\numberwithin{equation}{section}
\newtheorem {theorem}{Theorem}[section]
\newtheorem {lemma}[theorem]{{\bf Lemma}}
\newtheorem {proposition}[theorem]{{\bf Proposition}}
\theoremstyle{remark}
\newtheorem {remark}{{\bf Remark}}[section]
\newtheorem {definition}{{\bf Definition}}[section]
\theoremstyle{plain} \numberwithin {equation}{section}
\newcommand{\norm}[2]{\left\lVert #1 \right\rVert_{#2}}
\newcommand{\p}{{\mathbb P}}
\newcommand{\q}{{\mathbb Q}}
\def\N{{\mathbb N}}
\newcommand{\R}{{\mathbb R}}
\def\div{ \hbox{\rm div}\,  }
\newcommand\Z{{\mathbb{Z}}}
\def\la{\Lambda}
\def\aa{ \varphi }
\def\ze{ \zeta }
\def\nn{\nonumber}
\def\ta{\theta}
\def\aa{\phi}
\def\ddk{\dot \Delta_k}
\def\f{\frac}
\def\G{ \mathbf{G} }
\def\u{ \mathbf{u} }
\def\v{ \mathbf{v} }
\def\T{ \mathbb{T} }
\def\ea{ \mathcal E_\infty(t) }
\def\eat{ \mathcal E_\infty(t') }
\def\eb{  \mathcal E_1(t) }
\def\ebt{  \mathcal E_1(t') }
\begin{document}
\title{ { Global well-posedness and  large time behavior of solutions to   the  compressible Oldroyd-B model \\without stress diffusion}}

\author[Y. Zhao]{Yajuan Zhao}
\address[Y. Zhao]{School of Mathematics and Statistics, ZhengZhou University,
Zhengzhou, 450001, China}
\email{zhaoyj\_91@zzu.edu.cn}

\author[Y. Li]{Yongsheng Li}
\address[Y. Li]{School of Mathematics,
South China University of Technology,
Guangzhou, 510640, China}
\email{yshli@scut.edu.cn}

\author[T. Liang]{Tao Liang}
\address[T. Liang]{ School of Mathematics,
	South China University of Technology,
	Guangzhou, 510640, China}
\email{taolmath@163.com}

\author[X. Zhai]{Xiaoping Zhai}
\address[X. Zhai]{School  of Mathematics and Statistics, Guangdong University of Technology,
Guan-gzhou, 510520,  China}
\email{pingxiaozhai@163.com (Corresponding author)}

\footnote{\today}

\subjclass[2020]{35Q35, 35B65}

\keywords{Compressible Oldroyd-B model; Global  solution; Decay rates}

\begin{abstract}
We consider the Cauchy problem ($\R^d, d=2,3$) and the initial boundary values problem ($\T^d, d=2,3$)
associated to the compressible Oldroyd-B model which is first derived by  Barrett,
Lu and S\"{u}li {\it[Existence of large-data finite-energy global weak solutions to a compressible Oldroyd-B model, Commun. Math. Sci., {15} (2017), 1265--1323]} through micro-macro-analysis of
the compressible Navier-Stokes-Fokker-Planck system.
 Due to lack of stress diffusion, the problems
considered here are very difficult. Exploiting tools from harmonic analysis,
 notably the Littlewood Paley theory,
 we first establish the global well-posedness and time-decay rates for solutions of the
model with small initial data in Besov spaces with critical regularity.
Then, through deeply exploring and fully utilizing the structure of the perturbation system,
we obtain the global well-posedness and exponential decay rates for solutions of
the model with small initial data in the Soboles spaces $H^3(\T^d)$.
Our obtained results improve
considerably the recent results  by Lu, Pokorn\'{y} {\it[Anal. Theory Appl., {36} (2020), 348--372]},
Wang, Wen {\it[Math. Models Methods Appl. Sci., {30} (2020), 139--179]},
and Liu, Lu, Wen {\it[SIAM J. Math. Anal., {53} (2021), 6216--6242]}.
\end{abstract}

\maketitle

\tableofcontents
\section{Introduction and the main results}
\subsection {Model and synopsis of result}
The Oldroyd-B model is a typical prototypical model for viscoelastic fluids, which describes the motion of some viscoelastic flows:
\begin{eqnarray}\label{sys}
\left\{\begin{aligned}
&\partial_t\rho+\div(\rho \v)=0,\\
&\partial_t\eta+\div(\eta \v)-\varepsilon\Delta\eta=0,\\
&\partial_t{\sigma}+(\v\cdot\nabla){\sigma}+ {\sigma} \div \v  -
(\nabla \v{\sigma}+{\sigma}\nabla^{\top} \v)-\varepsilon\Delta{\sigma}= \frac{K\,A_0}{2\lambda_1}\eta  \,\mathbb{Id} - \frac{A_0}{2\lambda_1}{\sigma} ,\\
&\rho(\partial_t\v+\v\cdot\nabla \v)-\mu\Delta \v-(\lambda+\mu)\nabla\div \v+\nabla
P(\rho)=\div({\sigma} - (K L\eta + \zeta\, \eta^2)\,\mathbb{Id}\, \big),
\end{aligned}\right.
\end{eqnarray}
for $(t,x)\in \mathbb{R}_+\times\Omega$ with $\Omega=\R^d$ or $\mathbb{T}^d\,(d= 2,3)$. Here $\rho=\rho(t,x)\in \mathbb{R}_+$ is the density function of the fluid, ${ \v}=(v^1(t,x),v^2(t,x),\cdot\cdot\cdot v^d(t,x))$ is the velocity. The symmetric matrix function ${\sigma} = ({\sigma}_{i,j})$, $1\leq i,j \leq d$ is the extra stress tensor
and ${\eta}={\eta}(t,x)\in\mathbb{R}_+$ represents the polymer number density defined as the
integral of a probability density function   with respect to the conformation vector, which is a microscopic variable  in the modeling of dilute polymer chains.

 The viscosities constant $\mu$ and $\lambda$ are supposed to  satisfy $\mu>0$ and $d\lambda+2\mu\ge 0$.
 In particular, the parameters $K$, $\varepsilon$,   $A_0$, $\lambda_1$ are all positive numbers, whereas $\zeta \geq 0$ and $L\geq 0$ with $\zeta + L \neq 0$. The term $K L \eta + \zeta \eta^2$ in the momentum equation \eqref{sys} can be seen as the {polymer pressure}, compared to the fluid pressure $P(\rho)=R\rho^\gamma$ for some $R>0$ and $\gamma>1$.

If the additional stress tensor $\sigma$ and the polymer number density $\eta$ vanish, the system \eqref{sys} simplifies to the classical compressible Navier-Stokes equations, which have been extensively investigated by numerous researchers, see \cite{D2000,HLX2012,XZ2021} and references therein.
When the density $\rho$ is constant,
the system \eqref{sys} reduces to the incompressible Oldroyd-B model,
which has garnered considerable attention in the literature.
For comprehensive studies on this model, we refer to \cite{BS2011,BS2012,BS2012JDE,CM2001,CZ2023,ER2015,L2012,wang2022sharp,WZ2022,zhaijmp,Z2018}, along with the accompanying references therein.

The mathematical investigation of the Oldroyd-B model has garnered significant attention due to its physical significance and wide-ranging mathematical applications. We highlight several notable results concerning the compressible Oldroyd-B model \eqref{sys}, which was initially derived from the formal macroscopic closure of the compressible Navier-Stokes-Fokker-Planck system by Barrett, Lu, and S\"{u}li \cite{BLS2017}.
Initially, Barrett, Lu, and S\"{u}li \cite{BLS2017} established the existence of global-in-time finite-energy weak solutions in a bounded open domain $\Omega\subset\mathbb{R}^2$ with stress diffusion.
However, the uniqueness of the global weak solution remained an open question. Subsequently, Lu and Zhang \cite{LZ2018} proved weak-strong uniqueness and provided a refined blow-up criterion,
relying solely on the upper bound of the fluid density.
Wang and Wen \cite{WW2020} demonstrated the global well-posedness of \eqref{sys} and associated time-decay estimates in $\mathbb{R}^3$, particularly when the initial data is near a nonzero equilibrium state. Recently, Zhai and Li \cite{ZL2021} enhanced the findings of \cite{WW2020}, proving global well-posedness and optimal decay rates of solutions with small initial data in Besov spaces in $\mathbb{R}^d$ (where $d=2,3$). Moreover, in contrast to \cite{WW2020}, Zhai and Li \cite{ZL2021} allowed the polymer number density to vanish and considered the stress tensor to be near zero equilibrium.
Additionally, there exist intriguing results on other macroscopic models of compressible Oldroyd type, as evidenced by \cite{BS2016,BS2016JDE,HWZ2019,HL2016,HWWZ2022,LLZ2005,LWW2023} and the references therein.

It is noteworthy that in \eqref{sys}, there exist stress diffusion terms $\varepsilon\Delta\eta$ and $\varepsilon\Delta \sigma$, which significantly contribute to the mathematical analysis of existence theory. However, in standard derivations of bead-spring models, the center-of-mass diffusion term is often neglected, given its significantly smaller magnitude compared to other terms in the equations, as discussed in \cite{GS1990,LM2000,R1990}.
For compressible models without stress diffusion, few results concerning global well-posedness exist. Lu and Pokorn\'{y} \cite{LP2020} obtained global-in-time finite-energy weak solutions without any restrictions on the size of the data. Liu {\it et al.} \cite{LLW2021} established the global well-posedness of strong solutions and provided optimal decay rates for the highest-order derivatives of the solutions in $\mathbb{R}^3$.

\vskip .1in
\subsection{The first main result}
Our first aim in this paper is to study the Cauchy problem  of the  compressible Oldroyd-B model without stress diffusion (i.e., $\varepsilon=0$ in \eqref{sys}). More precisely, we  focus on the following system which is derived by introducing a transformation $ \tau_{ij} = \sigma_{ij} - K \eta \mathbb{I}_{ij}$  in \cite{LLW2021}:
\begin{eqnarray}\label{m2}
\left\{\begin{aligned}
&\partial_t \rho + \div (\rho \v) = 0,\\
&\rho(\partial_t \v+ \v\cdot\nabla \v) - \mu \Delta \v - (\lambda + \mu)\nabla \div \v + \nabla (P(\rho)+K (L-1)\eta + \zeta \eta^2 )  = \div \tau  ,\\
& \partial_t \tau  + \v \cdot \nabla \tau +  \frac{A_0}{2\lambda_1} \tau    = \big( \nabla \v \tau + \tau (\nabla \v)^{\top}\big) + K \eta \big( \nabla \v + (\nabla \v)^{\top} \big)- \tau \div \v,\\
& \partial_t \eta + \div (\eta \v)  = 0.
\end{aligned}\right.
\end{eqnarray}
We consider the Cauchy
problem of \eqref{m2} in $\R^d$ with the initial data
\begin{align*}
(\rho, \v, \tau, \eta)|_{t=0}=(\rho_0(x, t), \v_0(x,t), \tau_0(x,t), \eta_0(x,t))\to(\bar{\rho}, { 0},0,0) \quad \mathrm{ for }
\quad|x|\to \infty, \  t>0,
\end{align*}
where  $\bar{\rho} $ is a given  positive constant.

Moreover,
to simply the system,  for some positive constant $\alpha$ which will be chosen later, we  introduce some new unknowns
\begin{align*}
a = \rho - \bar{\rho},\quad n = R\rho^{\gamma}-R\bar{\rho}^\gamma  + K (L-1)\eta + \zeta \eta^2,\quad\u= \frac{ \v}{\alpha}.
\end{align*}
After an elementary calculation, we find that the new variables
 $ (n, \u, \tau, \eta)$  satisfies the system
\begin{eqnarray}\label{m3}
\left\{\begin{aligned}
&\partial_t n + \alpha \u \cdot \nabla n + \alpha  R\gamma \bar{\rho}^\gamma  \div \u = - \alpha R(\gamma -1) \big((a + \bar{\rho})^\gamma - \bar{\rho}^\gamma \big) \div \u - \alpha n \div \u - \alpha \zeta \eta^2 \div \u,\\
&  \partial_t \u+  \alpha \u\cdot\nabla \u - \frac{\mu } {a+\bar{\rho}} \Delta \u - \frac{ (\lambda + \mu)}{a+\bar{\rho}}\nabla \div \u + \frac{1}{\alpha (a+\bar{\rho})} \nabla n  = \frac{1}{\alpha (a+\bar{\rho})}\div \tau  ,\\
& \partial_t \tau  + \alpha \u \cdot \nabla \tau +  \frac{A_0}{2\lambda_1} \tau    = \alpha \big( \nabla \u \tau + \tau (\nabla \u)^{\top}\big) + \alpha K \eta \big( \nabla \u + (\nabla \u)^{\top} \big)- \alpha \tau \div \u,\\
& \partial_t \eta + \alpha \div (\eta \u)  = 0.
\end{aligned}\right.
\end{eqnarray}
The system \eqref{m3} is augmented by the inclusion of the following initial conditions:
\begin{align}\label{initial}
 (n, \u, \tau, \eta)|_{t=0}=&(n_0(x), \u_0(x),\tau_0(x), \eta_0(x))\\
\stackrel{\mathrm{def}}{=}&
(R\rho^{\gamma}_0(x)-R\bar{\rho}^\gamma_0(x)  + K(L-1)\eta_0(x) + \zeta \eta^2_0(x), \frac{ \v_0(x)}{\alpha},\tau_0(x), \eta_0(x))
, \qquad x \in \R^d.\nn
\end{align}

Let $\mathcal{S}(\R^d)$ be the space of
rapidly decreasing functions over $\R^d$ and $\mathcal{S}'(\R^d)$ be its dual
space. For any $z \in\mathcal{S}'(\R^d)$,
the lower and higher frequency parts are expressed as
\begin{align*}
z^\ell\stackrel{\mathrm{def}}{=}\sum_{k\leq k_0}\dot{\Delta}_k z\quad\hbox{and}\quad
z^h\stackrel{\mathrm{def}}{=}\sum_{k>k_0}\dot{\Delta}_k z
\end{align*}
for some fixed   integer $k_0\ge 1$ (the value of $k_0$ is dependent on  the proof of the main theorem). 
The corresponding  truncated semi-norms are defined  as follows:
\begin{align*}\|z\|^{\ell}_{\dot B^{s}_{2,1}}
\stackrel{\mathrm{def}}{=}  \|z^{\ell}\|_{\dot B^{s}_{2,1}}
\ \hbox{ and }\   \|z\|^{h}_{\dot B^{s}_{2,1}}
\stackrel{\mathrm{def}}{=}  \|z^{h}\|_{\dot B^{s}_{2,1}}.
\end{align*}
Now, the first main result of this paper is stated as follows.
\begin{theorem}
\label{dingli1}
Let $d=2,3$ and $\Lambda\stackrel{\mathrm{def}}{=}\sqrt{-\Delta}$. For any $( n^\ell_0, \u^\ell_0, \eta^\ell_0 )\in \dot B^{\frac{d}{2}-1}_{2,1}(\R^d)$,\ $ \tau^\ell_0 \in \dot B^{\frac{d}{2}}_{2,1}(\R^d)$,\  $(\Lambda n^h_0,\u^h_0,\Lambda\tau^h_0, \Lambda\eta^h_0) \in \dot B^{\frac{d}{2} + 1}_{2,1}(\R^d)$. There exist a small positive constant $c_0$ such that, if
\begin{align}\label{X0}
& \|(n_0, \u_0, \eta_0)\|^\ell_{\dot B^{\frac{d}{2}-1}_{2,1}} + \| \tau_0\|^\ell_{\dot B^{\frac{d}{2}}_{2,1}}   +  \|(\Lambda n_0,\u_0,\Lambda\tau_0, \Lambda\eta_0)\|^h_{\dot B^{\frac{d}{2} + 1}_{2,1}}  \leq c_0,
\end{align}
then the system \eqref{m3} with \eqref{initial} admits a unique global-in-time solution $( n, \u, \tau, \eta)$ satisfying
\begin{align*}
&n^\ell \in C(\mathbb{R}^+;\dot B^{\frac{d}{2}-1}_{2,1}) \cap L^1(\mathbb{R}^+;\dot B^{\frac{d}{2}+1}_{2,1}),\quad \hspace{0.08cm} \Lambda n^h \in C(\mathbb{R}^+;\dot B^{\frac{d}{2}+1}_{2,1}) \cap L^1(\mathbb{R}^+;\dot B^{\frac{d}{2}+1}_{2,1});\\
&\u^\ell \in C(\mathbb{R}^+;\dot B^{\frac{d}{2}-1}_{2,1}) \cap L^1(\mathbb{R}^+;\dot B^{\frac{d}{2}+1}_{2,1}),\quad \u^h \in C(\mathbb{R}^+;\dot B^{\frac{d}{2}+1}_{2,1}) \cap L^1(\mathbb{R}^+;\dot B^{\frac{d}{2}+3}_{2,1});\\
&\tau^\ell \in C(\mathbb{R}^+;\dot B^{\frac{d}{2}}_{2,1})  \cap L^1(\mathbb{R}^+;\dot B^{\frac{d}{2}}_{2,1}) , \quad\hspace{0.36cm}  \Lambda\tau^h \in C(\mathbb{R}^+;\dot B^{\frac{d}{2} + 1}_{2,1}) \cap L^1(\mathbb{R}^+;\dot B^{\frac{d}{2}+1}_{2,1}); \nn\\
&\eta^\ell \in C(\mathbb{R}^+;\dot B^{\frac{d}{2}-1}_{2,1}), \quad \quad \quad \quad \quad \quad  \hspace{0.45cm} \Lambda\eta^h \in C(\mathbb{R}^+;\dot B^{\frac{d}{2}+1}_{2,1}).
\end{align*}
Moreover, there exists a constant $C$ such that
\begin{align}\label{xiaonorm}
&\|(n, \u ,\eta)\|^\ell_{\widetilde{L}^{\infty}_{t}(\dot B^{\frac{d}{2} - 1}_{2,1})} + \|\tau\|^\ell_{\widetilde{L}^{\infty}_{t}(\dot B^{\frac{d}{2} }_{2,1})} + \|(\Lambda n,\u,\Lambda\tau ,\Lambda\eta)\|^h_{\widetilde{L}^{\infty}_{t}(\dot B^{\frac{d}{2} +1}_{2,1})}\nonumber\\
&\quad +\|( n,\u)\|^\ell_{{L}^1_t(\dot{B}_{2,1}^{\frac{d}{2} +1} )} +\|\tau\|^\ell_{{L}^1_t(\dot{B}_{2,1}^{\frac{d}{2}} )} + \| (\Lambda n,\Lambda\tau)\|^h_{{L}^1_t(\dot{B}_{2,1}^{\frac{d}{2}+1})}+\| \u\|^h_{{L}^1_t(\dot{B}_{2,1}^{\frac{d}{2}+3})} \leq Cc_0.
\end{align}
Furthermore,  if additionally the initial data satisfying
$(n_0^{\ell},\u_0^{\ell}, \eta_0^\ell)\in {\dot{B}^{-{s}}_{2,\infty}}(\R^d),  \tau_0^{\ell}\in \dot{B}_{2,\infty}^{-{s}+1}(\R^d),$ with $1-\frac{d}{2} < {s} \le \frac{d}{2},$
then we have the following time decay rates:
\begin{align*}
	\|\Lambda^{\beta_1} (n,\u)\|_{L^2}
	\le C(1+t)^{  - \frac{\beta_1+{s}}{2}},\quad\quad &\forall\ - {s} < \beta_1 < \frac{d}{2} -1,\\
	\|\Lambda^{\beta_2} \tau\|_{L^2}
	\le C(1+t)^{  - \frac{\beta_2+{s}-1}{2}},\quad\quad &\forall\ 1- {s} < \beta_2 < \frac{d}{2}.
	\end{align*}
\end{theorem}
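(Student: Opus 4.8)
I would run a continuation argument built on a single nonlinear functional $\mathcal X(t)$, defined as the left-hand side of \eqref{xiaonorm} with $\widetilde L^\infty_t$ read as $\sup_{[0,t]}$; thus \eqref{xiaonorm} is exactly the claim $\mathcal X(t)\le Cc_0$. The first step is to isolate the linear structure of \eqref{m3} about the constant state: the $(n,\u)$ block is the usual compressible Navier--Stokes mixed hyperbolic--parabolic system (acoustic coupling $\nabla n\leftrightarrow\div\u$ together with the Lam\'e operator on $\u$), the $\tau$-equation carries only the zeroth order damping $\frac{A_0}{2\lambda_1}\tau$ with no smoothing, and the $\eta$-equation is a pure transport equation; every genuinely nonlinear coupling --- $\alpha K\eta(\nabla\u+(\nabla\u)^\top)$, $\alpha\tau\div\u$, the transport terms $\alpha\u\cdot\nabla(\cdot)$, the compositions $(a+\bar\rho)^\gamma-\bar\rho^\gamma$ and $\frac1{a+\bar\rho}-\frac1{\bar\rho}$ with $a=a(n,\eta)$ obtained by inverting the definition of $n$, and $\zeta\eta^2$ --- is at least quadratic. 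The free parameter $\alpha$ is kept open and fixed (large enough) only once the dissipative structure is assembled, so that the feedback of $\tau$ into the momentum equation, which enters with the prefactor $\tfrac1{\alpha(a+\bar\rho)}$, is controlled by the damping of the $\tau$-equation.

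The core is the a priori inequality $\mathcal X(t)\lesssim \mathcal X(0)+\mathcal X(t)^2+\mathcal X(t)^3$. At low frequencies I would estimate $(n,\u)$ by the frequency-localized energy method for the compressible Navier--Stokes system (Danchin), obtaining the $\dot B^{d/2-1}_{2,1}\to L^1_t(\dot B^{d/2+1}_{2,1})$ smoothing for the coupled pair, with $\div\tau$ and $\zeta\eta^2\div\u$ treated as sources. For $\tau$ I would use a Littlewood--Paley energy estimate exploiting the damping: for $\partial_t\tau+\alpha\u\cdot\nabla\tau+\frac{A_0}{2\lambda_1}\tau=F$ one gets $\|\tau\|_{\widetilde L^\infty_t(\dot B^s_{2,1})}+\|\tau\|_{L^1_t(\dot B^s_{2,1})}\lesssim \|\tau_0\|_{\dot B^s_{2,1}}+\|F\|_{L^1_t(\dot B^s_{2,1})}$ up to a transport commutator absorbed by $\|\nabla\u\|_{L^1_t(\dot B^{d/2}_{2,1})}$; the damping buys $L^1_t$ integrability at the \emph{same} regularity, which is why $\tau$ is tracked in $\dot B^{d/2}_{2,1}$, one derivative above $(n,\u)$, so that $\div\tau$ matches the level of the momentum source. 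For $\eta$ one has only a transport estimate, giving $L^\infty_t$ control of $\|\eta\|^\ell_{\dot B^{d/2-1}_{2,1}}$ with a factor $\exp\!\big(C\!\int_0^t\|\nabla\u\|_{\dot B^{d/2}_{2,1}}\big)$ absorbed by smallness. At high frequencies I would pass to the $\Lambda$-weighted unknowns $(\Lambda n,\u,\Lambda\tau,\Lambda\eta)$ in $\dot B^{d/2+1}_{2,1}$: the parabolic part gives $\u^h\in L^1_t(\dot B^{d/2+3}_{2,1})$, the damping gives $\Lambda\tau^h\in L^1_t(\dot B^{d/2+1}_{2,1})$, the high-frequency part of $n$ is recovered from the acoustic coupling with $\u$ as in Danchin's analysis, and $\Lambda\eta^h$ is propagated in $L^\infty_t(\dot B^{d/2+1}_{2,1})$ by transport; summing low and high pieces and fixing $k_0$ and $\alpha$ produces the bound.

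All nonlinear terms are handled with Bony's decomposition and the product and composition laws in critical Besov spaces. Since $\eta$ and $\tau$ gain no regularity, the genuinely delicate products are those linking the non-dissipative quantities to the dissipative ones --- $\eta(\nabla\u+(\nabla\u)^\top)$, $\tau\div\u$, $\u\cdot\nabla\tau$, $\u\cdot\nabla\eta$, and the compositions entering the pressure and the variable viscosities --- each split into low--low, low--high and high--high interactions and matched against the norms in $\mathcal X(t)$, with the commutators $[\dot\Delta_k,\u\cdot\nabla]$ controlled by the standard estimate; because $\eta$ enters $\mathcal X(t)$ only through $L^\infty_t$ norms, wherever it occurs it is paired with a time-integrable factor (typically $\nabla\u$), so every product lands in the required $L^1_t$ space, and a standard bootstrap closes the global bound for $c_0$ small. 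Existence itself I would obtain from a Friedrichs (spectral truncation) scheme --- solve the truncated system, propagate $\mathcal X(t)\le Cc_0$ uniformly, and pass to the limit by Aubin--Lions compactness and Fatou's lemma; uniqueness follows from a stability estimate for the difference of two solutions in a space one derivative below the existence space, using the smallness of one of them (in dimension two accompanied by a logarithmic loss handled via Osgood's lemma).

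For the decay rates I would enlarge the functional by the low-frequency norms $\|(n,\u,\eta)\|^\ell_{\dot B^{-s}_{2,\infty}}$ and $\|\tau\|^\ell_{\dot B^{-s+1}_{2,\infty}}$, show they are propagated for $1-\tfrac d2<s\le\tfrac d2$ (the admissible range keeping the nonlinear terms under control through the same product laws), and then run the time-weighted argument in the spirit of Danchin and Xu: a frequency-localized Lyapunov inequality, combined with interpolation between the $\dot B^{-s}_{2,\infty}$-type bound and the high-regularity bound, yields algebraic in-time decay of the critical norms, which a second bootstrap upgrades to the stated rates $\|\Lambda^{\beta_1}(n,\u)\|_{L^2}\lesssim(1+t)^{-(\beta_1+s)/2}$ and $\|\Lambda^{\beta_2}\tau\|_{L^2}\lesssim(1+t)^{-(\beta_2+s-1)/2}$; the one-unit shift for $\tau$ reflects that, as far as decay is concerned, $\tau$ behaves like a quantity one derivative rougher than $(n,\u)$ --- its missing smoothing --- with the extra derivative in its data space $\dot B^{-s+1}_{2,\infty}$ compensating. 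The principal obstacle throughout is precisely the absence of stress diffusion: neither $\eta$ (pure transport) nor $\tau$ (zeroth-order damping only) gains regularity, so the standard critical-regularity scheme does not close verbatim, and the remedy --- carrying one extra derivative on the high-frequency blocks via the $\Lambda$-weights, extracting $L^1_t$ integrability for $\tau$ from the damping at unshifted regularity, confining $\eta$ to quadratic interactions paired with time-integrable factors, and tuning $\alpha$ and $k_0$ to make the coupled dissipative structure effective --- is the part that demands the most care.
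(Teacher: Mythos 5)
Your proposal follows essentially the same strategy as the paper: a frequency-localized energy method on the rescaled system, Danchin-type effective-velocity analysis for the $(n,\u)$ block (high-frequency damping of $n$ via $\G=\mathbb Q\u-\frac{\alpha_1}{\mu_1+\mu_2}\Delta^{-1}\nabla n$ and a convex combination of energy identities at low frequency), damping of $\tau$ to buy $L^1_t$ integrability at unshifted regularity so that $\tau$ is tracked one derivative above $(n,\u)$, pure-transport propagation for $\eta$, and the closure $\mathcal X(t)\lesssim\mathcal X(0)+\mathcal X(t)^2+\mathcal X(t)^3$. The decay strategy — propagate $\|(n,\u,\eta)\|^\ell_{\dot B^{-s}_{2,\infty}}+\|\tau\|^\ell_{\dot B^{-s+1}_{2,\infty}}$, derive a Lyapunov inequality by interpolation, solve the differential inequality — also matches Proposition 3.9 and Section 3.2 of the paper.

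One conceptual wobble: you describe $\alpha$ as a ``free parameter\dots fixed (large enough)\dots so that the feedback of $\tau$ into the momentum equation\dots is controlled.'' In fact $\alpha=\sqrt{1/(R\gamma\bar\rho^{\gamma+1})}$ is fixed once and for all to \emph{symmetrize} the acoustic coupling (so that both $\div\u$ in the $n$-equation and $\nabla n$ in the $\u$-equation carry the common coefficient $\alpha_1=\sqrt{R\gamma\bar\rho^{\gamma-1}}$), and its magnitude cannot be exploited for smallness: enlarging $\alpha$ simultaneously inflates every transport term $\alpha\u\cdot\nabla(\cdot)$, since the map $\v\mapsto\alpha\u$ is just a renaming. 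The actual control of the $\div\tau$ source is, exactly as you say elsewhere, the $L^1_t(\dot B^{d/2}_{2,1})$ integrability of $\tau$ obtained from the zeroth-order damping, which lands $\div\tau$ at $L^1_t(\dot B^{d/2-1}_{2,1})$, the right level for the momentum equation. So the proof closes for the reason you correctly identify, not because of any tuning of $\alpha$.
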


\begin{remark}
As far as we know, our theorem is the  first result about the global existence and convergence rates of solutions to the two-dimensional compressible  Oldroyd-B model without stress diffusion. Moreover, our
work can be viewed as an extension of \cite{LP2020},  \cite{WW2020} and \cite{ZL2021}, where the stress diffusion is included.

\end{remark}

\begin{remark}
Let $d=3$,  a similar result has been obtained by Liu {\it et al.}  \cite{LLW2021} in Sobolev spaces. However, the regularity of the initial data  is much more lower  than  \cite{LLW2021}, and is optimal  in some sense, here.
\end{remark}

\begin{remark}
Due to lack of dissipations on the fluid density $\rho$ and the polymer number density $\eta$,  thus $\rho$ and  $\eta$ have no
decay-in-time property. Moreover, if we take  $d=3$, $\beta_1=0$, and $s=\frac 32$ in Theorem \ref{dingli1},
 then we can get
 \begin{align*}
\|(n,\u)\|_{L^2}
\le& C(1+t)^{-\frac{3}{4}}
\end{align*}
which coincides with the  heat flows, thus our decay rate is optimal in some sense.
\end{remark}
\subsubsection*{\bf Scheme of the proof of Theorem \ref{dingli1}}
We shall  prove  Theorem \ref{dingli1} in Section 3. The proof contains  global well posedness and decay rates respectively.
In the first step,  we use six subsections to
 obtain the {\it a priori} estimates of the solutions in the low frequency and high frequency, respectively. Then, in the seventh  subsection, we  complete the proof of Theorem \ref{dingli1} by a continuous argument.
With the existence of the global solutions has been proved,  our   aim  in the second step is to prove the large time asymptotic behavior of the solutions. Throughout the proof, the crucial point is to establish a Lyapunov-type inequality in time for energy norms (see \eqref{sa59}) by using the pure energy argument (independent of spectral analysis) which is inspired of  the previous works \cite{xujiang2021}, \cite{ZL2021}.
To arrive at \eqref{sa59}, we need first to prove that the solutions constructed in Section 4 can propagate the regularity of the initial data with  negative   index (see Proposition \ref{propagate}).
Then, exploiting the interpolation inequality, we can  obtain the desired Lyapunov-type inequality for energy norms, which leads to the time-decay estimates.

\vskip .2in
\subsection{The second main result}
Our second aim  is to study the initial boundary value problem  of the  compressible Oldroyd-B model without stress diffusion (i.e., $\varepsilon=0$ in \eqref{sys}). More precisely,
inspired by \cite{LP2020},  we are concerned with the global well-posedness  and time-decay rates of the compressible Oldroyd-B model without stress diffusion which satisfies the following form:
\begin{eqnarray}\label{zhouqim2}
\left\{\begin{aligned}
&\partial_t\rho+\div(\rho \u)=0,\\
&\partial_t\eta+\div(\eta \u)=0,\\
&\partial_t\tau+ \tau+\div(\tau \u) = 0,\\
&\rho(\partial_t\mathbf{u} + \u\cdot \nabla \u) -  \mu \Delta \u -(\lambda+\mu) \nabla \div \u
+\nabla( P(\rho) +q(\eta)-\tau)=0,\\
&(\rho,\eta,\tau,\u)|_{t=0}=(\rho_{0},\eta_{0},\tau_{0},\u_{0}),
\end{aligned}\right.
\end{eqnarray}
where $q(\eta)=K (L-1)\eta + \zeta\, \eta^2$ and
$\tau$ is supposed to be a positive scalar function, see Section 2 of  \cite{LP2020} for more details about deriving \eqref{zhouqim2}.

From now on, attention is focused on the system \eqref{zhouqim2}  for
$(t, x) \in  [0, \infty)\times\T^d$ with the volume of $\T^d$ normalized to unity.
For notational convenience, we write
\begin{align}\label{conserint0}
\int_{\T^d}\rho_0\u_0\,dx=0.
\end{align}
Owing to the conservation of
total momentum, the property in \eqref{conserint0} is preserved in time. That is  to say, for any $t\ge0,$   there holds
\begin{align}\label{conserint1}
\int_{\mathbb T^d}\rho \u \,dx =0.
\end{align}

In order to overcome the lack of the stress diffusion, we will rewrite system  \eqref{zhouqim2}  in terms of variables $P,\u,\eta$ and $\tau$ as
 \begin{eqnarray}\label{zhouqim3}
\left\{\begin{aligned}
&\partial_t P+\div(P \u)+(\gamma-1)P\div \u=0,\\
&\partial_t\eta+\div(\eta \u)=0,\\
&\partial_t\tau + \tau+\div(\tau \u) =0,\\
&\rho(\partial_t\mathbf{u} + \u\cdot \nabla \u) -  \mu \Delta \u -(\lambda+\mu) \nabla \div \u+\nabla ( P(\rho) +q(\eta)-\tau)=0,\\
&(P,\eta,\tau,\u)|_{t=0}=(P_{0},\eta_{0},\tau_{0},\u_{0})\stackrel{\mathrm{def}}{=}
(R\rho_0^\gamma,\eta_{0},\tau_{0},\u_{0}).
\end{aligned}\right.
\end{eqnarray}

The second main result of the paper is stated as follows.
\begin{theorem}\label{dingli2}
Let  $d=2,3,$   and
\begin{align}\label{pingjunwei0}
	 c_0\le\rho_0,\eta_0\le c_0^{-1}\quad\int_{\T^d}\rho_0\u_0\,dx=0
\end{align}
for some constant $c_0>0$.
 Assume that the initial data satisfy
 $(P_0-\bar{P},\u_0, \eta_0-\bar{\eta},\tau_0)\in H^{3}(\T^d)$ for some constants $\bar{\rho}_0>0$, $\bar{P}=R\bar{\rho}_0^\gamma>0,$ and $\bar{\eta}_0>0$.
 There exists a small constant $\varepsilon$ such that, if
\begin{align*}
\norm{(P_0-\bar{P},\u_0, \eta_0-\bar{\eta},\tau_0)}{H^{3}}\le\varepsilon,
\end{align*}
then the system \eqref{zhouqim3} admits a unique  global  solution
$(P-\bar{P},\u, \eta-\bar{\eta},\tau)$ such that
\begin{align*}
&(P-\bar{P},\eta-\bar{\eta} ,\tau)\in C(\R^+;\ H^{3}),\quad
\u \in C(\R^+;\ H^{3})\cap L^{2} (\R^+;\ H^{4}).
\end{align*}
Moreover, for any $t\ge 0$ and for some pure constant $C_1 >0$,
there holds
\begin{align*}
\norm{(\u,\tau)}{H^{{3}}}\le C_1 e^{-C_1t}.
\end{align*}
\end{theorem}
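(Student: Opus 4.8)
The plan is to establish Theorem~\ref{dingli2} by a standard local-existence plus \emph{a priori} estimate scheme, where the real content is in the global uniform bound and the exponential decay, both obtained by a carefully weighted energy method exploiting the dissipative structure of the perturbation system. First I would set $P = \bar P + p$, $\eta = \bar\eta + \xi$, keep $\u$ and $\tau$ as the remaining perturbations, and rewrite \eqref{zhouqim3} as a quasilinear system with the linear principal part: $\partial_t p + (\gamma-1)\bar P\,\div\u$, $\partial_t\xi + \bar\eta\,\div\u$, $\partial_t\tau + \tau$, and $\bar\rho_0(\partial_t\u) - \mu\Delta\u - (\lambda+\mu)\nabla\div\u + \nabla(P'(\bar\rho_0)\,\text{(density perturbation)} + q'(\bar\eta)\xi - \tau) = 0$, with all remaining terms quadratic (or higher) in the perturbations and their first derivatives. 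Local well-posedness in $H^3$ follows from a routine Friedrichs/iteration argument (the density stays positive and bounded by continuity from \eqref{pingjunwei0}), so the entire proof reduces to closing a uniform-in-time bound on $\|(p,\u,\xi,\tau)(t)\|_{H^3}$ together with integrability of $\|\u\|_{H^4}$, which is exactly what $(P-\bar P,\eta-\bar\eta,\tau)\in C(\R^+;H^3)$ and $\u\in C(\R^+;H^3)\cap L^2(\R^+;H^4)$ assert.

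The core step is the energy estimate. For each multi-index $|\alpha|\le 3$ I would apply $\partial^\alpha$, pair with $\partial^\alpha$ of the corresponding unknown against the natural weights $\frac{P'(\bar\rho_0)}{(\gamma-1)\bar P}$ (for $p$), $\frac{q'(\bar\eta)}{\bar\eta}$ (for $\xi$), $1$ (for $\tau$), and $\bar\rho_0$ (for $\u$), so that the cross terms $\pm\langle\partial^\alpha\nabla p,\partial^\alpha\u\rangle$, $\pm\langle\partial^\alpha\nabla\xi,\partial^\alpha\u\rangle$, $\mp\langle\partial^\alpha\nabla\tau,\partial^\alpha\u\rangle$ cancel after integration by parts against the transport terms $\partial^\alpha(\div(P\u))$, $\partial^\alpha(\div(\eta\u))$, $\partial^\alpha(\div(\tau\u))$. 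This yields $\frac{d}{dt}\mathcal E(t) + \mu\|\nabla\u\|_{H^3}^2 + (\lambda+\mu)\|\div\u\|_{H^3}^2 + \|\tau\|_{H^3}^2 \le C\sqrt{\mathcal E(t)}\,\big(\|\nabla\u\|_{H^3}^2 + \mathcal E(t)\big)$, where $\mathcal E(t)\sim\|(p,\u,\xi,\tau)\|_{H^3}^2$; the nonlinear terms are controlled by the $H^3\hookrightarrow L^\infty\cap W^{1,\infty}$ embedding in $d\le 3$ and commutator (Moser-type) estimates. So far this only gives dissipation of $\u$ and $\tau$, not of $p$ and $\xi$ — which is expected, since $p$ and $\xi$ satisfy pure transport equations at leading order and genuinely do not decay on their own. \textbf{This is the main obstacle: recovering effective dissipation for the density-type quantity $p$} (the combination entering $\nabla(P(\rho)+q(\eta)-\tau)$), because without it the energy is merely bounded, not exponentially decaying.

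To overcome this I would introduce the standard "cross" or "interaction" functional: test the momentum equation against $\nabla(-\Delta)^{-1}$ of $\partial^\alpha p$ (equivalently, estimate $\frac{d}{dt}\langle\partial^\alpha\u,\nabla\partial^\alpha(-\Delta)^{-1}p\rangle$, legitimate on $\T^d$ thanks to the zero-momentum constraint \eqref{conserint1} which kills the zero Fourier mode), producing a term $c\|\nabla p\|_{H^2}^2$ (together with analogous terms controlling $\|q'(\bar\eta)\nabla\xi - \nabla\tau\|_{H^2}^2$) on the good side, at the cost of controllable multiples of $\|\nabla\u\|_{H^3}^2$, $\|\partial_t\u\|$, $\|\tau\|_{H^3}^2$ and quadratic errors. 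Note that here $\xi$ cannot be separately dissipated, but the \emph{combination} $P'(\bar\rho_0)p + q'(\bar\eta)\xi - \tau$ appearing under the gradient in the momentum equation can; this is enough because $\tau$ already decays, and one only needs decay of $\u$ and $\tau$ in the conclusion (not of $p$ or $\xi$). Forming $\mathcal L(t) = \mathcal E(t) + \delta\sum_{|\alpha|\le 2}\langle\partial^\alpha\u,\nabla\partial^\alpha(-\Delta)^{-1}p\rangle$ for small $\delta>0$ gives $\mathcal L\sim\mathcal E$ and a differential inequality $\frac{d}{dt}\mathcal L + c_1\big(\|\nabla\u\|_{H^3}^2 + \|\nabla p\|_{H^2}^2 + \|\tau\|_{H^3}^2\big)\le C\sqrt{\mathcal E}\,(\cdots)$; since $\u$ has mean zero (by \eqref{conserint1} modulo lower-order density perturbation, handled via Poincaré after a further small correction) one upgrades $\|\nabla\u\|_{H^3}^2$ to control $\|\u\|_{H^3}^2$, and together with the $\|\tau\|_{H^3}^2$ term this yields, once $\varepsilon$ (hence $\sqrt{\mathcal E}$) is small enough to absorb the right-hand side, the decay inequality $\frac{d}{dt}\mathcal L + C_1(\|\u\|_{H^3}^2 + \|\tau\|_{H^3}^2)\le 0$. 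Finally, since $\|(\u,\tau)\|_{H^3}^2$ does \emph{not} dominate the full $\mathcal L$ (because $\mathcal L$ still sees $\|p\|_{L^2}^2$ and $\|\xi\|_{L^2}^2$ which carry no dissipation), I would note that the \emph{highest-order} and \emph{gradient} parts of $p,\xi$ are dissipated, while their $L^2$ parts are slaved: from the $p$ and $\xi$ equations, $\|p\|_{L^2} + \|\xi\|_{L^2}\lesssim\|\div\u\|_{L^2} + \text{quadratic}$ after one time integration, or more simply one closes an exponential bound directly for the reduced energy $\widetilde{\mathcal E}=\|(\u,\tau)\|_{H^3}^2 + \|\nabla p\|_{H^2}^2 + \|\nabla\xi\|_{H^2}^2$, obtaining $\widetilde{\mathcal E}(t)\le Ce^{-C_1 t}$, which in particular gives $\|(\u,\tau)\|_{H^3}\le C_1 e^{-C_1 t}$ as claimed; the uniform bound $\|(p,\u,\xi,\tau)\|_{H^3}\le C\varepsilon$ needed to globalize comes along the way from $\mathcal L(t)\le\mathcal L(0)\le C\varepsilon^2$, closing the continuity argument.
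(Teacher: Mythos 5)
Your broad strategy (weighted $H^{3}$ energy estimates, a cross functional to manufacture pressure dissipation, mean-zero Poincar\'e to upgrade $\|\nabla\u\|$ to $\|\u\|$, and a smallness bootstrap) is in the right spirit, and you correctly identify the central obstruction. However, the execution has two genuine errors. First, the exponential decay you assert for $\widetilde{\mathcal E}=\|(\u,\tau)\|_{H^3}^2 + \|\nabla p\|_{H^2}^2 + \|\nabla\xi\|_{H^2}^2$ is false. Only the combination $\aa = p + K(L-1)b + \zeta b(b+2)$ — exactly what sits under the gradient in the momentum equation — inherits damping from coupling to $\u$; at the linear level $p - \gamma b$ is purely transported and does not decay, so neither $\|\nabla p\|_{H^2}$ nor $\|\nabla\xi\|_{H^2}$ can decay separately. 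Your cross functional $\langle\partial^\alpha\u,\nabla\partial^\alpha(-\Delta)^{-1}p\rangle$ is built on $p$ alone, so the pressure pairing produces $P'(\bar\rho_0)\|\partial^\alpha p\|^2 + q'(\bar\eta)\langle\partial^\alpha\xi,\partial^\alpha p\rangle - \langle\partial^\alpha\tau,\partial^\alpha p\rangle$; the $\langle\xi,p\rangle$ cross term costs a multiple of $\|\xi\|^2$, which is not dissipated, and the inequality does not close. The paper avoids this by working throughout with $\aa$ and the effective velocity $\mathbf{\widetilde{G}}=\mathbb{Q}\u-\frac{1}{\nu}\Delta^{-1}\nabla\aa$, so that the $(\aa,\u)$ subsystem has exactly the compressible Navier--Stokes linearization and the nonlinear forcing in that estimate involves only $\|(\aa,\u,\tau)\|_{H^3}^2$ — all of which \emph{is} dissipated — giving the clean Lyapunov inequality $\frac{d}{dt}\|(\aa,\u,\tau,\mathbf{\widetilde{G}})\|_{H^3}^2 + c\|(\aa,\u,\tau,\mathbf{\widetilde{G}})\|_{H^3}^2\le 0$ and thus $\|(\u,\tau)\|_{H^3}\le Ce^{-ct}$.

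Second, your claimed energy inequality $\frac{d}{dt}\mathcal E + \text{diss}\lesssim \sqrt{\mathcal E}\,(\|\nabla\u\|_{H^3}^2 + \mathcal E)$ does not close even for small data: the forcing $\sqrt{\mathcal E}\,\mathcal E$ involves the whole undissipated energy and the coefficient $\sqrt{\mathcal E}\sim\delta$ is constant in time, not integrable, so Gronwall gives linear growth. The source of this $\sqrt{\mathcal E}\,\mathcal E$ term is the top-order quadratic term $\int_{\T^d}p\,\Lambda^3\div\u\cdot\Lambda^3 p\,dx$ (and its $b$-analogue), which your Moser/commutator bounds cannot improve. The paper's essential trick — which you omit — is to substitute $\div\u=-(\partial_t p+\u\cdot\nabla p)/(\gamma(p+1))$ from the $p$-equation, converting the worst part into a total time derivative $\frac{1}{2\gamma}\frac{d}{dt}\int_{\T^d}\frac{p}{1+p}(\Lambda^3 p)^2\,dx$ that integrates to a boundary term. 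Only after this correction is the forcing coefficient reduced to $\|\u\|_{H^3}+\|(\u,\aa,\tau)\|_{H^3}^2+(\|p\|_{H^3}^2+\|b\|_{H^3}^2)\|\u\|_{H^3}$, which \emph{is} time-integrable once the exponential decay of $(\aa,\u,\tau,\mathbf{\widetilde{G}})$ is in hand; Gronwall then gives the uniform (bounded, not decaying) $H^3$ control on $(p,\u,b,\tau)$ that closes the bootstrap. Without both devices — the correct dissipated variable $\aa$, and the substitution trick for the top-order quadratic term — neither the uniform bound nor the exponential decay follows.
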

\begin{remark}
As far as we know, Theorem \ref{dingli2} is the  first result about the global existence and convergence rates of solutions to the
 compressible  Oldroyd-B model without stress diffusion on bounded domains.
	\end{remark}

\subsubsection*{\bf Scheme of the proof of Theorem \ref{dingli2}}
Now, let us explain the difficulty and our main idea to prove Theorem \ref{dingli2}. Due to the lack of the  dissipations  on both the density and the polymer number density, the stability and large-time behavior problem concerned here is very difficult.
 To make up for the missing regularization, we consider a  small perturbation of the equilibrium $\bar{P}$, $\bar{\eta}$ for the density and the polymer number density, respectively.
In the  framework of the perturbation, the local well-posedness of (\ref{zhouqim3}) can be shown via a procedure that is now standard (see, e.g.,  \cite{Kawashima}). The focus of the proof is on the global bound of
$( P-\bar{P},\u,\eta-\bar{\eta},\tau)$ in $H^3(\mathbb T^d)$. We use the bootstrapping argument and start by making the
ansatz that
\begin{align*}
	\sup_{t\in[0,T]}(\norm{P-\bar{P}}{H^{3}}+\norm{\u}{H^{3}}
+\norm{\eta-\bar{\eta}}{H^{3}}+\norm{\tau}{H^{3}})\le \delta,
\end{align*}
for suitably chosen $0<\delta<1$. The main efforts are devoted to proving that, if the initial norm is taken to be sufficiently small, namely
$$
\|P_0-\bar{P}\|_{H^3}+\|\u_0\|_{H^3}+\|\eta_0-\bar{\eta}\|_{H^3} +\norm{\tau_0}{H^{3}}\le \varepsilon
$$
with sufficiently small $\varepsilon>0$, then
\begin{align} \label{task}
\sup_{t\in[0,T]}(\norm{P-\bar{P}}{H^{3}}+\norm{\u}{H^{3}}
+\norm{\eta-\bar{\eta}}{H^{3}}+\norm{\tau}{H^{3}})\le \frac{\delta}{2}.
\end{align}
It is not trivial to prove (\ref{task}). Now, let us explain  our main idea. Without loss of generality, we set $R=1$ in
$P(\rho)=R\rho^\gamma$ and take $\bar \rho=\bar \eta=1$ in the paper.
 The starting point is to write the term $\nabla (P+q(\eta)-K(L-1)-1-\zeta)$ as  a new variable rather than the nonlinear term and define
 \begin{align*}
p\stackrel{\mathrm{def}}{=}P -1,\quad b\stackrel{\mathrm{def}}{=}\eta-1.
\end{align*}
Then, we can rewrite \eqref{zhouqim3} into the following form
\begin{eqnarray}\label{mmm}
\left\{\begin{aligned}
&\partial_tp+\gamma\div \u+\u\cdot\nabla p+\gamma p\div \u=0,\\
&\partial_t \u-\mu\Delta\u-(\lambda+\mu)\nabla\div\u
   +\nabla (p+q(\eta)-K(L-1)-\zeta)=\text{Nonlinear terms},\\
&\partial_t{b}+\div \u+\u\cdot\nabla {b}+{b}\div \u=0,\\
&\partial_t\tau + \tau+\div(\tau \u) =  0,
\end{aligned}\right.
\end{eqnarray}
By the standard energy method, we can show that
 \begin{align}\label{celue1}
&\frac12\frac{d}{dt}\norm{(\frac{1}{\sqrt{\gamma}}\, p,\u,
\sqrt{K(L-1)+2\zeta}\,b,\sqrt{1+c_1}\,\tau)}{H^{3}}^2+\mu\norm{\nabla\u}{H^{3}}^2
+(\lambda+\mu)\norm{\div\u}{H^{3}}^2+\norm{\tau}{H^{3}}^2\nn\\
&\quad\le C\left(\norm{\u}{H^3}+\norm{(p,\tau)}{H^3}^2
   +\norm{b}{H^3}^2(1+\norm{b}{H^3}^2)\right)\norm{(p,\u,b,\tau)}{H^3}^2,
\end{align}
 from which we can see that \eqref{celue1} does not close under small initial
 data unless some norms of $p$, $b$ such as  $\norm{p}{H^{3}}^2$ and
 $\norm{b}{H^{3}}^2$ occurs on the left.
 In order to capture the dissipation arising from
the complicated coupling between $p$ and $b$ , our idea is to introduce the new
pressure $\aa$ as
\begin{align}\label{}
\aa\stackrel{\mathrm{def}}{=}P-1 +q(\eta)-K(L-1)-\zeta.
\end{align}
After an elementary calculation, we find that the new variable $(\aa,\u)$
satisfies  the following equations
\begin{eqnarray}\label{celuem5}
\left\{\begin{aligned}
&\partial_t \aa+ (\gamma+2\ze+K(L-1))\div\u  =\text{Nonlinear terms},\\
&\partial_t \u-\mu\Delta\u-(\lambda+\mu)\nabla\div\u+\nabla \aa
=\text{Nonlinear terms}.
\end{aligned}\right.
\end{eqnarray}
Especially, the linearized system of \eqref{celuem5} has   the same  structure as the compressible Navier-Stokes equations. Hence, by exploiting   delicate energy analysis, we can capture the damping effect of  $\aa$ and smoothing effect of $\u$ in \eqref{celuem5}.

\vskip .1in

Although we have obtained the damping effect of $\aa$, another difficulty to prove (\ref{task}) is that  we still cannot get any damping effect of $p$ and $b$, respectively. So, the energy estimate like \eqref{celue1} is invalid to our bootstrap argument. We need to make a more dedicated energy estimate as follows
\begin{align}\label{celueyiming1}
&\frac12\frac{d}{dt}\norm{\left(\frac{1}{\sqrt{\gamma}}\, p,\u, \sqrt{K(L-1)+2\zeta}\,b,\sqrt{1+c_1}\,\tau\right)}{H^{3}}^2
     -\frac{1}{2\gamma}\frac{d}{dt}\int_{\T^d}\frac{p}{1+p}(\Lambda^3p)^2dx\nn\\
     &\qquad
      -\frac{K(L-1)}{2}\frac{d}{dt}\int_{\T^d}\frac{b}{1+b}(\Lambda^3b)^2dx
      +\mu\norm{\nabla\u}{H^{3}}^2
     +(\lambda+\mu)\norm{\div\u}{H^{3}}^2+\norm{\tau}{H^{3}}^2\nn\\
& \quad\leq C\left(\norm{\u}{H^3}+(\norm{p}{H^3}^2+\norm{b}{H^3}^2)\norm{\u}{H^3}
       +\norm{(\u,\aa,\tau)}{H^3}^2\right)
      \|( p, \u, b,\tau)\|_{H^{3}}^2.
\end{align}

Compared to \eqref{celue1}, the advantage of the refined energy estimates
\eqref{celueyiming1}
is that the time integral of
$\norm{\u}{H^3}+(\norm{p}{H^3}^2+\norm{b}{H^3}^2)\norm{\u}{H^3}
       +\norm{(\u,\aa,\tau)}{H^3}^2$
 in front of $\|( p, \u, b,\tau)\|_{H^{3}}^2$ is time integrable. This is because
the damping effect  and smoothing effect on $\aa$  and $\u$ in \eqref{celuem5}.
With the above {\it a priori }estimates,  finally, we  use the continuity argument to close the energy argument in  the framework of small initial data.
\section{Preliminaries}
\label{ineq}
Throughout the paper, $C > 0$ stands for a generic harmless ``constant''. For brevity,
we sometime write $f\lesssim g$ instead of $f \le Cg$.
 Let $A$, $B$ be two operators, we denote $[A, B] = AB - BA$, the commutator
between $A$ and $B$. Denote $\langle f,g\rangle$ the $L^2(\R^d)$ inner product of $f$ and $g$.  For $X$ a Banach space and $I$ an interval of $\mathbb{R}$, for any $f,g\in X$, we agree that
$\left\|\left(f,g\right)\right\| _{X}\stackrel{\mathrm{def}}{=} \left\|f\right\| _{X}
+\left\|g\right\|_{X}$
and denote by $C(I; X)$ the set of
continuous functions on $I$ with values in $X$.

\subsection{Littlewood-Paley decomposition and Besov spaces}
Let us briefly recall the
Littlewood-Paley decomposition and Besov spaces for convenience. More details may be
found for example in Chap. 2 and Chap. 3 of \cite{bcd}.
\begin{definition}
\label{def2.1}
Considering two smooth functions $\varphi$ and $\chi$ on $ \R$ with the supports $supp \varphi \subset [ \frac{3}{4} ,\frac{8}{3} ]$ and  $supp \chi \subset [0, \frac{4}{3}]$ such that
\begin{align*}
& \sum_{k \in \Z} \varphi (2^{-k}\xi) = 1 \quad for \quad \xi > 0 \quad and \quad \chi (\xi) \stackrel{\mathrm{def}}{=} 1-\sum_{k \ge 0} \varphi (2^{-k}\xi) \quad for \quad \xi \in \R.
\end{align*}
Then we define homogeneous dyadic blocks $ \dot{\Delta}$
\begin{align*}
& \dot{\Delta}_k f = \mathscr{F}^{-1} (\varphi (2^{-k}|\xi|)\hat{f}) \quad and \quad \dot{S}_{k}f = \mathscr{F}^{-1} (\chi (2^{-k}|\xi|)\hat{f}).
\end{align*}
If A(D) is a 0 order Fourier multiplier, then we have
\begin{align*}
& \| \dot{\Delta}_k (A(D))f \|_{L^p} \le C\|  \ddk f \|_{L^p},  \qquad \forall \ p \in [1, \infty].
\end{align*}
\end{definition}

\begin{definition}
\label{def2.2}
Let $p,r \in [1, \infty]$, $ s \in \R$ and $ f \in \mathcal{S}'(\R^d)$. We define following Besov norm by
\begin{align*}
& \| f\|_{\dot{B}^s_{p,r}} \stackrel{\mathrm{def}}{=} \big\| (2^{ks}\| \dot{\Delta}_k f\|_{L^p})_k \big\|_{\ell^r(\Z)}
\end{align*}
and the Besov space as follows
\begin{align*}
& \dot{B}^s_{p,r}(\R^d) \stackrel{\mathrm{def}}{=}  \Big\{ f \in \mathcal{S}_{h}'(\R^d),  \big| \| f\|_{\dot{B}^s_{p,r}}<\infty  \Big\},
\end{align*}
where $\mathcal{S}_{h}'(\R^d)$ denotes $ f \in \mathcal{S}'(\R^d)$  and \ $ \lim\limits_{k \rightarrow\infty} \| \dot{S}_{k}f\|_{L^{\infty}} = 0$.
\end{definition}

In this paper, we use the ``time-space" Besov spaces or Chemin-Lerner space introduced by Chemin and Lerner.
\begin{definition}
\label{chemin}
Let $s \in \R $ , and $0 < T \le +\infty$, we define
\begin{align*}
& \| f\|_{\widetilde{L}^{q}_{T}(\dot B^{s}_{p,r})} \stackrel{\mathrm{def}}{=}   \big\|2^{ks} \| \dot{\Delta}_k f\|_{L^q(0,T;L^p)}   \big\|_{\ell ^r}
\end{align*}
for $p,q \in [1, \infty]$ and with the standard modification for $p,q = \infty$.
\end{definition}
By Minkowski's inequality, we have the following inclusions between the Chemin-Lerner space $ \widetilde{L}^{q}_{T}(\dot B^{s}_{p,r})$ and the Bochner space $ L^{q}_{T}(\dot B^{s}_{p,r})$:
\begin{align*}
& \| f\|_{\widetilde{L}^{q}_{T}(\dot B^{s}_{p,r})} \le \| f\|_{L^{q}_{T}(\dot B^{s}_{p,r})} \quad \text{if} \ q \le r,\qquad \| f\|_{L^{q}_{T}(\dot B^{s}_{p,r})} \le \| f\|_{\widetilde{L}^{q}_{T}(\dot B^{s}_{p,r})} \quad \text{if} \ q \ge r.
\end{align*}

\subsection{Analysis tools in Besov spaces}
Let us first recall classical Bernstein's lemma of Besov spaces.
\begin{lemma}\label{bernstein}
Let $\mathcal{B}$ be a ball and $\mathcal{C}$ a ring of $\mathbb{R}^d$. A constant $C$ exists so that for any positive real number $\lambda$, any
non-negative integer k, any smooth homogeneous function $\sigma$ of degree m, and any couple of real numbers $(p, q)$ with
$1\le p \le q\le\infty$, there hold
\begin{align*}
&\mathrm{Supp} \,\hat{u}\subset\lambda \mathcal{B}\Rightarrow\sup_{|\alpha|=k}\|{\partial^{\alpha}f}\|_{L^q}\le C^{k+1}\lambda^{k+d(\frac1p-\frac1q)}\|{f}\|_{L^p},\\
&\mathrm{Supp} \,\hat{f}\subset\lambda \mathcal{C}\Rightarrow C^{-k-1}\lambda^k\|{f}\|_{L^p}\le\sup_{|\alpha|=k}\|{\partial^{\alpha}f}\|_{L^p}
\le C^{k+1}\lambda^{k}\|{f}\|_{L^p},\\
&\mathrm{Supp} \,\hat{f}\subset\lambda \mathcal{C}\Rightarrow \|{\sigma(D)f}\|_{L^q}\le C_{\sigma,m}\lambda^{m+d(\frac1p-\frac1q)}\|{f}\|_{L^p},
\end{align*}
where $\hat{f}$ denotes the Fourier transform of $f$.
\end{lemma}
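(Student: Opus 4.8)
\noindent\emph{Proof idea.}
The plan is to reduce all three inequalities to two standard ingredients: (i) if $\widehat f$ is supported in $\lambda\mathcal K$ for a \emph{fixed} compact set $\mathcal K$, then $f=g_\lambda * f$ where $g_\lambda(x)=\lambda^d g(\lambda x)$ and $g=\mathscr F^{-1}\psi$ for a fixed $\psi\in C_c^\infty(\R^d)$ equal to $1$ on a neighborhood of $\mathcal K$ — and, in the ring case, chosen with $\mathrm{Supp}\,\psi\subset\R^d\setminus\{0\}$; and (ii) Young's convolution inequality $\|h*f\|_{L^q}\le\|h\|_{L^r}\|f\|_{L^p}$ with $1+\tfrac1q=\tfrac1r+\tfrac1p$, which forces $r\ge1$ and thus uses the hypothesis $p\le q$. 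Every power of $\lambda$ will come from rescaling a fixed kernel, and every constant will be traced back to a single fixed-kernel bound.

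\noindent\emph{First inequality.}
Let $\mathrm{Supp}\,\widehat f\subset\lambda\mathcal B$ and pick $\psi$ as above. Then $\partial^\alpha f=(\partial^\alpha g_\lambda)*f$ with $\partial^\alpha g_\lambda(x)=\lambda^{d+|\alpha|}(\partial^\alpha g)(\lambda x)$, so, choosing $r$ by $1+\tfrac1q=\tfrac1r+\tfrac1p$,
\[
\|\partial^\alpha f\|_{L^q}\le\|\partial^\alpha g_\lambda\|_{L^r}\,\|f\|_{L^p}
=\lambda^{\,|\alpha|+d(\frac1p-\frac1q)}\,\|\partial^\alpha g\|_{L^r}\,\|f\|_{L^p}.
\]
The claim then follows from the fixed-kernel estimate $\sup_{|\alpha|=k}\|\partial^\alpha g\|_{L^r}\le C^{k+1}$, uniform in $r\in[1,\infty]$. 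To get it, write $\partial^\alpha g=\mathscr F^{-1}((i\xi)^\alpha\psi)$ and use $\|\phi\|_{L^r}\le\|\phi\|_{L^1}+\|\phi\|_{L^\infty}\lesssim\|(1-\Delta_\xi)^N((i\xi)^\alpha\psi)\|_{L^1}$ for $N=\lfloor d/2\rfloor+1$; since $\psi$ has a fixed support of radius $R$, the function $(1-\Delta_\xi)^N((i\xi)^\alpha\psi)$ is bounded by (a polynomial in $k$)$\cdot R^{k}$, which is $\le C^{k+1}$ once $C$ is large enough, uniformly in $k$.

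\noindent\emph{Ring case.}
For the lower bound let $\mathrm{Supp}\,\widehat f\subset\lambda\mathcal C$ and pick $\psi\in C_c^\infty(\R^d\setminus\{0\})$ equal to $1$ near $\mathcal C$. Using the multinomial identity $|\xi|^{2k}=\sum_{|\alpha|=k}\frac{k!}{\alpha!}\xi^{2\alpha}$ together with $\xi^{2\alpha}=(i\xi)^\alpha(-i\xi)^\alpha$, and that $|\xi|$ vanishes neither on $\mathrm{Supp}\,\widehat f$ nor on $\mathrm{Supp}\,\psi(\lambda^{-1}\cdot)$, we get
\[
\widehat f(\xi)=\sum_{|\alpha|=k}\frac{k!}{\alpha!}\,\frac{\psi(\lambda^{-1}\xi)\,(-i\xi)^\alpha}{|\xi|^{2k}}\,\widehat{\partial^\alpha f}(\xi),
\qquad\text{i.e.}\qquad
f=\sum_{|\alpha|=k}\frac{k!}{\alpha!}\,m_\alpha(D)\,\partial^\alpha f,
\]
where $m_\alpha(\xi)=\lambda^{-k}\widetilde m_\alpha(\lambda^{-1}\xi)$ and $\widetilde m_\alpha(\zeta)=\psi(\zeta)(-i\zeta)^\alpha|\zeta|^{-2k}\in C_c^\infty(\R^d\setminus\{0\})$. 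Writing $K_\alpha=\mathscr F^{-1}\widetilde m_\alpha$ and applying $L^1*L^p\subset L^p$,
\[
\|f\|_{L^p}\le\sum_{|\alpha|=k}\frac{k!}{\alpha!}\,\lambda^{-k}\|K_\alpha\|_{L^1}\|\partial^\alpha f\|_{L^p}
\le d^{k}\lambda^{-k}\Big(\sup_{|\alpha|=k}\|K_\alpha\|_{L^1}\Big)\sup_{|\alpha|=k}\|\partial^\alpha f\|_{L^p},
\]
using $\sum_{|\alpha|=k}\frac{k!}{\alpha!}=d^k$; and $\sup_{|\alpha|=k}\|K_\alpha\|_{L^1}\le C^{k+1}$ by the same fixed-support argument as before (the factor $|\zeta|^{-2k}$ on the fixed annulus only contributes a further geometric constant). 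After renaming $C$ this is $C^{-k-1}\lambda^k\|f\|_{L^p}\le\sup_{|\alpha|=k}\|\partial^\alpha f\|_{L^p}$. The matching upper bound $\sup_{|\alpha|=k}\|\partial^\alpha f\|_{L^p}\le C^{k+1}\lambda^k\|f\|_{L^p}$ is the case $q=p$ of the first inequality applied with any fixed ball containing $\mathcal C$.

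\noindent\emph{Third inequality, and the main obstacle.}
With $\psi\in C_c^\infty(\R^d\setminus\{0\})$ equal to $1$ near $\mathcal C$, $\widehat{\sigma(D)f}(\xi)=\sigma(\xi)\psi(\lambda^{-1}\xi)\widehat f(\xi)$; by homogeneity $\sigma(\xi)\psi(\lambda^{-1}\xi)=\lambda^{m}(\sigma\psi)(\lambda^{-1}\xi)$, and $\sigma\psi\in C_c^\infty(\R^d\setminus\{0\})$ since the cutoff removes the origin, the only point where $\sigma$ may fail to be smooth. Hence $\sigma(D)f=\lambda^{m}\Psi_\lambda*f$ with $\Psi=\mathscr F^{-1}(\sigma\psi)\in\mathcal S(\R^d)$, and Young's inequality (again with $1+\tfrac1q=\tfrac1r+\tfrac1p$) gives $\|\sigma(D)f\|_{L^q}\le\lambda^m\|\Psi_\lambda\|_{L^r}\|f\|_{L^p}=\lambda^{m+d(\frac1p-\frac1q)}\|\Psi\|_{L^r}\|f\|_{L^p}$, so $C_{\sigma,m}=\|\Psi\|_{L^r}$ depends only on $\sigma$, $m$, $\mathcal C$, and the exponents. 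The only delicate point in the whole argument is the uniform fixed-kernel bound $\|\partial^\alpha g\|_{L^r},\ \|K_\alpha\|_{L^1}\le C^{k+1}$ for $|\alpha|=k$: one must differentiate the fixed symbols exactly $\approx d/2$ times so that the polynomial-in-$k$ losses incurred when derivatives land on the monomial $(i\xi)^\alpha$ stay absorbable into a single geometric factor $C^{k+1}$ rather than producing a factorial $k!$; everything else is rescaling and Young's inequality.
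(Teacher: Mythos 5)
Your proposal is correct, and it is essentially the classical argument: the paper does not prove Lemma \ref{bernstein} itself but cites \cite{bcd}, and your proof (spectral cut-off plus rescaled Young's inequality for the ball case and for $\sigma(D)$, the identity $|\xi|^{2k}=\sum_{|\alpha|=k}\frac{k!}{\alpha!}\xi^{2\alpha}$ for the reverse ring estimate, with the kernel bounds $\le C^{k+1}$ obtained by differentiating the fixed symbols only $O(d)$ times) coincides with the standard proof given there.
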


The following like-Bernstein inequality will be used frequently.
\begin{lemma}\label{like-bernstein}(see \cite{danxu})
If supp $ \hat{f} \subset \big\{ \xi \in \R^d : R_1 \lambda \le |\xi| \le R_2 \lambda\big\}$, then there exists C depending only on $ d$, $ R_1$ and $R_2$ such that for all $1<p<\infty$,
\begin{align*}
& C \lambda^2 (\frac{p-1}{p}) \int_{\R^d} |f|^p dx \le (p-1) \int_{\R^d}|\nabla f|^2 |f|^{p-2} dx = -\int_{\R^d}\Delta f |f|^{p-2}f dx.
\end{align*}
\end{lemma}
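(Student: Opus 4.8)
The plan is as follows. The last equality in the statement is an integration by parts: for $f$ smooth with adequate decay,
\[
-\int_{\R^d}\Delta f\,|f|^{p-2}f\,dx=\int_{\R^d}\nabla f\cdot\nabla\big(|f|^{p-2}f\big)\,dx=(p-1)\int_{\R^d}|\nabla f|^2|f|^{p-2}\,dx,
\]
using $\nabla(|f|^{p-2}f)=(p-1)|f|^{p-2}\nabla f$; this holds in the distributional sense for every $p>1$ (for $1<p<2$ one first argues off the zero set of $f$, where $|f|^{p-2}|\nabla f|^2$ is still locally integrable, and the general case follows by a routine regularisation). No frequency information enters here, so all the content is in the inequality. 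For that I would pass to $g:=|f|^{p/2-1}f$, for which $|g|^2=|f|^p$ and $\nabla g=\tfrac p2\,|f|^{p/2-1}\nabla f$, so that
\[
\|g\|_{L^2}^2=\|f\|_{L^p}^p,\qquad\|\nabla g\|_{L^2}^2=\tfrac{p^2}{4}\int_{\R^d}|\nabla f|^2|f|^{p-2}\,dx,
\]
and the claim becomes the weighted Poincaré-type estimate $\|\nabla g\|_{L^2}^2\ge c\,p\,\lambda^2\|g\|_{L^2}^2$ with $c=c(d,R_1,R_2)$; multiplying by $4(p-1)p^{-2}$ then recovers the stated form with $C=4c$.

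The spectral hypothesis is used to express $f$ through its gradient. Fix $\theta\in C_c^\infty(\R^d\setminus\{0\})$ with $\theta\equiv1$ on $\{R_1\le|\zeta|\le R_2\}$ and let $\mathcal K_j$ be the Fourier multiplier with symbol $-i\,\xi_j|\xi|^{-2}\,\theta(\xi/\lambda)$. This symbol equals $\lambda^{-1}$ times a fixed smooth, compactly supported symbol evaluated at $\xi/\lambda$, so the convolution kernel of $\mathcal K_j$ has $L^1$-norm $\le C_1\lambda^{-1}$ with $C_1=C_1(d,R_1,R_2)$; and since $\theta\equiv1$ on $\mathrm{supp}\,\widehat f(\lambda\,\cdot)$ one has the exact identity $f=\sum_{j=1}^d\partial_j(\mathcal K_jf)$. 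Pairing with $|f|^{p-2}f$ and integrating by parts,
\[
\|f\|_{L^p}^p=\langle f,|f|^{p-2}f\rangle=-(p-1)\sum_j\langle\mathcal K_jf,\,|f|^{p-2}\partial_jf\rangle,
\]
and one bounds each term by Cauchy--Schwarz (splitting $|f|^{p-2}=|f|^{(p-2)/2}\,|f|^{(p-2)/2}$), Hölder with exponents $(p/2,\,p/(p-2))$, and Young's convolution inequality, obtaining $|\langle\mathcal K_jf,|f|^{p-2}\partial_jf\rangle|\le C_1\lambda^{-1}\|f\|_{L^p}^{p/2}\,(\int|f|^{p-2}|\partial_jf|^2)^{1/2}$. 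Summing in $j$ and dividing by $\|f\|_{L^p}^{p/2}$ gives an inequality of the right shape, $\|f\|_{L^p}^p\lesssim\lambda^{-2}\int|\nabla f|^2|f|^{p-2}\,dx$, up to a power of $p$.

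That power of $p$ is the crux, and I expect it to be the main difficulty: the constant must not depend on $p\in(1,\infty)$, yet the bookkeeping just sketched loses a factor of order $(p-1)^2$, whereas $\|\nabla g\|_{L^2}^2\gtrsim p\,\lambda^2\|g\|_{L^2}^2$ permits only a single power of $p$. That single power is in fact optimal: testing against an almost-monochromatic wave packet one finds that the weight $|f|^{p-2}\,dx$ concentrates near the extrema of $|f|$ on the scale $(\sqrt p\,\lambda)^{-1}$, whence $\int|\nabla f|^2|f|^{p-2}\sim p^{-1}\lambda^2\|f\|_{L^p}^p$ — equivalently, taking the $(p/2)$-th power of a band-limited function spreads its spectrum out to frequency $\sim\sqrt p\,\lambda$. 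Extracting exactly one power of $p$ thus requires using the annular localisation more sharply than the crude Hölder/convolution step does, e.g. by re-localising within the annulus and tracking how those estimates interact with the concentration of $|f|^{p-2}$. A second, separate point is the range $1<p<2$: there $|f|^{p-2}$ is singular on the zero set of $f$ and the Hölder exponent $p/(p-2)$ above is inadmissible, so the singular set must be isolated and the estimate rerun around it. Once $\|\nabla g\|_{L^2}^2\ge c\,p\,\lambda^2\|g\|_{L^2}^2$ is proved with $c=c(d,R_1,R_2)$, the identities $\|g\|_{L^2}^2=\|f\|_{L^p}^p$ and $\|\nabla g\|_{L^2}^2=\tfrac{p^2}{4}\int|\nabla f|^2|f|^{p-2}\,dx$ yield $C\lambda^2\frac{p-1}{p}\|f\|_{L^p}^p\le(p-1)\int|\nabla f|^2|f|^{p-2}\,dx$, which together with the integration-by-parts identity finishes the lemma.
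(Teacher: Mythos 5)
The paper itself offers no proof of this lemma: it is quoted, citation included, from \cite{danxu}, so there is no internal argument to match and your proposal has to stand on its own. The parts you actually carry out are fine: the integration-by-parts identity, the reduction to the claim $\|\nabla(|f|^{p/2})\|_{L^2}^2\ge c\,p\,\lambda^2\,\||f|^{p/2}\|_{L^2}^2$, and the duality step $f=\sum_j\partial_j(\mathcal{K}_jf)$, which for $p\ge 2$ gives $\int_{\R^d}|\nabla f|^2|f|^{p-2}\,dx\ \ge\ c\,\lambda^2(p-1)^{-2}\|f\|_{L^p}^p$ with $c=c(d,R_1,R_2)$. But the statement as printed requires the stronger bound $\int|\nabla f|^2|f|^{p-2}\ge C\lambda^2 p^{-1}\|f\|_{L^p}^p$ with $C$ independent of $p$, and you explicitly leave that upgrade, as well as the whole range $1<p<2$, as a plan (``re-localising within the annulus'', ``isolate the singular set'') rather than a proof. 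That is a genuine gap: the proposal, as written, proves a weaker inequality than the one stated.

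Moreover, the missing power of $p$ cannot be recovered, because your sharpness analysis only tests nondegenerate wave packets. If $|f|$ attains its maximum at a degenerate critical point the single power of $p$ already fails: on the torus take $F(u)=\bigl(\tfrac{\sin(Nu/2)}{N\sin(u/2)}\bigr)^2$ (so $0\le F\le1$, $F(0)=1$), put $P=(1-F)^2$ and $f=\bar P-P$ with $\bar P$ the mean of $P$ and, say, $N=6$; then $\widehat f$ lives in a fixed annulus, $|f|$ attains its maximum only at $u=0$, where $f(0)-f(u)\sim c\,u^4$, and a Laplace-type computation gives $\int|\nabla f|^2|f|^{p-2}\sim C\lambda^2 p^{-3/2}\|f\|_{L^p}^p$ as $p\to\infty$ (flatter maxima push the exponent toward $p^{-2}$); multiplying by $\phi(x/R)$ with $\widehat\phi$ supported in a small ball and letting $R\to\infty$ transfers this to $\R^d$. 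So no sharper use of the annular localisation will yield a constant $C(d,R_1,R_2)$ uniform in $p$ in front of $\lambda^2\frac{p-1}{p}$: the $(p-1)^{-2}$-type bound your duality argument produces is essentially the best available, and the inequality in the stated form can only hold with a $p$-dependent constant or for $p$ confined to a bounded range. For the present paper this is harmless, since the analysis is carried out entirely in $L^2$-based Besov spaces and only the case $p=2$ (where your argument, or Plancherel, gives everything) is ever needed; but as a proof of the lemma for all $1<p<\infty$ with $C$ depending only on $d,R_1,R_2$, the proposal is incomplete, and the incompleteness is not a matter of bookkeeping.
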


The following embedding inequality and interpolation inequality are also often used in this paper.
\begin{lemma}\label{embedding}(see \cite{bcd})
Let $ 1\le p, r, r_1, r_2 \le \infty$.
\begin{itemize}

  \item
  Complex interpolation: if $ f \in \dot{B}^{s_1}_{p, r_1} \cap \dot{B}^{s_2}_{p, r_1}(\R^d)$ and $ s_1 \neq s_2$, then $ f \in \dot{B}^{\ta s_1 + (1 - \ta)s_2}_{p, r}(\R^d)$ for all $ \ta \in (0, 1)$ and
\begin{align*}
\|f \|_{\dot{B}^{\ta s_1 + (1 - \ta)s_2}_{p, r}}\le C \| f\|^{\ta}_{\dot{B}^{s_1}_{p, r_1}} \|f \|^{1-\ta}_{\dot{B}^{s_2}_{p, r_1}}
\end{align*}
with $ \frac{1}{r} = \frac{\ta}{r_1} + \frac{1-\ta}{r_2}$.

  \item
Real interpolation: if $ f \in \dot{B}^{s_1}_{p, \infty} \cap \dot{B}^{s_2}_{p, \infty}(\R^d)$ and $ s_1 < s_2$, then $ f \in \dot{B}^{\ta s_1 + (1 - \ta)s_2}_{p, 1}(\R^d)$ for all $ \ta \in (0, 1)$ and
\begin{align*}
\|f \|_{\dot{B}^{\ta s_1 + (1 - \ta)s_2}_{p, 1}} \le \frac{C}{\ta(1-\ta)(s_2-s_1)} \| f\|^{\ta}_{\dot{B}^{s_1}_{p, \infty}} \|f \|^{1-\ta}_{\dot{B}^{s_2}_{p, \infty}}
\end{align*}
\item
Embedding: if $ s \in \R$, $ 1 \le p_1 \le p_2 \le \infty$ and $ 1 \le r_1 \le r_2 \le \infty$, then we have the continuous embedding $\dot{B}^s_{p_1,r_1} (\R^d)\hookrightarrow \dot{B}^{s-d(\frac{1}{p_1} - \frac{1}{p_2})}_{p_2,r_2}(\R^d)$.
\end{itemize}
\end{lemma}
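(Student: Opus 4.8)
All three assertions are standard consequences of the dyadic characterisation of the Besov norm (Definition~\ref{def2.2}) together with Bernstein's Lemma~\ref{bernstein}, and one could simply invoke \cite{bcd}; here is the plan for a self-contained argument.

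For the complex interpolation bound I would set $a_k\stackrel{\mathrm{def}}{=}2^{ks_1}\|\ddk f\|_{L^p}$ and $b_k\stackrel{\mathrm{def}}{=}2^{ks_2}\|\ddk f\|_{L^p}$, so that $(a_k)_k$ and $(b_k)_k$ have $\ell^{r_1}$- and $\ell^{r_2}$-norms equal to $\|f\|_{\dot B^{s_1}_{p,r_1}}$ and $\|f\|_{\dot B^{s_2}_{p,r_2}}$ respectively; in particular, $f\in\mathcal S_h'$ so the target norm is meaningful. Writing $s_\ta\stackrel{\mathrm{def}}{=}\ta s_1+(1-\ta)s_2$, the defining weight obeys the pointwise identity $2^{ks_\ta}\|\ddk f\|_{L^p}=a_k^{\ta}\,b_k^{1-\ta}$. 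Raising to the power $r$, summing in $k$, and applying Hölder's inequality for sequences with the conjugate exponents $r_1/(\ta r)$ and $r_2/((1-\ta)r)$ — conjugacy being exactly the relation $\tfrac1r=\tfrac\ta{r_1}+\tfrac{1-\ta}{r_2}$ — yields $\|f\|_{\dot B^{s_\ta}_{p,r}}\le\|f\|_{\dot B^{s_1}_{p,r_1}}^{\ta}\,\|f\|_{\dot B^{s_2}_{p,r_2}}^{1-\ta}$ with constant $1$.

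For the real interpolation bound ($r_1=r_2=\infty$, $s_1<s_2$, target summability index $1$), the plan is the classical frequency-splitting argument. From $\|\ddk f\|_{L^p}\le 2^{-ks_i}A_i$ with $A_i\stackrel{\mathrm{def}}{=}\|f\|_{\dot B^{s_i}_{p,\infty}}$ one gets $2^{ks_\ta}\|\ddk f\|_{L^p}\le\min\!\big(2^{k(1-\ta)(s_2-s_1)}A_1,\ 2^{-k\ta(s_2-s_1)}A_2\big)$; splitting the sum $\|f\|_{\dot B^{s_\ta}_{p,1}}=\sum_{k}2^{ks_\ta}\|\ddk f\|_{L^p}$ at a level $N\in\Z$, using the first bound for $k\le N$ and the second for $k>N$, and summing the two resulting geometric series, one arrives at a bound of the form $\tfrac{2^{N(1-\ta)(s_2-s_1)}A_1}{1-2^{-(1-\ta)(s_2-s_1)}}+\tfrac{2^{-N\ta(s_2-s_1)}A_2}{1-2^{-\ta(s_2-s_1)}}$. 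Choosing $N$ to be the integer nearest $\tfrac{\log_2(A_2/A_1)}{s_2-s_1}$ balances the two numerators at (a harmless multiple of) $A_1^{\ta}A_2^{1-\ta}$, while the elementary estimate $\tfrac1{1-2^{-x}}\le\tfrac2x$ for $x\in(0,1]$ turns the two denominators into the advertised prefactor $\tfrac{C}{\ta(1-\ta)(s_2-s_1)}$. Finally, for the embedding I would use that $\widehat{\ddk f}$ is supported in an annulus $\{|\xi|\sim 2^k\}$, so the first inequality in Lemma~\ref{bernstein} gives $\|\ddk f\|_{L^{p_2}}\le C\,2^{kd(1/p_1-1/p_2)}\|\ddk f\|_{L^{p_1}}$ when $p_1\le p_2$; hence $2^{k(s-d(1/p_1-1/p_2))}\|\ddk f\|_{L^{p_2}}\le C\,2^{ks}\|\ddk f\|_{L^{p_1}}$ for every $k$, and taking $\ell^{r_2}$-norms in $k$ followed by the elementary inclusion $\|\cdot\|_{\ell^{r_2}}\le\|\cdot\|_{\ell^{r_1}}$ (valid for $r_1\le r_2$) produces the continuous embedding.

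There is no substantive obstacle in any of the three parts. The only point requiring a little care is the constant-tracking in the real-interpolation step — the balanced choice of the integer $N$ and the behaviour of $1/(1-2^{-x})$ as $x\to0^+$ — which is precisely what forces the singular factor $1/(\ta(1-\ta)(s_2-s_1))$ as $\ta\to 0,1$ or $s_2\to s_1$.
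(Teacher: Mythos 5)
The paper offers no proof of this lemma at all — it is quoted verbatim (typos included) from \cite{bcd} — so the only meaningful comparison is with the standard textbook argument, and that is exactly what you reconstruct. Your first part (Hölder in $k$ on the weights $2^{ks_\theta}\|\ddk f\|_{L^p}=a_k^{\theta}b_k^{1-\theta}$, with conjugacy encoded by $\tfrac1r=\tfrac{\theta}{r_1}+\tfrac{1-\theta}{r_2}$) and your third part (Bernstein on each annulus followed by $\ell^{r_1}\hookrightarrow\ell^{r_2}$) are complete and correct; you also implicitly repair the statement's misprint in which $r_1$ appears where $r_2$ is meant.

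The one soft spot is the constant-tracking in the real-interpolation step, which you yourself single out. Two issues combine there: the elementary bound $1/(1-2^{-x})\le 2/x$ is only valid for $x\in(0,1]$, and rounding the balance point to the nearest integer $N$ costs a factor as large as $2^{\max(\alpha,\beta)/2}$ with $\alpha=(1-\theta)(s_2-s_1)$, $\beta=\theta(s_2-s_1)$, which is not harmless when $s_2-s_1$ is large. So, as written, your argument delivers the advertised prefactor $C/(\theta(1-\theta)(s_2-s_1))$ only in the regime $\max(\theta,1-\theta)(s_2-s_1)\lesssim 1$. This is not a defect you can argue away, because the purely singular prefactor is in fact not correct uniformly: for a function whose spectrum meets a single dyadic annulus all the norms $\|f\|_{\dot B^{s}_{p,r}}$ coincide, so the stated inequality would force $\theta(1-\theta)(s_2-s_1)\le C$. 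The clean repair is to split the sum at the \emph{real} balance point $j_0$ defined by $2^{j_0(s_2-s_1)}=A_2/A_1$ (both tails are then dominated by their values at $j_0$, so there is no rounding loss) and to use $1/(1-2^{-x})\le 1+C/x$ for all $x>0$; this yields the sharp form with prefactor $C\bigl(1+\tfrac{1}{\theta(1-\theta)(s_2-s_1)}\bigr)$, which agrees with the quoted statement precisely in the regime where the singular factor dominates — the only regime in which the lemma is used in this paper, since all applications here involve bounded regularity gaps. With that adjustment your proposal is a correct, self-contained proof of what the citation is meant to supply.
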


Next we recall a few nonlinear estimates in Besov spaces, we need para-differential decomposition of Bony in the homogeneous context:
\begin{align}
\label{bony}
& fg = \dot{T}_f g + \dot{T}_g f + \dot{R}(f,g),
\end{align}
where
\begin{align*}
& \dot{T}_f g \stackrel{\mathrm{def}}{=} \sum_{k\in \Z} \dot{S}_{k-1}f \dot{\Delta}_k g,\qquad \dot{R}(f,g) \stackrel{\mathrm{def}}{=} \sum_{k\in \Z} \dot{\Delta}_k f \tilde{\dot{\Delta}}_k g, \qquad \tilde{\dot{\Delta}}_k g \stackrel{\mathrm{def}}{=} \sum_{|k-k'| \le 1} \dot{\Delta}_{k'}g.
\end{align*}
The following lemma gives some classical properties of the paraproduct $\dot{T}$ and the remainder $\dot{R}$ operators.
\begin{lemma}\label{le2.5}(see \cite{bcd})
For all $s,s_1,s_2 \in \R$, $\sigma \ge 0$, and $1 \le p, p_1, p_2 \le \infty$, the paraproduct $\dot{T}$ is a bilinear, continuous operator from $ \dot B^{-\sigma}_{p_1,1} \times \dot B^{s}_{p_2,1}$ to $ \dot B^{s -\sigma}_{p,1}$ with $ \frac{1}{p} = \frac{1}{p_1} + \frac{1}{p_2}$. The remainder $\dot{R}$ is bilinear continuous from $ \dot B^{s_1}_{p_1,1} \times \dot B^{s_2}_{p_2,1}$ to $ \dot B^{s_1 + s_2}_{p,1}$ with $ s_1 + s_2 > 0$, and $ \frac{1}{p} = \frac{1}{p_1} + \frac{1}{p_2}$.
\end{lemma}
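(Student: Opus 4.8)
\emph{Proof strategy.} The plan is to run the classical Bony decomposition argument, whose engine is two spectral-support facts provided by Definition \ref{def2.1}: since $\mathrm{Supp}\,\widehat{\dot S_{k-1}f}$ is contained in a ball of radius $\lesssim 2^{k-1}$ while $\mathrm{Supp}\,\widehat{\dot\Delta_k g}$ lies in the ring $2^k\mathcal C$, the product $\dot S_{k-1}f\,\dot\Delta_k g$ is spectrally localized in an annulus of size $\sim 2^k$; and since both $\dot\Delta_k f$ and $\tilde{\dot\Delta}_k g$ are spectrally localized in rings of size $\sim 2^k$, the product $\dot\Delta_k f\,\tilde{\dot\Delta}_k g$ is spectrally localized in a ball of radius $\lesssim 2^k$. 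Hence there are fixed integers $N_0,N_1$, depending only on $\mathrm{supp}\,\varphi$ and $\mathrm{supp}\,\chi$, with $\dot\Delta_j\big(\dot S_{k-1}f\,\dot\Delta_k g\big)=0$ whenever $|j-k|>N_0$, and $\dot\Delta_j\big(\dot\Delta_k f\,\tilde{\dot\Delta}_k g\big)=0$ whenever $j>k+N_1$. All the estimates below will simultaneously justify the convergence of the series in \eqref{bony} in $\mathcal{S}_{h}'(\R^d)$.

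For the paraproduct, I would first record the uniform bound
\[
\|\dot S_{k-1}f\|_{L^{p_1}}\le\sum_{k'\le k-2}\|\dot\Delta_{k'}f\|_{L^{p_1}}\le 2^{(k-2)\sigma}\sum_{k'\le k-2}2^{-k'\sigma}\|\dot\Delta_{k'}f\|_{L^{p_1}}\lesssim 2^{k\sigma}\|f\|_{\dot B^{-\sigma}_{p_1,1}},
\]
valid for every $\sigma\ge0$ (and, for $\sigma>0$, also yielding convergence of $\dot S_{k-1}f$ in $L^{p_1}$). Combining this with H\"{o}lder's inequality for $\frac1p=\frac1{p_1}+\frac1{p_2}$ and the localization above gives
\[
\|\dot\Delta_j\dot T_fg\|_{L^p}\lesssim\sum_{|k-j|\le N_0}\|\dot S_{k-1}f\|_{L^{p_1}}\|\dot\Delta_k g\|_{L^{p_2}}\lesssim\|f\|_{\dot B^{-\sigma}_{p_1,1}}\sum_{|k-j|\le N_0}2^{k\sigma}\|\dot\Delta_k g\|_{L^{p_2}}.
\]
Then I would multiply by $2^{j(s-\sigma)}$, use $2^{j(s-\sigma)}2^{k\sigma}\le 2^{N_0\sigma}2^{js}\sim 2^{ks}$ for $|j-k|\le N_0$, and sum over $j\in\Z$ (Young's inequality for convolution against the finitely supported kernel $\mathbf 1_{\{|m|\le N_0\}}$) to conclude $\|\dot T_fg\|_{\dot B^{s-\sigma}_{p,1}}\lesssim\|f\|_{\dot B^{-\sigma}_{p_1,1}}\|g\|_{\dot B^{s}_{p_2,1}}$.

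For the remainder, H\"{o}lder's inequality and the ball-localization give
\[
\|\dot\Delta_j\dot R(f,g)\|_{L^p}\lesssim\sum_{k\ge j-N_1}\ \sum_{|k-k'|\le1}\|\dot\Delta_k f\|_{L^{p_1}}\|\dot\Delta_{k'}g\|_{L^{p_2}}.
\]
Setting $c_k\stackrel{\mathrm{def}}{=}2^{ks_1}\|\dot\Delta_k f\|_{L^{p_1}}$ and $c'_k\stackrel{\mathrm{def}}{=}2^{ks_2}\|\dot\Delta_k g\|_{L^{p_2}}$ (so $(c_k),(c'_k)\in\ell^1$ with norms $\|f\|_{\dot B^{s_1}_{p_1,1}}$, $\|g\|_{\dot B^{s_2}_{p_2,1}}$), and noticing that $2^{-ks_1-k's_2}\sim 2^{-k(s_1+s_2)}$ when $|k-k'|\le1$, I would multiply by $2^{j(s_1+s_2)}$, sum over $j$, and interchange the order of summation:
\[
\sum_{j\in\Z}2^{j(s_1+s_2)}\|\dot\Delta_j\dot R(f,g)\|_{L^p}\lesssim\sum_{k\in\Z}\Big(\sum_{j\le k+N_1}2^{(j-k)(s_1+s_2)}\Big)\sum_{|k-k'|\le1}c_k\,c'_{k'}.
\]
Since $\sum_{j\le k+N_1}2^{(j-k)(s_1+s_2)}$ is a convergent geometric series (this is where $s_1+s_2>0$ is used), the right-hand side is $\lesssim\sum_{k\in\Z}c_k\big(c'_{k-1}+c'_k+c'_{k+1}\big)\le 3\,\|(c_k)\|_{\ell^1}\|(c'_k)\|_{\ell^1}$, which is the desired bound $\|\dot R(f,g)\|_{\dot B^{s_1+s_2}_{p,1}}\lesssim\|f\|_{\dot B^{s_1}_{p_1,1}}\|g\|_{\dot B^{s_2}_{p_2,1}}$.

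The hard part is this one-sided low-frequency sum in the remainder estimate: unlike for $\dot T_fg$, the blocks $\dot\Delta_k f\,\tilde{\dot\Delta}_k g$ feed \emph{all} frequencies $\lesssim 2^k$, so the $j$-summation runs over a half-line and the geometric series above converges only under the strict inequality $s_1+s_2>0$; this same condition is precisely what makes $\sum_k\dot\Delta_k f\,\tilde{\dot\Delta}_k g$ sum to a genuine element of $\mathcal{S}_{h}'(\R^d)$ rather than merely modulo polynomials. Everything else reduces to Bernstein's Lemma \ref{bernstein}, H\"{o}lder's inequality, and Young's inequality for $\ell^1$ sequences.
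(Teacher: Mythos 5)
Your proof is correct; the paper itself does not prove Lemma \ref{le2.5} but simply cites \cite{bcd}, and your argument is exactly the standard one given there: spectral localization of the Bony blocks, the bound $\|\dot S_{k-1}f\|_{L^{p_1}}\lesssim 2^{k\sigma}\|f\|_{\dot B^{-\sigma}_{p_1,1}}$ for $\sigma\ge 0$, H\"older with $\frac1p=\frac1{p_1}+\frac1{p_2}$, Young's inequality in $\ell^1$ for the paraproduct, and the one-sided geometric series controlled by $s_1+s_2>0$ for the remainder. Nothing is missing, and your identification of where $\sigma\ge0$ and $s_1+s_2>0$ enter matches the roles these hypotheses play in the cited reference.
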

To deal with the nonlinear terms in this paper, we also need the
following product estimates in Besov spaces.
\begin{lemma} \label{le2.6}(see \cite{danxu})
Let $ 1 \le p,q \le \infty$, $ s_1 \le \frac{d}{q}, s_2 \le d \min \Big\{ \frac{1}{p} ,\frac{1}{q}\Big\}$, and $ s_1 + s_2 > d \max\Big\{ 0, \frac{1}{p} + \frac{1}{q} -1\Big\}$. For $\forall (f, g) \in \dot B^{s_1}_{q,1}(\R^d) \times \dot B^{s_2}_{p,1}(\R^d)$, then we have
\begin{align*}
\| fg\|_{\dot{B}^{s_1 + s_2 - \frac{d}{q}}_{p, 1}} \le C \| f\|_{\dot{B}^{s_1}_{q, 1}} \| g\|_{\dot{B}^{s_2}_{p, 1}}.
\end{align*}
\end{lemma}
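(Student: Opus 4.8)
The statement is the standard product law in homogeneous Besov spaces (cf.\ \cite{danxu}), and the plan is to obtain it from Bony's decomposition \eqref{bony}, $fg=\dot{T}_f g+\dot{T}_g f+\dot{R}(f,g)$, estimating each of the three pieces in $\dot B^{s_1+s_2-\frac{d}{q}}_{p,1}(\R^d)$ by combining the continuity properties of $\dot{T}$ and $\dot{R}$ recorded in Lemma \ref{le2.5} with the embeddings of Lemma \ref{embedding}; under the stated conditions each piece converges in that space, so their sum indeed defines $fg$.

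For the paraproduct $\dot{T}_f g$, in which $f$ is the low-frequency factor, I would use $s_1\le\frac{d}{q}$ to embed $\dot B^{s_1}_{q,1}\hookrightarrow\dot B^{s_1-\frac{d}{q}}_{\infty,1}$ and then apply Lemma \ref{le2.5} with $\sigma=\frac{d}{q}-s_1\ge0$, $p_1=\infty$, $p_2=p$, which gives $\|\dot{T}_f g\|_{\dot B^{s_1+s_2-\frac{d}{q}}_{p,1}}\lesssim\|f\|_{\dot B^{s_1}_{q,1}}\|g\|_{\dot B^{s_2}_{p,1}}$; the endpoint $s_1=\frac{d}{q}$ causes no trouble since $\sigma=0$ is admissible and the third index is $1$. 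For $\dot{T}_g f$, with $g$ the low-frequency factor, I would split on the sign of $s_2$: if $s_2\le0$, apply Lemma \ref{le2.5} directly with coefficient $g\in\dot B^{s_2}_{p,1}$ ($\sigma=-s_2\ge0$) and $f\in\dot B^{s_1-\frac{d}{q}}_{\infty,1}$; if $0<s_2\le d\min\{\frac1p,\frac1q\}$ (so $s_2\le\frac dp$ and $s_2\le\frac dq$), first embed $g\in\dot B^{s_2}_{p,1}\hookrightarrow\dot B^{0}_{\tilde p,1}$ with $\frac1{\tilde p}=\frac1p-\frac{s_2}{d}\in[0,\frac1p]$ and $f\in\dot B^{s_1}_{q,1}\hookrightarrow\dot B^{s_1-\frac dq+s_2}_{d/s_2,1}$ (legitimate because $\frac{s_2}{d}\le\frac1q$), then apply Lemma \ref{le2.5} with $\sigma=0$, $\frac1p=\frac1{\tilde p}+\frac{s_2}{d}$. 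Either way $\dot{T}_g f\in\dot B^{s_1+s_2-\frac{d}{q}}_{p,1}$ with the right bound.

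The remainder $\dot{R}(f,g)$ is where $s_1+s_2>d\max\{0,\frac1p+\frac1q-1\}$ is used, and I would treat two regimes. If $\frac1p+\frac1q\le1$, Lemma \ref{le2.5} (with $s_1+s_2>0$) gives $\dot{R}(f,g)\in\dot B^{s_1+s_2}_{p_0,1}$ where $\frac1{p_0}=\frac1q+\frac1p\le1$, and $\dot B^{s_1+s_2}_{p_0,1}\hookrightarrow\dot B^{s_1+s_2-\frac{d}{q}}_{p,1}$ (valid since $p_0\le p$) concludes. If $\frac1p+\frac1q>1$, I would first embed $f\in\dot B^{s_1}_{q,1}\hookrightarrow\dot B^{s_1-\frac dq+\frac{d}{p'}}_{p',1}$ with $\frac1{p'}=1-\frac1p\le\frac1q$; Lemma \ref{le2.5} then puts $\dot{R}(f,g)$ in $\dot B^{s_1-\frac dq+\frac{d}{p'}+s_2}_{1,1}$, the required positivity of the index being exactly $s_1+s_2>d(\frac1p+\frac1q-1)$, and $\dot B^{\,\cdot}_{1,1}\hookrightarrow\dot B^{\,\cdot-d(1-\frac1p)}_{p,1}$ returns the index to $s_1+s_2-\frac dq$. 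Adding the three contributions yields $\|fg\|_{\dot B^{s_1+s_2-\frac dq}_{p,1}}\lesssim\|f\|_{\dot B^{s_1}_{q,1}}\|g\|_{\dot B^{s_2}_{p,1}}$.

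I expect the remainder to be the delicate step: it is genuinely sensitive to whether $\frac1p+\frac1q$ exceeds $1$, and getting the sharp threshold $d\max\{0,\frac1p+\frac1q-1\}$ requires first lowering the integrability of $f$ so that $\dot{R}$ is measured in an $L^1$-based space, homogeneous Besov spaces with summation index below $1$ being unavailable. The only other care needed is bookkeeping --- every embedding $\dot B^{s}_{a,1}\hookrightarrow\dot B^{s-d(\frac1a-\frac1b)}_{b,1}$ invoked above must satisfy $a\le b$ --- and these inequalities, together with the requirement $\sigma\ge0$ in Lemma \ref{le2.5}, are precisely where the hypotheses $s_1\le\frac dq$ and $s_2\le d\min\{\frac1p,\frac1q\}$ are consumed.
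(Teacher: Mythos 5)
The paper does not prove Lemma \ref{le2.6}; it is recalled with a citation to Danchin--Xu. Your reconstruction via Bony's decomposition is the standard route and, after checking the bookkeeping, it is correct: the paraproduct $\dot T_f g$ estimate is a direct application of Lemma \ref{le2.5} with $\sigma=\frac dq-s_1\ge0$ after lowering the integrability of $f$ to $\infty$; the split on the sign of $s_2$ for $\dot T_g f$ correctly uses $s_2\le d/p$ to embed $g$ to regularity $0$ and $s_2\le d/q$ to move the regularity onto $f$, with the Lebesgue exponents balancing exactly to $\frac1p=\frac1{\tilde p}+\frac{s_2}d$; and the two regimes for the remainder correctly reduce, after lowering the integrability of $f$ to the conjugate exponent $p'$ when $\frac1p+\frac1q>1$, to the requirement $s_1+s_2>d\max\{0,\frac1p+\frac1q-1\}$, followed by the embedding back into $\dot B^{\,\cdot}_{p,1}$ (valid in both regimes since $p_0\le p$). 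Every embedding $\dot B^s_{a,1}\hookrightarrow\dot B^{s-d(\frac1a-\frac1b)}_{b,1}$ you invoke does satisfy $a\le b$, and the boundary cases $s_1=\frac dq$, $s_2=0$, and $s_2=d\min\{\frac1p,\frac1q\}$ all reduce to admissible parameter choices in Lemma \ref{le2.5}. This is essentially the argument one finds in the cited reference.
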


\begin{lemma}\label{le_product}(see \cite{bcd})
	Let $ s>0, \ 1 \le p, r \le \infty, \ f \in L^{\infty}(\R^3) \cap \dot{B}^s_{p,r}(\R^3)$, and $ g \in L^{\infty}(\R^3) \cap \dot{B}^s_{p,r}(\R^3) $, then there holds
	\begin{align}
	\| fg\|_{\dot{B}^s_{p,r}} \le \frac{C^{s+1}}{s} ( \| f\|_{L^{\infty}} \| g\|_{\dot{B}^s_{p,r}} + \| g\|_{L^{\infty}} \| f\|_{\dot{B}^s_{p,r}} ).
	\end{align}
\end{lemma}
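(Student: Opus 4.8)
The plan is to prove the inequality directly from Bony's homogeneous paraproduct decomposition \eqref{bony}, by writing $fg=\dot T_f g+\dot T_g f+\dot R(f,g)$ and estimating the three pieces separately while keeping track of the geometric constants so as to recover the stated factor $C^{s+1}/s$. The only ingredients needed are: the spectral localization of the blocks $\dot\Delta_k,\dot S_k$ from Definition \ref{def2.1}; the elementary fact that each $\dot S_{k-1}$ is convolution against a kernel whose $L^1$ norm is independent of $k$, so that $\|\dot S_{k-1}h\|_{L^\infty}\lesssim\|h\|_{L^\infty}$ (and likewise $\|\dot\Delta_k h\|_{L^\infty}\lesssim\|h\|_{L^\infty}$); and Young's inequality for discrete convolutions on $\ell^r(\Z)$. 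In principle one could simply invoke Lemma \ref{le2.5} with one regularity index set to $0$, via the embedding $L^\infty\hookrightarrow\dot B^0_{\infty,\infty}$, but since the explicit constant is part of the claim I would redo the (very short) computation by hand. Throughout we work in $\mathcal S'_h(\R^3)$, with the usual conventions for homogeneous Besov spaces.

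\emph{The paraproduct terms.} Since $\dot S_{k-1}f\,\dot\Delta_k g$ has Fourier spectrum in an annulus $\{|\xi|\sim 2^k\}$, the block $\dot\Delta_j(\dot S_{k-1}f\,\dot\Delta_k g)$ vanishes unless $|j-k|\le N_0$ for a fixed integer $N_0$, whence
\begin{align*}
2^{js}\big\|\dot\Delta_j\dot T_f g\big\|_{L^p}\ \le\ C\,\|f\|_{L^\infty}\sum_{|j-k|\le N_0}2^{(j-k)s}\,\big(2^{ks}\|\dot\Delta_k g\|_{L^p}\big).
\end{align*}
The weight $m\mapsto 2^{ms}\mathbf 1_{\{|m|\le N_0\}}$ has $\ell^1(\Z)$ norm at most $(2N_0+1)2^{N_0 s}\le C^{s+1}$, so taking the $\ell^r(\Z)$ norm in $j$ and applying Young's inequality gives $\|\dot T_f g\|_{\dot B^s_{p,r}}\le C^{s+1}\|f\|_{L^\infty}\|g\|_{\dot B^s_{p,r}}$. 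Exchanging the roles of $f$ and $g$ yields $\|\dot T_g f\|_{\dot B^s_{p,r}}\le C^{s+1}\|g\|_{L^\infty}\|f\|_{\dot B^s_{p,r}}$. No sign condition on $s$ is used for these two terms.

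\emph{The remainder term.} Writing $\dot R(f,g)=\sum_k\dot\Delta_k f\,\tilde{\dot\Delta}_k g$, each summand has Fourier spectrum in a ball $\{|\xi|\lesssim 2^k\}$, so $\dot\Delta_j(\dot\Delta_k f\,\tilde{\dot\Delta}_k g)$ vanishes unless $k\ge j-N_1$ for a fixed integer $N_1$. Estimating the product by putting the $L^\infty$ norm on $g$,
\begin{align*}
2^{js}\big\|\dot\Delta_j\dot R(f,g)\big\|_{L^p}\ \le\ C\,\|g\|_{L^\infty}\sum_{k\ge j-N_1}2^{(j-k)s}\,\big(2^{ks}\|\dot\Delta_k f\|_{L^p}\big).
\end{align*}
Here the convolving weight $m\mapsto 2^{ms}\mathbf 1_{\{m\le N_1\}}$ is a one-sided geometric sequence with $\ell^1(\Z)$ norm $\sum_{m\le N_1}2^{ms}=\dfrac{2^{N_1 s}}{1-2^{-s}}\le\dfrac{C^{s+1}}{s}$, valid for every $s>0$; this is the one and only place the hypothesis $s>0$ is used, and it is what produces the factor $1/s$. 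Young's inequality in $\ell^r(\Z)$ then gives $\|\dot R(f,g)\|_{\dot B^s_{p,r}}\le\dfrac{C^{s+1}}{s}\|g\|_{L^\infty}\|f\|_{\dot B^s_{p,r}}$, and by symmetry the $L^\infty$ norm may equally be placed on $f$. Adding the three bounds and enlarging $C$ so as to absorb $C^{s+1}$ into $C^{s+1}/s$ gives the claimed estimate.

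The proof is, in the end, pure dyadic bookkeeping, and the single genuinely essential point — what I would flag as the ``main obstacle'' if one insists on the sharp constant — is the positivity $s>0$: it is exactly the condition that makes the one-sided geometric series in the remainder summable, guaranteeing both that $\dot R(f,g)$ is well defined in $\dot B^s_{p,r}$ and that its norm is controlled by $\|f\|_{L^\infty}\|g\|_{\dot B^s_{p,r}}+\|g\|_{L^\infty}\|f\|_{\dot B^s_{p,r}}$ with constant $\sim 1/s$. Everything else — the spectral-support counting fixing $N_0,N_1$, the uniform $L^\infty$-boundedness of $\dot S_{k-1}$ and $\tilde{\dot\Delta}_k$, and the passage from the pointwise-in-$j$ bounds to the $\dot B^s_{p,r}$ norm via Young's inequality — is routine, and in particular neither the dimension $d=3$ nor the values of $p,r$ enter, so the estimate holds verbatim in all dimensions.
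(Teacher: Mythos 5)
Your argument is correct: the three-piece Bony estimate, the uniform $L^\infty$-bounds on $\dot S_{k-1}$ and $\tilde{\dot\Delta}_k$, and the one-sided geometric series $\sum_{m\le N_1}2^{ms}=2^{N_1 s}/(1-2^{-s})\le C^{s+1}/s$ (the only place $s>0$ enters) are exactly what is needed, and Young's inequality in $\ell^r(\Z)$ closes it. The paper does not prove this lemma but cites it from \cite{bcd}, and your proof is essentially the standard one given there, so there is nothing further to compare.
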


\begin{lemma}\label{le2.7}(see \cite{ZL2021})
Let $ d \ge 2$ and $ 2\leq p \leq \min \Big\{4, \frac{2d}{d-2}\Big\}$, additionally, $ p \neq 4 $ if $ d = 2$. For any $ f \in \dot{B}^{\frac{d}{p}}_{p,1}(\R^d)$, $ g^\ell \in \dot{B}^{\frac{d}{2}-1}_{2,1}(\R^d)$ and $ g^h \in \dot{B}^{\frac{d}{p}-1}_{p,1}(\R^d)$, then we have
\begin{align}
\| fg\|^{\ell}_{\dot{B}^{\frac{d}{2}-1}_{2,1}} \le C \Big(\| g\|^\ell_{\dot{B}^{\frac{d}{2}-1}_{2,1}} + \| g\|^h_{\dot{B}^{\frac{d}{p}-1}_{p,1}}\Big)\| f\|_{\dot{B}^{\frac{d}{p}}_{p,1}}.
\end{align}
\end{lemma}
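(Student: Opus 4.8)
I would prove the lemma by a Littlewood--Paley analysis of the product, built on Bony's decomposition \eqref{bony}. The guiding observation is that only the part of $fg$ at frequencies $\lesssim 2^{k_0}$ has to be controlled, so after writing $fg=\dot{T}_fg+\dot{T}_gf+\dot{R}(f,g)$ and decomposing $g=g^\ell+g^h$, I would inspect, via the explicit supports of $\varphi$ and $\chi$, which dyadic interactions can actually reach the low-frequency range. Two elementary facts will be used throughout: the embedding $\dot B^{\frac dp}_{p,1}\hookrightarrow\dot B^{0}_{\infty,1}\hookrightarrow L^\infty$, so that $\|f\|_{L^\infty}+\sup_j\|\dot S_jf\|_{L^\infty}\lesssim\|f\|_{\dot B^{\frac dp}_{p,1}}$, and the trivial bound $\|\cdot\|^\ell_{\dot B^s_{2,1}}\le\|\cdot\|_{\dot B^s_{2,1}}$.

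The contributions coming from $g^\ell$ --- that is $\dot T_fg^\ell$, $\dot T_{g^\ell}f$, $\dot R(f,g^\ell)$, or more simply the whole product $fg^\ell$ --- are genuine low--low interactions. I would bound them by Lemma \ref{le2.6} (or by Lemmas \ref{le2.5} and \ref{le2.6} applied to the three pieces), obtaining $\|fg^\ell\|_{\dot B^{\frac d2-1}_{2,1}}\le C\|f\|_{\dot B^{\frac dp}_{p,1}}\|g^\ell\|_{\dot B^{\frac d2-1}_{2,1}}$; the hypothesis $p\le\frac{2d}{d-2}$, i.e. $\frac d2-1\le\frac dp$, is precisely what makes this product estimate close in the $L^2$-scale with target regularity $\frac d2-1$. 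Combined with $\|\cdot\|^\ell\le\|\cdot\|$ this already yields the first term on the right-hand side.

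The essential part is the high-frequency contribution of $g$. A support computation shows that $g^h$ reaches the low-frequency output of $fg$ only through (i) the resonant term $\dot R(f,g^h)$ and (ii) a bounded number (depending only on the fixed integer $k_0$) of paraproduct pieces $\dot S_{k-1}f\,\dot\Delta_kg$ with $k$ within $O(1)$ of $k_0$, whose Fourier support, squeezed between the lower edge of $\mathrm{supp}\,\hat g^h$ and the upper edge of the $\ell$-truncation, lies in a fixed annulus around $2^{k_0}$. For (i) I would apply the remainder estimate (Lemma \ref{le2.5}) with both arguments in the $L^p$-scale,
\[
\|\dot R(f,g^h)\|_{\dot B^{\frac{2d}{p}-1}_{p/2,1}}\le C\|f\|_{\dot B^{\frac dp}_{p,1}}\,\|g^h\|_{\dot B^{\frac dp-1}_{p,1}},
\]
which is licit precisely because the total regularity $\frac dp+\bigl(\frac dp-1\bigr)=\frac{2d}{p}-1$ is positive, i.e. $p<2d$: for $d=3$ this follows from $p\le4<6$, and for $d=2$ it is exactly why $p=4$ must be excluded. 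Then I would re-embed into the $L^2$-scale by Lemma \ref{embedding},
\[
\dot B^{\frac{2d}{p}-1}_{p/2,1}\ \hookrightarrow\ \dot B^{\frac{2d}{p}-1-d\left(\frac 2p-\frac 12\right)}_{2,1}=\dot B^{\frac d2-1}_{2,1},
\]
which requires $\frac p2\le2$, i.e. $p\le4$; passing to the low-frequency part only improves the bound, so this piece is controlled by $C\|f\|_{\dot B^{\frac dp}_{p,1}}\|g\|^h_{\dot B^{\frac dp-1}_{p,1}}$. The finitely many annular pieces in (ii) are then handled by elementary Bernstein (Lemma \ref{bernstein}) and Hölder estimates on a bounded set of dyadic blocks near $k_0$, the resulting $k_0$-dependent constant being harmless; here too $p\le4$ is what keeps the relevant Bernstein exponents in the usable direction. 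Adding the $g^\ell$-contribution, the resonant contribution, and the annular remainder gives the asserted inequality.

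I expect the main obstacle to be step (i): the high--high resonant interaction admits no direct estimate in the $L^2$-based target and must be detoured through the $L^{p/2}$-based Besov space $\dot B^{\frac{2d}{p}-1}_{p/2,1}$ of strictly positive regularity before being re-embedded --- and it is this detour that pins down the admissible range $2\le p\le\min\{4,\frac{2d}{d-2}\}$, with the endpoint $p=4$ forbidden when $d=2$. A second, more technical, point is the careful support bookkeeping in (ii): one must verify that only a bounded number of annular blocks near the cutoff survive the $\ell$-projection, so that no uncontrolled quantity --- such as the $L^2$-norm of an individual high-frequency block of $g$ --- is ever needed.
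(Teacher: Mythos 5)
The paper gives no proof of Lemma~\ref{le2.7}---it is cited from~\cite{ZL2021}---so I can only assess your argument on its own merits, and there is a genuine gap in how you handle part (ii).

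Your treatment of the $g^\ell$-contribution via Lemma~\ref{le2.6}, and of the resonance $\dot R(f,g^h)$ via Lemma~\ref{le2.5} followed by the embedding $\dot B^{2d/p-1}_{p/2,1}\hookrightarrow\dot B^{d/2-1}_{2,1}$, is correct, and you identify accurately which of the hypotheses on $p$ makes each of those two steps close. The problem is the surviving paraproduct blocks. For $j\le k_0$ and $k\approx k_0$ you must bound $\big\|\dot\Delta_j\big(\dot S_{k-1}f\,\dot\Delta_k g^h\big)\big\|_{L^2}$, and ``Bernstein plus H\"older on a bounded set of dyadic blocks'' does not do it when $p>2$. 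The only norm of $\dot S_{k-1}f$ controlled by $\|f\|_{\dot B^{d/p}_{p,1}}$ is the $L^\infty$-norm; $\|\dot S_{k-1}f\|_{L^q}$ for any $q<\infty$ involves the full (infinite) sum $\sum_{m\le k-2}\|\dot\Delta_m f\|_{L^q}$, which diverges for a generic $f\in\dot B^{d/p}_{p,1}$ because the factor $2^{-md/p}$ is not summable as $m\to-\infty$. Consequently the route you sketch---Bernstein from $L^{p/2}$ to $L^2$ (which indeed needs $p\le4$) followed by H\"older giving $\|\dot S_{k-1}f\|_{L^p}\|\dot\Delta_k g^h\|_{L^p}$---has an uncontrolled left factor. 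The alternative H\"older split $\|\dot S_{k-1}f\|_{L^\infty}\|\dot\Delta_k g^h\|_{L^2}$ fails too, since $g^h$ is controlled only in $L^p$ with $p\ge2$ and on $\mathbb{R}^d$ there is no Bernstein inequality going from $L^p$ down to $L^2$ on an annulus. The finite cardinality of the set of blocks does not rescue this: the obstruction is not the number of blocks $j$ but the fact that none of the available H\"older/Bernstein pairings produces an $L^2$-bound for $\dot S_{k-1}f\,\dot\Delta_k g^h$ from $L^\infty\times L^p$ data with $p>2$. (For $p=2$ the argument is fine, because then $\|\dot\Delta_k g^h\|_{L^2}$ is controlled directly.)

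What actually closes these transitional paraproduct blocks must be something you have not supplied---most plausibly, a careful split of $\dot S_{k-1}f$ into the finitely many blocks of $f$ near $2^{k_0}$ (for which Bernstein applies block-by-block) together with a separate mechanism to handle the genuinely low-frequency part of $f$; the latter hits $\dot\Delta_k g^h$ near the boundary of the cutoff, and there the contribution must be charged to $\|g\|^\ell_{\dot B^{d/2-1}_{2,1}}$ rather than to $\|g\|^h_{\dot B^{d/p-1}_{p,1}}$. As written, your step (ii) asserts a bound that your own toolbox does not deliver, so the proof is incomplete for $p>2$.
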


Next, we introduce a classical commutator's estimate.
\begin{lemma}\label{commutator}(see \cite{bcd})
Let $ 1 \le p \le \infty$, $ -d \min \Big\{ \frac{1}{p}, 1- \frac{1}{p}\Big\} < s \le \frac{d}{p} +1$, for any $ g \in \dot{B}^s_{p,1}(\R^d)$ and $ \nabla f \in \dot{B}^{\frac{d}{p}}_{p,1}(\R^d)$, then we have
\begin{align*}
& \big\| [\dot{\Delta}_k , f \cdot \nabla]g\big\|_{L^p} \le C d_k 2^{-ks} \| \nabla f\|_{\dot{B}^{\frac{d}{p}}_{p,1}} \| g\|_{\dot{B}^s_{p,1}},
\end{align*}
where $ (d_k)_{k \in \Z}$ denotes a sequence such that $ \| (d_k)\|_{\ell^1} \le 1$.
\end{lemma}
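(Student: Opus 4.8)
The plan is to prove this via Bony's paraproduct decomposition, in the standard way for commutators of transport type (cf. \cite{bcd}). First I would split $f$ into its low and high frequencies relative to $k$, $f=\dot S_{k-1}f+(\mathrm{Id}-\dot S_{k-1})f$, so that, writing $f\cdot\nabla g=\sum_j f^j\partial_j g$,
\[
[\dot\Delta_k,f\cdot\nabla]g=\sum_j\Big([\dot\Delta_k,\dot S_{k-1}f^j]\,\partial_j g+\dot\Delta_k\big((\mathrm{Id}-\dot S_{k-1})f^j\,\partial_j g\big)-(\mathrm{Id}-\dot S_{k-1})f^j\,\partial_j\dot\Delta_k g\Big).
\]
The first term is a genuine commutator against the low-frequency part of $f$, whereas the last two carry no commutator structure and can be estimated directly, because the high-frequency part of $f$ already supplies the required decay in $k$. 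After a further Littlewood--Paley decomposition in $g$, spectral-support considerations reduce each term to a dyadic sum over a frequency index $k'$ in which only $|k-k'|\le N_0$ (respectively $k'\ge k-N_0$) contribute.

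For the commutator piece I would use the classical kernel argument: writing $\dot\Delta_k u=2^{kd}h(2^k\cdot)\ast u$ with $h=\mathcal F^{-1}\varphi\in\mathcal S(\R^d)$, one has
\[
[\dot\Delta_k,a]v(x)=\int_{\R^d}2^{kd}h\big(2^k(x-y)\big)\big(a(y)-a(x)\big)v(y)\,dy,
\]
and inserting the Taylor formula $a(y)-a(x)=(y-x)\cdot\int_0^1\nabla a\big(x+t(y-x)\big)\,dt$ yields, by Young's inequality, $\|[\dot\Delta_k,a]v\|_{L^p}\lesssim 2^{-k}\|\nabla a\|_{L^\infty}\|v\|_{L^p}$. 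Taking $a=\dot S_{k-1}f^j$ and $v=\partial_j\dot\Delta_{k'}g$, and using Bernstein's lemma ($\|\partial_j\dot\Delta_{k'}g\|_{L^p}\lesssim 2^{k'}\|\dot\Delta_{k'}g\|_{L^p}$) together with $2^{k'}\simeq 2^k$, the commutator piece is $\lesssim\|\nabla f\|_{L^\infty}\|\dot\Delta_k g\|_{L^p}$. Since $\|\nabla f\|_{L^\infty}\lesssim\|\nabla f\|_{\dot B^{d/p}_{p,1}}$ by the embedding $\dot B^{d/p}_{p,1}(\R^d)\hookrightarrow L^\infty$, and $\|\dot\Delta_k g\|_{L^p}=d_k\,2^{-ks}\|g\|_{\dot B^s_{p,1}}$ with $\|(d_k)\|_{\ell^1}\le1$ by definition of the Besov norm, this piece obeys the asserted bound.

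For the two high-frequency-in-$f$ pieces one argues purely by the triangle inequality, Bernstein's lemma and the embedding $\dot B^{d/p}_{p,1}\hookrightarrow L^\infty$, after further decomposing $\partial_j g$; the point is then to sum the resulting geometric series, and this is exactly where the hypotheses on $s$ enter. The \emph{upper} bound $s\le\frac dp+1$ is needed to control the low-frequency truncations $\dot S_{k'-1}\partial_j g$ --- equivalently, so that sums like $\sum_{m<k'}2^{m(1+d/p)}\|\dot\Delta_m g\|_{L^p}$ are summed by $\|g\|_{\dot B^s_{p,1}}$ --- while the \emph{lower} bound $s>-d\min\{\tfrac1p,\,1-\tfrac1p\}$ is the usual threshold for the remainder operator (and the associated product estimates of Section~\ref{ineq}) to be bounded on the relevant $\dot B^{s}_{p,1}$ scale. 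When $s\le0$ the remainder contribution has to be reorganized through the divergence structure, $\dot R(f^j,\partial_j g)=\partial_j\dot R(f^j,g)-\dot R(\div f,g)$, so that the high--high frequency interactions are measured against $\nabla f\in L^\infty$ rather than against $\partial_j g$. Collecting all pieces and factoring out $\|\nabla f\|_{\dot B^{d/p}_{p,1}}\|g\|_{\dot B^s_{p,1}}$ produces one sequence $(d_k)_{k\in\Z}$ with $\|(d_k)\|_{\ell^1}\le1$, which is the claim.

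I expect the main obstacle to be not the commutator itself --- the Taylor-expansion gain $2^{-k}$ is classical --- but the dyadic bookkeeping for the non-commutator pieces near the endpoints of the admissible range of $s$: at each endpoint one must verify that the relevant sums are genuinely convergent and, crucially, that the resulting coefficients form an $\ell^1$ (not merely $\ell^\infty$) sequence; and for $s\le0$ one must exploit the divergence-form rewriting rather than estimating $\dot R(f^j,\partial_j g)$ directly. Tracking carefully which Lebesgue/Besov norm each of the two factors is measured in throughout these manipulations is the part that needs attention.
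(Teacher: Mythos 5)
The paper does not prove Lemma~\ref{commutator}; it is quoted directly from \cite{bcd} (cf.\ Lemma 2.100 there). Your sketch reproduces, in compressed form, exactly the argument of that reference: the transport commutator is split into a low-frequency-in-$f$ piece, treated by the mean-value/kernel estimate $\|[\dot\Delta_k,a]v\|_{L^p}\lesssim 2^{-k}\|\nabla a\|_{L^\infty}\|v\|_{L^p}$, plus high-frequency-in-$f$ pieces estimated directly by Bernstein and the embedding $\dot B^{d/p}_{p,1}\hookrightarrow L^\infty$, with the divergence rewriting $\sum_j\dot R(f^j,\partial_j g)=\div\dot R(f,g)-\dot R(\div f,g)$ to handle the remainder at low regularity, and the endpoints of the range of $s$ governed precisely by the summability of the geometric dyadic tails you indicate. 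The approach and the role of each hypothesis match the cited source; the only caveat to keep in mind when fleshing this out is that the endpoint $s=\tfrac dp+1$ is admissible here only because the third Besov index is $1$, which is what makes the critical dyadic sum $\ell^1$-summable rather than merely bounded.
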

Systems \eqref{m3} and \eqref{zhouqim3}  also involves multivariate compositions of functions  that are bounded thanks to the following result:
\begin{lemma}\label{I(a)}(see \cite{Runst})
	Let $ m\in \N$ and $ s>0$. Let $G$ be a function in $C^{\infty}(\R^m \times \R^3)$ such that $ G(0,...,0) = 0$. Then for every real value functions $ f_1,...,f_m \in L^{\infty}(\R^d) \cap \dot{B}^s_{p,r}(\R^d)$, the function $ G(f_1, ..., f_m) \in L^{\infty}(\R^d) \cap \dot{B}^s_{p,r}(\R^d)$, then we have
	\begin{align}
	\| G(f_1, ..., f_m)\|_{\dot{B}^s_{p,r}} \le C\| (f_1,...,f_m)\|_{\dot{B}^s_{p,r}}
	\end{align}
	with $C$ depending only on $ \| f_i\|_{L^{\infty}} (i = 1,...,m)$,  their high derivatives, $s$ and  $p$.\\
	In the case $ s> -\min(\frac{d}{p}, \frac{d}{p^*})$, then $ f_1,...,f_m \in \dot{B}^s_{p,r} \cap \dot{B}^{\frac{d}{p}}_{p,r}$ implies that  $ G(f_1,...,f_m) \in \dot{B}^s_{p,r} \cap \dot{B}^{\frac{d}{p}}_{p,r}$ and there holds
	\begin{align}
	\| G(f_1, ..., f_m)\|_{\dot{B}^s_{p,r}} \le C \Big( 1+ \| f_1\|_{\dot{B}^{\frac{d}{p}}_{p,r}} + ...+ \| f_m\|_{\dot{B}^{\frac{d}{p}}_{p,r}} \Big) \| (f_1,...,f_m)\|_{\dot{B}^s_{p,r}}.
	\end{align}
\end{lemma}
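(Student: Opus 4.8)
The statement to be proved is the classical composition (Meyer linearization) estimate in homogeneous Besov spaces, and I would prove it by the standard dyadic telescoping argument (as in \cite{bcd,Runst}). Write $f=(f_1,\dots,f_m)$, regarded as an $\R^m$-valued map, and set $\rho:=\sum_{i=1}^m\|f_i\|_{L^\infty}$. Since $G\in C^\infty$ with $G(0)=0$ and the range of $f$ lies in the ball $B_\rho\subset\R^m$, the function $G(f)$ belongs to $L^\infty$ (with $\|G(f)\|_{L^\infty}\le\rho\,\sup_{B_\rho}|\nabla G|$); moreover $\dot S_k f$ and $\dot\Delta_k f$ take values in a fixed ball $B_{C\rho}$, so from now on all constants are allowed to depend on $\rho$ and on $\sup_{B_{C\rho}}|\partial^\alpha G|$ for finitely many multi-indices $\alpha$, together with $s$ and $p$.

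The starting point is Meyer's first linearization. Since $f_i\in\mathcal S'_h$ one has $\dot S_k f\to0$ as $k\to-\infty$, while $\dot S_k f\to f$ as $k\to+\infty$; using $G(0)=0$ and the smoothness of $G$ on $B_{C\rho}$, a routine limiting argument gives the telescoping identity
\[
G(f)=\sum_{k\in\Z}\bigl(G(\dot S_{k+1}f)-G(\dot S_k f)\bigr)=\sum_{k\in\Z}m_k\,\dot\Delta_k f,\qquad m_k:=\int_0^1\nabla G\bigl(\dot S_k f+\theta\,\dot\Delta_k f\bigr)\,d\theta,
\]
where $m_k\,\dot\Delta_k f$ denotes the $\R^m$-scalar product. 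The $L^\infty$ bounds on $\dot S_k f,\dot\Delta_k f$ give $\|m_k\|_{L^\infty}\le C$ uniformly in $k$, and the Leibniz rule together with Bernstein's Lemma~\ref{bernstein} (which yields $\|\nabla\dot S_k f\|_{L^\infty}\lesssim 2^k\rho$ and $\|\nabla\dot\Delta_k f\|_{L^\infty}\lesssim 2^k\rho$) gives $\|\partial^\alpha m_k\|_{L^\infty}\le C_\alpha 2^{|\alpha|k}$ for every $\alpha$; i.e. each $m_k$ behaves like a function frequency-localized at $2^k$.

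For $s>0$ I would estimate $\dot\Delta_j G(f)=\sum_k\dot\Delta_j(m_k\,\dot\Delta_k f)$ by splitting at $k\approx j$. When $k\ge j-N_0$, $\|\dot\Delta_j(m_k\,\dot\Delta_k f)\|_{L^p}\le\|m_k\|_{L^\infty}\|\dot\Delta_k f\|_{L^p}\le C\|\dot\Delta_k f\|_{L^p}$. When $k<j-N_0$, I exploit the derivative bounds on $m_k$: writing $\dot\Delta_j=2^{-2Mj}\dot\Delta_j(-\Delta)^M\circ(-\Delta)^{-M}$ as a frequency-localized Fourier multiplier of order $-2M$ and using $\|(-\Delta)^M(m_k\,\dot\Delta_k f)\|_{L^p}\le C_M 2^{2Mk}\|\dot\Delta_k f\|_{L^p}$, one gets $\|\dot\Delta_j(m_k\,\dot\Delta_k f)\|_{L^p}\le C_M 2^{-2M(j-k)}\|\dot\Delta_k f\|_{L^p}$. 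Multiplying by $2^{js}$ and summing in $k$, both pieces become discrete convolutions of the $\ell^r$ sequence $\bigl(2^{ks}\|\dot\Delta_k f\|_{L^p}\bigr)_k$ with the $\ell^1$ sequences $\bigl(2^{\ell s}\mathbf 1_{\ell\le N_0}\bigr)_\ell$ — here $s>0$ is precisely what makes this summable — and $\bigl(2^{\ell(s-2M)}\mathbf 1_{\ell>N_0}\bigr)_\ell$ with $2M>s$; Young's inequality for series then yields $\|G(f)\|_{\dot B^s_{p,r}}\le C\|(f_1,\dots,f_m)\|_{\dot B^s_{p,r}}$.

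For the refined bound valid when $s>-\min(d/p,d/p^*)$ and $f_i\in\dot B^s_{p,r}\cap\dot B^{d/p}_{p,r}$, I would run the same decomposition but additionally control $m_k$ in the algebra-type space $\dot B^{d/p}_{p,r}$: applying the estimate just proved (with regularity index $d/p>0$) to the smooth map $\nabla G-\nabla G(0)$ gives $\|m_k\|_{\dot B^{d/p}_{p,r}}\le C\|(f_1,\dots,f_m)\|_{\dot B^{d/p}_{p,r}}$, since $\|\dot S_k f\|_{\dot B^{d/p}_{p,r}},\|\dot\Delta_k f\|_{\dot B^{d/p}_{p,r}}\lesssim\|f\|_{\dot B^{d/p}_{p,r}}$. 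Feeding this, together with the product and paraproduct estimates of Lemmas~\ref{le2.5} and~\ref{le2.6}, into the estimate of $\dot\Delta_j(m_k\,\dot\Delta_k f)$ — in particular to handle the range $k\gg j$, which the crude $L^\infty$ bound on $m_k$ no longer controls once $s\le0$ — and summing as before, one obtains the factor $1+\sum_i\|f_i\|_{\dot B^{d/p}_{p,r}}$ in front of $\|(f_1,\dots,f_m)\|_{\dot B^s_{p,r}}$ (the ``$1$'' coming from the part of the sum where the crude bound still applies); membership $G(f)\in L^\infty\cap\dot B^{d/p}_{p,r}$ follows from these estimates and the embedding $\dot B^s_{p,r}\cap\dot B^{d/p}_{p,r}\hookrightarrow L^\infty$ (Lemma~\ref{embedding}). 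The main obstacle throughout is the treatment of the amplitudes $m_k$, which are not genuinely frequency-localized: the decay needed for $j\gg k$ must be squeezed out purely from the polynomial-in-$2^k$ control of their derivatives (for $s>0$) and, for negative $s$, from estimating $m_k$ in $\dot B^{d/p}_{p,r}$ via the first part applied to $\nabla G$ — which is legitimate only because $\nabla G$ is smooth on the fixed ball $B_{C\rho}$ fixed by $\sum_i\|f_i\|_{L^\infty}$, so that all constants depend only on that radius, finitely many derivatives of $G$ on it, $s$, and $p$.
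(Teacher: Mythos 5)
This lemma is not proved in the paper at all: it is quoted as a known composition estimate with a citation to Runst--Sickel \cite{Runst}, so there is no in-paper argument to compare against. Your sketch is the standard proof found in that reference (and in \cite{bcd}): Meyer's first linearization $G(f)=\sum_k m_k\,\dot\Delta_k f$ with $m_k=\int_0^1\nabla G(\dot S_kf+\theta\dot\Delta_kf)\,d\theta$, the uniform bounds $\|\partial^\alpha m_k\|_{L^\infty}\lesssim 2^{|\alpha|k}$ from Bernstein and Fa\`a di Bruno, and the two-regime summation that uses $s>0$ for $k\gtrsim j$ and the derivative bounds (equivalently, negative powers of the Laplacian on the block $\dot\Delta_j$) for $k\ll j$; this part of your argument is correct as written, modulo the routine justification of the telescoping identity in $\mathcal S'$. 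For the second estimate your plan is also the right one: split off the constant $\nabla G(0)$ (which produces the ``$1$''), bound $m_k-\nabla G(0)$ uniformly in $\dot B^{d/p}_{p,r}$ by the first part applied to $\nabla G-\nabla G(0)$, and treat the only problematic range $k\gg j$ by Bony decomposition, where the paraproduct term vanishes by support considerations and the remainder-type term, estimated with Bernstein, yields a factor $2^{(j-k)(s+\min(d/p,\,d/p^*))}$ whose summability is exactly the hypothesis $s>-\min(d/p,d/p^*)$. That last computation is the one place your write-up stays at the level of an assertion rather than an estimate, but the mechanism you invoke is the correct one and does close, so I see no gap in substance --- your proof is essentially the argument of the cited source.
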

\vskip .1in

\subsection{Analysis tools in Sobolev spaces}
We first recall a weighted Poincar\'e inequality first established by Desvillettes and Villani in \cite{DV05}.
\begin{lemma}\label{lem-Poi}
	Let $\Omega$  be a bounded connected Lipschitz domain and $\bar{\varrho}$ be a positive constant.  There exists a positive constant $C$, depending on $\Omega$ and $\bar{\varrho}$, such that  for any nonnegative function $\varrho$ satisfying
	\begin{align*}
		\int_{\Omega}\varrho dx=1, \quad \varrho\le\bar{\varrho},
	\end{align*}
	and any $\u\in H^1(\Omega)$, there holds
	\begin{align}\label{wPoi}
		\int_{\Omega}\varrho \left(\u-\int_{\Omega}\varrho \u \,dx\right)^2\,dx\le C\|\nabla \u\|_{L^2}^2.
	\end{align}
\end{lemma}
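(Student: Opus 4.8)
The plan is to reduce the weighted inequality to the classical Poincar\'e--Wirtinger inequality on $\Omega$ by two elementary observations. Write $m_\varrho\stackrel{\mathrm{def}}{=}\int_\Omega\varrho\,\u\,dx$ for the $\varrho$-weighted mean of $\u$ (well defined, since $\varrho\le\bar\varrho$ and $\u\in H^1(\Omega)\hookrightarrow L^1(\Omega)$ on the bounded domain $\Omega$), and $\bar{\u}\stackrel{\mathrm{def}}{=}|\Omega|^{-1}\int_\Omega\u\,dx$ for the ordinary mean.

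\textbf{First step: the weighted mean minimizes the weighted variance.} For any constant $c$, expanding the square and using $\int_\Omega\varrho\,dx=1$ together with $\int_\Omega\varrho(\u-m_\varrho)\,dx=\int_\Omega\varrho\,\u\,dx-m_\varrho=0$, one gets
\[
\int_\Omega\varrho(\u-c)^2\,dx=\int_\Omega\varrho(\u-m_\varrho)^2\,dx+(m_\varrho-c)^2\ \ge\ \int_\Omega\varrho(\u-m_\varrho)^2\,dx.
\]
Choosing $c=\bar{\u}$ therefore gives $\int_\Omega\varrho(\u-m_\varrho)^2\,dx\le\int_\Omega\varrho(\u-\bar{\u})^2\,dx$.

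\textbf{Second step: bound the right-hand side by the unweighted Poincar\'e inequality.} Since $\Omega$ is a bounded connected Lipschitz domain, there is a constant $C_P=C_P(\Omega)$ such that $\|\u-\bar{\u}\|_{L^2(\Omega)}^2\le C_P\|\nabla\u\|_{L^2(\Omega)}^2$ for all $\u\in H^1(\Omega)$ (this is the classical Poincar\'e--Wirtinger inequality, a consequence of the Rellich--Kondrachov compact embedding and connectedness; for vector-valued $\u$ one applies it componentwise). Using $\varrho\le\bar\varrho$ we obtain
\[
\int_\Omega\varrho(\u-\bar{\u})^2\,dx\le\bar\varrho\int_\Omega(\u-\bar{\u})^2\,dx\le\bar\varrho\,C_P(\Omega)\,\|\nabla\u\|_{L^2}^2,
\]
and combining with the first step yields \eqref{wPoi} with $C=\bar\varrho\,C_P(\Omega)$, which depends only on $\Omega$ and $\bar\varrho$ as claimed.

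There is essentially no obstacle in this argument: the only points requiring care are that all integrals are finite (immediate from $\varrho\le\bar\varrho$ and $H^1(\Omega)\hookrightarrow L^2(\Omega)$ on a bounded domain) and the invocation of the scalar Poincar\'e--Wirtinger inequality, which is standard for bounded connected Lipschitz domains. The mild subtlety worth a remark is that the hypotheses force $|\Omega|\ge\bar\varrho^{-1}$, so the class of admissible weights $\varrho$ is nonempty precisely when $\bar\varrho$ is compatible with $|\Omega|$; this plays no role in the estimate itself.
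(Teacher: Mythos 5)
Your proof is correct, and it is worth pointing out that the paper itself gives no proof of this lemma at all: it simply cites Desvillettes and Villani \cite{DV05} and moves on. So strictly speaking there is no proof in the paper to compare against. Your argument is a clean, self-contained one: the observation that the $\varrho$-weighted mean minimizes the $\varrho$-weighted variance (verified by expanding the square and using $\int_\Omega\varrho\,dx=1$, $\int_\Omega\varrho(\u-m_\varrho)\,dx=0$) reduces the problem to bounding $\int_\Omega\varrho|\u-\bar{\u}|^2\,dx$, after which the pointwise bound $\varrho\le\bar\varrho$ and the classical Poincar\'e--Wirtinger inequality on a bounded connected Lipschitz domain close the argument with $C=\bar\varrho\,C_P(\Omega)$. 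Both steps are exactly right, including the remark that the class of admissible $\varrho$ is nonempty only when $|\Omega|\ge\bar\varrho^{-1}$ (which follows from $1=\int_\Omega\varrho\le\bar\varrho|\Omega|$ but plays no role in the estimate). The main thing your approach buys is transparency: it isolates the two ingredients -- variance minimization and the unweighted Poincar\'e inequality -- so the reader sees that the weight $\varrho$ is only used through its upper bound and the normalization, with no lower bound or regularity on $\varrho$ needed. This is a genuine contribution relative to the paper, which treats the lemma as a black box.
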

\vskip .1in
In order to remove the weight function $\varrho$ in  \eqref{wPoi} without resorting to the lower bound of  $\varrho$, we need  another variant of Poincar\'e inequality (see Lemma 3.2 in \cite{F04}).
\begin{lemma}\label{lem2.2}
	Let $\Omega$ be a bounded connected Lipschitz domain  and
	$p>1$ be a constant. Given positive constants $M_0$ and $E_0$, there is a constant $C=C(E_0,M_0)$ such that for any
	non-negative function $\varrho$ satisfying
	$$
	M_0\leq\int_{\Omega}\varrho dx\quad\mbox{and}\quad  \int_{\Omega}\varrho^{p}dx\leq
	E_0,
	$$
	and for any $\u\in H^1(\Omega)$, there holds
	$$
	\|\u\|_{L^2}^2\leq C\left[\|\nabla
	\u\|_{L^2}^2+\left(\int_{\Omega}\varrho|\u|\,dx\right)^2\right].
	$$
\end{lemma}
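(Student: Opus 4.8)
The plan is to argue by contradiction and compactness; this is the classical route for a Poincar\'e-type inequality in which one is \emph{not} allowed to use a pointwise lower bound on the weight $\varrho$. Suppose the assertion were false. Then for every $n\in\N$ there would exist a nonnegative $\varrho_n$ with
\[
M_0\le\int_\Omega\varrho_n\,dx,\qquad \int_\Omega\varrho_n^p\,dx\le E_0,
\]
together with $\u_n\in H^1(\Omega)$ satisfying
\[
\|\u_n\|_{L^2}^2>n\left(\|\nabla\u_n\|_{L^2}^2+\left(\int_\Omega\varrho_n|\u_n|\,dx\right)^2\right).
\]
Since the right-hand side is nonnegative, $\u_n\not\equiv0$, so by the quadratic homogeneity of the inequality in $\u_n$ we may normalize $\|\u_n\|_{L^2}=1$; then each term on the right is smaller than $1/n$, whence $\|\nabla\u_n\|_{L^2}\to0$ and $\int_\Omega\varrho_n|\u_n|\,dx\to0$ as $n\to\infty$.

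\noindent The first step is to pass to the limit in $\u_n$. The sequence is bounded in $H^1(\Omega)$, so by the Rellich--Kondrachov theorem on the bounded Lipschitz domain $\Omega$ a subsequence (not relabeled) converges to some $\u$ strongly in $L^2(\Omega)$ and weakly in $H^1(\Omega)$. Weak lower semicontinuity of the Dirichlet energy gives $\|\nabla\u\|_{L^2}\le\liminf_n\|\nabla\u_n\|_{L^2}=0$, hence $\nabla\u\equiv0$; connectedness of $\Omega$ forces $\u\equiv c$ for a constant $c$, and $\|\u\|_{L^2}=1$ gives $|c|=|\Omega|^{-1/2}>0$.

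\noindent The main step is to contradict $\int_\Omega\varrho_n|\u_n|\,dx\to0$ using only the $L^p$ bound with $p>1$. The key point is that this bound makes $\{\varrho_n\}$ equi-integrable: for any measurable $E\subset\Omega$, H\"older's inequality yields $\int_E\varrho_n\,dx\le E_0^{1/p}|E|^{1-1/p}$, uniformly in $n$. Put $E_n:=\{x\in\Omega:\ |\u_n(x)-c|>|c|/2\}$. The strong convergence $\u_n\to c$ in $L^2(\Omega)$ together with Chebyshev's inequality gives $|E_n|\to0$, while on $\Omega\setminus E_n$ one has $|\u_n|\ge|c|/2$. Consequently
\[
\int_\Omega\varrho_n|\u_n|\,dx\ge\frac{|c|}{2}\int_{\Omega\setminus E_n}\varrho_n\,dx=\frac{|c|}{2}\left(\int_\Omega\varrho_n\,dx-\int_{E_n}\varrho_n\,dx\right)\ge\frac{|c|}{2}\left(M_0-E_0^{1/p}|E_n|^{1-1/p}\right),
\]
and letting $n\to\infty$ the right-hand side tends to $|c|M_0/2>0$, a contradiction. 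This proves the stated inequality, with $C$ depending on $\Omega$, $p$, $M_0$ and $E_0$.

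\noindent I do not expect a serious obstacle here; the only delicate point is obtaining a lower bound on $\int_\Omega\varrho_n|\u_n|\,dx$ without any lower bound on $\varrho_n$, which is achieved by coupling the equi-integrability of $\{\varrho_n\}$ (a consequence of $p>1$, and the place where the hypothesis $\int_\Omega\varrho_n^p\,dx\le E_0$ is actually used) with the convergence in measure of $\u_n$ to the nonzero constant $c$. Note that only the compact embedding $H^1(\Omega)\hookrightarrow L^2(\Omega)$ enters, so no restriction on the space dimension is needed.
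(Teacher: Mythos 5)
Your argument is correct. Note that the paper does not actually prove this lemma: it is quoted with a citation to Lemma 3.2 of Feireisl \cite{F04}, so there is no in-text proof to compare against; your compactness-and-contradiction argument is the standard route for this kind of weighted Poincar\'e inequality and is essentially the argument behind the cited result. All the steps check out: the normalization is legitimate because both sides are quadratic in $\u$; Rellich--Kondrachov on a bounded Lipschitz domain plus weak lower semicontinuity of $\|\nabla\cdot\|_{L^2}$ and connectedness force the limit to be a nonzero constant $c$ with $|c|=|\Omega|^{-1/2}$; and the decisive point---that $\int_\Omega\varrho_n|\u_n|\,dx$ cannot tend to zero---is exactly where the hypothesis $p>1$ enters, via the uniform bound $\int_E\varrho_n\,dx\le E_0^{1/p}|E|^{1-1/p}$ which prevents the mass $M_0$ of $\varrho_n$ from concentrating on the small exceptional sets $E_n$ where $\u_n$ is far from $c$. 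The only cosmetic remark is that the constant produced this way depends on $\Omega$ and $p$ as well as on $M_0,E_0$, which is consistent with the statement since $\Omega$ and $p$ are fixed data; and, being obtained by contradiction, it is not explicit, which the lemma does not require.
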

\vskip .1in
\begin{lemma}\label{daishu}{\rm(\cite{kato})}
Let $s\ge 0$, $f,g\in {H^{s}}(\T^d)\cap {L^\infty}(\T^d)$, it holds that
\begin{equation*}
\|fg\|_{H^{s}}\le C(\|f\|_{L^\infty}\|g\|_{H^{s}}+\|g\|_{L^\infty}\|f\|_{H^{s}}).
\end{equation*}
\end{lemma}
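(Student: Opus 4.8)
The plan is to prove the estimate by a Littlewood–Paley/Bony paraproduct argument, which is exactly the mechanism behind Lemma \ref{le_product} but transcribed to the inhomogeneous, periodic setting. First I would reduce to a homogeneous-type bound: on $\T^d$ one has, for $s\ge 0$, the equivalence $\|h\|_{H^s}^2\simeq\|h\|_{L^2}^2+\|h\|_{\dot H^s}^2$ (because $(1+|\xi|^2)^s\simeq 1+|\xi|^{2s}$), where $\|h\|_{\dot H^s}^2=\sum_{\xi\neq 0}|\xi|^{2s}|\hat h(\xi)|^2\simeq\sum_{k\ge k_*}2^{2ks}\|\dot\Delta_k h\|_{L^2}^2$, the lower cutoff $k_*=k_*(d)$ reflecting the fact that the torus has a smallest nonzero frequency so that $\dot\Delta_k h=0$ for $k<k_*$. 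Thus it suffices to control $\|fg\|_{L^2}$ and $\|fg\|_{\dot H^s}$. The first is immediate: $\|fg\|_{L^2}\le\|f\|_{L^\infty}\|g\|_{L^2}\le\|f\|_{L^\infty}\|g\|_{H^s}$. In particular the case $s=0$ is already complete, so from now on I assume $s>0$, and I decompose $fg=\dot T_f g+\dot T_g f+\dot R(f,g)$ as in \eqref{bony}.

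For the first paraproduct, $\dot\Delta_k(\dot T_f g)$ is frequency-localized near $2^k$ and satisfies $\|\dot\Delta_k(\dot T_f g)\|_{L^2}\lesssim\sum_{|k'-k|\le 4}\|\dot S_{k'-1}f\|_{L^\infty}\|\dot\Delta_{k'}g\|_{L^2}\lesssim\|f\|_{L^\infty}\sum_{|k'-k|\le 4}\|\dot\Delta_{k'}g\|_{L^2}$, so multiplying by $2^{ks}$ and taking $\ell^2$ in $k$ (the convolution with a fixed finitely-supported weight being harmless) gives $\|\dot T_f g\|_{\dot H^s}\lesssim\|f\|_{L^\infty}\|g\|_{\dot H^s}$; symmetrically $\|\dot T_g f\|_{\dot H^s}\lesssim\|g\|_{L^\infty}\|f\|_{\dot H^s}$. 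For the remainder, $\dot\Delta_j\dot R(f,g)$ only sees frequencies $k\ge j-N_0$, whence $\|\dot\Delta_j\dot R(f,g)\|_{L^2}\lesssim\sum_{k\ge j-N_0}\|\dot\Delta_k f\|_{L^\infty}\|\tilde{\dot\Delta}_k g\|_{L^2}\lesssim\|f\|_{L^\infty}\sum_{k\ge j-N_0}\|\tilde{\dot\Delta}_k g\|_{L^2}$, so $2^{js}\|\dot\Delta_j\dot R(f,g)\|_{L^2}\lesssim\|f\|_{L^\infty}\sum_{k\ge j-N_0}2^{(j-k)s}\,\bigl(2^{ks}\|\tilde{\dot\Delta}_k g\|_{L^2}\bigr)$; since $s>0$, the sequence $\bigl(2^{ms}\mathbf{1}_{\{m\le N_0\}}\bigr)_{m\in\Z}$ lies in $\ell^1(\Z)$, and Young's inequality for series convolution yields $\|\dot R(f,g)\|_{\dot H^s}\lesssim\|f\|_{L^\infty}\|g\|_{\dot H^s}$ (and, by the symmetric roles of $f,g$, also $\lesssim\|g\|_{L^\infty}\|f\|_{\dot H^s}$). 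Collecting the three contributions gives $\|fg\|_{\dot H^s}\lesssim\|f\|_{L^\infty}\|g\|_{\dot H^s}+\|g\|_{L^\infty}\|f\|_{\dot H^s}$, and adding the $L^2$ bound and invoking the norm equivalence delivers precisely the claimed inequality.

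The only delicate points are bookkeeping rather than substantive. One must be mildly careful that Littlewood–Paley theory on $\T^d$ truncates at a lowest frequency $k_*$, so all dyadic sums run over $k\ge k_*$ and the mean/low-frequency part of $fg$ is absorbed into $\|fg\|_{L^2}$; one must treat the endpoint $s=0$ separately since the remainder estimate genuinely uses $s>0$; and one should note at the start that $f,g\in L^\infty\cap H^s$ makes all the series above converge absolutely, so the manipulations are legitimate. I expect the main (though still routine) obstacle to be this torus adaptation of the homogeneous paraproduct calculus developed on $\R^d$ in Section \ref{ineq} — nothing is new, but it must be stated with care. As a remark I would note an alternative, elementary route for integer $s$: the Leibniz rule for $\partial^\alpha(fg)$ together with Hölder's inequality and the Gagliardo–Nirenberg bounds $\|D^j h\|_{L^{2s/j}}\lesssim\|h\|_{L^\infty}^{1-j/s}\|D^s h\|_{L^2}^{j/s}$, followed by interpolation to non-integer $s$; but I would carry out the Littlewood–Paley proof as the primary argument since the paper already builds that machinery.
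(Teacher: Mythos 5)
Your proof is correct. Be aware, though, that the paper itself gives no argument for this lemma: it is quoted as a known Moser-type product estimate with a citation to Kato's lecture notes, so there is no in-paper proof to match. Your Bony-decomposition route is a perfectly sound self-contained substitute: the reduction $\|h\|_{H^s}^2\simeq\|h\|_{L^2}^2+\|h\|_{\dot H^s}^2$ for $s\ge 0$ on $\T^d$, the uniform $L^\infty$-boundedness of $\dot S_{k'-1}$ and $\dot\Delta_k$, the $\ell^2$-summation for the two paraproducts, and the Young-convolution argument for the remainder (which genuinely needs $s>0$, correctly relegating $s=0$ to the trivial $L^2$ bound) are all in order, and your bookkeeping remarks about the lowest torus frequency and the zero mode are exactly the right caveats. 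The classical proofs behind the cited reference run instead through the Leibniz rule plus Gagliardo--Nirenberg interpolation for integer $s$ (the alternative you sketch) or through Kato--Ponce commutator machinery; that route is more elementary and avoids setting up periodic Littlewood--Paley theory, whereas yours handles all real $s\ge 0$ at once and reuses the dyadic calculus the paper already builds in Section \ref{ineq}, at the mild cost of transcribing the homogeneous decomposition \eqref{bony} to $\T^d$. Either way the statement is established; no gap.
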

\vskip .1in
\begin{lemma}\label{jiaohuanzi}{\rm(\cite{kato})}
Let $s> 0$. Then there exists a constant $C$ such that, for any $f\in {H^{s}}(\T^d)\cap W^{1,\infty}(\T^d)$, $g\in {H^{s-1}}(\T^d)\cap {L^\infty}(\T^d)$, there holds
\begin{align*}
\norm{[\la^s,f\cdot\nabla ]g}{L^2}\le C(\norm{\nabla f}{L^\infty}\norm{\la^sg}{L^2}+\norm{\la^s f}{L^2}\norm{\nabla g}{L^\infty}).
\end{align*}
\end{lemma}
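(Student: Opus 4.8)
The statement is the classical Kato--Ponce commutator estimate on $\T^d$, and the plan is to establish it by a Littlewood--Paley/paradifferential argument mirroring the computations behind Lemma~\ref{commutator} (see also \cite{bcd,kato}). First I would reduce to a \emph{scalar} commutator: since $f\cdot\nabla g=\sum_{i=1}^d f_i\,\partial_i g$ and $\Lambda^s$ is linear, $[\Lambda^s,f\cdot\nabla]g=\sum_i[\Lambda^s,f_i](\partial_i g)$, so it suffices to prove
\[
\|[\Lambda^s,a]b\|_{L^2}\le C\big(\|\nabla a\|_{L^\infty}\|\Lambda^{s-1}b\|_{L^2}+\|\Lambda^s a\|_{L^2}\|b\|_{L^\infty}\big)
\]
for scalar $a,b$ and then take $a=f_i$, $b=\partial_i g$, using $\|\Lambda^{s-1}\partial_i g\|_{L^2}\le\|\Lambda^s g\|_{L^2}$ and $\|\partial_i g\|_{L^\infty}\le\|\nabla g\|_{L^\infty}$. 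I would also record three facts special to $\T^d$ that make all low-frequency bookkeeping harmless: $[\Lambda^s,a]b=[\Lambda^s,a-\bar a]b$, so $a$ may be taken of zero mean; nonzero frequencies satisfy $|k|\ge c>0$, so negative-order homogeneous norms of zero-mean functions are dominated by $L^2$-based ones; and the volume is finite, so $\|h\|_{L^2}\lesssim\|h\|_{L^\infty}$.

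For the scalar estimate I would run Bony's decomposition on both $ab$ and $a\,\Lambda^s b$; using $\dot S_m(\Lambda^s b)=\Lambda^s\dot S_m b$ and $\tilde{\dot\Delta}_k(\Lambda^s b)=\Lambda^s\tilde{\dot\Delta}_k b$, this produces
\[
[\Lambda^s,a]b=[\Lambda^s,\dot T_a]b+\Lambda^s(\dot T_b a)-\dot T_{\Lambda^s b}a+\sum_k[\Lambda^s,\dot\Delta_k a](\tilde{\dot\Delta}_k b).
\]
The first term carries the derivative-gain mechanism: $[\Lambda^s,\dot T_a]b=\sum_k[\Lambda^s,\dot S_{k-1}a]\dot\Delta_k b$, and—exactly as in the proof of Lemma~\ref{commutator}, by writing $|\xi|^s-|\xi-\zeta|^s=\zeta\cdot\int_0^1\nabla(|\cdot|^s)(\xi-t\zeta)\,dt$ on the origin-avoiding frequency support, where $|\nabla(|\cdot|^s)|\sim 2^{(s-1)k}$—commuting $\Lambda^s$ past multiplication by the low-frequency function $\dot S_{k-1}a$ gains one derivative at the price of one factor of $2^k$, i.e. $\big\|[\Lambda^s,\dot S_{k-1}a]\dot\Delta_k b\big\|_{L^2}\lesssim 2^{(s-1)k}\|\nabla a\|_{L^\infty}\|\dot\Delta_k b\|_{L^2}$; since these summands live in dyadic annuli $\sim 2^k$, an $\ell^2$ summation yields $\|[\Lambda^s,\dot T_a]b\|_{L^2}\lesssim\|\nabla a\|_{L^\infty}\|\Lambda^{s-1}b\|_{L^2}$.

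The two paraproducts $\Lambda^s(\dot T_b a)$ and $\dot T_{\Lambda^s b}a$ are spectrally supported at $\sim 2^k$, so their $\dot\Delta_j$-blocks are essentially their $j$-th summands; Bernstein's lemma together with the elementary bound $\|\dot S_k(\Lambda^s b)\|_{L^\infty}\lesssim\sum_{k'\le k}2^{sk'}\|\dot\Delta_{k'}b\|_{L^\infty}\lesssim 2^{sk}\|b\|_{L^\infty}$—this is where the hypothesis $s>0$ enters, through the geometric series—gives $\|\Lambda^s(\dot T_b a)\|_{L^2}+\|\dot T_{\Lambda^s b}a\|_{L^2}\lesssim\|\Lambda^s a\|_{L^2}\|b\|_{L^\infty}$. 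Summing the three contributions then closes the scalar estimate up to the last piece.

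The genuinely delicate term, and the one I expect to be the main obstacle, is $\mathcal R:=\sum_k[\Lambda^s,\dot\Delta_k a](\tilde{\dot\Delta}_k b)$, a commutator of $\Lambda^s$ against the \emph{high}-frequency factor $\dot\Delta_k a$: each summand obeys only $\|[\Lambda^s,\dot\Delta_k a]\tilde{\dot\Delta}_k b\|_{L^2}\lesssim 2^{sk}\|\dot\Delta_k a\|_{L^2}\|b\|_{L^\infty}$, yet its spectrum reaches down to low frequencies, so a crude triangle inequality produces divergent tail sums over $k$. To control $\mathcal R$ I would either invoke the Coifman--Meyer bilinear multiplier theorem, which is tailored to exactly such ``remainder-with-a-derivative'' symbols (on this region the symbol of $\mathcal R$ behaves like one of order $s$ in the frequency of $a$, so $\mathcal R$ contributes $\|\Lambda^s a\|_{L^2}\|b\|_{L^\infty}$), or return to the un-reduced expression $[\Lambda^s,f\cdot\nabla]g$ and exploit the divergence structure $f\cdot\nabla g$ and the explicit $\nabla$ on $g$ to integrate by parts and restore summability; for $0<s<2$ one can instead use the singular-integral representation $\Lambda^s h(x)=c_{d,s}\,\mathrm{p.v.}\int\frac{h(x)-h(y)}{|x-y|^{d+s}}\,dy$, so that $[\Lambda^s,a]b(x)=c_{d,s}\,\mathrm{p.v.}\int\frac{(a(x)-a(y))\,b(y)}{|x-y|^{d+s}}\,dy$, and split the kernel at $|x-y|=1$ (the near piece via $|a(x)-a(y)|\le\|\nabla a\|_{L^\infty}|x-y|$, the far piece via finite volume), with general $s$ reached by a Taylor-subtraction induction. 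Apart from this term, all contributions are routine Littlewood--Paley estimates of the kind already used in Section~\ref{ineq}.
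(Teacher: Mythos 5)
The paper offers no proof of this lemma at all: it is quoted from the literature (it is the classical Kato--Ponce commutator estimate, cited as \cite{kato}), so there is no in-paper argument to compare with and your outline has to be judged on its own. Its skeleton is sound and is the standard paradifferential proof. The reduction to the scalar commutator $[\Lambda^s,a]b$ with $a=f_i$, $b=\partial_i g$ is correct; the treatment of $[\Lambda^s,\dot T_a]b$ via the first-order expansion of the symbol $|\xi|^s$ on origin-avoiding annuli, and of $\Lambda^s(\dot T_b a)$ and $\dot T_{\Lambda^s b}a$ via $\|\dot S_k\Lambda^s b\|_{L^\infty}\lesssim 2^{sk}\|b\|_{L^\infty}$ (using $s>0$), are exactly the classical estimates and give the two terms on the right-hand side. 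You have also correctly isolated the crux: the remainder-type contribution, where neither frequency dominates, cannot be summed termwise from the allowed norms (one only gets an $\ell^2$ sequence against ball-supported blocks). Your primary fix --- Coifman--Meyer --- is legitimate and is in fact essentially the original Kato--Ponce route: the piece $\Lambda^s\dot R(a,b)$ you can bound directly for $s>0$, and on the region $|\xi_1|\sim|\xi_2|$ the leftover symbol $|\xi_2|^s$ factors as $|\xi_1|^s$ times an order-zero Coifman--Meyer symbol, whose $L^2\times L^\infty\to L^2$ boundedness yields $\|\Lambda^s a\|_{L^2}\|b\|_{L^\infty}$. Modulo invoking that external theorem (no worse than the paper's own citation of the whole lemma), the plan closes.

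The caveat is that your two fallback options for the hard term do not work as described, so if either of them were the actual plan there would be a genuine gap. The ``integrate by parts to restore summability'' idea is not substantiated. For the singular-integral representation with $0<s<2$: splitting at $|x-y|=1$ and using $|a(x)-a(y)|\le\|\nabla a\|_{L^\infty}|x-y|$ on the near region produces (for $0<s<1$, precisely the range where this kernel formula is simplest) a bound of the form $\|\nabla a\|_{L^\infty}\|b\|_{L^2}$, and $\|b\|_{L^2}$ is \emph{not} dominated by $\|\Lambda^{s-1}b\|_{L^2}$ when $s<1$, nor is $\|\nabla a\|_{L^\infty}\|b\|_{L^\infty}$ dominated by the stated right-hand side; pairing the near region instead with $\|b\|_{L^\infty}$ and an $L^2$ modulus-of-continuity bound $\|a(\cdot)-a(\cdot-z)\|_{L^2}\lesssim|z|^s\|\Lambda^s a\|_{L^2}$ leads to the divergent integral $\int_{|z|\le1}|z|^{-d}\,dz$. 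So the crude near/far splitting is too lossy: one must genuinely exploit the principal-value cancellation or a frequency-adapted splitting scale there, which is substantially more work than the sketch suggests. In short: keep the Littlewood--Paley decomposition and dispose of the remainder term by Coifman--Meyer (or by a refined argument exploiting the commutator cancellation in that region); do not rely on the kernel-splitting shortcut.
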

\vskip .1in
\begin{lemma}\label{fuhe}{\rm(\cite{Triebel})}
Let $s>0$ and $f\in H^s(\T^d)\cap L^\infty(\T^d)$. Assume that $F$ is a smooth function  on $\R$ with $F(0)=0$. Then we
have
$$
\|F(f)\|_{H^s}\le C(1+\|f\|_{L^\infty})^{[s]+1}\|f\|_{H^s},
$$
where the constant $C$ depends on
$\sup\limits_{k\le{[s]+2},\, t\le\|f\|_{L^\infty}} \left\|F^{(k)}(t)\right\|_{L^\infty}.$
\end{lemma}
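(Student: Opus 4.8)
The plan is to bound separately the $L^2$ norm of $F(f)$ and the homogeneous seminorm $\|\Lambda^sF(f)\|_{L^2}$ and add them, after writing $s=m+\ta$ with $m=[s]\in\N\cup\{0\}$ and $\ta\in[0,1)$. The $L^2$ part is immediate: since $F(0)=0$, $|F(f(x))|\le\big(\sup_{|t|\le\|f\|_{L^\infty}}|F'(t)|\big)\,|f(x)|$ pointwise, hence $\|F(f)\|_{L^2}\le\|F'\|_{L^\infty(I)}\|f\|_{L^2}$ with $I:=[-\|f\|_{L^\infty},\|f\|_{L^\infty}]$. So the work is entirely in the $\dot H^s$ seminorm. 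The base case $s=\ta\in(0,1)$ I would do with the Gagliardo seminorm characterization of $H^s(\T^d)$: since $|F(f(x))-F(f(y))|\le\|F'\|_{L^\infty(I)}|f(x)-f(y)|$, one reads off $\|\Lambda^sF(f)\|_{L^2}\lesssim\|F'\|_{L^\infty(I)}\|\Lambda^sf\|_{L^2}$, which is the claim with exponent $0\le[s]+1$; the same argument with $L^a$ in place of $L^2$ will be reused later.

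For integer $s=m\ge1$ I would use the Fa\`a di Bruno formula: for $|\alpha|=m$, $\partial^\alpha F(f)$ is a finite linear combination of terms $F^{(k)}(f)\prod_{i=1}^k\partial^{\beta_i}f$ with $1\le k\le m$, $\beta_1+\cdots+\beta_k=\alpha$, $|\beta_i|\ge1$. I would bound $\|F^{(k)}(f)\|_{L^\infty}=\sup_{|t|\le\|f\|_{L^\infty}}|F^{(k)}(t)|$, and estimate the product of derivatives by H\"older with exponents $2m/|\beta_i|$ (admissible because $\sum_i|\beta_i|/(2m)=\tfrac12$) together with the Gagliardo--Nirenberg inequality on $\T^d$, $\|\partial^{\beta_i}f\|_{L^{2m/|\beta_i|}}\lesssim\|f\|_{L^\infty}^{1-|\beta_i|/m}\|\Lambda^mf\|_{L^2}^{|\beta_i|/m}$, getting $\big\|\prod_i\partial^{\beta_i}f\big\|_{L^2}\lesssim\|f\|_{L^\infty}^{k-1}\|\Lambda^mf\|_{L^2}$ (the power $k-1=\sum_i(1-|\beta_i|/m)$ coming from $\sum|\beta_i|=m$). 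Summing over $1\le k\le m$ yields $\|\Lambda^mF(f)\|_{L^2}\le C(1+\|f\|_{L^\infty})^{m-1}\|\Lambda^mf\|_{L^2}$, with $C$ controlled by $\max_{1\le k\le m}\|F^{(k)}\|_{L^\infty(I)}$; together with the $L^2$ bound this settles the integer case, in fact with the sharper exponent $m-1$.

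For non-integer $s=m+\ta$ ($m\ge1$, $\ta\in(0,1)$) it suffices to bound $\|\Lambda^\ta(\partial^\alpha F(f))\|_{L^2}$ for $|\alpha|=m$, i.e. $\|\Lambda^\ta(Gv)\|_{L^2}$ for each Fa\`a di Bruno term with $G=F^{(k)}(f)$, $v=\prod_{i=1}^k\partial^{\beta_i}f$. I would invoke a Kato--Ponce type fractional Leibniz estimate $\|\Lambda^\ta(Gv)\|_{L^2}\lesssim\|\Lambda^\ta G\|_{L^a}\|v\|_{L^b}+\|G\|_{L^\infty}\|\Lambda^\ta v\|_{L^2}$ with $\tfrac1a+\tfrac1b=\tfrac12$; the fractional chain rule (the base case above applied to $F^{(k)}$) gives $\|\Lambda^\ta G\|_{L^a}\lesssim\|F^{(k+1)}\|_{L^\infty(I)}\|\Lambda^\ta f\|_{L^a}$, where $k+1\le m+1\le[s]+2$ is precisely where the higher derivatives of $F$ enter, while $\|\Lambda^\ta v\|_{L^2}$ is handled by iterating the fractional Leibniz rule over the $k$ factors of $v$. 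Every term then reduces to a product of norms $\|\Lambda^\sigma f\|_{L^q}$ with $0\le\sigma\le s$, which I would interpolate by the Gagliardo--Nirenberg--Sobolev inequality between $\|f\|_{L^\infty}$ and $\|\Lambda^sf\|_{L^2}$ (with $1/q=\sigma/(2s)$ and weight $\|f\|_{L^\infty}^{1-\sigma/s}$); choosing the fractional orders on the factors to sum to $s$ makes all the Lebesgue exponents H\"older-conjugate to $\tfrac12$, and the bookkeeping $\sum_i(1-\sigma_i/s)\le[s]$ shows that the total power of $\|f\|_{L^\infty}$ never exceeds $[s]$, so the claimed bound $C(1+\|f\|_{L^\infty})^{[s]+1}\|f\|_{H^s}$ follows.

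The hard part will be this fractional step: one must apply the fractional Leibniz and Gagliardo--Nirenberg--Sobolev inequalities with exponents that are simultaneously admissible (all strictly between $1$ and $\infty$, and respecting the embedding thresholds $\dot H^s\hookrightarrow\dot W^{\sigma,q}$), and check that no admissible choice forces a power of $\|f\|_{L^\infty}$ beyond $[s]+1$ or a derivative of $F$ of order beyond $[s]+2$. A cleaner though slightly lossy alternative worth mentioning is the Bony telescoping identity $F(f)=F(\bar f)+\sum_q\dot\Delta_qf\cdot\int_0^1F'(\dot S_qf+t\dot\Delta_qf)\,dt$ (with $\bar f$ the spatial mean), whose frequency structure yields $\|\Lambda^sF(f)\|_{L^2}\lesssim_s\|F'\|_{L^\infty(\widetilde I)}\|\Lambda^sf\|_{L^2}$ for every $s>0$ at the price of enlarging $I$ to $\widetilde I=[-C\|f\|_{L^\infty},C\|f\|_{L^\infty}]$ for an absolute constant $C$; this proves a variant of Lemma~\ref{fuhe} depending only on $F'$, not literally the stated form.
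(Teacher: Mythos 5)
The paper gives no proof of this lemma: it is quoted as a known composition (Moser-type) estimate from \cite{Triebel}, so the comparison can only be with the standard literature argument. Your main route is exactly that standard proof: the $L^2$ part from $F(0)=0$ and the mean value theorem, the integer seminorm by Fa\`a di Bruno together with the Gagliardo--Nirenberg interpolation $\|\partial^{\beta_i}f\|_{L^{2m/|\beta_i|}}\lesssim\|f\|_{L^\infty}^{1-|\beta_i|/m}\|\Lambda^m f\|_{L^2}^{|\beta_i|/m}$, and the fractional remainder by Kato--Ponce plus a fractional chain rule, with the bookkeeping (powers of $\|f\|_{L^\infty}$ at most $[s]+1$, derivatives of $F$ of order at most $[s]+2$ in the constant) coming out as in the statement. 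That outline is sound.

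Two points need repair, though. First, you justify the fractional chain rule $\|\Lambda^\theta F^{(k)}(f)\|_{L^a}\lesssim\|F^{(k+1)}\|_{L^\infty(I)}\|\Lambda^\theta f\|_{L^a}$ by ``the same Gagliardo argument with $L^a$ in place of $L^2$''. That only works for $a=2$: the Gagliardo seminorm characterizes $W^{\theta,a}=B^{\theta}_{a,a}$, which coincides with the Riesz-potential scale $\|\Lambda^\theta\cdot\|_{L^a}$ only at $a=2$; for $a>2$ one has $\dot H^{\theta,a}\hookrightarrow\dot B^{\theta}_{a,a}$ and not conversely, so the Lipschitz/difference bound does not control $\|\Lambda^\theta F^{(k)}(f)\|_{L^a}$. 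You should invoke the genuine Christ--Weinstein fractional chain rule there (or run the whole fractional step in Besov norms, where the difference characterization is available, and return to $H^s=B^s_{2,2}$ at the end). Second, the closing aside is false as stated: the Meyer/Bony first linearization $F(f)=F(\bar f)+\sum_q\dot{\Delta}_q f\,m_q$ does \emph{not} yield $\|\Lambda^s F(f)\|_{L^2}\lesssim\|F'\|_{L^\infty}\|\Lambda^s f\|_{L^2}$ for every $s>0$, because $m_q$ is not frequency localized and one must estimate $\|\partial^\alpha m_q\|_{L^\infty}\lesssim 2^{q|\alpha|}$ for $|\alpha|$ up to roughly $[s]+1$, which brings in $F'',\dots,F^{([s]+2)}$. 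Indeed the claimed variant fails for $s>1$: take $F(t)=\varepsilon\sin(t/\varepsilon)$, so $\|F'\|_{L^\infty}\le1$, while for a fixed smooth nonconstant $f$ the term $F''(f)(\partial f)^2$ in $\partial^2F(f)$ has $L^2$ norm of order $\varepsilon^{-1}$, so no bound by $C\|F'\|_{L^\infty}\|f\|_{H^2}$ can hold. With that aside dropped and the $L^a$ chain-rule step corrected, your argument is a complete proof of the lemma.
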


\section{The proof of Theorem \ref{dingli1}}
We are going to  prove Theorem \ref{dingli1} by using energy argument. For clarity, the proof is divided into  two main steps. The first step is to establish the global well-posedness, and the second step is to prove the decay rates of the solutions.

\subsection{The proof  of the global well-posedness of Theorem \ref{dingli1}}
To begin with, we give some notations.
We shall denote by $\p\stackrel{\mathrm{def}}{=}\mathbb{I}-\nabla\Delta^{-1}\div$ the Leray projector which projects a
vector to its divergence free (or rotational) part. Denote also $\mathbb{Q}=\mathbb{I}-\mathbb{P}$, which
projects a vector field to its curl-free (or potential) part.
Define
 \begin{align*}
 \alpha = \sqrt{\frac{1}{R\gamma \bar{\rho}^{\gamma + 1}}}, \quad \alpha_1 = \sqrt{R\gamma \bar{\rho}^{\gamma-1}}, \quad\mu_1 = \frac{\mu}{\bar{\rho}}, \quad \mu_2 = \frac{\lambda + \mu}{\bar{\rho}},
\end{align*}
 we can further rewrite \eqref{m3} into the following form:
\begin{eqnarray}\label{reformulate}
\left\{\begin{aligned}
&\partial_t n  + \alpha_1  \div \u + \alpha \u \cdot \nabla n= f_1,\\
&  \partial_t \mathbb Q \u   - (\mu_1 + \mu_2) \Delta \mathbb Q \u  + \alpha_1 \nabla n - \alpha_1 \mathbb Q \div \tau = \mathbb Q f_2  ,\\
&  \partial_t \mathbb P \u   -  \mu_1 \Delta \mathbb P \u   - \alpha_1 \mathbb P \div \tau = \mathbb P f_2  ,\\
& \partial_t \tau  + \alpha \u \cdot \nabla \tau +  \frac{A_0}{2\lambda_1} \tau    = f_3,\\
& \partial_t \eta + \alpha  \u \cdot \nabla \eta  = f_4,
\end{aligned}\right.
\end{eqnarray}
where
\begin{align*}
&f_1 \stackrel{\mathrm{def}}{=}  - \alpha R(\gamma -1) I(a) \div \u  - \alpha n \div \u - \alpha \zeta \eta^2 \div \u,\\
&  f_2 \stackrel{\mathrm{def}}{=} - \alpha \u\cdot\nabla \u + \mu k(a) \Delta \u + (\lambda + \mu) k(a) \nabla \div \u
 - \frac{1}{\alpha} k(a) \nabla n+ \frac{1}{\alpha}k(a) \div \tau,\\
&f_3 \stackrel{\mathrm{def}}{=}  \alpha \big( \nabla \u \tau + \tau (\nabla \u)^{\top}\big) + \alpha K \eta \big( \nabla \u + (\nabla \u)^{\top} \big)- \alpha \tau \div \u,\\
&f_4 \stackrel{\mathrm{def}}{=} - \alpha \eta \div \u,
\end{align*}
with
$$ I(a)\stackrel{\mathrm{def}}{=} (a + \bar{\rho})^\gamma - \bar{\rho}^\gamma \quad\hbox{and}\quad  k(a) \stackrel{\mathrm{def}}{=} \frac{1} {a+\bar{\rho}} - \frac{1} {\bar{\rho}}.$$

To simply the notations in the energy argument, we also define the following equality:
\begin{align*}
 \mathcal E_\infty(t)\stackrel{\mathrm{def}}{=}&\| (n,\u,\eta )\|^\ell_{\dot B^{\frac{d}{2} -1}_{2,1}} + \| \tau \|^\ell_{\dot B^{\frac{d}{2}}_{2,1}}+ \|(\Lambda n,\u,\Lambda \tau,\Lambda \eta) \|^h_{\dot B^{\frac{d}{2} +1 }_{2,1}},\nonumber\\
\mathcal E_1(t)\stackrel{\mathrm{def}}{=}&\| (n,\u) \|^\ell_{\dot B^{\frac{d}{2} + 1}_{2,1}} + \| \tau \|^\ell_{\dot B^{\frac{d}{2} }_{2,1} } + \| (\Lambda n,\Lambda\tau) \|^h_{\dot B^{\frac{d}{2} + 1}_{2,1}} + \| \u \|^h_{\dot B^{\frac{d}{2} + 3}_{2,1}}.
\end{align*}
Then by using  the embedding relation in Besov spaces, one can obtain the following inequalities:
\begin{align}
\| ( n,\eta)\|_{\dot B^{\frac{d}{2}-1}_{2,1}}+\| ( n,\tau,\eta)\|_{\dot B^{\frac{d}{2}}_{2,1}}
 \lesssim&\| ( n,\eta)\|_{\dot B^{\frac{d}{2}-1}_{2,1}}^\ell+\| \tau\|_{\dot B^{\frac{d}{2}}_{2,1}}^\ell +\| ( \Lambda n,\Lambda\tau,\Lambda\eta)\|_{\dot B^{\frac{d}{2}+1}_{2,1}}^h
 \lesssim&\mathcal E_\infty(t);\label{buxiu1}\\
\|  n\|_{\dot B^{\frac{d}{2}+1}_{2,1}} +\|  n\|_{\dot B^{\frac{d}{2}+2}_{2,1}}\lesssim&\|  n\|_{\dot B^{\frac{d}{2}+2}_{2,1}}^\ell+ \| \Lambda n\|_{\dot B^{\frac{d}{2}+1}_{2,1}}^h
 \lesssim\mathcal E_1(t);\label{buxiu2}\\
\| \u \|_{\dot B^{\frac{d}{2} -1}_{2,1}}+\| \u \|_{\dot B^{\frac{d}{2} }_{2,1}}\lesssim&\| \u \|^\ell_{\dot B^{\frac{d}{2} - 1}_{2,1}}+ \| \u \|^h_{\dot B^{\frac{d}{2} + 1}_{2,1}}\lesssim\mathcal E_\infty(t);\label{buxiu3}\\
 \| \u \|_{\dot B^{\frac{d}{2} +1}_{2,1}}+\| \u \|_{\dot B^{\frac{d}{2} +2}_{2,1}}\lesssim&\| \u \|^\ell_{\dot B^{\frac{d}{2} + 1}_{2,1}}+ \| \u \|^h_{\dot B^{\frac{d}{2} + 3}_{2,1}}\lesssim\mathcal E_1(t).\label{buxiu4}
\end{align}

Moreover, throughout we make the assumption that
\begin{equation}\label{eq:smalladd}
\sup_{t\in\R_+,\, x\in\R^d} |(n,\eta)(t,x)|\leq \frac{1}{100}
\end{equation}
which will enable us to use freely the composition estimate stated in Lemma \ref{I(a)}.
Note that as $\dot B^{\frac d2}_{2,1}(\R^d)\hookrightarrow L^\infty(\R^d),$ Condition \eqref{eq:smalladd} will be ensured by the fact that the constructed solution about $(n,\eta)$ has small norm in $\dot B^{\frac d2}_{2,1}(\R^d)$.

\subsubsection{The estimate of $( n, \u)$ in the low frequency}\label{sublow_fre}
In this subsection, we are concerned with the
estimates of $( n, \u)$ in the low frequency. More precisely, we have the following lemma.
\begin{lemma}\label{le_aut_low}
Under the condition in Theorem \ref{dingli1}, there exists a constant $ C$ such that
\begin{align}\label{aut_low}
& \| (n, \u)\|^\ell_{\widetilde{L}^{\infty}_{t}(\dot B^{\frac{d}{2} - 1}_{2,1})}  +  \| (n, \u)\|^\ell_{{L}^{1}_{t}(\dot B^{\frac{d}{2} + 1}_{2,1} )} \nn\\
&\quad \le  \| (n_0, \u_0) \|^\ell_{\dot{B}^{\frac{d}{2}-1}_{2,1}}  + C\| \tau \|^\ell_{{L}^{1}_{t}(\dot B^{\frac{d}{2}}_{2,1})}  + C \int_{0}^{t} \eat\big( 1+ \eat\big)\ebt \,dt'.
\end{align}
\begin{proof}
At first, we get by
applying $ \dot{\Delta}_k$ to the first equation of \eqref{reformulate} that
\begin{align*}
	\partial_t \dot{\Delta}_k n  + \alpha_1 \dot{\Delta}_k  \div \mathbb{Q} \u =  \dot{\Delta}_k f_1 - \alpha \dot{\Delta}_k ( \u \cdot \nabla n),
\end{align*}
from which and
taking  $ L^2$ inner product with $ \dot{\Delta}_k n$, we have
\begin{align}\label{ak_l}
 \frac{1}{2} \frac{d}{dt} \| \dot{\Delta}_k n \|^2_{L^2}  + \alpha_1 \langle \dot{\Delta}_k  \div \mathbb{Q} \u,  \dot{\Delta}_k n \rangle = \langle \dot{\Delta}_k f_1 - \alpha \dot{\Delta}_k ( \u \cdot \nabla n), \dot{\Delta}_k n \rangle.
\end{align}
For convenience, we introduce the following new definition:
\begin{align*}
  \delta \stackrel{\mathrm{def}}{=} \Lambda^{-1} \div \mathbb{Q}\u.
\end{align*}
Then applying operator $\ddk\la^{-1} \div$ to the second equation of \eqref{reformulate} gives rise to
\begin{align}\label{eq_delta}
	\partial_t \ddk\delta - (\mu_1 + \mu_2)\Delta  \ddk\delta  -  \alpha_1\ddk \Lambda n  =  \ddk \Lambda^{-1} \div \mathbb Q f_2 + \ddk\alpha_1 \la^{-1} \div \mathbb{Q} \div \tau.
\end{align}
Taking the $L^2$ inner product with $ \ddk \delta$ on both hand side of \eqref{eq_delta} implies
\begin{align}\label{deltak_l}
		&\frac{1}{2} \frac{d}{dt} \| \dot{\Delta}_k \delta \|^2_{L^2}  + (\mu_1 + \mu_2) \| \nabla \dot{\Delta}_k \delta  \|^2_{L^2} -  \alpha_1 \big\langle   \dot{\Delta}_k  n, \dot{\Delta}_k \div \q \u \big\rangle \nn\\
	&\quad = \big\langle \Lambda^{-1} \div \q (\mathbb Q f_2 + \alpha_1 \div \tau)  , \dot{\Delta}_k \delta \big\rangle
\end{align}
where we have used the fact that
\begin{align*}
	\langle   \dot{\Delta}_k \Lambda n, \dot{\Delta}_k \delta \rangle =  \langle \dot{\Delta}_k  n,  \dot{\Delta}_k \div \q \u \rangle.
\end{align*}
In order to find the smoothing effect of $n$ in the low frequency part, we need to introduce an
unknown good quantity,
\begin{align*}
	\Gamma  \stackrel{\mathrm{def}}{=}  (\mu_1 + \mu_2) \Lambda n - \alpha_1 \delta.
\end{align*}
By a simple computation, we can deduce that
\begin{align*}
	\partial_t \Gamma +  \alpha_1^2 \Lambda n  = (\mu_1 + \mu_2) \big(\Lambda f_1 - \alpha \Lambda (\u \cdot \nabla n)\big) - \alpha_1 \Lambda^{-1} \div (\mathbb Q f_2 + \alpha_1 \q \div \tau).
\end{align*}
Hence, we get by a similar derivation of \eqref{ak_l} or \eqref{deltak_l} that
\begin{align}\label{gammak_l}
	& \frac{1}{2} \frac{d}{dt} \| \dot{\Delta}_k \Gamma \|^2_{L^2}  +(\mu_1 + \mu_2)\alpha_1^2  \|\dot{\Delta}_k \Lambda n \|^2_{L^2} - \alpha_1^3 \big\langle \dot{\Delta}_k n,  \dot{\Delta}_k \div \q \u \big \rangle \nn\\&\qquad  =  \big\langle  (\mu_1 + \mu_2) \big(\Lambda f_1 - \alpha \Lambda (\u \cdot \nabla n)\big) - \alpha_1 \Lambda^{-1} \div (\mathbb Q f_2 + \alpha_1 \q \div \tau), \dot{\Delta}_k \Gamma \big\rangle.
\end{align}
Let $0<\beta <1$ be determined later, multiplying \eqref{gammak_l} by $  \frac{\beta}{\alpha_1^2}$, \eqref{deltak_l} by $  (1-\beta)$, and then adding to \eqref{ak_l}, we can get
\begin{align}\label{udeltagammak_l}
	&\frac{1}{2} \frac{d}{dt} (\| \dot{\Delta}_k n\|^2_{L^2} + (1-\beta)\| \dot{\Delta}_k \delta\|^2_{L^2} + \frac{\beta}{\alpha_1^2} \| \dot{\Delta}_k \Gamma\|^2_{L^2}) \nn\\
	&\quad \quad  + (1-\beta)(\mu_1 + \mu_2) \| \nabla \dot{\Delta}_k \delta \|^2_{L^2}  +  \beta (\mu_1 + \mu_2)  \|\dot{\Delta}_k \Lambda n \|^2_{L^2}  \\
	&\quad   \lesssim \| \big(\dot{\Delta}_k f_1 ,  \dot{\Delta}_k ( \u \cdot \nabla n) \big)\|_{L^2} \| \dot{\Delta}_k n \|_{L^2} +  \| (\dot{\Delta}_k \mathbb Q f_2, \dot{\Delta}_k \div \tau)\|_{L^2}  \| \dot{\Delta}_k \delta\|_{L^2}\nn\\
	&\qquad + \| \big(\dot{\Delta}_k \la f_1 ,  \dot{\Delta}_k \la ( \u \cdot \nabla n),  \dot{\Delta}_k \mathbb Q f_2, \dot{\Delta}_k \div \tau \big)\|_{L^2} \|\dot{\Delta}_k \Gamma_{\lambda} \|_{L^2}.\nn
\end{align}
For any $ k\le k_0$, where $ k_0$ will be determined in the high frequency. On the one hand, applying Lemma \ref{bernstein} leads to
\begin{align}\label{left_eq}
\| \dot{\Delta}_k n\|^2_{L^2} + (1-\beta)\| \dot{\Delta}_k \delta\|^2_{L^2}
    + \frac{\beta}{\alpha_1^2} \| \dot{\Delta}_k \Gamma\|^2_{L^2}
& \le C(\| \dot{\Delta}_k n \|^2_{L^2} + \| \dot{\Delta}_k \delta \|^2_{L^2}) + C2^{2k} \| \dot{\Delta}_k n\|^2_{L^2}\nn\\
	& \le C(\| \dot{\Delta}_k n \|^2_{L^2} + \| \dot{\Delta}_k \delta \|^2_{L^2}).
\end{align}
On the other hand,  choosing $ \beta $ is sufficiently small such that $ C_1 \frac{\beta}{\alpha_1^2}  2^{2k_0} \le \frac{1}{2} $ , then we have
\begin{align*}
	&\| \dot{\Delta}_k n\|^2_{L^2} + (1-\beta)\| \dot{\Delta}_k \delta\|^2_{L^2} + \frac{\beta}{\alpha_1^2} \| \dot{\Delta}_k \Gamma\|^2_{L^2} \nn\\
	 &\quad \ge \| \dot{\Delta}_k n \|^2_{L^2} + (1-\beta)\| \dot{\Delta}_k \delta \|^2_{L^2} - \frac{\beta}{\alpha_1^2} (C_1 2^{2k}\| \dot{\Delta}_k n \|^2_{L^2} + \| \dot{\Delta}_k \delta \|^2_{L^2} )\nn\\
	& \quad \ge c(\| \dot{\Delta}_k n \|^2_{L^2} + \| \dot{\Delta}_k \delta \|^2_{L^2}),
\end{align*}
from which and  \eqref{left_eq} implies that
\begin{align}\label{equal}
	\| \dot{\Delta}_k n\|^2_{L^2} + (1-\beta)\| \dot{\Delta}_k \delta\|^2_{L^2} + \frac{\beta}{\alpha_1^2} \| \dot{\Delta}_k \Gamma\|^2_{L^2} \approx \| \dot{\Delta}_k n \|^2_{L^2} + \| \dot{\Delta}_k \delta \|^2_{L^2}.
\end{align}
Hence,
substituting \eqref{equal} into \eqref{udeltagammak_l} yields
\begin{align}\label{udeltagamma_l}
	&\frac{1}{2} \frac{d}{dt} (\| \dot{\Delta}_k n \|^2_{L^2} +\| \dot{\Delta}_k \delta \|^2_{L^2}) + c2^{2k} (\| \dot{\Delta}_k n \|^2_{L^2} + \| \dot{\Delta}_k \delta \|^2_{L^2}  ) \nn\\
  &\quad\lesssim \| \big(\dot{\Delta}_k f_1 ,  \dot{\Delta}_k ( \u \cdot \nabla n) \big)\|_{L^2} \| \dot{\Delta}_k n \|_{L^2} +  \| (\dot{\Delta}_k \mathbb Q f_2, \dot{\Delta}_k \div \tau)\|_{L^2}  \| \dot{\Delta}_k \delta\|_{L^2} \\
  &\qquad + \| \big(\dot{\Delta}_k \la f_1 ,  \dot{\Delta}_k \la ( \u \cdot \nabla n),  \dot{\Delta}_k \mathbb Q f_2, \dot{\Delta}_k \div \tau \big)\|_{L^2} \|\dot{\Delta}_k \Gamma_{\lambda} \|_{L^2}.\nn
\end{align}
Multiplying by $ 2^{(\frac{d}{2}- 1)k}\frac{1}{\| (\dot{\Delta}_k n, \dot{\Delta}_k \delta) \|_{L^2}}$ on both hand side of \eqref{udeltagamma_l} formally, then integrating from $0$ to $t$,  taking sum with respect to $k$ over $ k\le k_0$, we can obtain
\begin{align}\label{udelta_l}
	&\| (n, \delta ) \|^\ell_{\widetilde{L}^\infty_t(\dot{B}^{\frac{d}{2}-1}_{2,1})}  + \int_{0}^{t} \| (n, \delta )\|^\ell_{\dot B^{\frac{d}{2} + 1}_{2,1}} \,dt'\nn\\
	&\quad \lesssim  \| (n_0, \delta_0) \|^\ell_{\dot{B}^{\frac{d}{2}-1}_{2,1}} + \| \tau \|^\ell_{{L}^{1}_{t}(\dot B^{\frac{d}{2}}_{2,1})} +  \int_{0}^{t} \| (f_1, f_2, \u \cdot \nabla n )\|^\ell_{\dot B^{\frac{d}{2} - 1}_{2,1}} \,dt'
\end{align}
where we have used the fact
$$ \| \Lambda f_1 \|^\ell_{\dot B^{\frac{d}{2} - 1}_{2,1}} \lesssim \|  f_1 \|^\ell_{\dot B^{\frac{d}{2} - 1}_{2,1}}, \  \| \Lambda (\u \cdot \nabla n) \|^\ell_{\dot B^{\frac{d}{2} - 1}_{2,1}} \lesssim \|  \u \cdot \nabla n \|^\ell_{\dot B^{\frac{d}{2} - 1}_{2,1}}, \   \| \div \tau \|^\ell_{{L}^{1}_{t}(\dot B^{\frac{d}{2}-1}_{2,1})} \lesssim  \| \tau \|^\ell_{{L}^{1}_{t}(\dot B^{\frac{d}{2}}_{2,1})},$$
 and
\begin{align*}
	\frac{\| \dot{\Delta}_k n \|^2_{L^2} + \| \dot{\Delta}_k \delta \|^2_{L^2} } {\| \dot{\Delta}_k n  \|_{L^2} + \| \dot{\Delta}_k \delta \|_{L^2}}
	\ge c(\| \dot{\Delta}_k n  \|_{L^2} + \| \dot{\Delta}_k \delta \|_{L^2} ).
\end{align*}
 In the following, we need to deal with  nonlinear terms in \eqref{udelta_l}. At first, applying Lemma \ref{le2.6}, interpolation inequality and \eqref{buxiu1}--\eqref{buxiu4}, we have
\begin{align}\label{uda_l}
	\| \u \cdot \nabla n\|^\ell_{\dot B^{\frac{d}{2} - 1}_{2,1}}+\| n \div \u\|^\ell_{\dot B^{\frac{d}{2} - 1}_{2,1}} &\lesssim \| \u \|_{\dot B^{\frac{d}{2} }_{2,1}}  \|  n\|_{\dot B^{\frac{d}{2}}_{2,1}} \lesssim \|  n\|^2_{\dot B^{\frac{d}{2}}_{2,1}} + \| \u \|^2_{\dot B^{\frac{d}{2} }_{2,1}}\nn\\
	& \lesssim \|  n\|_{\dot B^{\frac{d}{2}-1}_{2,1}} \|  n\|_{\dot B^{\frac{d}{2}+1}_{2,1}} + \| \u \|_{\dot B^{\frac{d}{2} -1}_{2,1}} \| \u \|_{\dot B^{\frac{d}{2} +1}_{2,1}}  \nn\\
	& \lesssim  \ea\eb.
\end{align}
Bounding nonlinear terms involving composition functions in $f_1,f_2$ is more elaborate. Remembering $n = R\rho^{\gamma}-R\bar{\rho}^\gamma  + K (L-1)\eta + \zeta \eta^2 $ in mind, we can write $I(a)$ into the following form:
 $$I(a) \stackrel{\mathrm{def}}{=} (a+\bar{\rho})^\gamma - \bar{\rho}^\gamma = \frac{1}{R}n - \frac{K (L-1)}{R} \eta - \frac{\zeta}{R} \eta^2.$$
As a consequence, we decompose the term $ I(a) \div \mathbf{u}$ into three separate components: $ n \div \mathbf{u}$, $ \eta \div \mathbf{u}$,  and $ \eta^2\div \mathbf{u}$. The estimation of the term $ n\div \mathbf{u}$ has been previously addressed in \eqref{uda_l}; thus, we focus solely on the remaining two terms.
Thanks to Lemmas \ref{le2.6} and \ref{I(a)}, we infer from \eqref{buxiu1}--\eqref{buxiu4} that
\begin{align}\label{aIaedu_l}
\| \eta \div \u\|^\ell_{\dot B^{\frac{d}{2} - 1}_{2,1}}
	  \lesssim&\| \eta \|_{\dot B^{\frac{d}{2} - 1}_{2,1}} \| \u \|_{\dot B^{\frac{d}{2} + 1}_{2,1}}
	\lesssim\ea\eb.
\end{align}
Similarly, there holds
\begin{align}\label{eIaedu_l}
& \|  \eta^2 \div \u \|^\ell_{\dot B^{\frac{d}{2} - 1}_{2,1}} \lesssim\ea\big( 1+ \ea\big)\eb.
\end{align}

Combining \eqref{uda_l},  \eqref{aIaedu_l} and \eqref{eIaedu_l} gives
\begin{align}\label{h1_l}
	\| f_1 \|^\ell_{{L}^{1}_t ({\dot B^{\frac{d}{2} - 1}_{2,1}})} \lesssim  \big( 1 + \ea \big)\ea\eb.
\end{align}
Noticing the fact that operator $\la^{-1}\div $ is a zero order Fourier multiplier, we infer from Lemma \ref{le2.6} and \eqref{buxiu3}, \eqref{buxiu4}  that
\begin{align}\label{udu_l}
	\| \u \cdot \nabla \u\|^\ell_{\dot B^{\frac{d}{2} - 1}_{2,1}} &\lesssim \| \u \|_{\dot B^{\frac{d}{2} -1}_{2,1}} \| \u \|_{\dot B^{\frac{d}{2} +1}_{2,1}}
	\lesssim \ea\eb.
\end{align}
For the term $k(a) \stackrel{\mathrm{def}}{=}  k(n, \eta)$, we can similarly handle it like $I(a)$, and it can be easily verified that $k(0,0)=0$. However, we opt to employ the concept of Taylor expansion for binary functions. Consequently, a rephrasing of this term is necessary, taking the form:
\begin{align}\label{ine}
k(n, \eta) = k_1(0, 0) n + k_2(0, 0) \eta + n \widetilde{k}_1(n, \eta) + \eta \widetilde{k}_2(n, \eta).
\end{align}
Here, $k_1(0, 0), k_2(0, 0)$ are two constants and
 $ \widetilde{k}_1(n, \eta)$ consists of terms involving higher-order partial derivatives of $ k(n, \eta)$ with respect to the first variable evaluated at $ (0,0)$, while $ \widetilde{k}_2(n, \eta)   $ comprises terms involving higher-order partial derivatives of $ k(n, \eta)$ with respect to the second variable and mixed partial derivatives (though this combination is not unique). Moreover, it is straightforward to verify that
 $$ \widetilde{k}_1(0, 0) = 0, \qquad \widetilde{k}_2(0, 0) = 0.$$
 Therefore,  we  infer from Lemma \ref{le2.6} and \eqref{buxiu1}, \eqref{buxiu2}  that
\begin{align}\label{ada_l}
	\| n \nabla n\|^\ell_{\dot B^{\frac{d}{2} - 1}_{2,1}}+\| \eta \nabla n\|^\ell_{\dot B^{\frac{d}{2} - 1}_{2,1}}  \lesssim \| (n,\eta )\|_{\dot B^{\frac{d}{2} - 1}_{2,1}} \| n \|_{\dot B^{\frac{d}{2} + 1}_{2,1}} \lesssim \ea\eb,
\end{align}
and
\begin{align}\label{akaeda_l}
&  \| n \widetilde{k}_1(n, \eta) \nabla n\|^\ell_{\dot B^{\frac{d}{2} - 1}_{2,1}}+\| \eta \widetilde{k}_2(n, \eta) \nabla n\|^\ell_{\dot B^{\frac{d}{2} - 1}_{2,1}}\nn\\
&\quad\lesssim \| n \|_{\dot B^{\frac{d}{2} - 1}_{2,1}} \| \widetilde{k}_1(n, \eta) \|_{\dot B^{\frac{d}{2} }_{2,1}} \| n \|_{\dot B^{\frac{d}{2} + 1}_{2,1}}+\| \eta \|_{\dot B^{\frac{d}{2} - 1}_{2,1}} \| \widetilde{k}_2(n, \eta) \|_{\dot B^{\frac{d}{2} }_{2,1}} \| n \|_{\dot B^{\frac{d}{2} + 1}_{2,1}} \nn\\
& \quad\lesssim\| (n,\eta )\|_{\dot B^{\frac{d}{2} - 1}_{2,1}} \| (n, \eta) \|_{\dot B^{\frac{d}{2} }_{2,1}} \| n\|_{\dot B^{\frac{d}{2} + 1}_{2,1}}\nn\\
 & \quad\lesssim  \big( 1 + \ea \big)\ea\eb.
\end{align}
Combining \eqref{ada_l} with  \eqref{akaeda_l} gives rises to
\begin{align}\label{kada_l}
\| k(n, \eta) \nabla n\|^\ell_{\dot B^{\frac{d}{2} - 1}_{2,1}} \lesssim \ea \big(1+\ea\big)\eb.
\end{align}
Similarly,
for the term $ k(n, \eta) \nabla^2 \u$,  there hold
\begin{align*}
&\| n \nabla^2 \u\|^\ell_{\dot B^{\frac{d}{2} - 1}_{2,1}}+\| \eta \nabla^2 \u\|^\ell_{\dot B^{\frac{d}{2} - 1}_{2,1}}  \lesssim \|( n,\eta )\|_{\dot B^{\frac{d}{2} - 1}_{2,1}} \| \u \|_{\dot B^{\frac{d}{2} + 2}_{2,1}} \lesssim \ea\eb,\nn\\
&  \| n \widetilde{k}_1(n, \eta) \nabla^2 \u\|^\ell_{\dot B^{\frac{d}{2} - 1}_{2,1}}+\| \eta \widetilde{k}_2(n, \eta) \nabla^2 \u\|^\ell_{\dot B^{\frac{d}{2} - 1}_{2,1}}\nn\\
 &\quad\lesssim \| n \|_{\dot B^{\frac{d}{2} - 1}_{2,1}} \| \widetilde{k}_1(n, \eta) \|_{\dot B^{\frac{d}{2} }_{2,1}} \| \u \|_{\dot B^{\frac{d}{2} + 2}_{2,1}} +\| \eta \|_{\dot B^{\frac{d}{2} - 1}_{2,1}} \| \widetilde{k}_1(n, \eta) \|_{\dot B^{\frac{d}{2} }_{2,1}} \| \u \|_{\dot B^{\frac{d}{2} + 2}_{2,1}}\nn\\
& \quad\lesssim\| (n,\eta )\|_{\dot B^{\frac{d}{2} - 1}_{2,1}} \| (n, \eta) \|_{\dot B^{\frac{d}{2} }_{2,1}} \| \u \|_{\dot B^{\frac{d}{2} + 2}_{2,1}}\nn\\
 & \quad\lesssim \ea \big(1+\ea\big)\eb
\end{align*}
which implies that
\begin{align}\label{kaddu_l}
\| k(n, \eta) \nabla^2 \u\|^\ell_{\dot B^{\frac{d}{2} - 1}_{2,1}} \lesssim \ea \big(1+\ea\big)\eb.
\end{align}
For the term $ k(n, \eta)\div \tau$, we can also utilize a similar approach as  $ k(n ,\eta) \nabla^2 \u$ to get
\begin{align}\label{kadt_l}
\| k(n, \eta) \div \tau \|^\ell_{\dot B^{\frac{d}{2} - 1}_{2,1}} \lesssim \ea \big(1+\ea\big)\eb.
\end{align}
Together with \eqref{udu_l}, \eqref{kada_l}, \eqref{kaddu_l}, and \eqref{kadt_l}, we have
\begin{align}\label{h2_l}
\| \mathbb Q f_2 \|^\ell_{\dot B^{\frac{d}{2} - 1}_{2,1}} \lesssim \ea \big(1+\ea\big)\eb.
\end{align}

Concerning the low-frequency estimation of the incompressible component of velocity $ \p \u$, employing a treatment analogous to that in \eqref{udeltagamma_l}, we obtain
\begin{align*}
	 	\frac{1}{2} \frac{d}{dt} \| \dot{\Delta}_k  \mathbb P \u \|_{L^2}+  \mu_12^{2k}\| \dot{\Delta}_k  \mathbb P \u \|_{L^2}     \lesssim  \alpha_1 \|\dot{\Delta}_k \mathbb P \div \tau\|_{L^2} + \| \dot{\Delta}_k\mathbb P f_2\|_{L^2},
	 \end{align*}
from which we  further get
\begin{align}\label{pu_l}
&\|\p \u\|^\ell_{\widetilde{L}^{\infty}_{t}(\dot B^{\frac{d}{2}-1}_{2,1})}  + \|  \p\u \|^\ell_{{L}^{1}_{t}(\dot B^{\frac{d}{2} + 1}_{2,1})}\lesssim  \| \mathbb{P}\u_0\|^\ell_{\dot B^{\frac{d}{2}-1}_{2,1}} + \|  (\mathbb P f_2, \div \tau) \|^\ell_{{L}^{1}_t ({\dot B^{\frac{d}{2} - 1}_{2,1}})}.
\end{align}
Subsequently, by substituting \eqref{uda_l}, \eqref{h1_l} and \eqref{h2_l} into \eqref{udelta_l} and then simultaneously combining \eqref{pu_l}, we obtain \eqref{aut_low}.
\end{proof}
\end{lemma}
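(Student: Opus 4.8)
The plan is to decompose the velocity as $\u=\mathbb{Q}\u+\mathbb{P}\u$ into potential and solenoidal parts, treat the coupled block $(n,\mathbb{Q}\u)$ and the block $\mathbb{P}\u$ separately, and close each estimate by a frequency-localized Lyapunov functional restricted to $k\le k_0$. For the density--potential block I would apply $\dot\Delta_k$ to the first two equations of \eqref{reformulate}, take $L^2$ inner products, and obtain energy identities for $\|\dot\Delta_k n\|_{L^2}$ and for $\|\dot\Delta_k\delta\|_{L^2}$ with $\delta:=\Lambda^{-1}\div\mathbb{Q}\u$. Since neither identity exhibits damping on $n$, the crucial device is the effective unknown $\Gamma:=(\mu_1+\mu_2)\Lambda n-\alpha_1\delta$, which solves a damped equation $\partial_t\Gamma+\alpha_1^2\Lambda n=(\text{nonlinear})+(\text{stress})$ and hence furnishes control of $\|\Lambda n\|_{L^2}$. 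Combining the three localized identities with weights $1$, $1-\beta$, $\beta\alpha_1^{-2}$ and choosing $\beta$ small relative to $k_0$ (so that on low frequencies $2^{2k}\le 2^{2k_0}$ keeps the cross terms subordinate) makes the resulting quadratic form equivalent to $\|\dot\Delta_k n\|_{L^2}^2+\|\dot\Delta_k\delta\|_{L^2}^2$ and yields, for each $k\le k_0$,
\[
\frac12\frac{d}{dt}\big(\|\dot\Delta_k n\|_{L^2}^2+\|\dot\Delta_k\delta\|_{L^2}^2\big)+c\,2^{2k}\big(\|\dot\Delta_k n\|_{L^2}^2+\|\dot\Delta_k\delta\|_{L^2}^2\big)\lesssim \mathcal R_k\,\|(\dot\Delta_k n,\dot\Delta_k\delta)\|_{L^2},
\]
where $\mathcal R_k$ collects the localized norms of $f_1$, $\mathbb{Q}f_2$, $\u\cdot\nabla n$ and $\div\tau$. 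Dividing by $\|(\dot\Delta_k n,\dot\Delta_k\delta)\|_{L^2}$, integrating in time, multiplying by $2^{(\frac d2-1)k}$ and summing over $k\le k_0$ produces the bound of the left-hand side of \eqref{aut_low} by the initial data, by $\|\tau\|^\ell_{L^1_t(\dot B^{\frac d2}_{2,1})}$ (arising from the $\alpha_1\div\tau$ forcing together with $\|\div\tau\|^\ell_{\dot B^{\frac d2-1}_{2,1}}\lesssim\|\tau\|^\ell_{\dot B^{\frac d2}_{2,1}}$), and by $\|(f_1,f_2,\u\cdot\nabla n)\|^\ell_{L^1_t(\dot B^{\frac d2-1}_{2,1})}$.

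For the solenoidal block, $\mathbb{P}\u$ solves a heat equation with right-hand side $\alpha_1\mathbb{P}\div\tau+\mathbb{P}f_2$, so a localized energy estimate --- equivalently, maximal regularity for the heat flow in the Chemin--Lerner scale --- gives the analogous bound with the same forcing. It remains to estimate the nonlinear sources: $\u\cdot\nabla n$ and $n\div\u$ are controlled by the product law of Lemma \ref{le2.6} together with $\|f\|_{\dot B^{\frac d2}_{2,1}}^2\lesssim\|f\|_{\dot B^{\frac d2-1}_{2,1}}\|f\|_{\dot B^{\frac d2+1}_{2,1}}$ and \eqref{buxiu1}--\eqref{buxiu4}, giving $\lesssim \mathcal E_\infty(t)\,\mathcal E_1(t)$. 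In $f_1$ I would use $I(a)=\frac1R n-\frac{K(L-1)}{R}\eta-\frac\zeta R\eta^2$, so that $I(a)\div\u$ splits into $n\div\u$, $\eta\div\u$, $\eta^2\div\u$, the last one costing the factor $1+\mathcal E_\infty(t)$ via Lemma \ref{I(a)}. In $f_2$ only the $k(a)$ terms are delicate; writing $k(n,\eta)=k_1(0,0)n+k_2(0,0)\eta+n\widetilde k_1(n,\eta)+\eta\widetilde k_2(n,\eta)$ with $\widetilde k_i(0,0)=0$ and applying Lemmas \ref{le2.6} and \ref{I(a)} bounds $k(a)\nabla n$, $k(a)\nabla^2\u$ and $k(a)\div\tau$ each by $\mathcal E_\infty(t)(1+\mathcal E_\infty(t))\mathcal E_1(t)$. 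Summing all contributions and combining the two blocks yields \eqref{aut_low}.

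The step I expect to be the main obstacle is the construction and exploitation of the effective unknown $\Gamma$ and the calibration of $\beta$ against $k_0$: this is precisely where the low-frequency parabolic smoothing of $n$ --- absent from the original system, which carries no dissipation on the density --- is recovered, and it must be arranged so that the quadratic form stays coercive and none of the losing constants degenerate. A secondary difficulty is the bookkeeping of the composition terms in $f_1$ and $f_2$, which must close with $\mathcal E_\infty(t)(1+\mathcal E_\infty(t))\mathcal E_1(t)$ rather than with, say, $\mathcal E_1(t)^2$, since $\tau$ enjoys no smoothing and can only be paid for in $L^1_t(\dot B^{\frac d2}_{2,1})$.
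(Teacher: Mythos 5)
Your proposal reproduces the paper's argument essentially step by step: the same effective unknown $\Gamma=(\mu_1+\mu_2)\Lambda n-\alpha_1\delta$, the same three-term localized Lyapunov functional with weights $1$, $1-\beta$, $\beta\alpha_1^{-2}$ and the same calibration of $\beta$ against $k_0$, the same Taylor-splitting of $I(a)$ and $k(n,\eta)$ to close the nonlinear bounds at the required order, and the same separate heat-type estimate for $\mathbb{P}\u$. This matches the paper's proof; no substantive differences to report.
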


\subsubsection{The estimate of $\tau $ in the low frequency}
In this subsection, we are concerned with the
estimates of $\tau$ in the low frequency. More precisely, we have the following lemma.
\begin{lemma}\label{le_qt_low}
	Under the condition in Theorem \ref{dingli1}, there exists a constant $ C$ such that
	\begin{align}\label{t_low}
		\| \tau \|^\ell_{\widetilde{L}^{\infty}_{t}(\dot B^{\frac{d}{2}}_{2,1})}  + \frac{A_0}{2\lambda_1} \| \tau \|^\ell_{{L}^{1}_{t}(\dot B^{\frac{d}{2} }_{2,1} )} \le  \| \tau_0 \|^\ell_{\dot{B}^{\frac{d}{2}}_{2,1}}   + C\int_{0}^{t} \eat\ebt \, \,dt'.
	\end{align}
	\begin{proof}
Applying operator $ \dot{\Delta}_k$ to the forth equation of \eqref{reformulate}, we have
		\begin{align*}
			\partial_t  \dot{\Delta}_k \tau +\frac{A_0}{2\lambda_1}\dot{\Delta}_k \tau+ \u \cdot \nabla \dot{\Delta}_k \tau + [\dot{\Delta}_k, \u \cdot \nabla ]\tau = \dot{\Delta}_k f_3.
		\end{align*}
	 Taking the $ L^2$ inner product with $ \dot{\Delta}_k \tau$  gives
	 \begin{align}\label{eta}
	 	\frac{1}{2} \frac{d}{dt} \| \dot{\Delta}_k \tau \|^2_{L^2} +\frac{A_0}{2\lambda_1}\| \dot{\Delta}_k \tau \|^2_{L^2}- \int_{\R^d} \div \u |\dot{\Delta}_k \tau|^2 \,dx +  \langle [\dot{\Delta}_k, \u \cdot \nabla ]\tau,  \dot{\Delta}_k \tau \rangle  = \langle  \dot{\Delta}_k f_3,  \dot{\Delta}_k \tau \rangle
	 \end{align}
	 where we have used the fact
	 $$ \int_{\R^d} \u \cdot \nabla \dot{\Delta}_k \tau \cdot \dot{\Delta}_k \tau  \,dx = -\frac{1}{2} \int_{\R^d} \div \u |\dot{\Delta}_k \tau|^2 \,dx.$$
	 Thanks to the H\"{o}lder  inequality, we can infer \eqref{eta} that
\begin{align}\label{taoguji}
	 	\frac{1}{2} \frac{d}{dt} \| \dot{\Delta}_k \tau\|_{L^2}  +\frac{A_0}{2\lambda_1}   \| \dot{\Delta}_k \tau\|_{L^2}   \lesssim \| \div \u\|_{L^{\infty}} \|\dot{\Delta}_k \tau\|_{L^2} + \| [\dot{\Delta}_k, \u \cdot \nabla ]\tau\|_{L^2} +  \| \dot{\Delta}_k f_3\|_{L^2}.
	 \end{align}
Then multiplying \eqref{taoguji} by $2^{\frac{d k}{2}}$, integrating from $0$ to $t$,  taking sum with respect to $k$ over $ k\le k_0$, there holds
		\begin{align}\label{t_l}
		\| \tau \|^\ell_{\widetilde{L}^{\infty}_{t}(\dot B^{\frac{d}{2} }_{2,1})} + \frac{A_0}{2\lambda_1} \| \tau \|^\ell_{{L}^{1}_{t}(\dot B^{\frac{d}{2} }_{2,1} )} &\lesssim  \| \tau_0 \|^\ell_{\dot{B}^{\frac{d}{2}}_{2,1}} + \int_{0}^{t} \| \div \u \|_{L^{\infty}} \| \tau \|^\ell_{\dot B^{\frac{d}{2}}_{2,1}} \,dt' \nn\\
		&\quad +\int_{0}^{t} \sum_{k\leq k_0} 2^{\frac{d}{2}k} \| [\dot{\Delta}_k, \u \cdot \nabla ]\tau\|_{L^2} \,dt' + \int_{0}^{t} \| f_3 \|^\ell_{\dot B^{\frac{d}{2} }_{2,1}} \,dt'.
		\end{align}
Thanks to
 $ \dot B^{\frac{d}{2}}_{2,1}(\R^d) \hookrightarrow L^{\infty}(\R^d)$,  it follow from Lemma \ref{commutator}  and \eqref{buxiu1}, \eqref{buxiu4} that
\begin{align}\label{dut_l}
	& \| \div \u \|_{L^{\infty}} \| \tau \|^\ell_{\dot B^{\frac{d}{2}}_{2,1}} + \sum_{k\leq k_0} 2^{\frac{d}{2}k} \| [\dot{\Delta}_k, \u \cdot \nabla ]\tau\|_{L^2}
	\lesssim \| \u  \|_{\dot B^{\frac{d}{2} + 1}_{2,1}}  \| \tau  \|_{\dot B^{\frac{d}{2}}_{2,1}} \lesssim \ea\eb.
\end{align}
Similarly, for the last term in \eqref{t_l}, there holds

\begin{align}\label{h4_l}
\| f_3\|^\ell_{\dot B^{\frac{d}{2}}_{2,1}}
&\lesssim \| (\tau,\eta)\|_{\dot B^{\frac{d}{2}}_{2,1}} \| \u\|_{\dot B^{\frac{d}{2}+1}_{2,1}} \lesssim \ea\eb.
\end{align}
Inserting \eqref{h4_l} and \eqref{dut_l} into \eqref{t_l}, we can get \eqref{t_low}. Consequently, we complete the proof of Lemma \ref{le_qt_low}.
	\end{proof}
\end{lemma}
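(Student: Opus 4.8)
The plan is to treat the third equation of \eqref{reformulate} for $\tau$ -- namely $\partial_t\tau+\alpha\,\u\cdot\nabla\tau+\f{A_0}{2\lambda_1}\tau=f_3$ -- as a \emph{damped transport equation}: it carries no parabolic smoothing, but the zeroth-order term $\f{A_0}{2\lambda_1}\tau$ will by itself furnish the $L^1_t$ bound. First I would localize by applying $\ddk$, writing the convection term as $\alpha\,\u\cdot\nabla\ddk\tau$ modulo the commutator $\alpha[\ddk,\u\cdot\nabla]\tau$, and then pair the resulting equation with $\ddk\tau$ in $L^2(\R^d)$. The skew-symmetry identity $\int_{\R^d}\u\cdot\nabla\ddk\tau\cdot\ddk\tau\,dx=-\tfrac12\int_{\R^d}\div\u\,|\ddk\tau|^2\,dx$ turns the convection term into a harmless factor $\|\div\u\|_{L^\infty}$, and dividing by $\|\ddk\tau\|_{L^2}$ (the usual formal manipulation, made rigorous by an $\varepsilon$-regularization) yields
\begin{align*}
\f{d}{dt}\|\ddk\tau\|_{L^2}+\f{A_0}{2\lambda_1}\|\ddk\tau\|_{L^2}\lesssim\|\div\u\|_{L^\infty}\|\ddk\tau\|_{L^2}+\|[\ddk,\u\cdot\nabla]\tau\|_{L^2}+\|\ddk f_3\|_{L^2}.
\end{align*}

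Next I would integrate in time, multiply by $2^{\f{d}{2}k}$, and sum over $k\le k_0$ only, so that the left-hand side reproduces $\|\tau\|^\ell_{\widetilde{L}^{\infty}_{t}(\dot B^{\f{d}{2}}_{2,1})}+\f{A_0}{2\lambda_1}\|\tau\|^\ell_{L^1_t(\dot B^{\f{d}{2}}_{2,1})}$, while the right-hand side becomes $\|\tau_0\|^\ell_{\dot B^{\f{d}{2}}_{2,1}}$ plus the time integrals of $\|\div\u\|_{L^\infty}\|\tau\|^\ell_{\dot B^{\f{d}{2}}_{2,1}}$, of $\sum_{k\le k_0}2^{\f{d}{2}k}\|[\ddk,\u\cdot\nabla]\tau\|_{L^2}$, and of $\|f_3\|^\ell_{\dot B^{\f{d}{2}}_{2,1}}$. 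These three are absorbed using the tools of Section \ref{ineq}: the embedding $\dot B^{\f{d}{2}}_{2,1}\hookrightarrow L^\infty$ bounds $\|\div\u\|_{L^\infty}$ by $\|\u\|_{\dot B^{\f{d}{2}+1}_{2,1}}$; the commutator estimate of Lemma \ref{commutator} bounds the sum by $\|\nabla\u\|_{\dot B^{\f{d}{2}}_{2,1}}\|\tau\|_{\dot B^{\f{d}{2}}_{2,1}}$; and, since every term of $f_3$ is $\nabla\u$ (or $\div\u$) multiplied by $\tau$ or by $\eta$, the tame product estimate of Lemma \ref{le2.6} gives $\|f_3\|^\ell_{\dot B^{\f{d}{2}}_{2,1}}\lesssim\|(\tau,\eta)\|_{\dot B^{\f{d}{2}}_{2,1}}\|\u\|_{\dot B^{\f{d}{2}+1}_{2,1}}$. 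Plugging in the inclusions \eqref{buxiu1}--\eqref{buxiu4}, in particular $\|(\tau,\eta)\|_{\dot B^{\f{d}{2}}_{2,1}}\lesssim\ea$ and $\|\u\|_{\dot B^{\f{d}{2}+1}_{2,1}}\lesssim\eb$, each integrand is $\lesssim\eat\ebt$, which is precisely \eqref{t_low}.

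The point to watch is structural rather than computational: because $\tau$ has no dissipation, the estimate is pinned at the single regularity level $\dot B^{\f{d}{2}}_{2,1}$, the $L^1_t$ gain is paid for solely by the damping constant $\f{A_0}{2\lambda_1}$, and the forcing $f_3$ -- quadratic of the form $\nabla\u\cdot(\tau,\eta)$ -- costs exactly one derivative of $\u$. Hence this lemma does not close in isolation; it becomes useful only when combined with the low-frequency parabolic control of $\u$ in $L^1_t(\dot B^{\f{d}{2}+1}_{2,1})$ from Lemma \ref{le_aut_low}. A minor bookkeeping point is to keep the summation restricted to $k\le k_0$ throughout, so that only the truncated $(\cdot)^\ell$ seminorms appear; the high-frequency part of $\tau$, which genuinely needs the $\Lambda$-weighted spaces of Theorem \ref{dingli1}, is handled in a separate step.
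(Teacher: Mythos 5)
Your argument reproduces the paper's proof of Lemma \ref{le_qt_low} essentially verbatim: localize the damped transport equation for $\tau$ with $\dot\Delta_k$, pair with $\dot\Delta_k\tau$, use skew-symmetry and the commutator of Lemma \ref{commutator}, multiply by $2^{dk/2}$, sum over $k\le k_0$, and bound $f_3$ by the tame product estimate and the inclusions \eqref{buxiu1}--\eqref{buxiu4}. The only (trivial) slip is calling it the ``third'' rather than the fourth equation of \eqref{reformulate}; everything else, including the closing remarks on why the lemma only becomes useful when coupled with the parabolic $L^1_t$ control of $\u$, matches the paper's approach.
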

\subsubsection{The estimate of $\eta$ in the low frequency}
In this subsection, we are concerned with the
estimates of $\eta$ in the low frequency. More precisely, we have the following lemma.
\begin{lemma}\label{le_eta_low}
	Under the condition in Theorem \ref{dingli1}, there exists a constant $ C$ such that
	\begin{align}\label{eta_low}
		 \| \eta \|^\ell_{\widetilde{L}^{\infty}_{t}(\dot B^{\frac{d}{2} - 1}_{2,1})}   \le \| \eta_0 \|^\ell_{\dot{B}^{\frac{d}{2}-1}_{2,1}}   + C \int_{0}^{t} \eat\ebt \,dt'.
	\end{align}
	\begin{proof}
	 We get by a similar derivation of \eqref{taoguji} that
	 \begin{align}\label{dketa_l}
	 	\frac{1}{2} \frac{d}{dt} \| \dot{\Delta}_k \eta \|_{L^2}      \lesssim \| \div \u\|_{L^{\infty}} \|\dot{\Delta}_k \eta\|_{L^2} + \| [\dot{\Delta}_k, \u \cdot \nabla ]\eta\|_{L^2} +  \| \dot{\Delta}_k f_4\|_{L^2}.
	 \end{align}
	 Then multiplying by $2^{(\frac{d}{2} - 1)k}$, integrating from $0$ to $t$,  taking sum with respect to $k$ over $ k\le k_0$, there holds
		\begin{align}\label{eta_l}
			\| \eta  \|^\ell_{\widetilde{L}^{\infty}_{t}(\dot B^{\frac{d}{2} - 1}_{2,1})}
&\lesssim  \| \eta_0 \|^\ell_{\dot{B}^{\frac{d}{2}-1}_{2,1}} + \int_{0}^{t} \| \div \u \|_{L^{\infty}} \| \eta  \|^\ell_{\dot B^{\frac{d}{2} - 1}_{2,1}} \,dt' \nn\\
			&\quad + \int_{0}^{t} \sum_{k\leq k_0} 2^{(\frac{d}{2} -1)k} \| [\dot{\Delta}_k, \u \cdot \nabla ]\eta\|_{L^2} \,dt' + \int_{0}^{t} \| f_4 \|^\ell_{\dot B^{\frac{d}{2} - 1}_{2,1}} \,dt'.
		\end{align}
		For the term $ f_4$, we have already considered in \eqref{aIaedu_l},
		\begin{align}\label{h5_l}
			\| f_4\|^\ell_{\dot B^{\frac{d}{2} - 1}_{2,1}}=\| \eta \div \u\|^\ell_{\dot B^{\frac{d}{2} - 1}_{2,1}}  \lesssim \ea \eb.
		\end{align}
		Since $ \dot B^{\frac{d}{2}}_{2,1}(\R^d) \hookrightarrow L^{\infty}(\R^d)$, we can get by applying Lemma \ref{commutator} and \eqref{buxiu1}, \eqref{buxiu4} that
		\begin{align}\label{due_l}
			& \| \div \u \|_{L^{\infty}} \| \eta  \|^\ell_{\dot B^{\frac{d}{2} - 1}_{2,1}} + \sum_{k\leq k_0} 2^{(\frac{d}{2} -1)k} \| [\dot{\Delta}_k, \u \cdot \nabla ]\eta\|_{L^2}
			\lesssim \| \u  \|_{\dot B^{\frac{d}{2} + 1}_{2,1}}  \| \eta  \|_{\dot B^{\frac{d}{2} - 1}_{2,1}} \lesssim\ea\eb.
		\end{align}
		The combination of \eqref{h5_l} and \eqref{due_l} imply that \eqref{eta_low}.  Consequently, we complete the proof of Lemma \ref{le_eta_low}.
	\end{proof}
\end{lemma}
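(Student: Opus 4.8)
The plan is to treat the last equation of the reformulated system \eqref{reformulate}, namely $\partial_t\eta+\alpha\u\cdot\nabla\eta=f_4$ with $f_4=-\alpha\eta\div\u$, as a pure transport equation and to run, at the level of dyadic blocks, the same $L^2$‑energy scheme that was used for $\tau$ in the proof of Lemma \ref{le_qt_low}. The essential structural difference is that the $\eta$‑equation carries neither a damping term (unlike the $\frac{A_0}{2\lambda_1}\tau$ term in the $\tau$‑equation) nor any parabolic smoothing, so only an $\widetilde L^\infty_t$‑type bound can be expected; this is exactly what \eqref{eta_low} asserts, and it matches the fact that $\eta$ has no time decay.

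First I would localize in frequency: apply $\dot\Delta_k$ and commute it past the convection operator to get $\partial_t\dot\Delta_k\eta+\alpha\u\cdot\nabla\dot\Delta_k\eta+\alpha[\dot\Delta_k,\u\cdot\nabla]\eta=\dot\Delta_k f_4$. Taking the $L^2$ inner product with $\dot\Delta_k\eta$, using the cancellation $\int_{\R^d}\u\cdot\nabla\dot\Delta_k\eta\,\dot\Delta_k\eta\,dx=-\tfrac12\int_{\R^d}\div\u\,|\dot\Delta_k\eta|^2\,dx$, and applying Hölder's inequality gives the block inequality $\tfrac12\tfrac{d}{dt}\|\dot\Delta_k\eta\|_{L^2}\lesssim\|\div\u\|_{L^\infty}\|\dot\Delta_k\eta\|_{L^2}+\|[\dot\Delta_k,\u\cdot\nabla]\eta\|_{L^2}+\|\dot\Delta_k f_4\|_{L^2}$. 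I would then multiply by $2^{(d/2-1)k}$, integrate on $[0,t]$, and sum over $k\le k_0$; this yields $\|\eta\|^\ell_{\widetilde L^\infty_t(\dot B^{d/2-1}_{2,1})}$ on the left, together with $\|\eta_0\|^\ell_{\dot B^{d/2-1}_{2,1}}$ and three time‑integrated source terms on the right.

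It remains to show that each of those three terms is $\lesssim\int_0^t\eat\,\ebt\,dt'$. For the source $f_4=-\alpha\eta\div\u$ this is precisely the product bound already obtained in \eqref{aIaedu_l}: by Lemma \ref{le2.6} (in the borderline case $s_1=d/2$ for $\div\u$ and $s_2=d/2-1$ for $\eta$, which is admissible since $d=2,3$) one has $\|\eta\div\u\|^\ell_{\dot B^{d/2-1}_{2,1}}\lesssim\|\eta\|_{\dot B^{d/2-1}_{2,1}}\|\u\|_{\dot B^{d/2+1}_{2,1}}\lesssim\ea\eb$ by \eqref{buxiu1}--\eqref{buxiu4}. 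For the commutator sum I would invoke Lemma \ref{commutator} with $s=d/2-1$, which lies in its admissible window $\big(-d/2,\,d/2+1\big]$ whenever $d\ge2$, together with $\dot B^{d/2}_{2,1}\hookrightarrow L^\infty$, to get $\sum_{k\le k_0}2^{(d/2-1)k}\|[\dot\Delta_k,\u\cdot\nabla]\eta\|_{L^2}\lesssim\|\u\|_{\dot B^{d/2+1}_{2,1}}\|\eta\|_{\dot B^{d/2-1}_{2,1}}\lesssim\ea\eb$; the remaining term $\|\div\u\|_{L^\infty}\|\eta\|^\ell_{\dot B^{d/2-1}_{2,1}}$ is controlled in the same way. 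Inserting these three bounds gives \eqref{eta_low}.

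I do not expect a real obstacle here — this is the softest of the three low‑frequency estimates. The only points that need attention are bookkeeping ones: one works at the index $d/2-1$ (not $d/2$ as for $\tau$) and must check that $d/2-1$ still lies strictly above $-d/2$ so that Lemma \ref{commutator} and Lemma \ref{le2.6} apply for both $d=2$ and $d=3$; and one should not expect an $L^1_t$‑in‑time smoothing gain, since the transport structure of the $\eta$‑equation provides none, which is precisely why the statement of Lemma \ref{le_eta_low} contains only the $\widetilde L^\infty_t$ norm.
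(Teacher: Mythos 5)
Your proposal is correct and follows essentially the same route as the paper: localize the transport equation for $\eta$, exploit the $\div\u$ cancellation to get the block inequality \eqref{dketa_l}, weight by $2^{(d/2-1)k}$, sum over $k\le k_0$, and close the three resulting source terms via the product bound \eqref{aIaedu_l} for $f_4$ and the commutator lemma for the remaining two. Your added checks that $d/2-1$ lies in the admissible windows of Lemmas \ref{le2.6} and \ref{commutator}, and the remark that the absence of damping is exactly why only the $\widetilde L^\infty_t$ norm appears, are both accurate.
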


Next we shall
establish the high-frequency estimates of $( n, \u) $.
\subsubsection{The estimate of $( n, \u) $  in the high frequency}\label{sub_high} In this subsection, we aim to find the damping effect of the pressure  $n$ in the high frequency.
\begin{lemma}\label{le_aut_h}
	Under the condition in Theorem \ref{dingli1}, there exists a constant $ C$ such that
	\begin{align}\label{au_h}
	&\| (\Lambda n, \u)\|^h_{\widetilde{L}^{\infty}_{t}(\dot B^{\frac{d}{2}+1}_{2,1})}  + \| \Lambda n \|^h_{{L}^{1}_{t}(\dot B^{\frac{d}{2}+1}_{2,1})} + \|  \u \|^h_{{L}^{1}_{t}(\dot B^{\frac{d}{2} + 3}_{2,1})} \nn\\
	&\quad\le  \| (\Lambda n_0,\u_0)\|^h_{\dot B^{\frac{d}{2}+1}_{2,1}} + C\| \Lambda\tau \|^h_{{L}^{1}_t(\dot B^{\frac{d}{2} +1 }_{2,1})} +  C \int_{0}^{t}  \big( 1+\eat \big)\eat\ebt \, \,dt'.
	\end{align}
\begin{proof}
	In order to find the damping effect of the pressure in high frequency, we need to introduce the new unknown $$ \G \stackrel{\mathrm{def}}{=} \mathbb{Q} \u - \frac{\alpha_1}{ \mu_1 + \mu_2} \Delta^{-1} \nabla n,$$
from which and the second equation of \eqref{reformulate}, we can derive the equation of $\G$
\begin{align}\label{gfg}
\partial_t \G - (\mu_1 + \mu_2) \Delta \G =& \frac{\alpha_1^2}{\mu_1 + \mu_2} \G + \alpha_1 \q \div \tau \nn\\
&+ \mathbb Q f_2 - \frac{\alpha_1}{\mu_1 + \mu_2} \Delta^{-1} \nabla  \Big( f_1  - \alpha \u \cdot \nabla n -  \frac{\alpha_1^2}{\mu_1 + \mu_2} n\Big).
\end{align}
Thus, we can get   by a similar derivation of \eqref{eta_l} that
	\begin{align}\label{wk}
		&\frac{1}{2} \frac{d}{dt} \| \dot{\Delta}_k \G\|_{L^2}  + 2^{2k}(\mu_1 + \mu_2) \| \dot{\Delta}_k \G \|_{L^2} \nn\\
		&\quad\lesssim 2^{-k} \big(\| \dot{\Delta}_k f_1 \|_{L^2} + \| \dot{\Delta}_k n \|_{L^2} + \| \dot{\Delta}_k (\u \cdot \nabla n) \|_{L^2}\big) + \| \dot{\Delta}_k \G \|_{L^2} + \| \dot{\Delta}_k \mathbb Q f_2 \|_{L^2} + 2^k \| \dot{\Delta}_k \tau \|_{L^2}.
	\end{align}
Plugging $\G \stackrel{\mathrm{def}}{=} \mathbb{Q} \u - \frac{\alpha_1}{ \mu_1 + \mu_2} \Delta^{-1} \nabla n$ into the first equation of \eqref{reformulate} gives
	\begin{align*}
	\partial_t n  + \frac{\alpha_1^2}{\mu_1 + \mu_2 } n   + \alpha \u \cdot \nabla n= f_1 - \alpha_1  \div \G.
	\end{align*}
	Applying the operator $ \dot{\Delta}_k \Lambda $  to the above equation, we have
	\begin{align}\label{a_k}
		&\partial_t \dot{\Delta}_k \Lambda n + \frac{\alpha_1^2}{\mu_1 + \mu_2 } \dot{\Delta}_k \Lambda n + \alpha \u \cdot \nabla  \dot{\Delta}_k \Lambda n \nn\\
		&\quad = - \alpha [\dot{\Delta}_k, \u \cdot \nabla ] \Lambda n - \alpha \dot{\Delta}_k ( \Lambda \u \cdot \nabla n) - \dot{\Delta}_k \Lambda f_1   - \alpha_1 \dot{\Delta}_k \div \Lambda  \G.
	\end{align}
We proceed with a treatment similar to that of Equation \eqref{dketa_l} that
	\begin{align}\label{duk}
		&\frac{1}{2} \frac{d}{dt} \| \dot{\Delta}_k \Lambda n \|_{L^2}  + \frac{\alpha_1^2}{\mu_1 + \mu_2 } \| \dot{\Delta}_k \Lambda n \|_{L^2}\nn\\
 &\quad\lesssim \|\big([\dot{\Delta}_k, \u \cdot \nabla ] \Lambda n,  \dot{\Delta}_k ( \Lambda \u \cdot \nabla n), \dot{\Delta}_k \Lambda f_1 ,  \dot{\Delta}_k \Lambda  \div \G \big) \|_{L^2} + \| \div \u\|_{L^{\infty}} \| \dot{\Delta}_k \Lambda n\|_{L^2}.
	\end{align}

Let $\varepsilon>0$ be a small fixed constant which  will be determined later, multiplying \eqref{duk} by $  2\varepsilon$ and then adding to \eqref{wk}, we can get
	\begin{align}\label{duw_k}
		&\frac{1}{2} \frac{d}{dt} ( 2\varepsilon \| \dot{\Delta}_k \Lambda n\|_{L^2} + \| \dot{\Delta}_k \G \|_{L^2} ) +  \varepsilon \|\Lambda \dot{\Delta}_k  n\|_{L^2} + 2^{2k} (\mu_1 + \mu_2) \| \dot{\Delta}_k \G \|_{L^2} \nn\\
 &\quad\le C(\varepsilon 2^{2k}+1) \| \dot{\Delta}_k \G \|_{L^2}   + C2^{-k} \big( \| \dot{\Delta}_k n\|_{L^2} + \| \dot{\Delta}_k f_1 \|_{L^2} + \| \dot{\Delta}_k (\u \cdot \nabla n) \|_{L^2} \big)  + \| \dot{\Delta}_k \mathbb Q f_2 \|_{L^2}\nn\\
		&\quad \quad   + C\|\big([\dot{\Delta}_k, \u \cdot \nabla ] \Lambda n , \dot{\Delta}_k ( \Lambda \u \cdot \nabla n) , \dot{\Delta}_k \Lambda f_1  \big) \|_{L^2} + C\| \div \u\|_{L^{\infty}} \| \dot{\Delta}_k \Lambda n\|_{L^2} + 2^k \| \dot{\Delta}_k \tau \|_{L^2}.
	\end{align}
	It is easy to see that
	\begin{align*}
		C2^{-k} \| \dot{\Delta}_k n \|_{L^2} \le C2^{-2k_0} \| \Lambda
		 \dot{\Delta}_k n \|_{L^2},  \quad  C2^{-k} \| \dot{\Delta}_k f_1 \|_{L^2} \le C2^{-2k_0} \|
		 \dot{\Delta}_k \Lambda f_1 \|_{L^2},    \quad   \text{for all}\  k\ge k_0 -1.
	\end{align*}

	Choosing $ k_0$ is large enough and $ \varepsilon $ is sufficiently small such that $ \frac{\varepsilon}{2} \ge C2^{-2k_0}$ and  $ \frac{1}{2} \ge C\varepsilon $, we infer from \eqref{duw_k} that
	\begin{align}\label{duwk}
		&\frac{1}{2} \frac{d}{dt} ( 2\varepsilon \| \dot{\Delta}_k \Lambda n \|_{L^2} + \| \dot{\Delta}_k \G \|_{L^2} )  +  \varepsilon \|\Lambda \dot{\Delta}_k  n \|_{L^2} + 2^{2k} \| \dot{\Delta}_k \G \|_{L^2}\nn\\
		&\quad \le   C2^{-k}  \| \dot{\Delta}_k (\u \cdot \nabla n) \|_{L^2}   + \| \dot{\Delta}_k \mathbb Q f_2 \|_{L^2} + 2^k \| \dot{\Delta}_k \tau \|_{L^2}\nn\\
		&\quad \quad   + C\|\big([\dot{\Delta}_k, \u \cdot \nabla ] \Lambda n , \dot{\Delta}_k ( \Lambda \u \cdot \nabla n) , \dot{\Delta}_k \Lambda f_1  \big) \|_{L^2} + C\| \div \u\|_{L^{\infty}} \| \dot{\Delta}_k \Lambda n\|_{L^2}.
	\end{align}
	On the one hand, there holds
	\begin{align*}
		2\varepsilon \| \dot{\Delta}_k \Lambda n \|_{L^2} + \| \dot{\Delta}_k \G \|_{L^2} \lesssim \| \dot{\Delta}_k \Lambda n\|_{L^2} + \| \dot{\Delta}_k \mathbb{Q}\u \|_{L^2}.
	\end{align*}
	On the other hand, due to $ \frac{\varepsilon}{2} \ge C2^{-2k_0}$, there holds
	 \begin{align*}
	 	2\varepsilon \| \dot{\Delta}_k \Lambda n \|_{L^2} + \| \dot{\Delta}_k \G \|_{L^2} \gtrsim  \| \dot{\Delta}_k \Lambda n\|_{L^2} + \| \dot{\Delta}_k \mathbb{Q}\u \|_{L^2}.
	 \end{align*}

	Hence, we can get from the fact  $ \|\dot{\Delta}_k \delta \|_{L^2} \approx \|\dot{\Delta}_k \mathbb{Q} \u \|_{L^2} $ that
$$ 2\varepsilon \| \dot{\Delta}_k \Lambda n\|_{L^2} + \| \dot{\Delta}_k \G \|_{L^2} \approx \| \dot{\Delta}_k \Lambda n\|_{L^2} + \| \dot{\Delta}_k \delta \|_{L^2},$$
	 from which and multiplying the equation \eqref{duwk} by $ 2^{(\frac{d}{2}+1)k}$, taking the summation of $ k \ge k_0 $, and then integrating over $ [0, t]$, we can finally  get
	\begin{align}\label{le_uqt_h}
		&\| (\Lambda n, \delta )\|^h_{\widetilde{L}^{\infty}_{t}(\dot B^{\frac{d}{2}+1}_{2,1})}    + \|\Lambda n \|^h_{{L}^{1}_{t}(\dot B^{\frac{d}{2}+1}_{2,1})} + \|  \delta  \|^h_{{L}^{1}_{t}(\dot B^{\frac{d}{2} + 3}_{2,1})} \nn\\
		&\quad\lesssim \| (\Lambda n_0,\mathbb{Q}\u_0)\|^h_{\dot B^{\frac{d}{2}+1}_{2,1}} +\| \Lambda\tau \|^h_{{L}^1_{t} (\dot{B}^{\frac{d}{2} +1 }_{2,1} )} + \| f_1 \|^h_{{L}^1_{t} (\dot{B}^{\frac{d}{2} +2 }_{2,1} )}   + \| \big( f_2, \Lambda \u \cdot \nabla n  \big)  \|^h_{{L}^1_{t} (\dot{B}^{\frac{d}{2}+1}_{2,1} )}    \nn\\
		&\qquad +   \int_{0}^{t} \sum_{k \ge k_0 -1} 2^{(\frac{d}{2} +1)k} \Big(\| \div \u\|_{L^{\infty}} \| \dot{\Delta}_k \Lambda n\|_{L^2} + \| ([\dot{\Delta}_k, \u \cdot \nabla  ]\Lambda n) \|_{L^2}\Big) \,dt'.
	\end{align}
 Now,  applying Lemma \ref{embedding}  and \eqref{buxiu4} gives rise to
  		\begin{align}\label{last2_h}
  			\int_{0}^{t} \sum_{k \ge k_0 -1} 2^{(\frac{d}{2}+1)k} \| \div \u\|_{L^{\infty}} \| \dot{\Delta}_k \Lambda n\|_{L^2}\,dt' \lesssim 	\int_{0}^{t} \| \u\|_{\dot B^{\frac{d}{2} + 1}_{2,1}} \| \Lambda n \|^h_{\dot B^{\frac{d}{2} + 1}_{2,1}} \,dt' \lesssim \int_{0}^{t}\mathcal E_\infty(t')\mathcal E_1(t')\,dt'.
  		\end{align}
 For the last term in \eqref{le_uqt_h},
  it follows from Lemma \ref{commutator} directly that
		\begin{align}\label{last_h}
			\int_{0}^{t} \sum_{k \ge k_0 -1} 2^{(\frac{d}{2}+1)k} \|  [\dot{\Delta}_k, \u \cdot \nabla]\Lambda n \|_{L^2} \,dt'  \lesssim \int_{0}^{t}  \| \nabla \u\|_{\dot B^{\frac{d}{2} }_{2,1}} \| \la n\|_{\dot B^{\frac{d}{2} + 1}_{2,1}}  \,dt'  \lesssim \int_{0}^{t} \mathcal E_\infty(t')\mathcal E_1(t') \,dt'.
  		\end{align}
  		
  		By Lemma \ref{le_product} and \eqref{buxiu1}--\eqref{buxiu4}, we have
  		\begin{align}\label{duda_h}
  			\int_{0}^{t} \|  \la \u \cdot \nabla n \|^h_{\dot B^{\frac{d}{2} + 1}_{2,1}} \,dt' &\lesssim \int_{0}^{t}\big( \| \la \u \|_{L^{\infty}} \| \nabla n \|_{\dot B^{\frac{d}{2} + 1}_{2,1}} + \| \nabla n\|_{L^{\infty}} \| \la \u \|_{\dot B^{\frac{d}{2} + 1}_{2,1}} \big)\,dt' \nn\\
  			&\lesssim \int_{0}^{t} \big(\| \u \|_{\dot B^{\frac{d}{2} + 1}_{2,1}} \|\nabla n \|_{\dot B^{\frac{d}{2} + 1}_{2,1}} + \| \u \|_{\dot B^{\frac{d}{2} + 2}_{2,1}} \| n\|_{\dot B^{\frac{d}{2} + 1}_{2,1}}\big) \,dt'\nn\\
  			&\lesssim \int_{0}^{t} \mathcal E_\infty(t')\mathcal E_1(t')\, \,dt'.
  		\end{align}
		Similarly, for the terms in $f_1$, applying Lemmas \ref{le_product} and \ref{I(a)} and using  \eqref{buxiu1}--\eqref{eq:smalladd} gives
		\begin{align*}
			&\| (n\div \u, I(a)\div \u ) \|^h_{\dot{B}^{\frac{d}{2}+2}_{2,1} } \nn\\
			&\quad\lesssim  \| (n, I(a)) \|_{L^{\infty} } \| \div \u  \|_{\dot{B}^{\frac{d}{2}+2}_{2,1} } + \| (n, I(a)) \|_{\dot{B}^{\frac{d}{2}+2}_{2,1} } \| \div \u  \|_{L^{\infty} } \nn\\
			&\quad\lesssim\| (n, \eta)  \|_{\dot{B}^{\frac{d}{2}}_{2,1} } \|  \u  \|_{\dot{B}^{\frac{d}{2}+3}_{2,1} } + \| (n, \eta)  \|_{\dot{B}^{\frac{d}{2} +2}_{2,1} } \|  \u  \|_{\dot{B}^{\frac{d}{2}+1}_{2,1} }    \nn\\
			&\quad\lesssim \ea\eb,
		\end{align*}
		and
		\begin{align*}
			\|  \eta^2 \div \u  \|^h_{\dot{B}^{\frac{d}{2}+2}_{2,1} }
			\lesssim  &\| \eta^2 \|_{L^{\infty} } \| \div \u  \|_{\dot{B}^{\frac{d}{2}+2}_{2,1} } + \| \eta^2 \|_{\dot{B}^{\frac{d}{2}+2}_{2,1} } \| \div \u  \|_{L^{\infty} } \nn\\
			\lesssim &\| \eta  \|_{\dot{B}^{\frac{d}{2}}_{2,1} } \| \eta  \|_{\dot{B}^{\frac{d}{2}}_{2,1} } \|  \u  \|_{\dot{B}^{\frac{d}{2}+3}_{2,1} } +\| \eta  \|_{\dot{B}^{\frac{d}{2}}_{2,1} } \| \eta \|_{\dot{B}^{\frac{d}{2} +2}_{2,1} } \|  \u  \|_{\dot{B}^{\frac{d}{2}+1}_{2,1} }    \nn\\
			\lesssim&  (\ea)^2\eb.
		\end{align*}

		Hence, we get
		\begin{align}\label{first_h}
			\| f_1 \|^h_{{L}^1_{t} (\dot{B}^{\frac{d}{2} +2}_{2,1} )} \lesssim \int_{0}^{t} \big(1+\mathcal E_\infty(t')\big)\mathcal E_\infty(t')\mathcal E_1(t')\,dt'.
		\end{align}
		Finally, for the terms in $ \mathbb Q f_2$, we get by applying Lemmas \ref{bernstein}, \ref{le_product} and \eqref{buxiu1}--\eqref{buxiu4} that
		\begin{align}\label{udu_high}
		\|\u \cdot \nabla \u \|^h_{\dot B^{\frac{d}{2} + 1}_{2,1}} &\lesssim \| \u\|_{L^{\infty}} \| \nabla \u\|_{\dot B^{\frac{d}{2} + 1}_{2,1}} + \| \nabla \u\|_{L^{\infty}} \|  \u\|_{\dot B^{\frac{d}{2} + 1}_{2,1}}\lesssim \| \u\|_{\dot B^{\frac{d}{2} }_{2,1}}\| \u\|_{\dot B^{\frac{d}{2} +2}_{2,1}}\lesssim \ea\eb,
		\end{align}
		and
\begin{align}\label{kada_high}
		\|k(n,\eta) \nabla \eta \|^h_{\dot B^{\frac{d}{2} + 1}_{2,1}}
\lesssim& \| k(n,\eta)\|_{L^{\infty}} \| \nabla \eta\|_{\dot B^{\frac{d}{2} + 1}_{2,1}} + \| \nabla \eta\|_{L^{\infty}} \|  k(n,\eta)\|_{\dot B^{\frac{d}{2} + 1}_{2,1}}\nn\\
\lesssim& \| k(n,\eta)\|_{\dot B^{\frac{d}{2} }_{2,1}}\| \eta\|_{\dot B^{\frac{d}{2} +2}_{2,1}}+\| \eta\|_{\dot B^{\frac{d}{2} +1}_{2,1}}\| k(n,\eta)\|_{\dot B^{\frac{d}{2}+1 }_{2,1}}\nn\\
\lesssim& \ea\eb.
		\end{align}
Similarly,
	\begin{align}\label{Iaddu_high}
		\| k(n,\eta) (\Delta \u + \nabla \div \u) \|^h_{\dot B^{\frac{d}{2} + 1}_{2,1}} &\lesssim \| k(n,\eta) \|_{L^{\infty}} \| \u \|_{\dot B^{\frac{d}{2} + 3}_{2,1}} + \| k(n,\eta)\|_{\dot B^{\frac{d}{2} + 1}_{2,1}} \| \nabla^2 \u\|_{L^{\infty}} \nn\\
		&\lesssim \| (n, \eta)\|_{\dot B^{\frac{d}{2}}_{2,1}} \| \u \|_{\dot B^{\frac{d}{2} + 3}_{2,1}} + \| (n, \eta)\|_{\dot B^{\frac{d}{2} + 1}_{2,1}} \| \u \|_{\dot B^{\frac{d}{2} + 2}_{2,1}} \nn\\
		&\lesssim \ea\eb,
		\end{align}
		\begin{align}\label{Iadt_h}
			\| k(n,\eta) \div \tau \|^h_{\dot B^{\frac{d}{2} + 1}_{2,1}} &\lesssim \| k(n,\eta) \|_{L^{\infty}} \| \div \tau \|_{\dot B^{\frac{d}{2} + 1}_{2,1}} + \| k(n,\eta)\|_{\dot B^{\frac{d}{2} + 1}_{2,1}} \| \div \eta\|_{L^{\infty}} \nn\\
			&\lesssim \| (n,\eta) \|_{\dot B^{\frac{d}{2}}_{2,1}} \| \tau \|_{\dot B^{\frac{d}{2} + 2}_{2,1}} + \| (n,\eta) \|_{\dot B^{\frac{d}{2} + 1}_{2,1}} \| \tau \|_{\dot B^{\frac{d}{2} + 1}_{2,1}}\nn\\
			&\lesssim \ea\eb.
		\end{align}
		Together with \eqref{udu_high}--\eqref{Iadt_h}, we have
		\begin{align}\label{h2_h}
		\|    f_2 \|^h_{{L}^{1}_t ({\dot B^{\frac{d}{2} + 1}_{2,1}})} \lesssim \int_{0}^{t} \mathcal E_\infty(t')\mathcal E_1(t')\,dt'.
		\end{align}
		Regarding the high-frequency estimation of the incompressible part of velocity $\p \u$, analogous to the formulation in \eqref{eta_l}, we have
		\begin{align}\label{pu_lambda}
			&\|\p \u \|^h_{\widetilde{L}^{\infty}_{t}(\dot B^{\frac{d}{2}+1}_{2,1})}  + \|  \p\u  \|^h_{{L}^{1}_{t}(\dot B^{\frac{d}{2} + 3}_{2,1})}\lesssim \| \mathbb{P}\u_0\|^h_{\dot B^{\frac{d}{2}+1}_{2,1}} +\| \tau\|^h_{{L}^{1}_t ({\dot B^{\frac{d}{2} + 2}_{2,1}})} + \|   f_2 \|^h_{{L}^{1}_t ({\dot B^{\frac{d}{2} + 1}_{2,1}})}.
		\end{align}

	Consequently, 	inserting \eqref{last_h}, \eqref{last2_h}, \eqref{duda_h}, \eqref{first_h}, and \eqref{h2_h} into \eqref{le_uqt_h} and then simultaneously combining \eqref{pu_lambda}, we can arrive at \eqref{au_h},	which complete the proof of  Lemma \ref{le_aut_h}.
	\end{proof}
\end{lemma}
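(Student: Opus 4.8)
The plan is to adapt the effective-unknown method used for the compressible Navier--Stokes equations, so as to produce, in the high-frequency regime, a damping effect for the density-type variable $n$ while retaining the full parabolic gain for $\u$. Since $n$ obeys only a transport equation, the parabolic block for $\q\u$ must be reorganised. First I would introduce the effective velocity $\G\stackrel{\mathrm{def}}{=}\q\u-\frac{\alpha_1}{\mu_1+\mu_2}\Delta^{-1}\nabla n$ and check by a direct computation that it solves the heat equation
\begin{align*}
\partial_t\G-(\mu_1+\mu_2)\Delta\G=\frac{\alpha_1^2}{\mu_1+\mu_2}\G+\alpha_1\q\div\tau+\q f_2-\frac{\alpha_1}{\mu_1+\mu_2}\Delta^{-1}\nabla\Big(f_1-\alpha\u\cdot\nabla n-\frac{\alpha_1^2}{\mu_1+\mu_2}n\Big),
\end{align*}
whose source is of lower order. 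Substituting $\q\u=\G+\frac{\alpha_1}{\mu_1+\mu_2}\Delta^{-1}\nabla n$ back into the first equation of \eqref{reformulate} turns it into the damped transport equation
\begin{align*}
\partial_t n+\frac{\alpha_1^2}{\mu_1+\mu_2}n+\alpha\u\cdot\nabla n=f_1-\alpha_1\div\G,
\end{align*}
and the positive constant $\frac{\alpha_1^2}{\mu_1+\mu_2}$ is precisely the high-frequency damping of the pressure we are looking for.

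Then I would localise both equations with $\dot{\Delta}_k$, take $L^2$ inner products with $\dot{\Delta}_k\G$ and with $\dot{\Delta}_k\Lambda n$, and invoke Bernstein's lemma (Lemma \ref{bernstein}) to turn $-(\mu_1+\mu_2)\Delta$ into $2^{2k}(\mu_1+\mu_2)$; the transport term in the $n$-equation is handled as in the proof of Lemma \ref{le_eta_low}, producing a $\|\div\u\|_{L^\infty}$ factor and a commutator $[\dot{\Delta}_k,\u\cdot\nabla]\Lambda n$. For a small parameter $\varepsilon>0$ I would build the Lyapunov quantity $2\varepsilon\|\dot{\Delta}_k\Lambda n\|_{L^2}+\|\dot{\Delta}_k\G\|_{L^2}$ and add $2\varepsilon$ times the $n$-estimate to the $\G$-estimate. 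For $k\ge k_0$ the order-mismatched contributions — the terms $2^{-k}\|\dot{\Delta}_k n\|_{L^2}$ and $2^{-k}\|\dot{\Delta}_k f_1\|_{L^2}$ coming from $\Delta^{-1}\nabla$, the zeroth-order term $\|\dot{\Delta}_k\G\|_{L^2}$ from $\frac{\alpha_1^2}{\mu_1+\mu_2}\G$, and the term $\|\dot{\Delta}_k\Lambda\div\G\|_{L^2}\sim 2^{2k}\|\dot{\Delta}_k\G\|_{L^2}$ from $\alpha_1\div\G$ in the $n$-equation — are all absorbed into $\varepsilon\|\dot{\Delta}_k\Lambda n\|_{L^2}$ and into the dissipation $2^{2k}(\mu_1+\mu_2)\|\dot{\Delta}_k\G\|_{L^2}$, provided $k_0$ is large and $\varepsilon$ small (conditions of the form $\frac{\varepsilon}{2}\ge C2^{-2k_0}$ and $\frac12\ge C\varepsilon$). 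This leaves a closed differential inequality for each dyadic block, together with the norm equivalence $2\varepsilon\|\dot{\Delta}_k\Lambda n\|_{L^2}+\|\dot{\Delta}_k\G\|_{L^2}\approx\|\dot{\Delta}_k\Lambda n\|_{L^2}+\|\dot{\Delta}_k\q\u\|_{L^2}$.

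Dividing by $\|(\dot{\Delta}_k\Lambda n,\dot{\Delta}_k\G)\|_{L^2}$, multiplying by $2^{(\frac{d}{2}+1)k}$, integrating in time and summing over $k\ge k_0$ controls $\|(\Lambda n,\q\u)\|^h_{\widetilde{L}^\infty_t(\dot B^{\frac{d}{2}+1}_{2,1})}+\|\Lambda n\|^h_{L^1_t(\dot B^{\frac{d}{2}+1}_{2,1})}+\|\q\u\|^h_{L^1_t(\dot B^{\frac{d}{2}+3}_{2,1})}$ by the initial data, by $\|\Lambda\tau\|^h_{L^1_t(\dot B^{\frac{d}{2}+1}_{2,1})}$, and by the high-frequency $L^1_t$-norms of the commutators, of $\Lambda\u\cdot\nabla n$ and $\|\div\u\|_{L^\infty}\Lambda n$, of $f_1$ at the level $\dot B^{\frac{d}{2}+2}_{2,1}$ (one derivative above $f_2$, since $f_1$ enters the $n$-equation through $\Lambda$) and of $f_2$ at the level $\dot B^{\frac{d}{2}+1}_{2,1}$. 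The incompressible part is softer: $\p\u$ solves the genuine heat equation $\partial_t\p\u-\mu_1\Delta\p\u=\alpha_1\p\div\tau+\p f_2$, so standard maximal regularity gives $\|\p\u\|^h_{\widetilde{L}^\infty_t(\dot B^{\frac{d}{2}+1}_{2,1})}+\|\p\u\|^h_{L^1_t(\dot B^{\frac{d}{2}+3}_{2,1})}\lesssim\|\p\u_0\|^h_{\dot B^{\frac{d}{2}+1}_{2,1}}+\|\tau\|^h_{L^1_t(\dot B^{\frac{d}{2}+2}_{2,1})}+\|f_2\|^h_{L^1_t(\dot B^{\frac{d}{2}+1}_{2,1})}$, and adding the two parts reconstructs $\u=\q\u+\p\u$.

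Finally I would bound the nonlinearities exactly as in the low-frequency lemmas: the commutators through Lemma \ref{commutator}; the smooth products $\Lambda\u\cdot\nabla n$, $\u\cdot\nabla\u$, $k(n,\eta)\nabla n$, $k(n,\eta)(\Delta\u+\nabla\div\u)$ and $k(n,\eta)\div\tau$ through Lemma \ref{le_product} together with $\dot B^{\frac{d}{2}}_{2,1}\hookrightarrow L^\infty$ and the embeddings \eqref{buxiu1}--\eqref{buxiu4}; and the composition terms through Lemma \ref{I(a)}, after writing $I(a)=\frac{1}{R}n-\frac{K(L-1)}{R}\eta-\frac{\zeta}{R}\eta^2$ and Taylor-expanding $k(n,\eta)=k_1(0,0)n+k_2(0,0)\eta+n\widetilde{k}_1(n,\eta)+\eta\widetilde{k}_2(n,\eta)$ with $\widetilde{k}_1(0,0)=\widetilde{k}_2(0,0)=0$, so that each summand carries a factor of $n$ or $\eta$ of the right regularity. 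Each contribution is bounded by $\eat\ebt$ or by $(1+\eat)\eat\ebt$, and time integration yields \eqref{au_h}. I expect the main obstacle to be the second step — choosing $k_0$ and $\varepsilon$ consistently so that the order-mismatched terms generated by $\Delta^{-1}\nabla$ and by $\div\G$ are swallowed by the damping of $\Lambda n$ and the parabolic smoothing of $\G$; once that is set up, the rest is bookkeeping with the product and composition estimates already recorded in Section \ref{ineq}.
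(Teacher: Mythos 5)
Your proposal is correct and takes essentially the same route as the paper: the effective velocity $\G=\q\u-\frac{\alpha_1}{\mu_1+\mu_2}\Delta^{-1}\nabla n$, the resulting damped transport equation for $n$, the Lyapunov quantity $2\varepsilon\|\dot\Delta_k\Lambda n\|_{L^2}+\|\dot\Delta_k\G\|_{L^2}$ with the constraints $\frac{\varepsilon}{2}\ge C2^{-2k_0}$ and $\frac12\ge C\varepsilon$, the separate heat-equation treatment of $\p\u$, and the nonlinear bounds via Lemmas~\ref{commutator}, \ref{le_product}, \ref{I(a)} together with the decomposition of $I(a)$ and Taylor expansion of $k(n,\eta)$ are all as in the paper's proof. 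The only cosmetic difference is that the paper already works with $\frac{d}{dt}\|\cdot\|_{L^2}$ (rather than $\frac{d}{dt}\|\cdot\|_{L^2}^2$) in \eqref{wk}--\eqref{duk}, so the normalising division you mention is not needed there.
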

\subsubsection{The estimate of $\tau $  in the high frequency}
In this subsection, we are concerned with the
estimates of $\tau$ in the high frequency. More precisely, we have the following lemma.
\begin{lemma}\label{le_t_h}
	Under the condition in Theorem \ref{dingli1}, there exists a constant $ C$ such that
	\begin{align}\label{t_h}
	\| \Lambda\tau \|^h_{\widetilde{L}^{\infty}_{t}(\dot B^{\frac{d}{2}+1}_{2,1})} + \frac{A_0}{2\lambda_1}\| \Lambda\tau \|^h_{{L}^1_{t} (\dot{B}^{\frac{d}{2} +1 }_{2,1} )} \le  \| \Lambda\tau_0\|^h_{\dot B^{\frac{d}{2}+1}_{2,1}}   + C \int_{0}^{t} \mathcal E_\infty(t')\mathcal E_1(t')\, \,dt'.
	\end{align}
	\begin{proof}
		We  first get   by a similar derivation of \eqref{a_k} that
\begin{align*}
\partial_t \dot{\Delta}_k \Lambda \tau  + \frac{A_0}{2\lambda_1} \dot{\Delta}_k \Lambda \tau +  \alpha \u \cdot \nabla  \dot{\Delta}_k \Lambda \tau = - \alpha [\dot{\Delta}_k, \u \cdot \nabla ] \Lambda \tau - \alpha \dot{\Delta}_k ( \Lambda \u \cdot \nabla \tau) - \dot{\Delta}_k \Lambda f_3.
\end{align*}
		Multiplying the above equation by $ 2^{(\frac{d}{2}+1)k}$, taking the summation of $ k \ge k_0 $, and then integrating over $ [0, t]$, we can finally  get
		\begin{align}\label{t1_h}
		&\| \Lambda \tau\|^h_{\widetilde{L}^{\infty}_{t}(\dot B^{\frac{d}{2}+1}_{2,1})} + \frac{A_0}{2\lambda_1} \| \Lambda \tau \|^h_{{L}^1_{t} (\dot{B}^{\frac{d}{2} +1 }_{2,1} )}\nn\\ &\qquad \lesssim  \| \Lambda \tau_0 \|^h_{\dot B^{\frac{d}{2}+1}_{2,1}} + \| f_3 \|^h_{{L}^1_{t} (\dot{B}^{\frac{d}{2} +2 }_{2,1} )}   + \|  \Lambda \u \cdot \nabla \tau  \|^h_{{L}^1_{t} (\dot{B}^{\frac{d}{2}+1}_{2,1} )}    \nn\\
		&\qquad \  +  \int_{0}^{t} \sum_{k \ge k_0 -1} 2^{(\frac{d}{2} +1)k} \big(\| \div \u\|_{L^{\infty}} \| \dot{\Delta}_k \Lambda \tau\|_{L^2} + \| ([\dot{\Delta}_k, \u \cdot \nabla  ]\Lambda \tau) \|_{L^2}\big) \,dt'.
		\end{align}
		For the last term of \eqref{t1_h}, applying Lemmas \ref{commutator}, \ref{embedding} and using \eqref{buxiu1}-- \eqref{buxiu4} give rise to
		\begin{align}\label{t_last}
		& \sum_{k \ge k_0 -1} 2^{(\frac{d}{2} +1)k} \big(\| \div \u\|_{L^{\infty}} \| \dot{\Delta}_k \Lambda \tau\|_{L^2} + \| ([\dot{\Delta}_k, \u \cdot \nabla  ]\Lambda \tau) \|_{L^2}\big)\nn\\
		&\quad\lesssim \|  \u\|_{\dot B^{\frac{d}{2}+1 }_{2,1}} \| \tau \|_{\dot B^{\frac{d}{2} + 2}_{2,1}} \lesssim\ea\eb.
		\end{align}
We now deal with the term $\la \u \cdot \nabla \tau$. It follows from
		Lemma \ref{le_product} and \eqref{buxiu1}-- \eqref{buxiu4} that
		\begin{align}\label{dudt_h}
		 \|  \la \u \cdot \nabla \tau \|^h_{\dot B^{\frac{d}{2} + 1}_{2,1}}  &\lesssim \| \la \u \|_{L^{\infty}} \| \nabla \tau \|_{\dot B^{\frac{d}{2} + 1}_{2,1}} + \| \nabla \tau\|_{L^{\infty}} \| \la \u \|_{\dot B^{\frac{d}{2} + 1}_{2,1}}  \nn\\
		&\lesssim  \| \u \|_{\dot B^{\frac{d}{2} + 1}_{2,1}} \| \tau \|_{\dot B^{\frac{d}{2} + 2}_{2,1}} + \| \u \|_{\dot B^{\frac{d}{2} + 2}_{2,1}} \| \tau\|_{\dot B^{\frac{d}{2} + 1}_{2,1}}\nn\\
		&\lesssim \ea\eb .
		\end{align}
		Similarly, we have
		\begin{align}\label{N4_h}
		\| f_3 \|^h_{\dot B^{\frac{d}{2} + 2}_{2,1}}
& \lesssim \| \tau\|_{L^{\infty}} \| \u \|_{\dot B^{\frac{d}{2} + 3}_{2,1}} + \|  \tau\|_{\dot B^{\frac{d}{2} + 2}_{2,1}} \|   \nabla \u \|_{L^{\infty}} +  \| \eta\|_{L^{\infty}} \| \u \|_{\dot B^{\frac{d}{2} + 3}_{2,1}} + \|  \eta\|_{\dot B^{\frac{d}{2} + 2}_{2,1}} \|   \nabla \u \|_{L^{\infty}} \nn\\
		& \lesssim\| (\tau,\eta )\|_{\dot B^{\frac{d}{2}}_{2,1}}\| \u \|_{\dot B^{\frac{d}{2} + 3}_{2,1}} + \|  (\tau,\eta )\|_{\dot B^{\frac{d}{2} + 2}_{2,1}} \| \u \|_{\dot B^{\frac{d}{2}+1}_{2,1}} \nn\\
		&\lesssim \ea\eb .
		\end{align}
		Inserting \eqref{N4_h}, \eqref{dudt_h} and \eqref{eta_last} into \eqref{t1_h} gives rise to \eqref{t_h}. Consequently, we complete the proof of Lemma \ref{le_t_h}.
	\end{proof}
\end{lemma}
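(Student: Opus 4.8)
The plan is to run the same high-frequency scheme as for $(\la n,\u)$ in Lemma~\ref{le_aut_h}, the one structural difference being that the usable dissipation for $\tau$ is the zero-order damping $\f{A_0}{2\lambda_1}\tau$, not any parabolic smoothing. First I would localize the fourth equation of \eqref{reformulate}: applying $\ddk\la$ to $\partial_t\tau+\alpha\u\cdot\nabla\tau+\f{A_0}{2\lambda_1}\tau=f_3$ produces a transport--damping equation for $\ddk\la\tau$ with right-hand side $-\alpha[\ddk,\u\cdot\nabla]\la\tau-\alpha\ddk(\la\u\cdot\nabla\tau)-\ddk\la f_3$, the middle term being the piece generated when the derivative falls onto $\u$ inside the transport term. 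Taking the $L^2$ inner product with $\ddk\la\tau$ and integrating by parts, the term $\alpha\u\cdot\nabla\ddk\la\tau$ leaves only a $\|\div\u\|_{L^\infty}\|\ddk\la\tau\|_{L^2}^2$ remainder, while the damping contributes $\f{A_0}{2\lambda_1}\|\ddk\la\tau\|_{L^2}^2$ on the left; dividing by $\|\ddk\la\tau\|_{L^2}$, multiplying by $2^{(\f d2+1)k}$, summing over $k\ge k_0$ and integrating in time gives exactly \eqref{t1_h}, whose left-hand side already carries both the $\widetilde L^\infty_t$ and the $L^1_t$ norms of $\la\tau$ at high frequency --- the $L^1_t$ bound is free, coming from the damping alone.

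It then remains to estimate the four ``error'' terms on the right of \eqref{t1_h}. For the commutator sum and the $\|\div\u\|_{L^\infty}\|\ddk\la\tau\|_{L^2}$ piece I would use Lemma~\ref{commutator} together with $\dot B^{\f d2}_{2,1}\hookrightarrow L^\infty$, getting the bound $\|\u\|_{\dot B^{\f d2+1}_{2,1}}\|\tau\|_{\dot B^{\f d2+2}_{2,1}}$. For $\la\u\cdot\nabla\tau$ in $\dot B^{\f d2+1}_{2,1}$ the product estimate of Lemma~\ref{le_product} yields $\|\u\|_{\dot B^{\f d2+1}_{2,1}}\|\tau\|_{\dot B^{\f d2+2}_{2,1}}+\|\u\|_{\dot B^{\f d2+2}_{2,1}}\|\tau\|_{\dot B^{\f d2+1}_{2,1}}$, and for $f_3$ in $\dot B^{\f d2+2}_{2,1}$ (one order higher than the target, because of the extra $\la$) the same lemma applied to its summands $\nabla\u\,\tau$, $\eta\,\nabla\u$ and $\tau\,\div\u$ gives $\|(\tau,\eta)\|_{\dot B^{\f d2}_{2,1}}\|\u\|_{\dot B^{\f d2+3}_{2,1}}+\|(\tau,\eta)\|_{\dot B^{\f d2+2}_{2,1}}\|\u\|_{\dot B^{\f d2+1}_{2,1}}$. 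Every factor here is absorbed by $\ea$ or $\eb$ through the embeddings \eqref{buxiu1}--\eqref{buxiu4}: for instance $\|\tau\|_{\dot B^{\f d2+2}_{2,1}}\lesssim\ea$ because in low frequency $\dot B^{\f d2}_{2,1}$ and $\dot B^{\f d2+2}_{2,1}$ coincide up to the fixed factor $2^{2k_0}$, while its high-frequency block is $\|\la\tau\|^h_{\dot B^{\f d2+1}_{2,1}}$, a summand of $\mathcal E_\infty$. Inserting these bounds into \eqref{t1_h} and integrating $\eat\ebt$ in time converts it into \eqref{t_h}.

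The step I expect to be the genuine obstacle is the action of $\la$ on the transport term: since the $\tau$-equation carries no regularizing mechanism, one cannot afford an honest loss of derivative there. The remedy, borrowed from the treatment of $n$ in Lemma~\ref{le_aut_h}, is to keep $\la\tau$ rather than $\tau$ as the working unknown, so that a second derivative of $\tau$ occurs only inside bilinear terms where it multiplies a factor bounded in $\dot B^{\f d2}_{2,1}\hookrightarrow L^\infty$; the low-frequency part of such a term then costs only the constant $2^{2k_0}$ because $k\le k_0$, and the high-frequency part is supplied by $\|\la\tau\|^h_{\dot B^{\f d2+1}_{2,1}}$. Provided $k_0$ has already been fixed as in Lemma~\ref{le_aut_h}, no smallness is needed beyond \eqref{eq:smalladd}, and the whole estimate closes inside the a priori functionals $\mathcal E_\infty$ and $\mathcal E_1$.
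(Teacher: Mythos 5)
Your proposal matches the paper's proof essentially step for step: localize by $\dot\Delta_k\Lambda$, take $L^2$ inner products, integrate by parts on the transport term, sum over $k\ge k_0$ with weight $2^{(\frac{d}{2}+1)k}$ to obtain \eqref{t1_h}, and then close the nonlinear terms via Lemma~\ref{commutator}, Lemma~\ref{le_product}, and the embeddings \eqref{buxiu1}--\eqref{buxiu4}. The commentary on keeping $\Lambda\tau$ as the working unknown and on the damping supplying the $L^1_t$ control is exactly the mechanism at work, so there is nothing to add or correct.
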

\subsubsection{The estimate of $\eta $  in the high frequency}
In this subsection, we are concerned with the
estimates of $\eta$ in the high frequency. More precisely, we have the following lemma.
\begin{lemma}\label{le_e_h}
		Under the condition in Theorem \ref{dingli1}, there exists a constant $ C$ such that
	\begin{align}\label{e_h}
	\| \Lambda\eta \|^h_{\widetilde{L}^{\infty}_{t}(\dot B^{\frac{d}{2}+1}_{2,1})}  \le  \| \Lambda\eta_0\|^h_{\dot B^{\frac{d}{2}+1}_{2,1}}   + C \int_{0}^{t} \eat\ebt\, \,dt'.
	\end{align}
	\begin{proof}
		We get   by a similar derivation of \eqref{t1_h} that
		\begin{align}\label{eta_h}
		\| \Lambda \eta\|^h_{\widetilde{L}^{\infty}_{t}(\dot B^{\frac{d}{2}+1}_{2,1})}
\lesssim&  \| \Lambda \eta_0 \|^h_{\dot B^{\frac{d}{2}+1}_{2,1}} + \| f_4 \|^h_{{L}^1_{t} (\dot{B}^{\frac{d}{2} +2 }_{2,1} )}   + \|  \Lambda \u \cdot \nabla \eta  \|^h_{{L}^1_{t} (\dot{B}^{\frac{d}{2}+1}_{2,1} )}    \nn\\
		& +   \int_{0}^{t} \sum_{k \ge k_0 -1} 2^{(\frac{d}{2} +1)k} \big(\| \div \u\|_{L^{\infty}} \| \dot{\Delta}_k \Lambda \eta\|_{L^2} + \| ([\dot{\Delta}_k, \u \cdot \nabla  ]\Lambda \eta) \|_{L^2}\big) \,dt'.
		\end{align}
		For the last two terms in  \eqref{eta_h}, we can deal with them the same as
\eqref{t_last} and \eqref{dudt_h} that
		\begin{align}\label{eta_last}
		\|  \la \u \cdot \nabla \eta \|^h_{\dot B^{\frac{d}{2} + 1}_{2,1}}
		\lesssim  \| \u \|_{\dot B^{\frac{d}{2} + 1}_{2,1}} \| \eta \|_{\dot B^{\frac{d}{2} + 2}_{2,1}} + \| \u \|_{\dot B^{\frac{d}{2} + 2}_{2,1}} \| \eta\|_{\dot B^{\frac{d}{2} + 1}_{2,1}}
		\lesssim& \ea\eb,\\
\sum_{k \ge k_0 -1} 2^{(\frac{d}{2} +1)k} \big(\| \div \u\|_{L^{\infty}} \| \dot{\Delta}_k \Lambda \eta\|_{L^2} + \| ([\dot{\Delta}_k, \u \cdot \nabla  ]\Lambda \eta) \|_{L^2}\big)
			\lesssim& \|  \u\|_{\dot B^{\frac{d}{2}+1 }_{2,1}} \| \eta \|_{\dot B^{\frac{d}{2} + 2}_{2,1}} \lesssim\ea\eb.\nn
		\end{align}
		For the term $f_4$,  we get from
		Lemma \ref{le_product} and \eqref{buxiu1}-- \eqref{buxiu4} that
\begin{align}\label{N5_h}
		\| f_4 \|^h_{\dot B^{\frac{d}{2} + 2}_{2,1}}
& \lesssim \| \eta\|_{L^{\infty}} \| \u \|_{\dot B^{\frac{d}{2} + 3}_{2,1}} + \|  \eta\|_{\dot B^{\frac{d}{2} + 2}_{2,1}} \|   \nabla \u \|_{L^{\infty}} \nn\\
		& \lesssim\| \eta \|_{\dot B^{\frac{d}{2}}_{2,1}}\| \u \|_{\dot B^{\frac{d}{2} + 3}_{2,1}} + \|  \eta \|_{\dot B^{\frac{d}{2} + 2}_{2,1}} \| \u \|_{\dot B^{\frac{d}{2}+1}_{2,1}} \nn\\
		&\lesssim \ea\eb .
		\end{align}

		Inserting  \eqref{eta_last} and \eqref{N5_h} into \eqref{eta_h} gives rise to \eqref{e_h}.
Consequently, we complete the proof of Lemma \ref{le_e_h}.
	\end{proof}
\end{lemma}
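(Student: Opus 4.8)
The plan is to use that, in \eqref{reformulate}, $\eta$ solves a \emph{pure transport} equation $\partial_t\eta+\alpha\u\cdot\nabla\eta=f_4$ with $f_4=-\alpha\eta\,\div\u$: since there is no damping or parabolic regularization acting on $\eta$, the most one can hope for at the top regularity level is a bound on $\|\Lambda\eta\|^{h}_{\widetilde{L}^{\infty}_{t}(\dot B^{\frac{d}{2}+1}_{2,1})}$ alone, with no $L^1_t$ gain --- consistent with the fact that $\eta$ carries no decay in time. First I would apply $\dot\Delta_k\Lambda$ to the $\eta$-equation and, exactly as in the derivation of \eqref{a_k} for $n$, push $\Lambda$ through the convective term so as to obtain
$$\partial_t\dot\Delta_k\Lambda\eta+\alpha\,\u\cdot\nabla\dot\Delta_k\Lambda\eta=-\alpha\,[\dot\Delta_k,\u\cdot\nabla]\Lambda\eta-\alpha\,\dot\Delta_k(\Lambda\u\cdot\nabla\eta)-\dot\Delta_k\Lambda f_4,$$
where the first term on the right collects the $\dot\Delta_k$-commutator contributions and the second is the usual $[\Lambda,\u\cdot\nabla]$-type term.

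Next I would take the $L^2$ inner product of this identity with $\dot\Delta_k\Lambda\eta$. The convective term contributes only $\tfrac{1}{2}\|\div\u\|_{L^\infty}\|\dot\Delta_k\Lambda\eta\|_{L^2}^2$ after integration by parts, so after dividing by $\|\dot\Delta_k\Lambda\eta\|_{L^2}$ one arrives at
$$\tfrac{1}{2}\tfrac{d}{dt}\|\dot\Delta_k\Lambda\eta\|_{L^2}\lesssim\|\div\u\|_{L^\infty}\|\dot\Delta_k\Lambda\eta\|_{L^2}+\|[\dot\Delta_k,\u\cdot\nabla]\Lambda\eta\|_{L^2}+\|\dot\Delta_k(\Lambda\u\cdot\nabla\eta)\|_{L^2}+\|\dot\Delta_k\Lambda f_4\|_{L^2}.$$
Multiplying by $2^{(\frac{d}{2}+1)k}$, summing over $k\ge k_0$ and integrating in time on $[0,t]$ --- the supremum in time landing inside the $\ell^1$-sum that defines the Chemin--Lerner norm --- then gives exactly \eqref{eta_h}, with the initial-data term appearing with coefficient $1$ (characteristic of the transport structure) and with four source integrals left to estimate in $L^1_t$.

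The heart of the matter is to show that each of these four source terms is bounded by $\int_0^t\eat\,\ebt\,dt'$. For $f_4=-\alpha\eta\,\div\u$, I would invoke the $L^\infty\cap\dot B^{s}$ product estimate of Lemma \ref{le_product} to get $\|\eta\,\div\u\|^{h}_{\dot B^{\frac{d}{2}+2}_{2,1}}\lesssim\|\eta\|_{\dot B^{\frac{d}{2}}_{2,1}}\|\u\|_{\dot B^{\frac{d}{2}+3}_{2,1}}+\|\eta\|_{\dot B^{\frac{d}{2}+2}_{2,1}}\|\u\|_{\dot B^{\frac{d}{2}+1}_{2,1}}$, and then use the embedding chain \eqref{buxiu1}--\eqref{buxiu4} to place the top-order factor ($\|\u\|_{\dot B^{\frac{d}{2}+3}_{2,1}}$, respectively $\|\eta\|_{\dot B^{\frac{d}{2}+2}_{2,1}}$) under $\eb$ and the companion factor under $\ea$; the same splitting controls $\|\Lambda\u\cdot\nabla\eta\|^{h}_{\dot B^{\frac{d}{2}+1}_{2,1}}$. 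For the two $k$-sums, Lemma \ref{commutator} gives $\sum_{k\ge k_0-1}2^{(\frac{d}{2}+1)k}\|[\dot\Delta_k,\u\cdot\nabla]\Lambda\eta\|_{L^2}\lesssim\|\nabla\u\|_{\dot B^{\frac{d}{2}}_{2,1}}\|\Lambda\eta\|_{\dot B^{\frac{d}{2}+1}_{2,1}}$, and since $\dot B^{\frac{d}{2}}_{2,1}\hookrightarrow L^\infty$ the $\|\div\u\|_{L^\infty}$-term is handled identically; both are $\lesssim\ea\,\eb$. Substituting all four bounds into \eqref{eta_h} produces \eqref{e_h}.

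The main obstacle --- really a bookkeeping constraint more than a genuine analytic hurdle --- is that, with no dissipation on $\eta$, there is nothing to absorb the derivative loss in $\Lambda\u\cdot\nabla\eta$ and in $\div\u\,\Lambda\eta$; the estimate closes only because in each product the factor carrying the extra derivative is a norm of $\u$ (or of $\eta$) already accounted for in $\eb$ --- which, crucially, contains the time-integrable piece $\|\u\|^{h}_{L^1_t(\dot B^{\frac{d}{2}+3}_{2,1})}$ furnished by the smoothing of $\u$ --- while the remaining factor sits in $\ea$. One therefore has to route the high-frequency truncation through the product and commutator estimates correctly and assign the regularity indices so that precisely the combination $\ea\,\eb$ (and not, say, $\ea^2$) emerges; apart from that, the derivation is identical in spirit to the transport estimates already carried out for the high-frequency parts of $n$ and $\tau$.
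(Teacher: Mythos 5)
Your proposal is correct and follows essentially the same route as the paper: applying $\dot{\Delta}_k\Lambda$ to the transport equation for $\eta$ as in the derivation of \eqref{a_k}/\eqref{t1_h}, performing the $L^2$ energy estimate with the $\|\div \u\|_{L^\infty}$, commutator, $\Lambda\u\cdot\nabla\eta$ and $\Lambda f_4$ sources, and then bounding these by $\mathcal E_\infty(t')\mathcal E_1(t')$ via Lemma \ref{le_product}, Lemma \ref{commutator} and the embeddings \eqref{buxiu1}--\eqref{buxiu4}. Your observation that the closure works only because the derivative-heavy factor in each product lands in $\mathcal E_1$ (which contains the time-integrable $\|\u\|^h_{L^1_t(\dot B^{\frac d2+3}_{2,1})}$) while the companion factor sits in $\mathcal E_\infty$ is exactly the bookkeeping the paper relies on.
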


\subsubsection{The proof of global existence of Theorem \ref{dingli1}}
In this subsection, we use the continuity argument to complete the proof of global existence  of Theorem \ref{dingli1}.
Multiplying  \eqref{t_low}  by an large constant  and adding it to \eqref{aut_low}, \eqref{eta_low}, we can get the estimate of $(n,\u,\tau,\eta)$ in the low frequency:
\begin{align}\label{nuetlow}
& \| (n, \u,\eta)\|^\ell_{\widetilde{L}^{\infty}_{t}(\dot B^{\frac{d}{2} - 1}_{2,1})} +\| \tau \|^\ell_{\widetilde{L}^{\infty}_{t}(\dot B^{\frac{d}{2}}_{2,1})}  +  \| (n, \u)\|^\ell_{{L}^{1}_{t}(\dot B^{\frac{d}{2} + 1}_{2,1} )} + \frac{A_0}{2\lambda_1} \| \tau \|^\ell_{{L}^{1}_{t}(\dot B^{\frac{d}{2} }_{2,1} )}\nn\\
&\quad \le  \| (n_0, \u_0,\eta_0) \|^\ell_{\dot{B}^{\frac{d}{2}-1}_{2,1}} + \| \tau_0 \|^\ell_{\dot{B}^{\frac{d}{2}}_{2,1}} + C \int_{0}^{t} \eat\big( 1+ \eat\big)\ebt \,dt'.
\end{align}

Multiplying  \eqref{t_h}  by an large constant  and adding it to \eqref{au_h} and \eqref{e_h}, we can get the estimate of $(n,\u,\tau,\eta)$ in the high frequency:
\begin{align}\label{nuethigh}
	&\| (\Lambda n, \u,\Lambda \tau,\Lambda \eta)\|^h_{\widetilde{L}^{\infty}_{t}(\dot B^{\frac{d}{2}+1}_{2,1})}  + \| \Lambda n \|^h_{{L}^{1}_{t}(\dot B^{\frac{d}{2}+1}_{2,1})} + \|  \u \|^h_{{L}^{1}_{t}(\dot B^{\frac{d}{2} + 3}_{2,1})}+ \frac{A_0}{2\lambda_1}\| \Lambda\tau \|^h_{{L}^1_{t} (\dot{B}^{\frac{d}{2} +1 }_{2,1} )} \nn\\
	&\quad\le  \| (\Lambda n_0,\u_0,\Lambda \tau_0,\Lambda \eta_0)\|^h_{\dot B^{\frac{d}{2}+1}_{2,1}} +  C \int_{0}^{t}  \big( 1+\eat \big)\eat\ebt \, \,dt'.
	\end{align}
Now,  define $ \mathcal{E}(t)$ as
\begin{align*}
	&\mathcal{E}(t) \stackrel{\mathrm{def}}{=} \ea + \int_{0}^{t} \mathcal{E}_1(t') \,dt'.
\end{align*}

Then, we can deduce from \eqref{nuetlow} and \eqref{nuethigh} that
\begin{align}\label{energy}
	\mathcal{E}(t) \le C\mathcal{E}(0) + C\mathcal{E}^2(t) + C\mathcal{E}^3(t).
\end{align}
Under the assumption of Theorem \ref{dingli1}, there exists a constant $C_0$ such that $\mathcal E(0)\leq C_0 c_0$. Further supported by the local existence which can be  achieved through basic energy method, then there exists a positive time $T$ such that
\begin{equation}\label{re}
 \mathcal E(t) \leq 2 C_0c_0 , \quad  \forall \; t \in [0, T].
\end{equation}
Let $T^{*}$ be the maximum time of existence in Equation \eqref{re}. According to Equation \eqref{energy}, when $c_0$ is sufficiently small, the standard continuity method can extend $T^*$ to $T^{*} = \infty$. We omit the details here. Hence, we finish the proof of Theorem \ref{dingli1}. $\hspace{7cm}\square$

\subsection{The proof  of the decay rates of the solutions in Theorem \ref{dingli1}}
In this section, we shall follow the method used in \cite{xujiang2021}, \cite{ZL2021}  to get the decay rates of the solutions constructed in the previous section.  The proof mainly depends on the pure energy approach without the spectral analysis.
From Section 3.1 (see the derivation of \eqref{udelta_l}, \eqref{pu_l}, \eqref{t_l}, \eqref{eta_l}, \eqref{le_uqt_h},  \eqref{pu_lambda}, \eqref{t1_h} and \eqref{eta_h} for more details), we can  get the following low-frequency and  high-frequency estimates:
\begin{align}\label{fa1}
&\frac{d}{dt}\big(\|(n,\u )\|^{\ell}_{\dot{B}_{2,1}^{\frac{d}{2}-1}}
+\|\tau\|^\ell_{\dot{B}_{2,1}^{\frac{d}{2}}}
+\|(\Lambda n,\u,\Lambda \tau)\|^{h}_{\dot{B}_{2,1}^{\frac{d}{2}+1}}\big)
\nonumber\\
&\quad \quad+c_1\big(\|(n,\u)\|^\ell_{\dot{B}_{2,1}^{\frac{d}{2} +1}} +\|\tau\|^\ell_{\dot{B}_{2,1}^{\frac{d}{2}}}
+\|(\Lambda n,\Lambda \tau)\|^{h}_{\dot{B}_{2,1}^{\frac{d}{2}+1}}+\|\u\|^{h}_{\dot{B}_{2,1}^{\frac{d}{2} +3}}\big)\nonumber\\
&\quad\lesssim (1+\ea)\ea\big(\|(n,\u)\|^\ell_{\dot{B}_{2,1}^{\frac{d}{2} +1}} +\|\tau\|^\ell_{\dot{B}_{2,1}^{\frac{d}{2}}}
+\|(\Lambda n,\Lambda \tau)\|^{h}_{\dot{B}_{2,1}^{\frac{d}{2}+1}}+\|\u\|^{h}_{\dot{B}_{2,1}^{\frac{d}{2} +3}}\big).
\end{align}

By the proof of the global existence of Theorem \ref{dingli1},  we can choose $c_0$  small enough such that the following estimate holds:
\begin{align*}
  (1+\ea)\ea \le \frac{c_1}{2},
\end{align*}
from which and \eqref{fa1},  we find that
\begin{align}\label{fa3}
&\frac{d}{dt}\big(\|(n,\u )\|^{\ell}_{\dot{B}_{2,1}^{\frac{d}{2}-1}}
+\|\tau\|^\ell_{\dot{B}_{2,1}^{\frac{d}{2}}}
+\|(\Lambda n,\u,\Lambda \tau)\|^{h}_{\dot{B}_{2,1}^{\frac{d}{2}+1}}\big)
\nonumber\\
&\quad +\frac{c_1}{2}\big(\|(n,\u)\|^\ell_{\dot{B}_{2,1}^{\frac{d}{2} +1}} +\|\tau\|^\ell_{\dot{B}_{2,1}^{\frac{d}{2}}}
+\|(\Lambda n,\Lambda \tau)\|^{h}_{\dot{B}_{2,1}^{\frac{d}{2}+1}}+\|\u\|^{h}_{\dot{B}_{2,1}^{\frac{d}{2} +3}}\big)\le 0.
\end{align}

In order to derive the decay estimate for the solutions as stated in Theorem \ref{dingli1}, we need to obtain a Lyapunov-type differential inequality from \eqref{fa3}.
For any $\widetilde{\beta} > 0$, noticing the  small condition of the solution in \eqref{xiaonorm} and embedding relation at high frequencies, there holds
\begin{align}\label{fa4}
\|(\Lambda n,\Lambda \tau)\|^{h}_{\dot{B}_{2,1}^{\frac{d}{2}+1}}+\|\u\|^{h}_{\dot{B}_{2,1}^{\frac{d}{2} +3}} \ge C \big(\|(\Lambda n,\Lambda \tau)\|^{h}_{\dot{B}_{2,1}^{\frac{d}{2}+1}}+\|\u\|^{h}_{\dot{B}_{2,1}^{\frac{d}{2} +3}} \big)^{1+\widetilde{\beta}}.
\end{align}
Therefore, to establish the Lyapunov-type inequality of the solutions, it suffices to control the norm $\|(n,\u)\|^\ell_{\dot{B}_{2,1}^{\frac{d}{2}+1}}
+\|\tau\|^\ell_{\dot{B}_{2,1}^{\frac{d}{2}}}$  at low frequencies.
This process can be  obtained from
an interpolation inequality. Before exploiting the interpolation inequality, we need to control on the viability of the uniform bound
\begin{equation*}
\|(n,\u, \eta)(t,\cdot)\|^{\ell}_{\dot B^{-{s}}_{2,\infty}} + \|\tau(t,\cdot)\|^{\ell}_{\dot B^{-{s}+1}_{2,\infty}}\le C \,\,\mbox{ for any $1-\frac{d}{2} < {s} \le \frac{d}{2}$}.
\end{equation*}
This explains why we consider the propagation on the
regularity of the initial data with  negative   index in the following subsection.

\subsubsection{Propagation the regularity of the initial data with  negative   index}
In this subsection, we shall derive the following key proposition.

\begin{proposition}\label{propagate}
	Let $(n,\u,\tau,\eta) $ be the  solutions obtained in \eqref{xiaonorm}. For any $1-\frac{d}{2} < {s} \le \frac{d}{2},$ and $(n_0^{\ell},\u_0^{\ell}, \eta_0^\ell)\in {\dot{B}^{-{s}}_{2,\infty}}(\R^d),  \tau_0^{\ell}\in \dot{B}_{2,\infty}^{-{s}+1}(\R^d),$
	then there exists a constant $C_0>0$ depends on the norm of the initial data
	such that for all $t\geq0$,
	\begin{eqnarray}\label{fa5}
	\|(n,\u, \eta)(t,\cdot)\|^{\ell}_{\dot B^{-{s}}_{2,\infty}} + \|\tau(t,\cdot)\|^{\ell}_{\dot B^{-{s}+1}_{2,\infty}}\leq C_0.
	\end{eqnarray}
\end{proposition}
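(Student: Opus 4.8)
The plan is to reproduce, on the low-frequency blocks only, the energy scheme of Section 3.1 that produced \eqref{udelta_l}, \eqref{pu_l}, \eqref{t_l} and \eqref{eta_l}, but now measuring the solution in the spaces $\dot B^{-s}_{2,\infty}$ and $\dot B^{-s+1}_{2,\infty}$ in place of $\dot B^{\frac{d}{2}-1}_{2,1}$ and $\dot B^{\frac{d}{2}}_{2,1}$. Concretely, one applies $\dot\Delta_k$ to \eqref{reformulate}, retains the good unknowns $\delta=\Lambda^{-1}\div\q\u$ and $\Gamma=(\mu_1+\mu_2)\Lambda n-\alpha_1\delta$ of Lemma \ref{le_aut_low}, carries out the same $L^2$ energy estimates, then multiplies by $2^{-sk}$ (respectively $2^{(-s+1)k}$ in the $\tau$--equation, to absorb the $\div\tau$ coupling in the $\p\u$--equation via $\|\div\tau\|^{\ell}_{\dot B^{-s}_{2,\infty}}\lesssim\|\tau\|^{\ell}_{\dot B^{-s+1}_{2,\infty}}$) and takes $\sup_{k\le k_0}$ rather than $\sum_{k\le k_0}$. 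Since the good-unknown manipulations are purely algebraic, the linear terms are handled verbatim; after discarding the non-negative dissipation terms (not needed for a uniform bound) one arrives at
\[
\mathcal{D}(t)\le\mathcal{D}(0)+C\int_0^t\Big(\|(f_1,f_2,\u\cdot\nabla n)\|^{\ell}_{\dot B^{-s}_{2,\infty}}+\|f_3\|^{\ell}_{\dot B^{-s+1}_{2,\infty}}+\|f_4\|^{\ell}_{\dot B^{-s}_{2,\infty}}+\mathcal{R}(t')\Big)\,dt',
\]
where $\mathcal{D}(t)\stackrel{\mathrm{def}}{=}\|(n,\u,\eta)(t,\cdot)\|^{\ell}_{\dot B^{-s}_{2,\infty}}+\|\tau(t,\cdot)\|^{\ell}_{\dot B^{-s+1}_{2,\infty}}$ and $\mathcal{R}(t')$ gathers the commutator contributions $\sum_{k\le k_0}2^{-sk}\big\|[\dot\Delta_k,\u\cdot\nabla]w\big\|_{L^2}$ ($w\in\{\tau,\eta\}$, with weight $2^{(-s+1)k}$ for $w=\tau$) together with the $\|\div\u\|_{L^\infty}(\cdots)$ pieces from the transport terms.

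The decisive point is that the whole bracket under the integral is bounded by $C\,\mathcal{E}_1(t')\big(\mathcal{D}(t')+c_0\big)$. This uses two structural features of the nonlinearities. (i) Every summand of $f_1,\dots,f_4$ and of $\u\cdot\nabla n$ is at least quadratic and carries exactly one factor that is a full spatial derivative of the unknowns --- one of $\div\u,\ \nabla n,\ \Delta\u,\ \nabla\div\u,\ \div\tau,\ \nabla\u$ --- which by \eqref{buxiu1}--\eqref{buxiu4} lies in $\dot B^{\frac{d}{2}}_{2,1}$ (and, when an extra derivative is needed so that the product falls into $\dot B^{-s+1}_{2,\infty}$, in $\dot B^{\frac{d}{2}+1}_{2,1}$) with norm $\lesssim\mathcal{E}_1(t)$, while the remaining factor keeps the low regularity and is bounded globally by $\mathcal{D}(t)$ on low frequencies and, owing to \eqref{eq:smalladd} and \eqref{xiaonorm}, by $\mathcal{E}_\infty(t)\le Cc_0$ on high frequencies. (ii) The composition functions are never handled as compositions in the negative-index space: $I(a)$ is rewritten as $\frac{1}{R}n-\frac{K(L-1)}{R}\eta-\frac{\zeta}{R}\eta^2$ and $k(n,\eta)$ through the Taylor-type splitting \eqref{ine}, so that the genuinely nonlinear pieces $\widetilde k_1(n,\eta),\widetilde k_2(n,\eta)$ occur only as factors in products and are needed merely in $\dot B^{\frac{d}{2}}_{2,1}$, where Lemma \ref{I(a)} applies. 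The commutator terms in $\mathcal{R}(t')$ are controlled, via the paradifferential splitting of $[\dot\Delta_k,\u\cdot\nabla]w$, by $\lesssim d_k\,2^{sk}\,\|\nabla\u\|_{\dot B^{\frac{d}{2}}_{2,1}}\|w\|_{\dot B^{-s}_{2,\infty}}$ (and the analogous $\dot B^{-s+1}_{2,\infty}$ estimate), with $\|(d_k)\|_{\ell^\infty}\lesssim1$, so that $\mathcal{R}(t')\lesssim\mathcal{E}_1(t')\,\mathcal{D}(t')$.

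The main obstacle is precisely this last block of estimates --- product and commutator bounds in the $\ell^\infty$--summability Besov scale at negative regularity, i.e.\ the analogues of Lemmas \ref{le2.6}, \ref{le2.7}, \ref{commutator} with $\dot B^{-s}_{2,\infty}$, $\dot B^{-s+1}_{2,\infty}$ in place of $\dot B^{\frac{d}{2}-1}_{2,1}$ --- and this is the only place where the hypothesis $1-\frac{d}{2}<s\le\frac{d}{2}$ enters. For a product $gh$ with $g$ of low index ($-s$ or $-s+1$) and $h$ at the $\mathcal{E}_1$ level (in $\dot B^{\frac{d}{2}}_{2,1}$ or $\dot B^{\frac{d}{2}+1}_{2,1}\hookrightarrow L^\infty$) one argues through Bony's decomposition \eqref{bony}: $\dot T_h g$ stays in the space of $g$; $\dot T_g h$ is estimated using the embedding $\dot B^{-s}_{2,\infty}\hookrightarrow\dot B^{-s-\frac{d}{2}}_{\infty,\infty}$; and the remainder $\dot R(g,h)$ is first placed into a space $\dot B^{\sigma}_{1,1}$ --- whose positivity condition from Lemma \ref{le2.5} forces $-s+\frac{d}{2}>0$, hence the upper bound $s\le\frac{d}{2}$ --- and then transferred into $\dot B^{-s}_{2,\infty}$ by Bernstein's inequality. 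The lower bound $1-\frac{d}{2}<s$ is what makes $-s+1<\frac{d}{2}$, so that $\dot B^{-s+1}_{2,\infty}$ can afterwards be interpolated against the known regularity $\dot B^{\frac{d}{2}}_{2,1}$ of $\tau^\ell$ in the decay argument. Once these nonlinear bounds are secured, the integral inequality becomes $\mathcal{D}(t)\le\mathcal{D}(0)+C\int_0^t\mathcal{E}_1(t')\big(\mathcal{D}(t')+c_0\big)\,dt'$; since $\int_0^\infty\mathcal{E}_1(t')\,dt'\le Cc_0$ by \eqref{xiaonorm}, Grönwall's lemma yields $\mathcal{D}(t)\le C_0$ with $C_0$ depending only on the initial data, which is \eqref{fa5}.
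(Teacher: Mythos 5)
Your proposal matches the paper's proof in every essential respect: the same good unknowns $\delta,\Gamma$ carried to the $\dot B^{-s}_{2,\infty}$ (resp.\ $\dot B^{-s+1}_{2,\infty}$) scale with $\sup_{k\le k_0}$ in place of $\sum_{k\le k_0}$, the same Taylor-type splitting \eqref{ine} of the composition functions so that $\widetilde k_1,\widetilde k_2$ occur only as $\dot B^{d/2}_{2,1}$--factors, the same reduction to an integral inequality of the form $\mathcal{D}(t)\le\mathcal{D}(0)+C\int_0^t\mathcal{E}_1(t')\big(\mathcal{D}(t')+c_0\big)\,dt'$, and the same Gr\"onwall closure using $\int_0^\infty\mathcal{E}_1\,dt'\le Cc_0$ from \eqref{xiaonorm}. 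The only cosmetic difference is that the paper imports the $\ell^\infty$-scale product bounds \eqref{key0}--\eqref{key2} directly from \cite{ZL2021} where you re-derive them via Bony's decomposition; your attribution of the constraint $1-\frac{d}{2}<s\le\frac{d}{2}$ to the decay step alone is slightly off (it is already forced by the product estimates \eqref{key1}--\eqref{key2} used inside this very proposition), but that does not affect the argument.
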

\begin{proof}
	At first,	considering the first three equations in \eqref{reformulate}, we can derive the following two inequalities through a deduction similar to \eqref{udelta_l},  \eqref{pu_l} that
	\begin{align}\label{nqu_prop}
	\| (n, \q \u ) \|^\ell_{\dot{B}^{-{s}}_{2,\infty}}   \lesssim  \| (n_0, \q \u_0) \|^\ell_{\dot{B}^{-{s}}_{2,\infty}} + \| \tau \|^\ell_{\widetilde{L}^{1}_{t}(\dot{B}^{-{s}+1}_{2,\infty})} +  \int_{0}^{t} \| (f_1, \mathbb Q f_2, \u \cdot \nabla n )\|^\ell_{\dot{B}^{-{s}}_{2,\infty}} \,dt',
	\end{align}
	and
	\begin{align}\label{pu_prop}
	\|\p \u\|^{\ell}_{\dot{B}^{-{s}}_{2,\infty}}
	\lesssim\|\p \u_0\|^{\ell}_{\dot{B}^{-{s}}_{2,\infty}} + \| \tau \|^\ell_{\widetilde{L}^{1}_{t}(\dot{B}^{-{s}+1}_{2,\infty})}
	+\int_0^t\|\mathbb P f_2\|^{\ell}_{\dot{B}^{-{s}}_{2,\infty}}\,dt'.
	\end{align}
	
	From the fourth and fifth equations of \eqref{reformulate}, we can derive results through a deduction similar to \eqref{eta_l} that
	\begin{align}\label{t_prop}
	\| \tau \|^\ell_{\dot{B}^{-{s}+1}_{2,\infty}} + \frac{A_0}{2\lambda_1} \| \tau \|^\ell_{\widetilde{L}^{1}_{t}(\dot{B}^{-{s}+1}_{2,\infty} )} &\lesssim  \| \tau_0 \|^\ell_{\dot{B}^{-{s}+1}_{2,\infty}} + \int_{0}^{t} \| \div \u \|_{L^{\infty}} \| \tau \|^\ell_{\dot{B}^{-{s}+1}_{2,\infty}} \,dt' \nn\\
	&\quad + \int_{0}^{t} \sup_{k\leq k_0} 2^{(-{s}+1)k} \| [\dot{\Delta}_k, \u \cdot \nabla ]\tau\|_{L^2} \,dt' + \int_{0}^{t} \| f_3 \|^\ell_{\dot{B}^{-{s}+1}_{2,\infty}} \,dt'
	\end{align}
	and
	\begin{align}\label{e_prop}
	\|\eta\|^{\ell}_{\dot{B}^{-{s}}_{2,\infty}}
	&\lesssim\|\eta_0\|^{\ell}_{\dot{B}^{-{s}}_{2,\infty}}
	+\int_{0}^{t} \| \div \u \|_{L^{\infty}} \| \eta \|^\ell_{\dot{B}^{-{s}}_{2,\infty}} \,dt' \nn\\
	&\quad + \int_{0}^{t} \sup_{k\leq k_0} 2^{-{s} k} \| [\dot{\Delta}_k, \u \cdot \nabla ]\eta\|_{L^2} \,dt' + \int_{0}^{t} \| f_4 \|^\ell_{\dot{B}^{-{s}}_{2,\infty}} \,dt'.
	\end{align}
	Multiplying \eqref{t_prop} by a large constant $C$ and adding it to \eqref{nqu_prop} and \eqref{pu_prop}, and subsequently combining \eqref{e_prop}, we can derive
	\begin{align}\label{nute_prop}
	&\|(n, \u, \eta) \|^{\ell}_{\dot{B}^{-{s}}_{2,\infty}}+\|\tau\|^{\ell}_{\dot{B}^{-{s}+1}_{2,\infty}}\nonumber\\
	&\quad\lesssim\|(n_0,\u_0, \eta_0)\|^{\ell}_{\dot{B}^{-{s}}_{2,\infty}}+\|\tau_0\|^{\ell}_{\dot{B}^{-{s}+1}_{2,\infty}}
	\nonumber\\&\quad\quad+\int_0^t\|(f_1,  f_2, f_4, \u \cdot \nabla n )\|^{\ell}_{\dot{B}^{-{s}}_{2,\infty}}\,dt' +\int_0^t\|f_3\|^{\ell}_{\dot{B}^{-{s}+1}_{2,\infty}}\,dt'
	\nonumber\\&\quad\quad + \int_{0}^{t} \| \div \u \|_{L^{\infty}} \| \tau \|^\ell_{\dot{B}^{-{s}+1}_{2,\infty}} \,dt' + \int_{0}^{t} \sup_{k\leq k_0} 2^{(-{s}+1)k} \| [\dot{\Delta}_k, \u \cdot \nabla ]\tau\|_{L^2} \,dt'\nn\\
	&\quad \quad +\int_{0}^{t} \| \div \u \|_{L^{\infty}} \| \eta \|^\ell_{\dot{B}^{-{s}}_{2,\infty}} \,dt' + \int_{0}^{t} \sup_{k\leq k_0} 2^{-{s} k} \| [\dot{\Delta}_k, \u \cdot \nabla ]\eta\|_{L^2} \,dt'.
	\end{align}
	
	In order to bound the terms on the right-hand side of the above inequality, we leverage the product estimate in Besov spaces which can be found in \cite{ZL2021}.
	\begin{align}
	&\|fg\|_{\dot{B}_{2,\infty}^{-{s}}}\!\!\lesssim \|f\|_{\dot{B}_{2,\infty}^{-{s}}}\|g\|_{\dot{B}_{2,1}^{\frac{d}{2}}},\quad -\frac{d}{2}\le -{s}<\frac{d}{2};\label{key0}\\
	&\|f g^\ell\|_{\dot B^{-{s}}_{2,\infty}}^\ell\!\! \lesssim\|f\|_{\dot B^{\frac{d}{2} -1}_{2,1}}\|g^\ell\|_{\dot B^{-{s}+1}_{2,\infty}},\quad -\frac{d}{2}\le-{s}<\frac{d}{2} -1;
	\label{key1}\\
	&\|f g^h\|_{\dot B^{-{s}}_{2,\infty}}^\ell \!\!\lesssim\|f\|_{\dot B^{\frac{d}{2} -1}_{2,1}}\|g\|^h_{\dot B^{\frac{d}{2}}_{2,1}},\quad -\frac{d}{2}\le-{s}<\frac{d}{2}-1.\label{key2}
	\end{align}
For the term $ \|\u\cdot\nabla n\|_{\dot B^{-{s}}_{2,\infty}}^\ell $,  we first use the decompositions $n=n^\ell+n^h$ to write
	\begin{align}\label{uda}
	\|\u\cdot\nabla n\|_{\dot B^{-{s}}_{2,\infty}}^\ell \lesssim \|\u\cdot\nabla n^\ell\|_{\dot B^{-{s}}_{2,\infty}}^\ell + \|\u\cdot\nabla n^h\|_{\dot B^{-{s}}_{2,\infty}}^\ell .
	\end{align}
	In view of \eqref{key0}, \eqref{key2} and Bernstein's inequality, one gets that
	\begin{align}\label{besI1}
	\|\u\cdot\nabla n^\ell\|_{\dot B^{-{s}}_{2,\infty}}^\ell &\lesssim \| \u\|^\ell_{\dot{B}^{-{s}}_{2,\infty}} \|\nabla n \|^\ell_{\dot{B}^{\frac{d}{2}}_{2,1}} + \|\u \|^h_{\dot{B}^{\frac{d}{2}}_{2,1}} \|\nabla n \|^\ell_{\dot{B}^{\frac{d}{2} -1}_{2,1}} \nn\\
	&\lesssim \| \u\|^\ell_{\dot{B}^{-{s}}_{2,\infty}} \| n \|^\ell_{\dot{B}^{\frac{d}{2}+1}_{2,1}} + \|\u \|^h_{\dot{B}^{\frac{d}{2} +3}_{2,1}} \| n \|^\ell_{\dot{B}^{\frac{d}{2}-1}_{2,1}}\nn\\
	&\lesssim \eb \| \u\|^\ell_{\dot{B}^{-{s}}_{2,\infty}} + \ea\eb.
	\end{align}
	Similarly,
	\begin{align*}
	\|\u\cdot\nabla n^h\|_{\dot B^{-{s}}_{2,\infty}}^\ell &\lesssim \| \u\|^\ell_{\dot{B}^{-{s}}_{2,\infty}} \|\nabla n \|^h_{\dot{B}^{\frac{d}{2} }_{2,1}} + \|\u \|^h_{\dot{B}^{\frac{d}{2}}_{2,1}} \|\nabla n\|^h_{\dot{B}^{\frac{d}{2} -1}_{2,1}} \nn\\
	&\lesssim \eb \| \u\|^\ell_{\dot{B}^{-{s}}_{2,\infty}} + \ea\eb,
	\end{align*}
	from which and \eqref{besI1} gives rise to
	\begin{align}\label{uda_prop}
	\|\u\cdot\nabla n\|_{\dot B^{-{s}}_{2,\infty}}^\ell \lesssim \eb \| \u\|^\ell_{\dot{B}^{-{s}}_{2,\infty}} + \ea\eb.
	\end{align}
	For the second term of $ f_1$ and the term $ f_4$, it follows from \eqref{key0} that
	\begin{align}\label{ndu_prop}
	\|(n\div \u, \eta \div \u)\|^{\ell}_{\dot{B}^{-{s}}_{2,\infty}}&\lesssim \|n\|_{\dot{B}^{-{s}}_{2,\infty}} \|\nabla \u \|_{\dot{B}^{\frac{d}{2}}_{2,1}} + \|\eta\|_{\dot{B}^{-{s}}_{2,\infty}} \|\nabla \u \|_{\dot{B}^{\frac{d}{2}}_{2,1}} \nn\\
	&\lesssim \big(\|(n, \eta)\|^\ell_{\dot{B}^{-{s}}_{2,\infty}} + \|(n, \eta) \|^h_{\dot{B}^{-{s}}_{2,\infty}} \big) \|\nabla \u \|_{\dot{B}^{\frac{d}{2}}_{2,1}}  \nn\\
	&\lesssim \big(\|\u \|^\ell_{\dot{B}^{\frac{d}{2} +1}_{2,1}} + \| \u \|^h_{\dot{B}^{\frac{d}{2} +3 }_{2,1}}\big) \big(\|(n, \eta)\|^\ell_{\dot{B}^{-{s}}_{2,\infty}} + \|(\Lambda n ,\Lambda \eta) \|^h_{\dot{B}^{\frac{d}{2} +1}_{2,1}} \big) \nn\\
	&\lesssim \eb \| (n, \eta)\|^\ell_{\dot{B}^{-{s}}_{2,\infty}} + \ea\eb.
	\end{align}
	So, one has
	\begin{align}\label{h5_prop}
		\|f_4\|^\ell_{\dot B^{-{s}}_{2,\infty}} \lesssim	\eb \|  \eta\|^\ell_{\dot{B}^{-{s}}_{2,\infty}} + \ea\eb.
	\end{align}
We now turn to bound the first term in $f_1$. Recalling \eqref{ine}, we can get
\begin{align}\label{dgaga}
&\|I(a) \div \u)\|^{\ell}_{\dot{B}^{-{s}}_{2,\infty}}
\lesssim\|(n \div \u, \eta  \div \u)\|^{\ell}_{\dot{B}^{-{s}}_{2,\infty}}+\|\eta^2 \div \u\|^{\ell}_{\dot{B}^{-{s}}_{2,\infty}}.
\end{align}
Noticing \eqref{ndu_prop}, we only need to bound the last term in \eqref{dgaga}. Thanks to \eqref{key0} and \eqref{buxiu1}, \eqref{buxiu3} there holds
	\begin{align}\label{nIndu_prop}
	\|\eta^2 \div \u\|^{\ell}_{\dot{B}^{-{s}}_{2,\infty}}
	\lesssim&  \|\eta\|_{\dot{B}^{-{s}}_{2,\infty}} \|\eta \nabla \u \|_{\dot{B}^{\frac{d}{2}}_{2,1}} \nn\\
	\lesssim &\| \eta\|_{\dot{B}^{-{s}}_{2,\infty}}  \| \eta \|_{\dot{B}^{\frac{d}{2}}_{2,1}} \| \u \|_{\dot{B}^{\frac{d}{2}+1}_{2,1}}  \nn\\
	\lesssim &\ea \eb \| (n, \eta)\|^\ell_{\dot{B}^{-{s}}_{2,\infty}},
	\end{align}
from which and
	 \eqref{ndu_prop}, we have
	\begin{align}\label{Indu_prop}
	\|I(a)\div \u\|^\ell_{\dot B^{-{s}}_{2,\infty}} \lesssim \big(1+\ea\big) \eb \| (n, \eta)\|^\ell_{\dot{B}^{-{s}}_{2,\infty}} + \ea\eb.
	\end{align}
Hence, the combination of \eqref{ndu_prop} and \eqref{Indu_prop} gives rise to
	\begin{align}\label{h1_prop}
	\|f_1\|^\ell_{\dot B^{-{s}}_{2,\infty}} \lesssim \big(1+\ea\big) \eb \| (n, \eta)\|^\ell_{\dot{B}^{-{s}}_{2,\infty}} + \ea\eb.
	\end{align}
	Next, we deal with the terms of $ \mathbb Q f_2$. The term $\|\u\cdot\nabla \u\|_{\dot B^{-{s}}_{2,\infty}}^\ell $ can be bounded the same as \eqref{uda_prop} such that
	\begin{align}\label{udu_prop}
	\|\u\cdot\nabla \u\|_{\dot B^{-{s}}_{2,\infty}}^\ell \lesssim \eb \| \u\|^\ell_{\dot{B}^{-{s}}_{2,\infty}} + \ea\eb.
	\end{align}
	For the term $k(a)(\Delta \u + \nabla \div \u)$, one can infer from \eqref{ine}  that
\begin{align}\label{agea}
	\|k(a)(\Delta \u + \nabla \div \u)\|_{\dot B^{-{s}}_{2,\infty}}^\ell
\lesssim \|(n \nabla^2 \u, \eta \nabla^2 \u) \|_{\dot B^{-{s}}_{2,\infty}}^\ell+ \|(n \widetilde{k}_1(n, \eta)\nabla^2 \u, \eta \widetilde{k}_2(n, \eta)\nabla^2 \u) \|_{\dot B^{-{s}}_{2,\infty}}^\ell .
	\end{align}
 Thanks to \eqref{key0}, \eqref{key2} and \eqref{buxiu1}--\eqref{buxiu4}, there holds
	\begin{align}\label{addu_prop}
	\|(n \nabla^2 \u, \eta \nabla^2 \u) \|_{\dot B^{-{s}}_{2,\infty}}^\ell
	&\lesssim \|(n^\ell \nabla^2 \u, \eta^\ell \nabla^2 \u) \|_{\dot B^{-{s}}_{2,\infty}}^\ell + \|(n^h \nabla^2 \u, \eta^h \nabla^2 \u )\|_{\dot B^{-{s}}_{2,\infty}}^\ell \nn\\
	& \lesssim \|(n, \eta) \|_{\dot B^{-{s}}_{2,\infty}}^\ell \|\nabla ^2 \u\|_{\dot{B}^{\frac{d}{2}}_{2,1}} +  \|(n, \eta)\|^h_{\dot{B}^{\f d2}_{2,1}} \|\nabla^2 \u\|_{\dot B^{\frac{d}{2}-1}_{2,1}}\nn\\
	& \lesssim (\| \u\|^\ell_{\dot{B}^{\frac{d}{2}+1}_{2,1}} + \| \u\|^h_{\dot{B}^{\frac{d}{2} +3}_{2,1}}) (\|(n, \eta) \|_{\dot B^{-{s}}_{2,\infty}}^\ell + \|\Lambda n \|^h_{\dot{B}^{\frac{d}{2} +1}_{2,1}} + \|\eta \|^h_{\dot{B}^{\frac{d}{2} +2}_{2,1}}) \nn\\
	& \lesssim \eb \| (n, \eta)\|_{\dot B^{-{s}}_{2,\infty}}^\ell + \ea\eb.
	\end{align}
	According to \eqref{key1}, \eqref{key2} and Lemma \ref{le2.5}, one has
	\begin{align*}
	&\|(n \widetilde{k}_1(n, \eta)\nabla^2 \u, \eta \widetilde{k}_2(n, \eta)\nabla^2 \u) \|_{\dot B^{-{s}}_{2,\infty}}^\ell \nn\\
	&\quad\lesssim \| (n,\eta)^\ell \widetilde{k}_1(n, \eta)\nabla^2 \u \|_{\dot B^{-{s}}_{2,\infty}}^\ell  +\| (n,\eta)^h \widetilde{k}_2(n, \eta) \nabla^2 \u \|_{\dot B^{-{s}}_{2,\infty}}^\ell\nn\\
	&\quad \lesssim \| (n,\eta)\|_{\dot B^{-{s}+1}_{2,\infty}}^\ell \| \widetilde{k}_1(n, \eta) \nabla^2 \u\|_{\dot{B}^{\frac{d}{2} -1}_{2,1}} + \|(n,\eta)\|^h_{\dot{B}^{\frac{d}{2}}_{2,1}} \| \widetilde{k}_2(n, \eta)\nabla^2 \u\|_{\dot{B}^{\frac{d}{2} -1}_{2,1}}\nn\\
	&\quad \lesssim \| (n, \eta)\|_{\dot B^{-{s}}_{2,\infty}}^\ell \| \widetilde{k}_1(n, \eta)\|_{\dot{B}^{\frac{d}{2}}_{2,1}} \| \nabla^2 \u\|_{\dot{B}^{\frac{d}{2} -1}_{2,1}} +  \|(n, \eta)\|^h_{\dot{B}^{\frac{d}{2}}_{2,1}} \| \widetilde{k}_1(n, \eta)\|_{\dot{B}^{\frac{d}{2}}_{2,1}} \| \nabla^2 \u\|_{\dot{B}^{\frac{d}{2} -1}_{2,1}}\nn\\
	&\quad \lesssim \big(\| (n, \eta)\|_{\dot B^{-{s}}_{2,\infty}}^\ell + \|\Lambda n \|^h_{\dot{B}^{\frac{d}{2} +1}_{2,1}} + \|\eta \|^h_{\dot{B}^{\frac{d}{2} +2}_{2,1}}\big)\nn\\
 &\qquad\times\big( \|(n, \eta)\|^\ell_{\dot{B}^{\frac{d}{2}-1}_{2,1}} + \|\Lambda n \|^h_{\dot{B}^{\frac{d}{2} +1}_{2,1}} + \|\eta \|^h_{\dot{B}^{\frac{d}{2} +2}_{2,1}}\big) \big(\| \u\|^\ell_{\dot{B}^{\frac{d}{2}+1}_{2,1}} + \| \u\|^h_{\dot{B}^{\frac{d}{2} +3}_{2,1}} \big)\nn\\
	&\quad \lesssim \ea\eb \| (n,\eta)\|_{\dot B^{-{s}}_{2,\infty}}^\ell + \ea\ea\eb,
	\end{align*}
	from which and \eqref{addu_prop},  we can get that
	\begin{align}\label{Iaddu_prop}
	\|k(a)(\Delta \u + \nabla \div \u) \|_{\dot B^{-{s}}_{2,\infty}}^\ell \lesssim (1+\ea)\eb \| (n,\eta)\|_{\dot B^{-{s}}_{2,\infty}}^\ell  + (1+\ea)\ea\eb.
	\end{align}
	Similarly, we have
	\begin{align}\label{Iada_prop}
	\|k(n, \eta)\nabla n \|_{\dot B^{-{s}}_{2,\infty}}^\ell \lesssim (1+\ea)\eb \| (n,\eta)\|_{\dot B^{-{s}}_{2,\infty}}^\ell  + (1+\ea)\ea\eb
	\end{align}
	and
	\begin{align}\label{Iadt_prop}
	\|k(n, \eta)\div \tau \|_{\dot B^{-{s}}_{2,\infty}}^\ell \lesssim (1+\ea)\eb \| (n,\eta)\|_{\dot B^{-{s}}_{2,\infty}}^\ell  + (1+\ea)\ea\eb.
	\end{align}

	Collecting \eqref{udu_prop}, \eqref{Iaddu_prop}, \eqref{Iada_prop} and \eqref{Iadt_prop} together, we arrive at
	\begin{align}\label{h2_prop}
	 \| f_2\|^{\ell}_{\dot{B}^{-{s}}_{2,\infty}}
 \lesssim (1+\ea)\eb \| (a, \u, \eta)\|_{\dot B^{-{s}}_{2,\infty}}^\ell  + (1+\ea)\ea\eb.
	\end{align}
	For the second term of $ f_3$,  it follows from \eqref{ndu_prop} that
	\begin{align}\label{edu_prop}
		\| \eta (\nabla \u + (\nabla \u)^{\top} ) \|^{\ell}_{\dot{B}^{-{s}+1}_{2,\infty}} \lesssim \eb \| \eta\|^\ell_{\dot{B}^{-{s}}_{2,\infty}} + \ea\eb.
	\end{align}
For the rest two term of $ f_3$,
 it follows from \eqref{key0} that
	\begin{align}\label{tdu_prop}
		\| \nabla \u \tau + \tau (\nabla \u)^{\top} + \tau \div \u \|^{\ell}_{\dot{B}^{-{s}+1}_{2,\infty}} &\lesssim \| \nabla \u \tau + \tau (\nabla \u)^{\top} + \tau \div \u \|^{\ell}_{\dot{B}^{ -{s}}_{2,\infty}} \lesssim \| \nabla \u \|_{\dot B^{-{s}}_{2,\infty}} \| \tau\|_{\dot{B}^{\frac{d}{2}}_{2,1}}\nn\\
		&\lesssim (\| \tau\|^\ell_{\dot{B}^{\frac{d}{2}}_{2,1}} + \| \tau\|^h_{\dot{B}^{\frac{d}{2}+2}_{2,1}} ) (\|  \u \|^\ell_{\dot B^{-{s}}_{2,\infty}} + \| \u\|^h_{\dot{B}^{\frac{d}{2}+1}_{2,1}} )\nn\\
		& \lesssim \eb \|  \u \|^\ell_{\dot B^{-{s}}_{2,\infty}} + \ea\eb.
	\end{align}
	Hence, the combination of \eqref{tdu_prop} and \eqref{edu_prop} yields that
	\begin{align}\label{h4_prop}
	\| f_3\|^\ell_{\dot B^{-{s}+1 }_{2,\infty}} \lesssim \eb \|(\u, \eta)\|^{\ell}_{\dot{B}^{-{s}}_{2,\infty}}   + \ea\eb.
	\end{align}
	Finally, we still need to deal with the remaining terms in \eqref{nute_prop}. By Lemma \ref{commutator}, one can get
	\begin{align}\label{te_commutaor_prop}
		 &\| \div \u \|_{L^{\infty}} (\| \tau \|^\ell_{\dot{B}^{-{s}+1}_{2,\infty}} + \| \eta \|^\ell_{\dot{B}^{-{s}}_{2,\infty}}) +  \sup_{k\leq k_0} 2^{(-{s}+1)k} \| [\dot{\Delta}_k, \u \cdot \nabla ]\tau\|_{L^2}    +  \sup_{k\leq k_0} 2^{-{s} k} \| [\dot{\Delta}_k, \u \cdot \nabla ]\eta\|_{L^2}\nn\\
		 &\quad\lesssim (\| \tau \|_{\dot{B}^{-{s}+1}_{2,\infty}} + \| \eta \|_{\dot{B}^{-{s}}_{2,\infty}}) \|\nabla \u \|_{\dot{B}^{\frac{d}{2}}_{2,1}} \nn\\
		 &\quad \lesssim (\| \tau \|^\ell_{\dot{B}^{-{s}+1}_{2,\infty}} + \| \eta \|^\ell_{\dot{B}^{-{s}}_{2,\infty}} + \| (\tau, \eta) \|^h_{\dot{B}^{\frac{d}{2}+2}_{2,1}}  ) (\| \u\|^\ell_{\dot{B}^{\frac{d}{2}+1}_{2,1}} + \| \u\|^h_{\dot{B}^{\frac{d}{2} +3}_{2,1}}  )\nn\\
		 &\quad \lesssim \eb (\| \tau \|^\ell_{\dot{B}^{-{s}+1}_{2,\infty}} + \| \eta \|^\ell_{\dot{B}^{-{s}}_{2,\infty}}) + \ea\eb.
	\end{align}
	Inserting \eqref{uda_prop}, \eqref{h5_prop}, \eqref{h1_prop}, \eqref{h2_prop}, \eqref{h4_prop},  and \eqref{te_commutaor_prop} into \eqref{nute_prop}, we can get
	\begin{align}\label{nute}
	&\|(n,\u, \eta) \|^{\ell}_{\dot{B}^{-{s}}_{2,\infty}}+\|\tau\|^{\ell}_{\dot{B}^{-{s}+1}_{2,\infty}}\nonumber\\
	&\quad\lesssim\|(n_0,\u_0, \eta_0)\|^{\ell}_{\dot{B}^{-{s}}_{2,\infty}}+\|\tau_0\|^{\ell}_{\dot{B}^{-{s}+1}_{2,\infty}}+\int_{0}^{t} \big(1+\eat\big)\eat\ebt \,dt'\nn\\
	&\quad \quad +  \int_{0}^{t}
	\big(1+\eat\big)\ebt (\|(n^{\ell},\u^{\ell}, \eta^\ell)\|_{\dot{B}^{-{s}}_{2,\infty}}
	+\|\tau^{\ell}\|_{\dot{B}^{-{s}+1}_{2,\infty}})\,dt' .
	\end{align}
		Thus, in view of the global solutions obtained in Theorem \ref{dingli1}, and applying the Gronwall inequality to \eqref{nute} gives rise to
	\begin{eqnarray}\label{prop}
	\|(n,\u,\eta)(t,\cdot)\|^{\ell}_{\dot B^{-{s}}_{2,\infty}}+\|\tau(t,\cdot)\|^{\ell}_{\dot B^{-{s}+1}_{2,\infty}}\leq \bar{C}.
	\end{eqnarray}
	for all $t\geq0$, where $\bar{C}>0$ depends on the norm of the initial data.
	
	Consequently, we complete the proof of Proposition \ref{propagate}.
\end{proof}

\subsubsection{Lyapunov-type differential inequality}
In this section, we establish the Lyapunov-type inequality in time for energy norms, which leads
to the time-decay estimates. On the one hand, for any $-\frac d2\le{s}<{\frac{d}{2}-1}$,  we get by interpolation inequality that
\begin{align*}
\|(n,\u)\|^\ell_{\dot{B}_{2,1}^{\frac{d}{2}-1}}
\le& C \big(\|(n,\u)\|^\ell_{\dot{B}_{2,\infty}^{-{s}}}\big)^{\theta_{1}}
\big(\|(n,\u)\|^\ell_{\dot{B}_{2,1}^{\frac{d}{2}+1}}\big)^{1-\theta_{1}},
\quad \theta_1=\frac{4}{d+2{s}+2}\in(0,1),
\end{align*}
this together with Proposition \ref{propagate}  implies
there exists a constant $\widetilde{C}$ such that
\begin{align}\label{sa51}
\|(n,\u)\|^\ell_{\dot{B}_{2,1}^{ \frac{d}{2} +1}}\ge  \widetilde{C}\big(\|(a,\u)\|^\ell_{\dot{B}_{2,1}^{ \frac{d}{2}-1}}\big)^{\frac{1}{1-\theta_{1}}}.
\end{align}
On the other hand, by using  smallness condition of $\|\tau\|^\ell_{\dot{B}_{2,1}^{\frac{d}{2}}}$ obtained in \eqref{xiaonorm}, there holds
\begin{align}\label{sa54}
\|\tau\|^\ell_{\dot{B}_{2,1}^{\frac{d}{2}}}\ge \widetilde{C}\big(\|\tau\|^\ell_{\dot{B}_{2,1}^{\frac{d}{2}}}\big)^{\frac{1}{1-\theta_{1}}}.
\end{align}
Thus,  setting $\widetilde{\beta} = \frac{\ta_1}{1-\ta_1} >0$ in \eqref{fa4} and combining  with \eqref{sa51} and \eqref{sa54}, we conclude that there exists a constant $\widetilde{C} > 0$ such that the following Lyapunov-type inequality in time holds
\begin{align}\label{sa59}
&\frac{d}{dt}\big(\|(n,\u )\|^{\ell}_{\dot{B}_{2,1}^{\frac{d}{2}-1}}
+\|\tau\|^\ell_{\dot{B}_{2,1}^{\frac{d}{2}}}+\|(\Lambda n,\u,\Lambda \tau )\|^{h}_{\dot{B}_{2,1}^{\frac{d}{2} +1}}
\big)
\nonumber\\
&\quad +\widetilde{C}\big(\|(n,\u )\|^{\ell}_{\dot{B}_{2,1}^{\frac{d}{2}-1}}
+\|\tau\|^\ell_{\dot{B}_{2,1}^{\frac{d}{2}}}+\|(\Lambda n,\u,\Lambda \tau )\|^{h}_{\dot{B}_{2,1}^{\frac{d}{2} +1}}\big)^{1+\frac{4}{d+2{s}-2}}\le 0.
\end{align}
\subsubsection{Decay estimate}
Solving the differential inequality  obtained in \eqref{sa59} directly, we obtain
\begin{align}\label{decay}
\|(n,\u )\|^{\ell}_{\dot{B}_{2,1}^{\frac{d}{2}-1}}
+\|\tau\|^\ell_{\dot{B}_{2,1}^{\frac{d}{2}}}+\|(\Lambda n,\u,\Lambda \tau )\|^{h}_{\dot{B}_{2,1}^{\frac{d}{2} +1}}
\le C(1+t)^{-\frac{d+2{s}-2}{4}}.
\end{align}
For any $ - {s} < \beta_1 < \frac{d}{2} -1,$ by  using Lemma \ref{embedding} and Bernstein's inequality, we have
\begin{align*}
\|(n,\u)\|^\ell_{\dot{B}_{2,1}^{\beta_1}}\le C\big(\|(n,\u)\|^\ell_{\dot{B}_{2,\infty}^{-{s}}}\big)^{\kappa_{1}} \big(\|(n,\u)\|^\ell_{\dot{B}_{2,1}^{\frac{d}{2}-1}}\big)^{1-\kappa_{1}},\quad \kappa_{1}=\frac{d -2-2\beta_1}{d -2+2{s}}\in (0,1),
\end{align*}
from  which and Proposition \ref{propagate} gives rise to
\begin{align}\label{sa60}
\|(n,\u)\|^\ell_{\dot{B}_{2,1}^{\beta_1}}
\le C(1+t)^{  - \frac{\beta_1+{s}}{2}}.
\end{align}
Since $\beta_1< \frac{d}{2} -1,$ we see that
$$\|\Lambda n\|^h_{\dot{B}_{2,1}^{\beta_1-1}}+\|\u\|^h_{\dot{B}_{2,1}^{\beta_1}}\le C \|(\Lambda n,\u)\|^{h}_{\dot{B}_{2,1}^{\frac{d}{2} +1}}
 \le C(1+t)^{-\frac{d+2{s}-2}{4}},
$$
from which and \eqref{sa60} gives
\begin{align*}
\|(n,\u)\|_{\dot{B}_{2,1}^{\beta_1}}=&\|\Lambda n\|_{\dot{B}_{2,1}^{\beta_1-1}}+\|\u\|_{\dot{B}_{2,1}^{\beta_1}}\nonumber\\
\le& C(1+t)^{  - \frac{\beta_1+{s}}{2}}+C(1+t)^{-\frac{d+2{s}-2}{4}}\nonumber\\
\le& C(1+t)^{  - \frac{\beta_1+{s}}{2}}.
\end{align*}
Based on the embedding relationship of $\dot{B}^{0}_{2,1}(\R^d)\hookrightarrow L^2(\R^d)$, we can obtain
\begin{align*}
\|\Lambda^{\beta_1} (n,\u)\|_{L^2}\le C \|(n,\u)\|_{\dot{B}_{2,1}^{\beta_1}}
\le& C(1+t)^{  - \frac{\beta_1+{s}}{2}}.
\end{align*}
For any $1 - {s}<\beta_2<\frac{d}{2},$ by the interpolation inequality and Bernstein's inequality, we have
\begin{align*}
\|\tau\|^\ell_{\dot{B}_{2,1}^{\beta_2}}
\le C\big(\|\tau\|^\ell_{\dot{B}_{2,\infty}^{-{s}+1}}\big)^{\kappa_{2}} \big(\|\tau\|^\ell_{\dot{B}_{2,1}^{\frac{d}{2}}}\big)^{1-\kappa_{2}},\quad \kappa_{2}=\frac{d -2\beta_2}{d + 2{s} -2}\in (0,1),
\end{align*}
which  gives
\begin{align}\label{sa61+3}
\|\tau\|^\ell_{\dot{B}_{2,1}^{\beta_2}}
\le C(1+t)^{ - \frac{\beta_2 + {s} -1}{2}}.
\end{align}
In  view of $\beta_2<\frac{d}{2},$ we can get in the high frequency that
\begin{align}\label{sa61+61+3}
\|\tau\|^{h}_{\dot{B}_{2,1}^{\beta_2}}\le C\|\Lambda\tau\|^{h}_{\dot{B}_{2,1}^{\frac{d}{2}+1}}\le C(1+t)^{-\frac{d+2{s}-2}{4}}.
\end{align}
From \eqref{sa61+3} and \eqref{sa61+61+3}, we have
\begin{align*}
\|\tau\|_{\dot{B}_{2,1}^{\beta_2}}
&\le C(1+t)^{ - \frac{\beta_2 + {s} -1}{2}}+C(1+t)^{-\frac{d+2{s}-2}{4}}\nonumber\\
&\le C(1+t)^{  - \frac{\beta_2+{s}-1}{2}}.
\end{align*}

Finally, we obtain the decay rate of $\tau$ that
\begin{align*}
\|\Lambda^{\beta_2}\tau\|_{L^2}
\le C(1+t)^{ - \frac{\beta_2 + {s} -1}{2}}.
\end{align*}
This completes the proof of the Theorem \ref{dingli1}. $\hspace{7.2cm} \qquad \quad \square$

\vskip .2in

\section{The proof of Theorem \ref{dingli2}}
This section is devoted to proving Theorem \ref{dingli2}. The proof is long
and is thus divided into several
subsections for the sake of clarity.

\subsection{Local well-posedness}\label{loc}
Given the initial data $(P_0-\bar{P},\u_0, \eta_0-\bar{\eta},\tau_0)\in H^{3}(\T^d)$,
 the local well-posedness of \eqref{zhouqim3} could be proven
by using the standard energy method (see, e.g.,  \cite{Kawashima}). Thus, we may assume that there exists $T > 0$  such that the system \eqref{zhouqim3} has a unique solution
$(P-\bar{P},\u, \eta-\bar{\eta},\tau)\in C([0,T];H^{3})$. Moreover,
\begin{align}\label{youjiexing}
\frac12c_0\le\rho(t,x),\eta(t,x)\le 2c_0^{-1},\quad\hbox{for any $t\in[0,T]$}.
\end{align}
We use the bootstrapping argument to show that this local solution can be extended into a global one.
The goal is to derive {\itshape a priori} upper bound. To initiate the bootstrapping argument, we make the
ansatz that
\begin{align*}
\sup_{t\in[0,T]}(\norm{P-\bar{P}}{H^{3}}+\norm{\u}{H^{3}}
     +\norm{\eta-\bar{\eta}}{H^{3}}+\norm{\tau}{H^{3}})\le \delta,
\end{align*}
where $0<\delta<1$ obeys requirements to be specified later. In the following subsections we prove that, if the initial norm is taken to be sufficiently small, namely
$$
\|P_0-\bar{P}\|_{H^{3}} +\| \u_0\|_{H^{3}}+\|\eta_0-\bar{\eta}\|_{H^{3}}+\|\tau_0\|_{H^{3}}\le \varepsilon,
$$
with sufficiently small $\varepsilon>0$, then
$$
\sup_{t\in[0,T]}(\norm{P-\bar{P}}{H^{3}}+\norm{\u}{H^{3}}
     +\norm{\eta-\bar{\eta}}{H^{3}}+\norm{\tau}{H^{3}})\le \frac{\delta}{2}.
$$
The bootstrapping argument then leads to the desired global bound.

\subsection{Energy estimates for $(P-\bar{P},\u, \eta-\bar{\eta},\tau)$}
We first show the energy estimates
which contains the dissipation estimate for $\u$  only.
Without loss of generality,  we set $R=1$ in
$P(\rho)=R\rho^\gamma$ and let $\bar \rho=\bar \eta=1.$
By introducing
\begin{align*}
p\stackrel{\mathrm{def}}{=}P -1\quad\hbox{and}
\quad b\stackrel{\mathrm{def}}{=}\eta-1,
\end{align*}
we can reformulate the system \eqref{zhouqim3} into the following form:
\begin{eqnarray}\label{mm}
\left\{\begin{aligned}
&\partial_tp+\gamma\div \u+\u\cdot\nabla p+\gamma p\div \u=0,\\
&\partial_t \u-\div(\bar{\mu}(\rho)\nabla\u)-\nabla(\bar{\lambda}(\rho)\div\u)+\nabla\aa=\widetilde{f}_1,\\
&\partial_t{b}+\div \u+\u\cdot\nabla {b}+{b}\div \u=0,\\
&\partial_t\tau + \tau+\div(\tau \u) =  0,
\end{aligned}\right.
\end{eqnarray}
where
\begin{align*}
&\bar{\mu}(\rho)\stackrel{\mathrm{def}}{=}\frac{\mu}{\rho},\quad
\bar{\lambda}(\rho)\stackrel{\mathrm{def}}{=}\frac{\lambda+\mu}{\rho},\nn\\
&\aa\stackrel{\mathrm{def}}{=}P+q(\eta)-K(L-1)-1-\zeta=p+K(L-1)b+\zeta b(b+2),
\end{align*}
and
\begin{align}\label{f1}
\widetilde{f}_1\stackrel{\mathrm{def}}{=}
-\u\cdot\nabla \u+J(a)\nabla \aa+\nabla J(a)\left(\mu\nabla\u+(\lambda+\mu)\div \u\right)
+\frac{1}{1+a} \nabla \tau,
\end{align}
with $a=\rho-1$ and $J(a)=\frac{a}{1+a}$.
\vskip .1in
In this subsection, we shall prove the following crucial lemma.
\begin{lemma}\label{gaojie}
Let $(p,\u,b,\tau) \in C([0, T];H^3)$ be a solution to the  system \eqref{mm}, there holds
\begin{align}\label{gaojie1}
&\frac12\frac{d}{dt}\norm{\left(\frac{1}{\sqrt{\gamma}}\,p,\u,\sqrt{K(L-1)+2\zeta}\,b,\sqrt{1+c_1}\,\tau\right)}{H^{3}}^2
     -\frac{1}{2\gamma}\frac{d}{dt}\int_{\T^d}\frac{p}{1+p}(\Lambda^3p)^2dx\nn\\
     &\qquad
      -\frac{K(L-1)}{2}\frac{d}{dt}\int_{\T^d}\frac{b}{1+b}(\Lambda^3b)^2dx
      +\mu\norm{\nabla\u}{H^{3}}^2
     +(\lambda+\mu)\norm{\div\u}{H^{3}}^2+\norm{\tau}{H^{3}}^2\nn\\
& \quad\leq C\left(\norm{\u}{H^3}+(\norm{p}{H^3}^2+\norm{b}{H^3}^2)\norm{\u}{H^3}
       +\norm{(\u,\aa,\tau)}{H^3}^2\right)
      \|( p, \u, b,\tau)\|_{H^{3}}^2,
\end{align}
where $c_1$ will be given in the following proof.
\end{lemma}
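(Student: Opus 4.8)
The plan is to run, for each $k=0,1,2,3$, a weighted $\Lambda^k$ energy estimate on the reformulated system \eqref{mm}: apply $\Lambda^k$ to the four equations, take the $L^2(\T^d)$ inner products with $\Lambda^k p,\ \Lambda^k\u,\ \Lambda^k b,\ \Lambda^k\tau$, multiply respectively by the weights $\frac1\gamma,\ 1,\ K(L-1)+2\zeta,\ 1+c_1$, and add. From the viscous operator $-\div(\bar\mu(\rho)\nabla\u)-\nabla(\bar\lambda(\rho)\div\u)$, after commuting $\Lambda^k$ past $\bar\mu(\rho),\bar\lambda(\rho)$ (Lemma \ref{jiaohuanzi}) and using that $\bar\mu(\rho)=\mu/\rho$, $\bar\lambda(\rho)=(\lambda+\mu)/\rho$ stay close to $\mu,\lambda+\mu$ on $[0,T]$ by \eqref{youjiexing} and the bootstrap ansatz, one extracts the coercive contribution $\mu\|\nabla\Lambda^k\u\|_{L^2}^2+(\lambda+\mu)\|\div\Lambda^k\u\|_{L^2}^2$ together with commutator errors that are cubic and carry a factor $\|\nabla\u\|_{H^k}$; the $\tau$-damping gives $\|\Lambda^k\tau\|_{L^2}^2$. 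The pressure/velocity coupling is algebraically exact: since $\aa=p+(K(L-1)+2\zeta)b+\zeta b^2$, one has $\langle\Lambda^k\nabla\aa,\Lambda^k\u\rangle=-\langle\Lambda^k\aa,\Lambda^k\div\u\rangle$, and the $p$- and linear-in-$b$ parts of this cancel, with the present choice of weights, against $\langle\Lambda^k\div\u,\Lambda^k p\rangle$ (from the $\gamma\div\u$ term in the $p$-equation) and $(K(L-1)+2\zeta)\langle\Lambda^k\div\u,\Lambda^k b\rangle$ (from the $\div\u$ term in the $b$-equation); the only surviving pressure term is the quadratic remainder $-\zeta\langle\Lambda^k(b^2),\Lambda^k\div\u\rangle$.

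The core difficulty is the top order $k=3$ in the two transport-type equations for $p$ and $b$, which carry no dissipation. The convection parts are tamed by $\langle\u\cdot\nabla\Lambda^3f,\Lambda^3f\rangle=-\frac12\int_{\T^d}\div\u\,(\Lambda^3f)^2\,dx$ and the Kato commutator (Lemma \ref{jiaohuanzi}), but the terms $\langle p\,\Lambda^3\div\u,\Lambda^3p\rangle$ (from $\gamma p\div\u$, weight $\frac1\gamma$) and $\langle b\,\Lambda^3\div\u,\Lambda^3b\rangle$ are not controlled by the viscous dissipation alone. To remove them I add the correction functionals $-\frac1{2\gamma}\int_{\T^d}\frac{p}{1+p}(\Lambda^3p)^2\,dx$ and $-\frac{K(L-1)}2\int_{\T^d}\frac{b}{1+b}(\Lambda^3b)^2\,dx$; differentiating in time and substituting $\partial_tp=-\gamma\div\u-\u\cdot\nabla p-\gamma p\div\u$ (resp. the $b$-equation) into $\Lambda^3\partial_tp$, the top-order piece they generate, $\int_{\T^d}\frac{p}{1+p}\Lambda^3\div\u\,\Lambda^3p\,dx$, combines with $-\langle p\,\Lambda^3\div\u,\Lambda^3p\rangle$ to yield the order-reduced $-\int_{\T^d}\frac{p^2}{1+p}\Lambda^3\div\u\,\Lambda^3p\,dx$, which is in turn cancelled by the leading part of $\int_{\T^d}\frac{p}{1+p}\Lambda^3p\,\Lambda^3(p\div\u)\,dx$; what is left are commutators, $\int_{\T^d}\partial_t(\frac{p}{1+p})(\Lambda^3p)^2\,dx$, and the $\u\cdot\nabla p$ contribution, all cubic or higher with an extra $\|\u\|_{H^3}$ or small-norm factor. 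For $b$ the analogous mechanism kills the $K(L-1)$-weighted part of $b\,\Lambda^3\div\u$, while the remaining $2\zeta$-weighted part cancels, at leading order, the quadratic pressure remainder $-\zeta\langle\Lambda^3(b^2),\Lambda^3\div\u\rangle$ (using $\Lambda^3(b^2)=2b\,\Lambda^3b+$ lower order) — which is why the $b$-correction functional carries only the coefficient $K(L-1)/2$. Finally $c_1$ is fixed large enough that the weighted damping $(1+c_1)\|\Lambda^3\tau\|_{L^2}^2$ absorbs, via Young's inequality, the top-order coupling $-\langle\Lambda^3\tau,\Lambda^3\div\u\rangle$ produced by the $\frac1{1+a}\nabla\tau$ term of $\widetilde{f}_1$, a harmlessly small multiple of $\|\nabla\u\|_{H^3}^2$ being absorbed into the viscosity.

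All remaining contributions are lower-order or genuinely nonlinear and are estimated by the product rule (Lemma \ref{daishu}), the commutator estimate (Lemma \ref{jiaohuanzi}) and the composition estimate (Lemma \ref{fuhe}) applied to $J(a),\bar\mu(\rho),\bar\lambda(\rho)$, together with the embedding $H^3(\T^d)\hookrightarrow W^{1,\infty}(\T^d)$ and the smallness from the ansatz; whenever a term involves $\Lambda^3\div\u$ — which is controlled only by $\|\nabla\u\|_{H^3}$, not by $\|\u\|_{H^3}$ — one integrates by parts to shift the top derivative onto $\u$ or uses Young's inequality against the viscous dissipation. In this way every resulting term is bounded by one of $\|\u\|_{H^3}$, $(\|p\|_{H^3}^2+\|b\|_{H^3}^2)\|\u\|_{H^3}$, or $\|(\u,\aa,\tau)\|_{H^3}^2$, multiplied by $\|(p,\u,b,\tau)\|_{H^3}^2$, which is exactly the right-hand side of \eqref{gaojie1}. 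The estimates for $k=0,1,2$ need no correction functionals and are routine; summing over $k$ gives \eqref{gaojie1}. The main obstacle is precisely the second step: identifying the exact combination of correction functionals that both cancels the uncontrollable top-order terms $p\,\Lambda^3\div\u$ and $b\,\Lambda^3\div\u$ and whose own time derivative is estimable — everything else is careful but standard bookkeeping.
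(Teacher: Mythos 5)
Your proposal is correct and follows essentially the same strategy as the paper: the same choice of weights $\bigl(\tfrac1\gamma,1,K(L-1)+2\zeta,1+c_1\bigr)$ produces the linear pressure–velocity cancellations; the quadratic pressure leftover $\zeta\langle\Lambda^3(b^2),\Lambda^3\div\u\rangle$ is paired with the $2\zeta$-weighted $b\div\u$ contribution (the paper's $M_3$, handled there via the product rule for $\div$ and one integration by parts — algebraically the same cancellation you describe); and the surviving top-order terms $p\,\Lambda^3\div\u$ and $K(L-1)\,b\,\Lambda^3\div\u$ are removed by exactly the two correction functionals $-\tfrac1{2\gamma}\int\frac{p}{1+p}(\Lambda^3p)^2\,dx$ and $-\tfrac{K(L-1)}2\int\frac{b}{1+b}(\Lambda^3b)^2\,dx$. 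The only cosmetic difference is the direction of the argument for the corrections: the paper starts from the uncontrollable term $-\int p\,\Lambda^3\div\u\,\Lambda^3p\,dx$, substitutes $\div\u=-\frac{\partial_tp+\u\cdot\nabla p}{\gamma(p+1)}$ and extracts the $\tfrac{d}{dt}$ piece, whereas you posit the functionals, differentiate, substitute $\partial_t p$ and $\partial_t b$ from the equations, and verify the three-way cancellation with $\frac{p}{1+p}-p+\frac{p^2}{1+p}=0$ at top order; these are equivalent.
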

\begin{proof}
We can use the standard energy method to get the $L^2$ estimates, here we omit the details. Let $s=1,2,3.$
Applying  operator $\la^s$  to the equations of \eqref{mm} and then taking $L^2$ inner product with $(\frac{1}{\gamma}\la^s p, \la^s\u,(K(L-1)+2\zeta)\la^s b,(1+c_1)\la^s \tau)$ yield
\begin{align}\label{gaojie2}
&\frac12\frac{d}{dt}\norm{\left(\frac{1}{\sqrt{\gamma}}\, \la^s  p,\la^s \u,
\sqrt{K(L-1)+2\zeta}\,\la^s b,\sqrt{1+c_1}\,\la^s \tau\right)}{L^{2}}^2
-\int_{\T^d}\la^{s}\div(\bar{\mu}(\rho)\nabla\u)\cdot\la^{s} \u\,dx\nn\\
&\qquad-\int_{\T^d}\la^{s}\nabla(\bar{\lambda}(\rho)\div\u)\cdot\la^{s} \u\,dx
   +(1+c_1)\norm{\la^s \tau}{L^{2}}^2\nn\\
&\quad= -\frac{1}{\gamma}\int_{\T^d}\la^{s} (\u\cdot\nabla p)\cdot\la^s p\,dx
        -\int_{\T^d}\la^{s}\div\u\cdot \la^s p\,dx
        -\int_{\T^d}\la^{s}(p\,\div\u)\cdot \la^s p\,dx\nn\\
&\qquad
-\int_{\T^d}\la^{s} \nabla\aa\cdot\la^{s} \u\,dx
+\int_{\T^d}\la^{s} \widetilde{f}_1\cdot\la^{s} \u\,dx
-(K(L-1)+2\zeta)\int_{\T^d}\la^{s} \div\u\cdot\la^{s} b\,dx\nn\\
&\qquad
-(K(L-1)+2\zeta)\int_{\T^d}\la^{s} (\u\cdot\nabla b)\cdot\la^s b\,dx
-(K(L-1)+2\zeta)\int_{\T^d}\la^{s} (b\,\div \u)\cdot\la^{s} b\,dx\nn\\
&\qquad
-(1+c_1)\int_{\T^d}\la^{s}\big(\div(\tau\u)\big)\cdot \la^{s}\tau\,dx.
\end{align}
Due to $$
\nabla\aa=\nabla p+(K(L-1)+2\zeta)\nabla b+2\zeta b\nabla b
$$
and the  cancellations
\begin{align*}
&\int_{\T^d}\la^{s}\div\u\cdot \la^{s}p\,dx+\int_{\T^d}\la^{s}\nabla p\cdot\la^{s}\u\,dx=0,\nn\\
&\int_{\T^d}\la^{s}\div\u\cdot \la^{s}b\,dx+\int_{\T^d}\la^{s}\nabla b\cdot\la^{s}\u\,dx=0,
\end{align*}
we can further rewrite \eqref{gaojie2} into
\begin{align}\label{gaojie3}
&\frac12\frac{d}{dt}\norm{(\frac{1}{\sqrt{\gamma}}\, \la^s p,\la^s \u,
\sqrt{K(L-1)+2\zeta}\,\la^s b,\sqrt{1+c_1}\,\la^s \tau)}{L^{2}}^2
-\int_{\T^d}\la^{s}\div(\bar{\mu}(\rho)\nabla\u)\cdot\la^{s} \u\,dx\nn\\
&\qquad-\int_{\T^d}\la^{s}\nabla(\bar{\lambda}(\rho)\div\u)\cdot\la^{s} \u\,dx
   +(1+c_1)\norm{\la^s \tau}{L^{2}}^2\nn\\
&\quad= -\frac{1}{\gamma}\int_{\T^d}\la^{s} (\u\cdot\nabla p)\cdot\la^s p\,dx
        -\int_{\T^d}\la^{s}(p\,\div\u)\cdot \la^s p\,dx
-2\zeta\int_{\T^d}\la^{s} (b\nabla b)\cdot\la^{s} \u\,dx\nn\\
&\qquad
+\int_{\T^d}\la^{s} \widetilde{f}_1\cdot\la^{s} \u\,dx
-(K(L-1)+2\zeta)\int_{\T^d}\la^{s} (\u\cdot\nabla b)\cdot\la^s b\,dx\nn\\
&\qquad
-(K(L-1)+2\zeta)\int_{\T^d}\la^{s} (b\,\div \u)\cdot\la^{s} b\,dx
-(1+c_1)\int_{\T^d}\la^{s}\big(\div(\tau\u)\big)\cdot \la^{s}\tau\,dx.
\end{align}
The second term on the left-hand side can be written as
\begin{align}\label{gaojie4}
&-\int_{\T^d}\la^{s}\div(\bar{\mu}(\rho)\nabla\u)\cdot\la^{s} \u\,dx
=\int_{\T^d}\la^{s}(\bar{\mu}(\rho)\nabla\u)\cdot\nabla\la^{s} \u\,dx\nn\\
&\quad=\int_{\T^d}[\la^{s},\,\bar{\mu}(\rho)\nabla]\u\cdot\nabla\la^{s} \u\,dx
        +\int_{\T^d}\bar{\mu}(\rho)\nabla\la^{s}\u\cdot\nabla\la^{s} \u\,dx.
\end{align}
Due to \eqref{youjiexing}, we have for any $t\in[0,T]$ that
\begin{align}\label{gaojie5}
 \int_{\T^d}\bar{\mu}(\rho)\nabla\la^{s}\u\cdot\nabla\la^{s} \u\,dx
 \ge \frac{\mu c_0}{2}\norm{\la^{s+1}\u}{L^{2}}^2.
\end{align}
For the first term in \eqref{gaojie4}, we first rewrite this term into
\begin{align}\label{gaojie6}
\int_{\T^d}[\la^{s},\bar{\mu}(\rho)\nabla]\u\cdot\nabla\la^{s} \u\,dx
=&\int_{\T^d}[\la^{s},\,(\bar{\mu}(\rho)-\mu+\mu)\nabla]\u\cdot\nabla\la^{s} \u\,dx\nn\\
=&-\int_{\T^d}[\la^{s},\,\mu J(a)\nabla]\u\cdot\nabla\la^{s} \u\,dx.
\end{align}
Bounding nonlinear terms involving composition functions in $\eqref{gaojie6}$ is more elaborate.
Throughout we make the assumption that
\begin{equation}\label{axiao}
\sup_{t\in\R^+,\, x\in\T^d} |a(t,x)|\leq \frac{1}{100},
\end{equation}
which will enable us to use freely the composition estimate stated in Lemma \ref{fuhe}.
Note that $H^2(\T^d)\hookrightarrow L^\infty(\T^d),$ condition \eqref{axiao} will be ensured by the
fact that the constructed solution about $a$ has small norm.
It follows from Lemma \ref{fuhe} that
\begin{equation}\label{eq:smalla}
\|J(a)\|_{H^s}\le C\|a\|_{H^s}, \quad\hbox{for any $s>0$}.
\end{equation}
Moreover,  from  $p = P(1+a)-P(1)$, when \eqref{axiao} holds, $a$ can be expressed by a smooth function of $p$, hence
we can use Lemma \ref{fuhe} again to deduce that
\begin{equation}\label{smalla}
\|a\|_{H^3}\le C\|p\|_{H^3}.
\end{equation}
Then, using the fact $H^2(\T^d)\hookrightarrow L^\infty(\T^d)$,
with the aid of Lemmas \ref{jiaohuanzi}, \eqref{eq:smalla} and \eqref{smalla}, we have
\begin{align}\label{gaojie9}
\Big|\int_{\T^d}[\la^{s},\mu J(a)\nabla]\u\cdot\nabla\la^{s} \u\,dx\Big|
&\quad \le C\Big(\norm{\nabla J(a) }{L^\infty}\norm{\la^{s} \u}{L^2}
                 +\norm{\nabla \u}{L^\infty}\norm{\la^{s}J(a)}{L^2}\Big)
         \norm{\nabla\la^{s}\u}{L^{2}}\nn\\
&\quad \le C\Big(\norm{J(a) }{H^3}\norm{\la^{s} \u}{L^2}
           +\norm{\nabla \u}{L^\infty}\norm{J(a)}{H^3}\Big)
         \norm{\nabla\la^{s}\u}{L^{2}}\nn\\
&\quad \le \frac{\mu c_0}{4}\norm{\la^{s+1}\u}{L^{2}}^2+C\norm{ p }{H^3}^2\norm{ \u}{H^3}^2.
\end{align}
Inserting \eqref{gaojie5}, \eqref{gaojie6}, and \eqref{gaojie9} into \eqref{gaojie4} leads to
\begin{align}\label{gaojie10}
-\int_{\T^d}\la^{s}\div(\bar{\mu}(\rho)\nabla\u)\cdot\la^{s} \u\,dx
\ge&\frac{\mu c_0}{4}\norm{\la^{s+1}\u}{L^{2}}^2
     -C\norm{p}{H^3}^2\norm{ \u}{H^3}^2.
\end{align}
Similarly, we have
\begin{align}\label{gaojie11}
-\int_{\T^d}\la^{s}\nabla(\bar{\lambda}(\rho)\div\u)\cdot\la^{s} \u\,dx
\ge&\frac{(\lambda+\mu)c_0}{4}\norm{\la^{s}\div\u}{L^{2}}^2
     -C\norm{p}{H^3}^2\norm{ \u}{H^3}^2.
\end{align}

Hence, plugging \eqref{gaojie10} and \eqref{gaojie11} into \eqref{gaojie3}, we obtain
\begin{align}\label{gaojie13}
&\frac12\frac{d}{dt}\norm{(\frac{1}{\sqrt{\gamma}}\,\la^s p,\la^s \u,
\sqrt{K(L-1)+2\zeta}\,\la^s b,\sqrt{1+c_1}\,\la^s \tau)}{L^{2}}^2\nn\\
&\qquad
+\frac{\mu c_0}{4}\norm{\la^{s+1}\u}{L^2}^2
     +\frac{(\lambda+\mu)c_0}{4}\norm{\la^{s}\mbox{div}\u}{L^2}^2
   +(1+c_1)\norm{\la^s \tau}{L^{2}}^2\nn\\
&\quad\leq C\norm{p}{H^3}^2\norm{ \u}{H^3}^2
-\frac{1}{\gamma}\int_{\T^d}\la^{s} (\u\cdot\nabla p)\cdot\la^s p\,dx
        -\int_{\T^d}\la^{s}(p\,\div\u)\cdot \la^s p\,dx\nn\\
&\qquad
-2\zeta\left(\int_{\T^d}\la^{s} (b\nabla b)\cdot\la^{s} \u\,dx
              +\int_{\T^d}\la^{s} (b\,\div \u)\cdot\la^{s} b\,dx\right)\nn\\
&\qquad
-(K(L-1)+2\zeta)\int_{\T^d}\la^{s} (\u\cdot\nabla b)\cdot\la^s b\,dx
-K(L-1)\int_{\T^d}\la^{s} (b\,\div \u)\cdot\la^{s} b\,dx
\nn\\
&\qquad-(1+c_1)\int_{\T^d}\la^{s}\big(\div(\tau\u)\big)\cdot \la^{s}\tau\,dx
+\int_{\T^d}\la^{s} \widetilde{f}_1\cdot\la^{s} \u\,dx
\nn\\
&\quad \stackrel{\mathrm{def}}{=}C\norm{p}{H^3}^2\norm{ \u}{H^3}^2+M_1+M_2+M_3+M_4+M_5+M_6+M_7.
\end{align}
For the first  term $M_1$, it follows from integration by parts that
\begin{align}\label{gaojie14}
M_1=& -\frac{1}{\gamma}\int_{\T^d}\la^{s} (\u\cdot\nabla p)\cdot\la^s p\,dx\nn\\
= &-\frac{1}{\gamma}\int_{\T^d} [\la^{s},\,\u\cdot\nabla] p\cdot\la^s p\,dx
-\frac{1}{\gamma}\int_{\T^d}\u\cdot\nabla\la^{s}p\cdot\la^s p\,dx.
\end{align}
By Lemma \ref{jiaohuanzi}, there holds
\begin{align}\label{gaojie15}
\left|\int_{\T^d}[\la^{s},\,\u\cdot\nabla] p\cdot\la^s p\,dx\right|
\le& C\norm{[\la^s,\u\cdot\nabla ]p}{L^2}\norm{\la^{s}p}{L^2}\nn\\
\le&C\big(\norm{\nabla \u}{L^\infty}\norm{\la^{s}p}{L^2}
+\norm{\la^{s} \u}{L^2}\norm{\nabla p}{L^\infty}\big)\norm{\la^{s}p}{L^2}\nn\\
\le&C\norm{\u}{H^3}\norm{p}{H^3}^2.
\end{align}
By integration by parts, there holds
\begin{align*}
\left|\int_{\T^d}\u\cdot\nabla\la^{s}p\cdot\la^s p\,dx\right|
\le& C\norm{\nabla \u}{L^\infty}\norm{\la^{s}p}{L^2}^2
\le C\norm{\u}{H^3}\norm{p}{H^3}^2,
\end{align*}
from which and \eqref{gaojie15}, we get
\begin{align}\label{I1}
\left|M_1\right|
\le&C\norm{\u}{H^3}\norm{p}{H^3}^2.
\end{align}
In the same manner, there holds
\begin{align}\label{I4}
\left|M_4\right|\le C\left|\int_{\T^d}\la^{s} (\u\cdot\nabla b)\cdot\la^{s} b\,dx\right|
\le C\norm{\u}{H^3}\norm{b}{H^3}^2.
\end{align}
Similarly,
\begin{align}\label{gaojie19}
&\int_{\T^d}\la^{s} (b\,\div \u)\cdot\la^{s} b\,dx
+\int_{\T^d}\la^{s} (b\nabla b)\cdot\la^{s} \u\,dx\nn\\
&\quad=\int_{\T^d}[\la^{s},\, b\div] \u\cdot\la^{s} b\,dx
+\int_{\T^d}[\la^{s},\,b\nabla] b\cdot\la^{s} \u\,dx\nn\\
&\qquad+\int_{\T^d}b\div\la^{s} \u\cdot\la^{s} b\,dx+\int_{\T^d}b \nabla\la^{s} b\cdot\la^{s} \u\,dx\nn\\
&\quad=\int_{\T^d}[\la^{s},\, b\div] \u\cdot\la^{s} b\,dx+\int_{\T^d}[\la^{s},\,b\nabla] b\cdot\la^{s} \u\,dx+\int_{\T^d}b\div (\la^{s}b\la^{s}\u)\,dx\nn\\
&\quad=\int_{\T^d}[\la^{s}, \,b\div] \u\cdot\la^{s} b\,dx+\int_{\T^d}[\la^{s},\,b\nabla] b\cdot\la^{s} \u\,dx-\int_{\T^d}\la^{s}b\la^{s}\u\cdot\nabla b\,dx.
\end{align}
By Lemma \ref{jiaohuanzi}, we have
\begin{align}\label{gaojie20}
\int_{\T^d}[\la^{s}, b\div] \u\cdot\la^{s} b\,dx
\le& C\norm{[\la^s,b\div  ]\u}{L^2}\norm{\la^{s}b}{L^2}\nn\\
\le&C\Big(\norm{\nabla \u}{L^\infty}\norm{\la^{s}b}{L^2}
  +\norm{\la^{s} \u}{L^2}\norm{\nabla b}{L^\infty}\Big)\norm{\la^{s}b}{L^2}\nn\\
\le&C\norm{\u}{H^3}\norm{b}{H^3}^2
\end{align}
and
\begin{align}\label{gaojie21}
\int_{\T^d}[\la^{s}, b\nabla] b\cdot\la^{s} \u\,dx
\le& C\norm{[\la^s,b \nabla] b}{L^2}\norm{\la^{s}\u}{L^2}\nn\\
\le&C\norm{\nabla b}{L^\infty}\norm{\la^{s}b}{L^2}\norm{\la^{s}\u}{L^2}\nn\\
\le&C\norm{\u}{H^3}\norm{b}{H^3}^2.
\end{align}
For the last term in \eqref{gaojie19},
\begin{align}\label{gaojie22}
\int_{\T^d}\la^{s}b\la^{s}\u\cdot\nabla b\,dx
\le& C\norm{\la^{s}b}{L^2}\norm{\la^{s}\u}{L^2}\norm{\nabla b}{L^\infty}\nn\\
\le&C\norm{\u}{H^3}\norm{b}{H^3}^2.
\end{align}
Hence, combining with \eqref{gaojie19}--\eqref{gaojie22}, we can get
\begin{align}\label{I3}
\left|M_3\right|\leq C\left|\int_{\T^d}\la^{s} (b\nabla b)\cdot\la^{s} \u\,dx
+\int_{\T^d}\la^{s} b\,\div \u\cdot\la^{s} b\,dx
\right|
\le C\norm{\u}{H^3}\norm{b}{H^3}^2.
\end{align}
Next, we have to bound the most trouble term
 \begin{align}\label{gaojie24}
 \int_{\T^d}\la^{s}  (p\,\div \u)\cdot\la^s p\,dx
 =\int_{\T^d}[\la^{s}, \, p\,\div] \u\cdot\la^s p\,dx
 +\int_{\T^d}p\,\div \la^{s}\u\cdot\la^s p\,dx.
\end{align}
Then, by Lemma \ref{jiaohuanzi}, we have
\begin{align}\label{gaojie25}
\left|-\int_{\T^d}[\la^{s},\, p\,\div] \u\cdot\la^s p\,dx\right|
\le& C\norm{[\la^s,\, p\,\div ] \u}{L^2}\norm{\la^{s}p}{L^2}\nn\\
\le&C(\norm{\nabla \u}{L^\infty}\norm{\la^{s}p}{L^2}
+\norm{\la^{s} \u}{L^2}\norm{\nabla p}{L^\infty})\norm{\la^{s}p}{L^2}\nn\\
\le&C\norm{\u}{H^3}\norm{p}{H^3}^2.
\end{align}
Finally, we have to deal with the last term on the right hand side of \eqref{gaojie24}. In fact, for $s=0,1,2$, we can bound this term directly as follows
\begin{align}\label{gaojie26}
\left|-\int_{\T^d}p\,\div\la^{s} \u\cdot\la^s p\,dx\right|
\leq&C\norm{p}{L^\infty}\norm{\la^{s}\div \u}{L^2}\norm{\la^{s}p}{L^2}\nn\\
\leq&C\norm{p}{H^2}\norm{\div\u}{H^2}\norm{p}{H^2}\nn\\
\leq&C\norm{\u}{H^3}\norm{p}{H^3}^2.
\end{align}

However, if $s=3$,  the term $\norm{\div\u}{H^3}$ must be appeared which we cannot control it.
To overcome the difficulty, we  deduce  from the first equation of \eqref{mm} that
 $$\div\u=-\frac{\partial_t p+\u\cdot\nabla p}{\gamma(p+1)},$$
 from which we have
\begin{align}\label{gaojie27}
-\int_{\T^d}p\la^{{3}} \div\u\cdot\la^{{3}} p\,dx
=&\frac{1}{\gamma}\int_{\T^d}p\la^{{3}} \left(\frac{\partial_t p+\u\cdot\nabla p}{p+1}\right)\cdot\la^{{3}} p\,dx \nn\\
=&\frac{1}{\gamma}
\left(\underbrace{\int_{\T^d}p\la^{{3}} \left(\frac{\partial_t p}{p+1}\right)\cdot\la^{{3}} p\,dx }_{D_1} +\underbrace{\int_{\T^d}p\la^{{3}} \left(\frac{\u\cdot\nabla p}{p+1}\right)\cdot\la^{{3}} p\,dx}_{D_2}
\right)
.
\end{align}
For the first term $D_1$, we have
\begin{align}\label{gaojie28}
D_1=&\int_{\T^d}p\la^{{3}} \left(\frac{\partial_t p}{p+1}\right)\cdot\la^{{3}} p\,dx\nn\\
=&
\int_{\T^d}\frac{p}{p+1}\la^{{3}} \left({\partial_t p}\right)\cdot\la^{{3}} p\,dx
+\int_{\T^d}p\sum_{\ell=0}^2 C_3^\ell\la^{{\ell}}\partial_t p
 \la^{{3-\ell}}\left(\frac{1}{p+1}\right)
 \cdot\la^{{3}} p\,dx
\nn\\
=&\frac{1}{2}\int_{\T^d}\frac{p}{p+1}\partial_t(\la^{{3}}p)^2 \,dx
+\int_{\T^d}p\sum_{\ell=0}^2 C_3^\ell\la^{{\ell}}\partial_t p
 \la^{{3-\ell}}\left(\frac{1}{p+1}\right)
\cdot\la^{{3}} p\,dx
\nn\\
=&\frac{1}{2}\frac{d}{dt}\int_{\T^d}\frac{p}{p+1}(\la^{{3}}p)^2 \,dx
-\frac{1}{2}\int_{\T^d}\frac{1}{(p+1)^2}\partial_t p(\la^{{3}}p)^2\,dx\nn\\
&+
\int_{\T^d}p\sum_{\ell=0}^2 C_3^\ell\la^{{\ell}}\partial_t p
\la^{{3-\ell}}\left(\frac{1}{p+1}\right)
 \cdot\la^{{3}} p\,dx.
\end{align}
Using the first equation of \eqref{mm}, we  can bound the second term on the right hand side of \eqref{gaojie28} as
\begin{align}\label{gaojie29}
-\frac{1}{2}\int_{\T^d}\frac{1}{(p+1)^2}\partial_t p(\la^{{3}}p)^2\,dx
=&\frac{1}{2}\int_{\T^d}\frac{1}{(p+1)^2}
(\u\cdot\nabla p+\gamma p\,\div \u+\gamma\div \u)(\la^{{3}}p)^2\,dx\nn\\
\le&C\Big((1+\norm{p}{L^\infty})\norm{\nabla \u}{L^\infty}+\norm{\nabla p}{L^\infty}
\norm{ \u}{L^\infty}\Big)\norm{\la^{{3}}p}{L^2}^2\nn\\
\le&C\Big(1+\norm{p}{H^3}\Big)\norm{\u}{H^3}\norm{p}{H^3}^2.
\end{align}
By the H\"older inequality and embedding inequality, the last term in \eqref{gaojie28} can be controlled as
\begin{align*}
&\int_{\T^d}p\sum_{\ell=0}^2 C_3^\ell\la^{{\ell}}\partial_t p
 \la^{{3-\ell}}\left(\frac{1}{p+1}\right)
 \cdot\la^{{3}} p\,dx\\
& \leq C\norm{p}{L^\infty}\norm{\la^{3}p}{L^2}
 \Bigg(\norm{\partial_tp}{L^\infty}\norm{\la^{{3}}\left(\frac{1}{p+1}\right)}{L^2}\\
 &\qquad
  +\norm{\la\partial_tp}{L^3}\norm{\la^{{2}}\left(\frac{1}{p+1}\right)}{L^6}
   +\norm{\la^2\partial_tp}{L^2}\norm{\la\left(\frac{1}{p+1}\right)}{L^\infty}
  \Bigg)\\
&\leq C\norm{p}{L^\infty}\norm{\la^{3}p}{L^2}\norm{\partial_tp}{H^2}
   \left(\norm{\la^{3}p}{L^2}+\norm{\la^{2}p}{H^1}+\norm{\la p}{H^2}\right)\\
&\leq C\norm{\partial_tp}{H^2}\norm{ p}{H^3}^3.
\end{align*}
Since
\begin{align*}
\norm{\partial_t p}{H^2}
\leq C\norm{\u\cdot\nabla p+ \gamma a\,\div \u+\gamma\div \u}{H^2}
\leq C(\norm{\u}{H^3} +\norm{\u}{H^3}\norm{p}{H^3}).
\end{align*}
Hence, we have
\begin{align}\label{gaojie33}
\int_{\T^d}p\sum_{\ell=0}^2 C_3^\ell\la^{{\ell}}\partial_t p
 \la^{{3-\ell}}\left(\frac{1}{p+1}\right)
 \cdot\la^{{3}} p\,dx
 \le C(1+\norm{p}{H^3})\norm{\u}{H^3}\norm{p}{H^3}^3.
\end{align}
Combining  \eqref{gaojie29} with \eqref{gaojie33}, we get
\begin{align}\label{gaojie34}
D_1
\le&\frac{1}{2}\frac{d}{dt}\int_{\T^d}\frac{p}{p+1}(\la^{{3}}p)^2 \,dx
+C(1+\norm{p}{H^3}^2)\norm{\u}{H^3}\norm{p}{H^3}^2.
\end{align}
For the term $D_2$, we infer that
\begin{align}\label{gaojie35}
D_2=&\int_{\T^d}p\la^{{3}} \left(\frac{\u\cdot\nabla p}{p+1}\right)\cdot\la^{{3}} p\,dx\nn\\
=&\int_{\T^d}\frac{p}{p+1}\la^{{3}}(\u\cdot\nabla p)\cdot\la^{{3}} p\,dx+\int_{\T^d}p\sum_{\ell=0}^2 C_3^\ell\la^{{\ell}}(\u\cdot\nabla p)\la^{{3-\ell}}\left(\frac{1}{p+1}\right)
\cdot\la^{{3}} p\,dx\nn\\
=&D_{2,1}+D_{2,2}.
\end{align}
We can use the commutator again   to rewrite $D_{2,1}$ into
\begin{align*}
D_{2,1}=&\int_{\T^d}\frac{p}{p+1}
        \left[\la^{{3}},\,\u\cdot\nabla\right] p\cdot \la^{{3}} p\,dx+
\int_{\T^d}\frac{p}{p+1}\u\cdot\nabla\la^{{3}} p\cdot\la^{{3}} p\,dx.
\end{align*}
Thanks to Lemma \ref{jiaohuanzi}, we get
\begin{align*}
\left|\int_{\T^d}\frac{p}{p+1}
      \left[\la^{{3}},\,\u\cdot\nabla\right] p\cdot \la^{{3}} p\,dx\right|
\le& C\norm{\frac{p}{p+1}}{L^\infty}\norm{[\la^3,\u\cdot\nabla ]p}{L^2}\norm{\la^{{3}}p}{L^2}\nn\\
\le&C\Big(\norm{\nabla \u}{L^\infty}\norm{\la^{{3}}p}{L^2}+\norm{\la^{{3}} \u}{L^2}\norm{\nabla p}{L^\infty}\Big)\norm{\la^{{3}}p}{L^2}\nn\\
\le&C\norm{\u}{H^3}\norm{p}{H^3}^2.
\end{align*}
By using the integration by parts, we have
\begin{align*}
\left|
\int_{\T^d}\frac{p}{p+1}\u\cdot\nabla\la^{{3}} p\cdot\la^{{3}} p\,dx\right|
\le& C\norm{\div\left(\frac{p\u}{p+1}\right)}{L^\infty}\norm{\la^{{3}}p}{L^2}^2\nn\\
\le&C\Big(\norm{\nabla \u}{L^\infty}+\norm{ \u}{L^\infty}\norm{\nabla p}{L^\infty}\Big)\norm{\la^{{3}}p}{L^2}^2\nn\\
\le&C\big(1+\norm{p}{H^3}\big)\norm{\u}{H^3}\norm{p}{H^3}^2,
\end{align*}
from which we get
\begin{align}\label{gaojie36}
D_{2,1}\le&C\big(1+\norm{p}{H^3}\big)\norm{\u}{H^3}\norm{p}{H^3}^2.
\end{align}
Thanks to the H\"older inequality and embedding inequality, we can get
\begin{align*}
D_{2,2}=&
\int_{\T^d}p\sum_{\ell=0}^2 C_3^\ell\la^{{\ell}}(\u\cdot\nabla p)
   \la^{{3-\ell}}\left(\frac{1}{p+1}\right)
\cdot\la^{{3}} p\,dx\nn\\
\leq& C\norm{p}{L^\infty}\norm{\la^{3}p}{L^2}
    \Bigg(\norm{\u\cdot\nabla p}{L^\infty}\norm{\la^{{3}}\left(\frac{1}{p+1}\right)}{L^2}\\
   &\qquad
  +\norm{\la(\u\cdot\nabla p)}{L^3}\norm{\la^{{2}}\left(\frac{1}{p+1}\right)}{L^6}
   +\norm{\la^2(\u\cdot\nabla p)}{L^2}\norm{\la\left(\frac{1}{p+1}\right)}{L^\infty}
  \Bigg)\\
\leq&C\norm{p}{L^\infty}\norm{\la^{3}p}{L^2}\norm{\u\cdot\nabla p}{H^2}
   \left(\norm{\la^{3}p}{L^2}+\norm{\la^{2}p}{H^1}+\norm{\la p}{H^2}\right)\\
\leq&C\norm{\u}{H^3}\norm{ p}{H^3}^4,
\end{align*}
which combines with \eqref{gaojie36} implies that
\begin{align}\label{gaojie37}
D_{2}
\le C(1+\norm{p}{H^3}^2)\norm{\u}{H^3}\norm{p}{H^3}^2.
\end{align}
Inserting \eqref{gaojie34} and \eqref{gaojie37} into \eqref{gaojie27} leads to
\begin{align}\label{gaojie38}
-\int_{\T^d}p\la^{{3}} \div\u\cdot\la^{{3}} p\,dx
\le&\frac{1}{2\gamma}\frac{d}{dt}\int_{\T^d}\frac{p}{p+1}(\la^{{3}}p)^2 \,dx+C(1+\norm{p}{H^3}^2)\norm{\u}{H^3}\norm{p}{H^3}^2.
\end{align}
Consequently, taking the estimates \eqref{gaojie25}, \eqref{gaojie26} and \eqref{gaojie38} into \eqref{gaojie24}, we get
\begin{align}\label{I2}
M_2
\le&\frac{1}{2\gamma}\frac{d}{dt}\int_{\T^d}\frac{p}{p+1}(\la^{3}p)^2 \,dx
+C(1+\norm{p}{H^3}^2)\norm{\u}{H^3}\norm{p}{H^3}^2.
\end{align}
In the same manner, there holds
\begin{align}\label{I5}
M_5
\le&\frac{K(L-1)}{2}\frac{d}{dt}\int_{\T^d}\frac{b}{1+b}(\la^{3}b)^2 \,dx
+C(1+\norm{b}{H^3}^2)\norm{\u}{H^3}\norm{b}{H^3}^2.
\end{align}
To bound $M_6$, we rewrite
\begin{align*}
M_6
=-(1+c_2)\left(\int_{\T^d}\la^{s}\big(\tau\div\u\big)\cdot \la^{s}\tau\,dx
+\int_{\T^d}\la^{s}\big(\u\div\tau\big)\cdot \la^{s}\tau\,dx\right).
\end{align*}
By using Lemma \ref{daishu}, we have
\begin{align*}
(1+c_2)\left|\int_{\T^d}\la^{s}\big(\tau\div\u\big)\cdot \la^{s}\tau\,dx\right|
\leq&C(\|\tau\|_{L^\infty}\norm{\div \u}{H^s}
+\|\div \u\|_{L^\infty}\norm{\tau}{H^s})\norm{\la^{s}\tau}{L^2}\\
\leq&C\norm{\div \u}{H^3}\norm{\tau}{H^3}^2\\
\leq&\frac{(\lambda+\mu)c_0}{16}\norm{\div \u}{H^3}^2+C\norm{\tau}{H^3}^4.
\end{align*}
It follows from Lemma \ref{jiaohuanzi} that
\begin{align*}
(1+c_2)\left|\int_{\T^d}\la^{s}\big(\u\div\tau\big)\cdot \la^{s}\tau\,dx\right|
\leq&C\left(\left|\int_{\T^d}
      \left[\la^{{s}},\,\u\div\right] \tau\cdot \la^{{s}} \tau\,dx\right|
+\left|\int_{\T^d}
      \u\div\la^{{s}} \tau\cdot \la^{{s}} \tau\,dx\right|
\right)\\
\leq&C(\|\nabla\tau\|_{L^\infty}\norm{\u}{H^s}
+\|\nabla \u\|_{L^\infty}\norm{\tau}{H^s})\norm{\la^{s}\tau}{L^2}\\
\leq&C\norm{ \u}{H^3}\norm{\tau}{H^3}^2.
\end{align*}
Then we get
\begin{align}\label{I6}
|M_6|\leq\frac{(\lambda+\mu)c_0}{16}\norm{\div \u}{H^3}^2
+C\norm{\tau}{H^3}^4+C\norm{ \u}{H^3}\norm{\tau}{H^3}^2.
\end{align}
 In the following, we bound the terms of $\widetilde{f}_1$ in \eqref{gaojie13}. To do this, we  write
\begin{align}\label{gaojie40}
M_7=\int_{\T^d}\la^{s} \widetilde{f}_1\cdot\la^{s} \u\,dx\stackrel{\mathrm{def}}{=}\widetilde{A}_1+\widetilde{A}_2+\widetilde{A}_3+\widetilde{A}_4+\widetilde{A}_5
\end{align}
with
\begin{align*}
&\widetilde{A}_1\stackrel{\mathrm{def}}{=}-\int_{\T^d}\la^{s} \big(\u\cdot\nabla \u\big)\cdot\la^{s} \u\,dx,\nn\\
&\widetilde{A}_2\stackrel{\mathrm{def}}{=}\int_{\T^d}\la^{s} \big(J(a)\nabla \aa\big)\cdot\la^{s} \u\,dx,\nn\\
&
\widetilde{A}_3\stackrel{\mathrm{def}}{=}\int_{\T^d}\la^{s} \big(\mu\nabla J(a)\nabla\u\big)\cdot\la^{s} \u\,dx,\nn\\
&\widetilde{A}_4\stackrel{\mathrm{def}}{=}\int_{\T^d}\la^{s}\big( (\lambda+\mu)
      \nabla J(a)\div\u\big)\cdot\la^{s} \u\,dx,\nn\\
&\widetilde{A}_5\stackrel{\mathrm{def}}{=}\int_{\T^d}\la^{s} \big(\frac{1}{1+a}\nabla\tau\big)\cdot\la^{s} \u\,dx.
\end{align*}
The term $\widetilde{A}_1$ can be bounded as in \eqref{I1}
\begin{align*}
\big|\widetilde{A}_1\big|\le C\norm{\u}{H^3}^3.
\end{align*}
By Lemmas \ref{daishu} and (\ref{eq:smalla}), we have
\begin{align*}
\big|\widetilde{A}_2\big|
\leq&C\Big(\|\nabla \aa\|_{L^{\infty}}\|J(a)\|_{H^{s-1}}
        +\|\nabla \aa\|_{H^{s-1}}\|J(a)\|_{L^{\infty}}\Big)\norm{\la^{s+1}\u}{L^2}\nonumber\\
\leq&\frac{\mu c_0}{64}\norm{\la^{s+1}\u}{L^2}^2
         +C\norm{a}{H^{s-1}}^2\norm{\aa}{H^3}^2
         +C\|J(a)\|_{H^{2}}^2\norm{\aa}{H^s}^2\nn\\
\leq&\frac{\mu c_0}{64}\norm{\la^{s+1}\u}{L^2}^2
         +C\norm{p}{H^{3}}^2\norm{\aa}{H^3}^2.
\end{align*}
Similarly,
\begin{align*}
\big|\widetilde{A}_3\big|+\big|\widetilde{A}_4\big|
\leq&C\Big(\|\nabla J(a)\|_{L^{\infty}}\norm{\la^{s}\u}{L^2}
        +\|\nabla J(a)\|_{H^{s-1}}\|\nabla \u\|_{L^{\infty}}\Big)\norm{\la^{s+1}\u}{L^2}\nonumber\\
\leq&\frac{\mu{c_0}}{64}\norm{\la^{s+1}\u}{L^2}^2
   +C\norm{p}{H^3}^2\norm{ \u}{H^3}^2
\end{align*}
and
\begin{align*}
\big|\widetilde{A}_5\big|
\leq&C\norm{\frac{1}{1+a}\nabla\tau}{H^{s-1}}\norm{\la^{s+1}\u}{L^2}
\leq C\|\frac{1}{1+a}\|_{H^{2}}\|\tau\|_{H^{3}}\norm{\la^{s+1}\u}{L^2}\\
\leq&C\left(1+\|a\|_{H^{3}}\right)\|\tau\|_{H^{3}}\norm{\la^{s+1}\u}{L^2}
\leq\frac{\mu c_0}{64}\norm{\la^{s+1}\u}{L^2}^2
         +c_1(1+\norm{a}{H^{3}}^2)\norm{\tau}{H^3}^2\\
\leq&\frac{\mu c_0}{64}\norm{\la^{s+1}\u}{L^2}^2
         +c_1(1+\norm{p}{H^{3}}^2)\norm{\tau}{H^3}^2.
\end{align*}
Inserting the bounds for $\widetilde{A}_1$ through $\widetilde{A}_5$ into \eqref{gaojie40}, we get
\begin{align}\label{I7}
|M_7|
\leq\frac{3\mu{c_0}}{64}\norm{\la^{s+1}\u}{L^2}^2
+C\norm{\u}{H^3}^3
+C\norm{(\aa,\u,\tau)}{H^3}^2\norm{p}{H^3}^2+c_1\norm{\tau}{H^3}^2.
\end{align}
Finally, inserting \eqref{I1}, \eqref{I4}, \eqref{I3},
   \eqref{I2}, \eqref{I5}, \eqref{I6} and \eqref{I7} into \eqref{gaojie13}
and then summing up  \eqref{gaojie13} over $1\leq s\leq3$, we obtain \eqref{gaojie1}.
Consequently, we prove the Lemma \ref{gaojie}.
\end{proof}
\subsection{Energy estimates for $(\aa,\u, \tau)$ }
In this subsection, we shall give the energy estimates for the unknown
good function $\aa$,
We need to reformulate \eqref{mm} in terms of variables $\aa$, $\u$ and $\tau$.
Precisely, one has
\begin{eqnarray}\label{energy1}
\left\{\begin{aligned}
&\partial_t \aa+ (\gamma+2\ze+K(L-1))\div\u  =\widetilde{f}_2,\\
&\partial_t \u-\div(\bar{\mu}(\rho)\nabla\u)-\nabla(\bar{\lambda}(\rho)\div\u)+\nabla \aa
=\widetilde{f}_1,\\
&\partial_t\tau + \tau+\div(\tau \u) =  0,\\
&(\aa,\u,\tau)|_{t=0}=(\aa_0,\u_0,\tau_0),
\end{aligned}\right.
\end{eqnarray}
with
\begin{align}\label{aa}
&\aa\stackrel{\mathrm{def}}{=}p+K(L-1)b+\zeta\eta^2-\zeta=p+K(L-1)b+\zeta b(b+2),\\
\label{f2}
&\widetilde{f}_2\stackrel{\mathrm{def}}{=}-\u\cdot\nabla \aa
-\gamma\aa\div \u+(\gamma-2)\zeta b^2\div\u+\big(2(\gamma-2)\zeta +K(\gamma-1)(L-1)\big)b\div\u,
\end{align}
and $\widetilde{f}_1$ is defined in \eqref{f1}.
 Then, we will present the energy estimates for $(\aa,\u, \tau)$ in the following Lemma.
\begin{lemma}\label{haosan}
Let $(\aa,\u,\tau) \in C([0, T];H^3)$ be a solution to the  system \eqref{energy1}, there holds
\begin{align}\label{energy2}
&
\frac12\frac{d}{dt}\norm{\left(\frac{1}{\sqrt{\gamma+2\ze+K(L-1)}}\,\aa,\u,\sqrt{1+c_2}\,\tau\right)}{H^{3}}^2
+\mu\norm{\nabla\u}{H^{3}}^2
+(\lambda+\mu)\norm{\div\u}{H^{3}}^2+\norm{\tau}{H^{3}}^2\nn\\
& \quad\leq C\left(\norm{(\u,p)}{H^3}
       +(1+\norm{b}{H^3}^2)\norm{b}{H^3}^2
       +\norm{(p,\aa,\tau}{H^3}^2\right)
      \|(\aa, \u,\tau)\|_{H^{3}}^2,
\end{align}
where $c_2$ will be given later.
\end{lemma}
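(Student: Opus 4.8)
\emph{Plan of proof.} The argument runs parallel to that of Lemma~\ref{gaojie}, but now it exploits the cleaner Navier--Stokes--type structure of \eqref{energy1}. At the $L^2$ level the estimate is standard (cf. the remark at the start of the proof of Lemma~\ref{gaojie}), so the plan is: for $s=1,2,3$ apply $\Lambda^s$ to the three equations of \eqref{energy1} and take the $L^2(\T^d)$ inner products of the resulting identities with, respectively, $\frac{1}{\gamma+2\ze+K(L-1)}\Lambda^s\aa$, $\Lambda^s\u$, and $(1+c_2)\Lambda^s\tau$, then add them up. The normalising constant in the first multiplier is chosen precisely so that, after one integration by parts, the linear coupling cancels, $\int_{\T^d}\Lambda^s\div\u\,\Lambda^s\aa\,dx+\int_{\T^d}\Lambda^s\nabla\aa\cdot\Lambda^s\u\,dx=0$. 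The viscous term I would treat exactly as in \eqref{gaojie4}--\eqref{gaojie11}: integrate by parts, use the uniform bounds \eqref{youjiexing} for the ellipticity of $\bar\mu(\rho),\bar\lambda(\rho)$, and estimate the commutator $[\Lambda^s,\bar\mu(\rho)\nabla]\u$ by Lemma~\ref{jiaohuanzi} together with $\|J(a)\|_{H^s}\le C\|a\|_{H^s}$ and $\|a\|_{H^3}\le C\|p\|_{H^3}$ (see \eqref{eq:smalla}, \eqref{smalla}); this produces the dissipative terms $\mu\norm{\nabla\u}{H^3}^2+(\lambda+\mu)\norm{\div\u}{H^3}^2$ (with the coercivity constants coming from \eqref{youjiexing}) up to an error $C\norm{p}{H^3}^2\norm{\u}{H^3}^2$, while the damping term of the $\tau$ equation supplies $(1+c_2)\norm{\tau}{H^3}^2$.

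It then remains to bound the nonlinear contributions. Those coming from $\widetilde f_1$ split as the five terms $\widetilde A_1,\dots,\widetilde A_5$ of \eqref{gaojie40} and are controlled exactly as in \eqref{I7}, by $C\norm{\u}{H^3}^3+C\norm{(\aa,\u,\tau)}{H^3}^2\norm{p}{H^3}^2+c_2(1+\norm{p}{H^3}^2)\norm{\tau}{H^3}^2$ plus an arbitrarily small multiple of $\norm{\nabla\u}{H^3}^2$, using Lemmas~\ref{daishu}, \ref{jiaohuanzi} and \ref{fuhe}. For $\widetilde f_2$ the transport term $-\u\cdot\nabla\aa$ paired with $\Lambda^s\aa$ is split into a commutator (Lemma~\ref{jiaohuanzi}) and $\int_{\T^d}\u\cdot\nabla\Lambda^s\aa\,\Lambda^s\aa\,dx$, which after an integration by parts is $\lesssim\norm{\u}{H^3}\norm{\aa}{H^3}^2$; the remaining terms $-\gamma\aa\div\u$, $(\gamma-2)\ze\,b^2\div\u$ and $\bigl(2(\gamma-2)\ze+K(\gamma-1)(L-1)\bigr)\,b\div\u$ are products in which $\div\u$ carries the top derivative, so at $s=3$ the leading piece $(\,\cdot\,)\Lambda^3\div\u\,\Lambda^3\aa$ is absorbed by Young's inequality into $(\lambda+\mu)\norm{\div\u}{H^3}^2$, leaving $C\bigl(\norm{\aa}{H^3}^2+\norm{p}{H^3}^2+(1+\norm{b}{H^3}^2)\norm{b}{H^3}^2\bigr)\norm{\aa}{H^3}^2$ together with lower-order commutator pieces $\lesssim\bigl(\norm{\u}{H^3}+\norm{p}{H^3}+(1+\norm{b}{H^3})\norm{b}{H^3}\bigr)\norm{\u}{H^3}\norm{\aa}{H^3}$. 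The $\tau$-transport term $\div(\tau\u)=\tau\div\u+\u\cdot\nabla\tau$ I would handle as in \eqref{I6}, contributing $C\norm{\u}{H^3}\norm{\tau}{H^3}^2+C\norm{\tau}{H^3}^4$ plus a small multiple of $\norm{\div\u}{H^3}^2$.

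To finish I would sum over $1\le s\le3$ (and add the $L^2$ estimate), fix $c_2$ large enough that $(1+c_2)\norm{\tau}{H^3}^2$ minus the $c_2(1+\norm{p}{H^3}^2)\norm{\tau}{H^3}^2$ generated by $\widetilde A_5$ leaves exactly $\norm{\tau}{H^3}^2$ on the left once the $c_2\norm{p}{H^3}^2\norm{\tau}{H^3}^2$ piece is moved to the right (where it is $\le\norm{p}{H^3}^2\norm{(\aa,\u,\tau)}{H^3}^2$), and then choose the various $\varepsilon$'s in the Young inequalities so that the small multiples of $\norm{\nabla\u}{H^3}^2$ and $\norm{\div\u}{H^3}^2$ sum to at most half of the dissipation and are absorbed on the left. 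Rewriting every surviving right-hand factor via $\norm{a}{H^3}\le C\norm{p}{H^3}$, the bootstrap ansatz $\norm{b}{H^3}<1$, and $\aa=p+K(L-1)b+\ze\,b(b+2)$ (to trade the linear-in-$b$ pieces against $\aa$ and $p$) then yields exactly \eqref{energy2}. The main obstacle is precisely the top-order treatment of the coupling terms $\aa\div\u$ and $b\div\u$: in Lemma~\ref{gaojie} the analogous $p\div\u$ and $b\div\u$ terms forced the introduction of the correctors $\int_{\T^d}\frac{p}{1+p}(\Lambda^3p)^2\,dx$ and $\int_{\T^d}\frac{b}{1+b}(\Lambda^3b)^2\,dx$, whereas here $\aa$ couples with $\div\u$ directly and the top derivative $\norm{\Lambda^3\div\u}{L^2}$ is already controlled by $(\lambda+\mu)\norm{\div\u}{H^3}^2$, so no corrector is needed — this is the structural gain of passing to the good unknown $\aa$, and the only delicate point left is the bookkeeping that each leftover factor lands in one of $\norm{(\u,p)}{H^3}$, $(1+\norm{b}{H^3}^2)\norm{b}{H^3}^2$, or $\norm{(p,\aa,\tau)}{H^3}^2$.
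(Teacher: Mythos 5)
Your plan matches the paper's proof almost step for step: apply $\Lambda^s$ to \eqref{energy1} and pair with $(\tfrac{1}{\gamma+2\zeta+K(L-1)}\Lambda^s\aa,\Lambda^s\u,(1+c_2)\Lambda^s\tau)$, exploit the $\aa$--$\u$ cancellation, treat the variable-coefficient viscous term exactly as \eqref{gaojie10}--\eqref{gaojie11}, bound $\widetilde f_1$ as in \eqref{I7}, handle $\div(\tau\u)$ as in \eqref{I6}, and observe that the coupling terms $\aa\div\u$, $b\div\u$, $b^2\div\u$ can be Young-absorbed into the velocity dissipation so that no $\int\frac{p}{1+p}(\Lambda^3p)^2$-type corrector is needed — which is precisely what the paper does in \eqref{energy18}--\eqref{energy22} and is indeed the structural gain of the good unknown $\aa$. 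The only thing you elide is that the paper actually writes out the $s=0$ level and there invokes the Poincar\'e inequality \eqref{P} (from $\int_{\T^d}\rho\u\,dx=0$ via Lemmas \ref{lem-Poi} and \ref{lem2.2}) to control $\|\u\|_{L^2}$ by $\|\nabla\u\|_{L^2}$ when bounding $\widetilde A_2$; this only sharpens the $p$-dependence from $\|p\|_{H^3}$ to $\|p\|_{H^3}^2$, and since the stated right-hand side of \eqref{energy2} already allows a linear $\|(\u,p)\|_{H^3}$ factor, your cruder handling also closes the estimate. Your mention of a bootstrap $\|b\|_{H^3}<1$ is superfluous at this stage — the lemma keeps the factor $(1+\|b\|_{H^3}^2)\|b\|_{H^3}^2$ explicit and reserves the smallness for Section 4.5 — but this does not affect the correctness of the argument.
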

\begin{proof}
We start with the $L^2$ estimate.
Taking inner product with $(\frac{1}{\gamma+2\ze+K(L-1)}\aa,\u,(1+c_2)\tau)$ for the equations in \eqref{energy1} gives
\begin{align}\label{energy3}
&\frac12\frac{d}{dt}\norm{\left(\frac{1}{\sqrt{\gamma+2\ze+K(L-1)}}\,\aa,\u,\sqrt{1+c_2}\,\tau\right)}{L^{2}}^2
    -\int_{\T^d}\div(\bar{\mu}(\rho)\nabla\u)\cdot \u\,dx\nn\\
&\qquad-\int_{\T^d}\nabla(\bar{\lambda}(\rho)\div\u)\cdot \u\,dx+(1+c_2)\int_{\T^d}\tau^2\,dx\nn\\
&\quad= \frac{1}{\gamma+2\ze+K(L-1)}\int_{\T^d} \widetilde{f}_2\cdot \aa\,dx
      +\int_{\T^d} \widetilde{f}_1\cdot \u\,dx-(1+c_2)\int_{\T^d} \div(\tau \u)\cdot \tau\,dx,\nn\\
&\quad\stackrel{\mathrm{def}}{=} B_{11}+B_{12}+B_{13}
\end{align}
where we have used the following cancellations
\begin{align*}
\int_{\T^d}\div\u\cdot \aa\,dx+\int_{\T^d}\nabla \aa\cdot\u\,dx=0.
\end{align*}
For the last three terms on the left hand side of \eqref{energy3}, we get by integration by parts  and \eqref{youjiexing}  that
\begin{align}\label{energy4}
-\int_{\T^d}\div(\bar{\mu}(\rho)\nabla\u)\cdot \u\,dx
=&\int_{\T^d}\bar{\mu}(\rho)\nabla\u\cdot \nabla\u\,dx
\ge \frac{\mu c_0} {2}\norm{\nabla\u}{L^{2}}^2
\end{align}
and
\begin{align}\label{energy5}
-\int_{\T^d}\nabla(\bar{\lambda}(\rho)\div\u)\cdot \u\,dx
=&\int_{\T^d}\bar{\lambda}(\rho)\div\u\cdot \div\u\,dx
  \ge \frac{\left(\lambda+\mu\right) c_0} {2}\norm{\div\u}{L^{2}}^2.
\end{align}
Next, we shall estimate each term on the right hand side of \eqref{energy3}.
First, it follows from integration by parts and the H\"older inequality that
\begin{align}\label{energy6}
|B_{11}|=&\Big|\frac{1}{\gamma+2\ze+K(L-1)}\int_{\T^d} \widetilde{f}_2\cdot \aa\,dx\Big|\nn\\
\le&C\left(|\int_{\T^d} (\u\cdot \nabla\aa)\cdot\aa\,dx|
   +|\int_{\T^d} \left(\aa\div\u\right)\cdot\aa\,dx|
   +|\int_{\T^d}b^2\div\u\cdot\aa\,dx|
   +|\int_{\T^d}b\div\u\cdot\aa\,dx|\right)\nn\\
\le&C\norm{\div\u}{L^{\infty}}\norm{\aa}{L^{2}}^2
    +C\left(\norm{b}{L^{\infty}}+\norm{b}{L^{\infty}}^2\right)\norm{\div\u}{L^{2}}\norm{\aa}{L^{2}}\nn\\
\le&\frac{\left(\lambda+\mu\right) c_0} {16}\norm{\div\u}{L^{2}}^2
      + C \left(\norm{\u}{H^{3}}
      +(1+\norm{b}{H^{3}}^2)\norm{b}{H^{3}}^2\right)\norm{\aa}{L^{2}}^2.
\end{align}
For the first term in $\widetilde{f}_1$,
\begin{align}\label{energy7}
\Big| \int_{\T^d} \u\cdot\nabla \u\cdot \u\,dx\Big|\le C \norm{\nabla\u}{L^{\infty}}\norm{\u}{L^{2}}^2
\le C \norm{\u}{H^{3}}^3.
\end{align}
Due to $\int_{\mathbb{T}^2}\rho \u \,dx=0,$ one can deduce from Lemma \ref{lem-Poi} that
\begin{align*}
\|(\sqrt{\rho }\u)(t)\|_{L^2}^2\le C\|\nabla \u(t)\|_{L^2}^2,
\end{align*}
which combines with Lemma \ref{lem2.2} further implies that
\begin{align}\label{P}
\|\u(t)\|_{L^2}^2\le C\|\nabla \u(t)\|_{L^2}^2.
\end{align}
Thanks to \eqref{P} and Lemma \ref{fuhe}, we have
\begin{align}\label{energy9}
\Big| \int_{\T^d} J(a)\nabla \aa\cdot \u\,dx\Big|
\le& C \norm{J(a)}{L^{\infty}}\norm{ \nabla \aa}{L^{2}}\norm{\u}{L^{2}}\nn\\
\le& \frac{\mu c_0} {32}\norm{\u}{L^{2}}^2+C \norm{J(a)}{H^{2}}^2\norm{ \aa}{H^{1}}^2,\nn\\
\le& \frac{\mu c_0} {32}\norm{\nabla\u}{L^{2}}^2+C \norm{p}{H^{3}}^2\norm{ \aa}{H^{3}}^2
\end{align}
and
\begin{align}
\label{energy10}
\Big| \int_{\T^d} \nabla J(a)(\mu\nabla\u+(\lambda+\mu)\div u)\cdot \u\,dx\Big|
\le& C \norm{\nabla J(a)}{L^{\infty}}\norm{\nabla \u}{L^{2}}\norm{\u}{L^{2}}\nn\\
\le&\frac{\mu c_0} {32}\norm{\nabla\u}{L^{2}}^2+C\norm{\nabla J(a)}{H^{2}}^2\norm{\u}{L^{2}}^2\nn\\
\le&\frac{\mu c_0} {32}\norm{\nabla\u}{L^{2}}^2+C\norm{p}{H^{3}}^2\norm{\u}{H^{3}}^2.
\end{align}
From integration by parts, there holds
\begin{align}\label{energy11}
\Big| \int_{\T^d} \frac{1}{1+a}\nabla \tau\cdot \u\,dx\Big|
\le& C\Big| \int_{\T^d} \frac{1}{1+a}\tau\cdot \div \u\,dx\Big|
+C\Big| \int_{\T^d}\tau\u\cdot \nabla \frac{1}{1+a}\,dx\Big|\nn\\
\le&C\norm{\frac{1}{1+a}}{L^{\infty}}\norm{\tau}{L^{2}}\norm{\nabla\u}{L^{2}}
      +C\norm{\nabla \frac{1}{1+a}}{H^{2}}\norm{\tau}{L^{2}}\norm{\u}{L^{2}}\nn\\
\le&\frac{\mu c_0} {32}\norm{\nabla\u}{L^{2}}^2
     +c_2\norm{\tau}{L^{2}}^2
     +C\norm{\nabla J(a)}{H^{2}}(\norm{\u}{L^{2}}^2+\norm{\tau}{L^{2}}^2)\nn\\
\le&\frac{\mu c_0} {32}\norm{\nabla\u}{L^{2}}^2
     +c_2\norm{\tau}{L^{2}}^2
     +C\norm{p}{H^{3}}(\norm{\u}{L^{2}}^2+\norm{\tau}{L^{2}}^2).
\end{align}
Combining with  \eqref{energy7}, \eqref{energy9}--\eqref{energy11} gives
\begin{align}\label{energy12}
|B_{12}|=\Big| \int_{\T^d} \widetilde{f}_1\cdot \u\,dx\Big|
\le&\frac{3\mu c_0} {32}\norm{\nabla\u}{L^{2}}^2+c_2\norm{\tau}{L^{2}}^2\nn\\
   &+C\Big(\norm{(\u,p)}{H^{3}}+  \norm{p }{H^{3}}^2\Big)
  \norm{(\aa,\u,\tau)}{H^3}^2.
\end{align}
For the last term of \eqref{energy3}, by integration by parts, we have
\begin{align}\label{energy13}
|B_{13}|=&(1+c_2)\left|\int_{\T^d} \u\div\tau \cdot \tau\,dx
       +\int_{\T^d} \tau \div\u\cdot \tau\,dx\right|\nn\\
\le&C\left|\int_{\T^d} \div\u |\tau|^2\,dx\right|\nn\\
\le&C\norm{\u}{H^{3}}\norm{\tau}{H^{3}}^2.
\end{align}
Inserting  \eqref{energy6}, \eqref{energy12}, and \eqref{energy13} into \eqref{energy3}
 and using \eqref{energy4}-\eqref{energy5}, we arrive at a basic energy inequality
\begin{align}\label{energy14}
&\frac12\frac{d}{dt}\norm{
\left(\frac{1}{\sqrt{\gamma+2\ze+K(L-1)}}\,\aa,\u,\sqrt{1+c_2}\,\tau\right)}{L^{2}}^2
     +\frac{\mu c_0} {4}\norm{\nabla\u}{L^{2}}^2
     + \frac{(\lambda+\mu) c_0} {4}\norm{\div\u}{L^{2}}^2
     +\norm{\tau}{L^{2}}^2\nn\\
&\quad\leq C\Big(\norm{(\u,p)}{H^{3}}+  \norm{p }{H^{3}}^2+(1+\norm{b}{H^{3}}^2)\norm{b}{H^{3}}^2\Big)
  \norm{(\aa,\u,\tau)}{H^3}^2.
\end{align}

 Next, we are concerned with the higher energy estimates.
 Applying  $\la^s$ with $1\le s\le 3$ to \eqref{energy1}
 and then taking $L^2$ inner product with
 $(\frac{1}{\gamma+2\ze+K(L-1)}\la^s\aa,\la^s\u,(1+c_2)\la^s\tau)$ yields
\begin{align}\label{energy16}
&\frac12\frac{d}{dt}\norm{\left(
\frac{1}{\sqrt{\gamma+2\ze+K(L-1)}}\,\la^s\aa,\la^s\u,\sqrt{1+c_2}\,\la^s\ta\right)}{L^2}^2
-\int_{\T^d}\la^{s}\div(\bar{\mu}(\rho)\nabla\u)\cdot\la^{s} \u\,dx\nn\\
&\qquad-\int_{\T^d}\la^{s}\nabla(\bar{\lambda}(\rho)\div\u)\cdot\la^{s} \u\,dx
   +(1+c_2)\int_{\T^d}\la^{s}\tau\cdot\la^{s} \tau\,dx\\
&\quad= \frac{1}{\gamma+2\ze+K(L-1)}\int_{\T^d}\la^{s} \widetilde{f}_2\cdot\la^{s} \aa\,dx
+\int_{\T^d}\la^{s} \widetilde{f}_1\cdot\la^{s} \u\,dx
-(1+c_2)\int_{\T^d}\la^{s} \div(\tau \u)\cdot \la^{s}\tau\,dx.\nn
\end{align}
The last three terms on the left hand side of \eqref{energy16}
can be dealt from \eqref{gaojie10}-\eqref{gaojie11}, then we get
\begin{align}\label{energy17}
&\frac12\frac{d}{dt}\norm{\left(\frac{1}{2}\la^s\aa,\la^s\u,\la^s\ta\right)}{L^2}^2
   +\frac{\mu c_0}{4}\norm{\la^{s+1}\u}{L^2}^2
   +\frac{(\lambda+\mu)c_0}{4}\norm{\la^{s}\div\u}{L^2}^2
   +(1+c_2)\norm{\la^{s}\tau}{L^2}^2 \nn\\
&\quad\le  \frac{1}{\gamma+2\ze+K(L-1)}\int_{\T^d}\la^{s} \widetilde{f}_2\cdot\la^{s} \aa\,dx
\nn\\
&\qquad+\int_{\T^d}\la^{s} \widetilde{f}_1\cdot\la^{s} \u\,dx
-(1+c_2)\int_{\T^d}\la^{s} \div(\tau \u)\cdot \la^{s}\tau\,dx
 + C\norm{p}{H^3}^2 \norm{ \u}{H^3}^2.
\end{align}
We now estimate successively terms on the right hand side of \eqref{energy17}.
To bound the first term in $\widetilde{f}_2$, we rewrite it into
\begin{align*}
\int_{\T^d}\la^{s} (\u\cdot\nabla \aa)\cdot\la^{s} \aa\,dx
=&\int_{\T^d}[\la^{s},\, \u\cdot\nabla] \aa\cdot\la^{s} \aa\,dx
+\int_{\T^d}\u\cdot\nabla\la^{s}\aa\cdot\la^{s} \aa\,dx.
\end{align*}
Then according to  Lemma \ref{jiaohuanzi} and integration by parts, we have
\begin{align}\label{energy18}
\Big|\int_{\T^d}\la^{s} (\u\cdot\nabla \aa)\cdot\la^{s} \aa\,dx\Big|
\le& C\norm{[\la^s,\,\u\cdot\nabla]\aa}{L^2}\norm{\la^{s}\aa}{L^2}
      +C\norm{\div\u}{L^\infty}\norm{\la^{s}\aa}{L^2}^2\nn\\
\le&C\big(\norm{\nabla \u}{L^\infty}\norm{\la^{s}\aa}{L^2}
      +\norm{\la^{s} \u}{L^2}\norm{\nabla \aa}{L^\infty}\big)\norm{\la^{s}\aa}{L^2}\nn\\
\le&C\norm{ \u}{H^3}\norm{\aa}{H^3}^2.
\end{align}
For the second term in $\widetilde{f}_2$, it follows from Lemma \ref{daishu} that
\begin{align}\label{energy19}
\frac{\gamma}{\gamma+2\ze+K(L-1)}\int_{\T^d}\la^{s} (\aa\div\u)\cdot\la^{s} \aa\,dx\leq&C\big(\|\nabla\u\|_{L^{\infty}}\|\aa\|_{H^{s}}
        +\|\nabla\u\|_{H^{s}}\|\aa\|_{L^{\infty}}\big)\norm{\la^{s}\aa}{L^2}\nonumber\\
         \leq&\frac{\mu c_0}{64}\|\nabla\u\|_{H^3}^2+C\|\aa\|_{H^{3}}^4.
\end{align}
Similarly, we have
\begin{align}\label{energy20}
\frac{2(\gamma-2)\zeta+K(\gamma-1)(L-1)}{\gamma+2\ze+K(L-1)}
     \int_{\T^d}\la^{s} (b\div\u)\cdot\la^{s} \aa\,dx
\leq &C\big(\|\nabla\u\|_{L^{\infty}}\|b\|_{H^{s}}
        +\|\nabla\u\|_{H^{s}}\|b\|_{L^{\infty}}\big)\norm{\la^{s}\aa}{L^2}\nonumber\\
\leq&\frac{\mu c_0}{64}\|\nabla\u\|_{H^3}^2+C\norm{b}{H^3}^2\norm{\aa}{H^3}^2
\end{align}
and
\begin{align}\label{energy21}
\frac{(\gamma-2)\zeta}{\gamma+2\ze+K(L-1)}\int_{\T^d}\la^{s} (b^2\div\u)\cdot\la^{s} \aa\,dx\leq&C\big(\|\nabla\u\|_{L^{\infty}}\|b^2\|_{H^{s}}
        +\|\nabla\u\|_{H^{s}}\|b^2\|_{L^{\infty}}\big)\norm{\la^{s}\aa}{L^2}\nonumber\\
         \leq&\frac{\mu c_0}{64}\|\nabla\u\|_{H^3}^2+C\norm{b}{H^3}^4\norm{\aa}{H^3}^2.
\end{align}
Collecting \eqref{energy18}-\eqref{energy21}, we can get
\begin{align}\label{energy22}
\frac{1}{\gamma+2\ze+K(L-1)}
    \int_{\T^d}\la^{s} \widetilde{f}_2\cdot\la^{s} \aa\,dx
&\le\frac{3\mu{c_0}}{64}\norm{\nabla\u}{H^3}^2\nn\\
&+C\left(\|\u\|_{H^3}+\|\aa\|_{H^3}^2+(1+\norm{b}{H^3}^2)\norm{b}{H^3}^2\right)
     \norm{\aa}{H^3}^2.
\end{align}
We get by a  similar  derivation of \eqref{I7} and \eqref{I6} that
\begin{align}\label{energy23}
\left|\int_{\T^d}\la^{s} \widetilde{f}_1\cdot\la^{s} \u\,dx\right|
\leq&\frac{3\mu{c_0}}{64}\norm{\la^{s+1}\u}{L^2}^2
+C\norm{\u}{H^3}^3
+C\norm{(\aa,\u,\tau)}{H^3}^2\norm{p}{H^3}^2+c_2\norm{\tau}{H^3}^2
\end{align}
and
\begin{align}\label{energy24}
(1+c_2)\left|\int_{\T^d}\la^{s} \div(\tau \u)\cdot \la^{s}\tau\,dx\right|
\leq&\frac{(\lambda+\mu)c_0}{16}\norm{\div \u}{H^3}^2
+C\norm{\tau}{H^3}^4+C\norm{ \u}{H^3}\norm{\tau}{H^3}^2.
\end{align}
Plugging \eqref{energy22}-\eqref{energy24} into \eqref{energy17} and combining with \eqref{energy14}, we arrive at the desired estimate \eqref{energy2}. This completes the proof of Lemma \ref{haosan}.
\end{proof}

\subsection{Dissipation estimates for $(\aa,\u,\tau,\mathbf{\widetilde{G}})$}
Next, we find the hidden dissipation of $\aa.$
We rewrite \eqref{energy1} into
 \begin{eqnarray}\label{dissp1}
    \left\{\begin{aligned}
    &\partial_t \aa+ (\gamma+2\ze+K(L-1))\div\u  =\widetilde{f}_2,\\
    &\partial_t\u - \mu\Delta \u -(\lambda+\mu)\nabla \div \u
            +\nabla\aa=\widetilde{f}_3,\\
    &\partial_t\tau+\tau+ \div (\tau\u)=0,\\
    &(\aa,\u,\tau)|_{t=0}=(\aa_0,\u_0,\tau_0),
    \end{aligned}\right.
    \end{eqnarray}
  where
   \begin{align}\label{f3}
    &\widetilde{f}_3\stackrel{\mathrm{def}}{=}- \u\cdot \nabla \u
    +J(a)\nabla\aa-J(a)\Big(\mu\Delta \u + (\lambda+\mu)\nabla \div \u\Big)+\frac{1}{1+a}\nabla\tau.
    \end{align}
Denote
\begin{align}\label{dissp4}
{\mathbf{\widetilde{G}}}\stackrel{\mathrm{def}}{=} \mathbb{Q}\u-\frac{1}{\nu}\Delta^{-1}\nabla {\aa}
\end{align}
with $\nu=\lambda+2\mu>0$.
Then, we find out that ${\mathbf{\widetilde{G}}}$ satisfies
\begin{align}\label{dissp5}
\partial_t{\mathbf{\widetilde{G}}}-\nu\Delta {\mathbf{\widetilde{G}}}
   =\frac{\gamma+2\ze+K(L-1)}{\nu}\q \u
   -\frac{1}{\nu}\Delta^{-1}\nabla \widetilde{f}_2+\q \widetilde{f}_3.
\end{align}
The goal of this subsection is to establish the dissipation estimates for $(\aa,\u,\tau,\mathbf{\widetilde{G}})$.
\begin{lemma}\label{miduhaosan}
Let $(\aa,\u,\tau,\mathbf{\widetilde{G}}) \in C([0, T];H^3)$ be a solution
  to the  system \eqref{dissp1} and  \eqref{dissp5},
 there holds
\begin{align}\label{dissp6}	
&\frac12\frac{d}{dt}\norm{(\aa,\u,\tau,\mathbf{\widetilde{G}})}{H^{{3}}}^2+\norm{(\aa,\tau)}{H^{{3}}}^2
   +\norm{(\nabla\u,\nabla\mathbf{\widetilde{G}})}{H^{{3}}}^2\\
 &\quad  \le C\norm{(p,\u)}{H^{{3}}}^2\norm{\nabla\u}{H^{{3}}}^2
+C\Big(\|\u\|_{H^3}+\norm{ (p,\aa,\tau)}{H^3}^2
+(1+\norm{b}{H^3}^2)\norm{b}{H^3}^2\Big)
\norm{(\aa,\u,\tau)}{H^{3}}^2.\nn
\end{align}
\end{lemma}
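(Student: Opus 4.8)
The plan is to extract the missing damping of $\aa$ by testing the parabolic equation \eqref{dissp5} for the auxiliary variable $\mathbf{\widetilde{G}}$, and then to combine the outcome with the energy estimate of Lemma \ref{haosan}. For \textbf{Step 1 (the $\mathbf{\widetilde{G}}$ estimate)} I would, for $s=0,1,2,3$, apply $\la^s$ to \eqref{dissp5}, take the $L^2$ inner product with $\la^s\mathbf{\widetilde{G}}$, and sum over $s$. Since $\mathbf{\widetilde{G}}=\q\u-\frac1\nu\Delta^{-1}\nabla\aa$ is a gradient field it has zero mean on $\T^d$, so the Poincaré inequality gives $\|\mathbf{\widetilde{G}}\|_{H^3}\lesssim\|\nabla\mathbf{\widetilde{G}}\|_{H^3}$; combined with \eqref{P} this upgrades the parabolic gain $\nu\|\nabla\la^s\mathbf{\widetilde{G}}\|_{L^2}^2$ to a full $H^3$ dissipation and lets the linear source $\frac{\gamma+2\ze+K(L-1)}{\nu}\q\u$ be absorbed by Young's inequality, at the cost of a term $C\|\nabla\u\|_{H^3}^2$. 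For the nonlinear sources $-\frac1\nu\Delta^{-1}\nabla\widetilde{f}_2$ and $\q\widetilde{f}_3$ I would integrate by parts once to move a derivative onto $\mathbf{\widetilde{G}}$; this is essential for the second-order piece $J(a)\big(\mu\Delta\u+(\lambda+\mu)\nabla\div\u\big)$ of $\widetilde{f}_3$, where I would first write $J(a)\Delta\u=\div(J(a)\nabla\u)-\nabla J(a)\cdot\nabla\u$. The resulting products are then bounded exactly as in the proof of Lemma \ref{haosan}, using Lemmas \ref{daishu}, \ref{jiaohuanzi}, \ref{fuhe} and the composition bounds \eqref{eq:smalla}--\eqref{smalla}, so that each term is either $C\|(p,\u)\|_{H^3}^2\|\nabla\u\|_{H^3}^2$, or of the form (small or dissipative factor)$\times\|(\aa,\u,\tau)\|_{H^3}^2$, or absorbable into $\frac{\nu}{2}\|\nabla\mathbf{\widetilde{G}}\|_{H^3}^2$.

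For \textbf{Step 2 (combination)} I would add Lemma \ref{haosan}, multiplied by a large constant $N=N(\mu,\lambda,\nu)$, to the inequality of Step 1. Taking $N$ large makes $N\mu\|\nabla\u\|_{H^3}^2$ dominate the $C\|\nabla\u\|_{H^3}^2$ produced in Step 1, and taking the bootstrap parameter $\delta$ (hence $\|p\|_{H^3},\|b\|_{H^3}$) small makes the correction terms $\frac1{2\gamma}\frac{d}{dt}\int_{\T^d}\frac p{1+p}(\Lambda^3p)^2\,dx$ and $\frac{K(L-1)}{2}\frac{d}{dt}\int_{\T^d}\frac b{1+b}(\Lambda^3b)^2\,dx$ in \eqref{gaojie1} reabsorbable into $\frac{d}{dt}$ of an equivalent $H^3$ energy. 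This gives a clean inequality controlling $\frac{d}{dt}\|(\aa,\u,\tau,\mathbf{\widetilde{G}})\|_{H^3}^2$ with dissipation $c\big(\|\tau\|_{H^3}^2+\|\nabla\u\|_{H^3}^2+\|\nabla\mathbf{\widetilde{G}}\|_{H^3}^2\big)$.

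For \textbf{Step 3 (recovering $\aa$)}, applying $\div$ to the definition of $\mathbf{\widetilde{G}}$ yields $\aa=\nu\,\div(\q\u-\mathbf{\widetilde{G}})$ modulo the spatial mean of $\aa$, so that $\|\aa\|_{H^3}\lesssim\|\nabla\u\|_{H^3}+\|\nabla\mathbf{\widetilde{G}}\|_{H^3}$ up to a harmless small/lower-order contribution. Trading a fixed fraction of the dissipation obtained in Step 2 for $\|\aa\|_{H^3}^2$ then produces the left-hand side of \eqref{dissp6}; relabelling the coefficient in front of $\tau$ as $c_2$ and renaming absolute constants gives exactly \eqref{dissp6}.

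The hard part will be Step 1's handling of the nonlinear terms in $\widetilde{f}_3$, especially the viscous term $J(a)\big(\mu\Delta\u+(\lambda+\mu)\nabla\div\u\big)$: a naive $H^3$ pairing against $\la^3\mathbf{\widetilde{G}}$ would cost five derivatives on $\u$, so one must carefully redistribute derivatives by integration by parts and keep every resulting product either of the form appearing on the right of \eqref{dissp6} or small enough to be absorbed. Doing this while simultaneously guaranteeing the absorption of the $\|\nabla\u\|_{H^3}^2$ produced by the linear source $\q\u$ — which is precisely why the Poincaré inequalities \eqref{wPoi}--\eqref{P} and the large weight $N$ are indispensable — is the main technical and bookkeeping obstacle.
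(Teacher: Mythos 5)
Your overall architecture is right — test the parabolic equation for $\mathbf{\widetilde{G}}$, combine with Lemma~\ref{haosan}, and extract $\|\aa\|_{H^3}^2$ — but the mechanism in your Step~3 is not what the paper does, and it leaves a small gap. The paper never invokes the algebraic identity $\aa=\nu\,\div(\q\u-\mathbf{\widetilde{G}})$. Instead it substitutes $\div\u=\div\mathbf{\widetilde{G}}+\tfrac1\nu\aa$ back into the first equation of \eqref{dissp1}, obtaining a \emph{damped transport equation} for $\aa$,
\[
\partial_t\aa+\frac{\gamma+2\zeta+K(L-1)}{\nu}\,\aa
=-\bigl(\gamma+2\zeta+K(L-1)\bigr)\div\mathbf{\widetilde{G}}+\widetilde f_2,
\]
and tests with $\la^s\aa$, $s=0,\ldots,3$; the damping coefficient produces the $\|\aa\|_{H^3}^2$-dissipation directly (see \eqref{dissp10}), at the cost of $\|\nabla\mathbf{\widetilde{G}}\|_{H^3}^2$ and $\widetilde f_2$ on the right. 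Your algebraic "trade" is morally equivalent, but on $\T^d$ one actually has $\div\mathbf{\widetilde{G}}=\div\u-\tfrac1\nu\bigl(\aa-\overline{\aa}\bigr)$, so the identity controls only $\|\aa-\overline{\aa}\|_{H^3}$; without a separate accounting for the spatial mean $\overline{\aa}$ (which is merely quadratically small, not zero, since $\int_{\T^d}P\,dx$ is not conserved) the $L^2$ part of $\|\aa\|_{H^3}^2$ on the left of \eqref{dissp6} is not recovered, whereas the paper's damped-transport route gives it at once.

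Two smaller points. First, the paper does not decompose $J(a)\Delta\u=\div(J(a)\nabla\u)-\nabla J(a)\cdot\nabla\u$; it simply moves one $\la$ by Young's inequality, writing $\int\la^s\q\widetilde f_3\cdot\la^s\mathbf{\widetilde{G}}\,dx=\int\la^{s-1}\q\widetilde f_3\cdot\la^{s+1}\mathbf{\widetilde{G}}\,dx$, so that only $\|\widetilde f_3\|_{H^2}$ and $\|\widetilde f_2\|_{H^1}$ are ever needed (see \eqref{dissp7} and \eqref{dissp13}--\eqref{dissp14}); then $\|J(a)\Delta\u\|_{H^2}\lesssim\|a\|_{H^3}\|\nabla\u\|_{H^3}$ follows from the algebra property of $H^2$ and Lemma~\ref{fuhe}, and the resulting factor $\|\nabla\u\|_{H^3}^2$ is precisely the one allowed on the right of \eqref{dissp6}. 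Your integration by parts achieves the same thing but with more machinery than the argument requires. Second, in Step~2 you speak of absorbing the correction terms $\frac{1}{2\gamma}\frac{d}{dt}\int_{\T^d}\frac{p}{1+p}(\Lambda^3p)^2\,dx$ and $\frac{K(L-1)}{2}\frac{d}{dt}\int_{\T^d}\frac{b}{1+b}(\Lambda^3b)^2\,dx$; those belong to Lemma~\ref{gaojie} (estimate \eqref{gaojie1}), not to Lemma~\ref{haosan}. The estimate \eqref{energy2} of Lemma~\ref{haosan}, which is what the paper (and you) actually add, has no such correction terms — that is precisely the point of passing to the good unknown $\aa$, which kills the problematic $p\,\div\u$ pairing. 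You are adding the right lemma but anticipating a difficulty from the wrong one.
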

\begin{proof}
For $s=0,$ we can follow  the derivation of \eqref{energy14} to get the desired estimates, here we omit the details. For $s=1,2,3$,
applying ${\la^s} $ to  the above equation \eqref{dissp5},
 and  taking the $L^2$-inner product with ${\la^s}\mathbf{\widetilde{G}}$ give
\begin{align*}
	&\frac12\frac{d}{dt}\norm{\la^{s} \mathbf{\widetilde{G}}}{L^{2}}^2+
       \nu\norm{\la^{s+1} \mathbf{\widetilde{G}}}{L^{2}}^2\nn\\
	&\quad\le C\left|\int_{\T^d}\la^s\q \u\cdot\la^s \mathbf{\widetilde{G}}\,dx\right|
     +C\left|\int_{\T^d}\la^{s-1}\widetilde{f}_2\cdot\la^s \mathbf{\widetilde{G}}\,dx\right|
   +C\left|\int_{\T^d}\la^s\mathbb{Q}\widetilde{f}_3\cdot\la^s \mathbf{\widetilde{G}}\,dx\right|.
\end{align*}
With the aid of the Young inequality,  we obtain
\begin{align}\label{dissp7}
	\frac12\frac{d}{dt}\norm{\la^{s} \mathbf{\widetilde{G}}}{L^{2}}^2+\frac{\nu}{2}\norm{\la^{s+1} \mathbf{\widetilde{G}}}{L^{2}}^2
\le C\big(\norm{\la^{s-1}\q\u}{L^{2}}^2+\norm{\la^{s-2}\widetilde{f}_2}{L^{2}}^2
	+\norm{\la^{s-1}\widetilde{f}_3}{L^{2}}^2\big).
\end{align}
By using the fact
$$\div\q\u=\div\u,$$
we infer from the first equation in \eqref{dissp1} and \eqref{dissp4}
that $\aa$ satisfies a damped transport equation
\begin{align*}
\partial_t{\aa}+\frac{\gamma+2\ze+K(L-1)}{\nu}{\aa}=-(\gamma+2\ze+K(L-1))\div {\mathbf{\widetilde{G}}}+\widetilde{f}_2.
\end{align*}
For the above equation, we get by a similar derivation of \eqref{dissp7} that
\begin{align*}
\frac12\frac{d}{dt}\norm{\la^s {\aa}}{L^{2}}^2
+\frac{\gamma+2\ze+K(L-1)}{2\nu}\norm{\la^s {\aa}}{L^{2}}^2
\leq C\|\nabla\mathbf{\widetilde{G}} \|_{H^3}^2
+\left|\int_{\T^d}\la^s\widetilde{f}_2\cdot\la^s {\aa}\,dx\right|.
\end{align*}
The last term on the right hand side of the above equality can be bounded the same as \eqref{energy22}
\begin{align*}
\left|\int_{\T^d}\la^{s} \widetilde{f}_2\cdot\la^{s} \aa\,dx\right|
\le\varepsilon\|\nabla\u\|_{H^3}^2+C\left(\|\u\|_{H^3}+\|\aa\|_{H^3}^2
  +(1+\norm{b}{H^3}^2)\norm{b}{H^3}^2\right)\norm{\aa}{H^3}^2,
\end{align*}
from which, we have
\begin{align}\label{dissp10}
&\frac12\frac{d}{dt}\norm{\la^s {\aa}}{L^{2}}^2
   +\frac{\gamma+2\ze+K(L-1)}{2\nu}  \norm{\la^s {\aa}}{L^{2}}^2\nn\\
 &\quad\le\varepsilon\|\nabla\u\|_{H^3}^2+C\norm{\nabla\mathbf{\widetilde{G}}}{H^{{3}}}^2
+ C\left(\|\u\|_{H^3}+\norm{\aa}{H^3}^2
      +(1+\norm{b}{H^3}^2)\norm{b}{H^3}^2\right)\norm{\aa}{H^3}^2.
\end{align}
By multiplying  suitable large constant,
 it follows from  \eqref{dissp7} and \eqref{dissp10} that
\begin{align}\label{dissp101}
&\frac12\frac{d}{dt}\norm{(\aa,\mathbf{\widetilde{G}})}{H^{3}}^2+\norm{{\aa}}{H^{3}}^2+\norm{\nabla\mathbf{\widetilde{G}}}{H^{3}}^2\nn\\
&\quad\le \varepsilon\|\nabla\u\|_{H^3}^2+C(\|\u\|_{H^3}^2+\|\widetilde{f}_2\|_{H^1}^2+\|\widetilde{f}_3\|_{H^2}^2)\nn\\
&\qquad+ C\left(\|\u\|_{H^3}+\norm{\aa}{H^3}^2
      +(1+\norm{b}{H^3}^2)\norm{b}{H^3}^2\right)\norm{\aa}{H^3}^2.
\end{align}
Using the fact that $H^3(\T^d)$ is Banach algebra, the nonlinear terms in \eqref{dissp101} can be estimated as follows. Due to,
\begin{align*}
\norm{\u\cdot\nabla\aa}{H^{{1}}}^2+\norm{{\aa} \div\u}{H^{{1}}}^2
\le C\norm{\u}{H^{{2}}}^2\norm{\nabla\aa}{H^{{2}}}^2+C\norm{\aa}{H^{{2}}}^2\norm{\div\u}{H^{{2}}}^2
\leq C\norm{\u}{H^{{3}}}^2\norm{\aa}{H^{{3}}}^2
\end{align*}
and
\begin{align*}
\norm{b \div\u}{H^{{1}}}^2+\norm{b^2 \div\u}{H^{{1}}}^2
\le C\norm{b}{H^{{3}}}^2\norm{\u}{H^{{3}}}^2+\norm{b}{H^{{3}}}^4\norm{\u}{H^{{3}}}^2,
\end{align*}
we obtain
\begin{align}\label{dissp13}
\norm{\widetilde{f}_2}{H^{{1}}}^2
\le C\left(\norm{\aa}{H^{{3}}}^2+(1+\norm{b}{H^{{3}}}^2)\norm{b}{H^{{3}}}^2
    \right)\norm{\u}{H^{{3}}}^2.
\end{align}
From \eqref{f3}, by using Lemma \ref{fuhe}, we get
\begin{align}\label{dissp14}
\norm{\widetilde{f}_3}{H^{{2}}}^2
\le& C\norm{\u}{H^{{2}}}^2\norm{\nabla\u}{H^{{2}}}^2
   +C\norm{a}{H^{{2}}}^2\norm{\nabla \aa}{H^{{2}}}^2
    +C\norm{a}{H^{{2}}}^2\norm{\nabla\u}{H^{{3}}}^2
     +C\norm{1-J(a)}{H^{{2}}}^2\norm{\tau}{H^{{3}}}^2\nn\\
\le& C\norm{(p,\u)}{H^{{3}}}^2\norm{\nabla\u}{H^{{3}}}^2
      +C\norm{p}{H^{{3}}}^2\norm{(\aa,\tau)}{H^{{3}}}^2+C\norm{\tau}{H^{{3}}}^2.
\end{align}
Inserting \eqref{dissp13}-\eqref{dissp14} into \eqref{dissp101} leads to
\begin{align}\label{dissp102}
&\frac12\frac{d}{dt}\norm{(\aa,\mathbf{\widetilde{G}})}{H^{3}}^2+\norm{{\aa}}{H^{3}}^2+\norm{\nabla\mathbf{\widetilde{G}}}{H^{3}}^2\nn\\
&\quad\le \varepsilon\|\nabla\u\|_{H^3}^2+C(\|(\u,\tau)\|_{H^3}^2+\|(p,\u)\|_{H^3}^2\|\nabla\u\|_{H^3}^2)
\nn\\
&\qquad+ C\left(\|\u\|_{H^3}+\norm{(p,\aa)}{H^3}^2
      +(1+\norm{b}{H^3}^2)\norm{b}{H^3}^2\right)\norm{(\aa,\u,\tau)}{H^3}^2.
\end{align}
From the third equation in \eqref{dissp1} and \eqref{I6}, we get
\begin{align*}
	&\frac12\frac{d}{dt}\norm{\la^{s} \tau}{L^{2}}^2+\norm{\la^{s} \tau}{L^{2}}^2
\le\varepsilon\|\nabla\u\|_{H^3}^2 +C\left(\norm{\u}{H^{3}}
   +\norm{\tau}{H^{3}}^2\right)\norm{\tau}{H^{3}}^2.
\end{align*}
Then, we have
\begin{align}\label{dissp11}
	&\frac12\frac{d}{dt}\norm{ \tau}{H^{3}}^2+\norm{ \tau}{H^{3}}^2
\le\varepsilon\|\nabla\u\|_{H^3}^2 +C\left(\norm{\u}{H^{3}}
   +\norm{\tau}{H^{3}}^2\right)\norm{\tau}{H^{3}}^2.
\end{align}
Multiplying \eqref{dissp11} by a suitable large constant and
then adding to  \eqref{dissp102}, we can finally get  that
\begin{align}\label{dissp16}
&\frac12\frac{d}{dt}\norm{(\aa,\tau,\mathbf{\widetilde{G}})}{H^{{3}}}^2+\norm{(\aa,\tau)}{H^{{3}}}^2
	+\norm{\nabla\mathbf{\widetilde{G}}}{H^{{3}}}^2\nn\\
&\quad\le \varepsilon\|\nabla\u\|_{H^3}^2+C\norm{\u}{H^{{3}}}^2
       +C\|(p,\u)\|_{H^3}^2\|\nabla\u\|_{H^3}^2\nn\\
&\qquad+ C\left(\|\u\|_{H^3}+\norm{(p,\aa,\tau)}{H^3}^2
      +(1+\norm{b}{H^3}^2)\norm{b}{H^3}^2\right)\norm{(\aa,\u,\tau)}{H^3}^2.
\end{align}
It follows from \eqref{energy2} that
\begin{align}\label{dissp17}
&
\frac12\frac{d}{dt}\norm{\left(\frac{1}{\sqrt{\gamma+2\ze+K(L-1)}}\,\aa,\u,\sqrt{1+c_2}\,\tau\right)}{H^{3}}^2
+\mu\norm{\nabla\u}{H^{3}}^2
+(\lambda+\mu)\norm{\div\u}{H^{3}}^2+\norm{\tau}{H^{3}}^2\nn\\
& \quad\leq C\left(\norm{\u}{H^3}
       +(1+\norm{b}{H^3}^2)\norm{b}{H^3}^2
       +\norm{(p,\aa,\tau}{H^3}^2\right)
      \|(\aa, \u,\tau)\|_{H^{3}}^2,
\end{align}
Thus,  multiplying the above inequality \eqref{dissp17} by a suitable large constant and
then adding to  \eqref{dissp16}, we can finally get  that
\begin{align*}	
&\frac12\frac{d}{dt}\norm{(\aa,\u,\tau,\mathbf{\widetilde{G}})}{H^{{3}}}^2+\norm{(\aa,\tau)}{H^{{3}}}^2
   +\norm{(\nabla\u,\nabla\mathbf{\widetilde{G}})}{H^{{3}}}^2\nn\\
&\quad\le C\|(p,\u)\|_{H^3}^2\|\nabla\u\|_{H^3}^2
+C
\big(\|\u\|_{H^3}+\norm{( p,\aa,\tau)}{H^3}^2
+(1+\norm{b}{H^3}^2)\norm{b}{H^3}^2\big)
\norm{\aa,\u,\tau}{H^{3}}^2.
\end{align*}
This completes the proof of Lemma \ref{miduhaosan}.
\end{proof}

\subsection{Bootstrap argument}
In this section, we prove Theorem \ref{dingli2}.
For any $(p_0, \u_0,\tau_0, b_0)\in H^{{3}}({ \mathbb{T} }^d)$,  we can prove the local well-posedness of \eqref{mm}
by using the  standard energy method.  To prove the global well-posedness, the aim is
 on the global bound of
$(p,\u,\tau,b)$ in $H^3(\mathbb T^d)$. We use the bootstrapping argument and start by making the
ansatz that
\begin{align}\label{boot1}
	\sup_{t\in[0,T]}\norm{(p,\u,\tau,b)}{H^{3}}\le \delta,
\end{align}
for suitably chosen $0<\delta<1$. The main efforts are devoted to proving that, if the initial norm is taken to be sufficiently small, namely
\begin{align*}
\norm{(p_0,\u_0,\tau_0,b_0)}{H^{{3}}}\le\varepsilon,
\end{align*}
with sufficiently small $\varepsilon>0$, then
\begin{align*}
\sup_{t\in[0,T]}\norm{(p,\u,\tau,b)}{H^{3}}\le \frac{\delta}{2}.
\end{align*}
Under the assumption of \eqref{boot1}, we infer from \eqref{dissp6} that
\begin{align}\label{boot2}	&\frac12\frac{d}{dt}\norm{(\aa,\u,\tau,\mathbf{\widetilde{G}})}{H^{{3}}}^2+\norm{(\aa,\tau)}{H^{{3}}}^2
+\norm{(\nabla\u,\nabla\mathbf{\widetilde{G}})}{H^{{3}}}^2\nn\\
&\quad\le C
\delta(\delta^3+\delta+1)\norm{(\aa,\u,\tau)}{H^{3}}^2+C\delta^2\norm{\nabla\u}{H^{{3}}}^2,
\end{align}
where we use the fact
\begin{align*}
\norm{\aa}{H^{{3}}}^2=&\norm{p+K(L-1)b+\zeta b(b+2)}{H^{{3}}}^2\\
 \le&\norm{p}{H^{{3}}}^2+\zeta\norm{b^2}{H^{{3}}}^2+(K(L-1)+2\zeta)\norm{b}{H^{{3}}}^2\\
 \le&\norm{a}{H^{{3}}}^2+\zeta\norm{b}{H^{{3}}}^4+(K(L-1)+2\zeta)\norm{b}{H^{{3}}}^2\\
 \le&C\delta^2.
\end{align*}
Denote
\begin{align*}
	{\mathcal{\widetilde{E}}(t)}=&\norm{(\aa,\u,\tau,\mathbf{\widetilde{G}})}{H^{{3}}}^2
\end{align*}
and
\begin{align*}
	{\mathcal{\widetilde{D}}(t)}=&\norm{(\aa,\tau)}{H^{{3}}}^2+\norm{(\nabla\u,\nabla\mathbf{\widetilde{G}})}{H^{{3}}}^2.
\end{align*}
From \eqref{P}, choosing $\delta$ small enough in \eqref{boot2} implies that
\begin{align}\label{boot3}
	\frac{d}{dt}{\mathcal{\widetilde{E}}(t)}+\frac12{\mathcal{\widetilde{D}}(t)}\le 0.
\end{align}
On the one hand, notice that $\mathbf{\widetilde{G}}$ is a potential function, the following Poincar\'{e} inequality holds\begin{equation*}
		\begin{split}
			\norm{\mathbf{\widetilde{G}}}{L^{{2}}}\leq C\norm{\nabla\mathbf{\widetilde{G}}}{L^{{2}}}.
		\end{split}
	\end{equation*}
This gives rise  to
\begin{align*}
\mathcal{\widetilde{E}}(t)\le{C\mathcal{\widetilde{D}}(t)},
\end{align*}
from which and \eqref{boot3}  we get
\begin{align*}
	\frac{d}{dt}{\mathcal{\widetilde{E}}(t)}+c{\mathcal{\widetilde{E}}(t)}\le 0.
\end{align*}
Solving this inequality yields
\begin{align}\label{boot4}
	{\mathcal{\widetilde{E}}(t)}\le Ce^{-ct}.
\end{align}
Hence, we get
\begin{align}\label{boot5}
\int_0^t\left(\norm{ \aa(t')}{H^3}^2+\norm{ \u(t')}{H^3}^2+\norm{ \tau(t')}{H^3}^2\right)\,dt'\le C.
\end{align}
Due to \eqref{youjiexing}, $$ \frac{1}{2}c_0\le\rho,\eta\le 2c_0^{-1},$$
we have
\begin{align*}
 \tilde{c}_0\le\frac{1}{1+p},\ \frac{1}{1+b}\le \tilde{c}_0^{-1}.
\end{align*}
Hence, there holds
\begin{align*}
\frac12\norm{p}{H^{3}}^2-\frac{1}{2}\int_{\T^d}\frac{p}{1+p}(\la^{3}p)^2 \,dx
\ge C\norm{p}{H^{3}}^2,\\
\frac12\norm{b}{H^{3}}^2-\frac{1}{2}\int_{\T^d}\frac{b}{1+b}(\la^{3}b)^2 \,dx
\ge C\norm{b}{H^{3}}^2,
\end{align*}
from which and the Lemma \ref{gaojie}, we have
\begin{align}\label{boot6}
&\norm{(p,\u,\tau,b)}{H^{3}}^2
\leq \norm{(p_0,\u_0,\tau_0,b_0)}{H^{3}}^2\nn\\
&\quad+ C\int_0^t\left(\norm{\u}{H^3}+(\norm{p}{H^3}^2+\norm{\u}{H^3}^2)\norm{\u}{H^3}
       +\norm{(\aa,\u,\tau)}{H^3}^2\right) \norm{( p, \u,\tau, b)}{H^{3}}^2\,dt'.
\end{align}
 From \eqref{boot4} and \eqref{boot5}, exploiting
the Gronwall inequality to \eqref{boot6}  implies that
\begin{align*}
\norm{(p,\u,\tau,b)}{H^{3}}^2
\le&C\norm{(p_0,\u_0,\tau_0,b_0)}{H^{3}}^2\\
&\quad\times\exp
\left\{C\int_0^T\left(\norm{\u}{H^3}
      +(\norm{p}{H^3}^2+\norm{b}{H^3}^2)\norm{\u}{H^3}
        +\norm{(\aa,\u,\tau)}{H^3}^2\right)\,dt'\right\}\\
\le& C\varepsilon^2.
\end{align*}
Taking $\varepsilon$ small enough so that $C\varepsilon^2\le \delta^2/4$, we deduce from a continuity argument that the local solution can be extended as a global one in time.
 Consequently, we complete the proof of Theorem \ref{dingli2}.\hspace{15.4cm}$\square$

\bigskip
 \section*{Acknowledgement.}
{X. Zhai would like to thank Professor Yong Lu for his value discussions and suggestions.
Zhao is supported by the China Postdoctoral Science Foundation under grant 2023TQ0309.
Zhai is supported by  the Guangdong Provincial Natural Science Foundation under grant 2022A1515011977 and 2024A1515030115.}

\bigskip
\noindent{Conflict of Interest:} The authors declare that they have no conflict of interest.

\bigskip
\noindent{Data availability statement:}
 Data sharing not applicable to this article as no datasets were generated or analysed during the
current study.

\vskip .3in
\bibliographystyle{abbrv}

\end{document}